\documentclass[12pt]{amsart}

\usepackage{graphicx, overpic}
\usepackage[below]{placeins}
\usepackage[colorlinks=true, linkcolor=blue, citecolor=blue]{hyperref}
\usepackage[]{algorithm2e}
\usepackage{comment}

\usepackage[T1]{fontenc}
\usepackage{amsmath,amsthm,amscd,amssymb,eucal}

\usepackage{enumerate, amsfonts, latexsym, color, url}
\usepackage{epstopdf}
\usepackage{pinlabel}
\usepackage{calrsfs}
\DeclareMathAlphabet{\pazocal}{OMS}{zplm}{m}{n}

\begin{document}

\newtheorem{conx}{Conjecture}
\renewcommand{\theconx}{\Alph{conx}} 

\newtheorem{theorem}{Theorem}[section]
\newtheorem{lemma}[theorem]{Lemma}
\newtheorem{proposition}[theorem]{Proposition}
\newtheorem{corollary}[theorem]{Corollary}
\newtheorem{conjecture}[theorem]{Conjecture}
\newtheorem{question}[theorem]{Question}
\newtheorem{problem}[theorem]{Problem}
\newtheorem*{claim}{Claim}
\newtheorem*{criterion}{Criterion}
\newtheorem*{cut_point_theorem}{Cut Point Theorem}
\newtheorem*{linearization_theorem}{Linearization Theorem}

\theoremstyle{definition}
\newtheorem{definition}[theorem]{Definition}
\newtheorem{provisional_definition}[theorem]{Provisional Definition}
\newtheorem{construction}[theorem]{Construction}
\newtheorem{notation}[theorem]{Notation}
\newtheorem{object}[theorem]{Object}
\newtheorem{operation}[theorem]{Operation}

\theoremstyle{remark}
\newtheorem{remark}[theorem]{Remark}
\newtheorem{example}[theorem]{Example}
\newtheorem*{case}{Case}

\numberwithin{equation}{subsection}

\def\Z{{\mathbb Z}}
\def\N{{\mathbb N}}
\def\R{{\mathbb R}}
\def\C{{\mathbb C}}
\def\CC{{\pazocal C}}
\def\D{{\mathbb D}}
\def\H{{\mathbb H}}
\def\SS{\Gamma}	
\def\I{{\pazocal I}}
\def\IG{IG}	
\def\L{{\pazocal L}}
\def\M{{\pazocal M}} %
\def\Mord{{\pazocal M}_{\ord}}
\def\P{{\pazocal P}}
\def\T{{\pazocal T}}

\def\diam{\textnormal{diam}}
\def\dist{\textnormal{dist}}
\def\ord{\textnormal{ord}}
\def\length{\textnormal{length}}
\def\fix{\textnormal{fix}}
\def\inte{\textnormal{int}}
\def\limit{\textnormal{limit}}
\def\cut{\textnormal{cut}}

\newcommand\numberthis{\addtocounter{equation}{1}\tag{\theequation}}
\newcommand{\marginal}[1]{\marginpar{\tiny #1}}

\title{Laminations and External Angles for Similarity Pairs}
\author{Danny Calegari}
\address{University of Chicago \\ Chicago, Ill 60637 USA}
\email{dannyc@uchicago.edu}

\author{Alden Walker}
\address{Center for Communications Research \\ La Jolla, CA 92121 USA}
\email{akwalke@ccr-lajolla.org}
\date{\today}

\begin{abstract}
A {\em similarity pair} is the dynamical system in $\C$ generated by two maps
$f:z \to sz-1$ and $g:z \to sz+1$ for $|s|<1$. Associated to the dynamical system
is an attractor $\Lambda$. The Barnsley--Harrington Mandelbrot set $\M$ is the
set of $s\in \D$ for which $\Lambda$ is connected. Let $K$ denote the filled set of a
connected $\Lambda$. For $s\in \partial \M$ we show that the (partially defined) action of 
the semigroup on $\partial K$ is topologically conjugate to a (discontinuous)
piecewise linear action of constant slope. Conditional on a conjecture (satisfied
for `most' $s\in \partial \M$) we give a necessary and sufficient condition in terms
of the dynamics on $\partial K$ for $K$ to contain cut points, and we describe the
set of all such cut points in terms of infinite walks in a directed graph $\IG$
obtained by an explicit recursive algorithm.

The structure of the `dynamical cut point set' for a 2-dimensional family of 
piecewise linear actions (containing those coming from $s\in \partial \M$) recovers
and generalizes the Douady--Hubbard--Thurston quadratic minor lamination for the
abstract Mandelbrot set.
\end{abstract}

\maketitle
\setcounter{tocdepth}{1}
\tableofcontents

\section{Introduction}

A {\em similarity pair} is an iterated function system (hereafter IFS) generated
by two conjugate similarities of the plane. Let $\SS$ denote the free semigroup on two
generators $f$ and $g$. If we fix a complex number $s$
with $|s|<1$, the similarity pair associated to $s$ is the action of $\SS$ on
$\C$ given by $f:z \to sz-1$ and $g:z \to sz+1$. 
In the sequel when we need to stress the dependence on $s$ we usually do this with 
subscripts.

Associated to a similarity pair is its {\em attractor} $\Lambda$, 
the unique compact, nonempty subset of $\C$ with 
$\Lambda = f\Lambda \cup g\Lambda$. Another way to describe $\Lambda$ is that it is
the closure of the fixed points of the nontrivial elements of the semigroup. See
Definition~\ref{definition:limit_set}.

Similarity pairs are a class of one dimensional holomorphic dynamical systems that lie 
somewhere intermediate between holomorphic dynamics, and Kleinian
groups. In this paper we will discuss a geometric object (a dynamical system on the
circle and an invariant lamination) that is analogous to the invariant 
dynamical laminations arising in holomorphic dynamics.

In 1985 Barnsley--Harrington \cite{Barnsley_Harrington} defined a `Mandelbrot set' 
$\M \subset \D$ for similarity pairs to be the set of complex parameters $s$ 
for which $\Lambda$ is connected; this is supposed to be very closely analogous to
the (ordinary) Mandelbrot set $\Mord$, the set of complex parameters $c$ for which the Julia
set $J_c$ of $h_c:z \to z^2+c$ is connected. Similarity pairs, the set $\M$ and
generalizations were subsequently studied by many people, 
e.g.\/ \cite{Bousch1,Bousch2,Bandt,Solomyak_local,Solomyak,Shmerkin_Solomyak,
Thurston_entropy,Calegari_Koch_Walker,Espigule_Juger_Saldana,Lindsay_Tiozzo_Wu} and others.

\medskip

In this paper we study the dynamics of similarity pairs and the topology of $\Lambda$
for parameters $s$ in the frontier $\partial \M$ of $\M$. Suppose $s$ is in $\M$ so that
$\Lambda$ is connected. In this case we may define $K$ to be
the {\em full} set associated to $\Lambda$, i.e.\/ the smallest compact simply-connected
subset of $\C$ containing $\Lambda$. It is a fact that $\Lambda$, and therefore
also $K$, is locally connected; this is in stark contrast to the case of Julia sets
of rational maps, which can fail to be locally connected even for $h_c:z \to z^2+c$,
for example when $c$ is a Cremer point, so that $h_c$ has an irrationally indifferent 
fixed point where the dynamics cannot be linearized, see e.g.\/ \cite{Milnor}, 
Corollary~18.6. 
In our case, since $K$ is simply-connected, its topology is entirely captured by 
the structure of its set of cut points. In this paper we are able to give a simple
sufficient condition for the existence of cut points in $K$ for $s \in \partial \M$, 
and conditional on a conjecture (Conjecture~\ref{conjecture:A}) which holds for 
more than $99\%$ of the points in $\partial \M$ (in a certain sense), this condition is
also necessary. Furthermore, also conditional on Conjecture~\ref{conjecture:A}, 
we give an explicit algorithm that describes the structure of the set of cut points
of $K$ in terms of infinite paths in a directed graph.

In the case of the ordinary Mandelbrot set and cut points of $J_c$, the story is
as follows. Isolated cut points in (filled) Julia sets $J_c$ for $h_c:z \to z^2+c$ appear or
disappear as $c$ moves between tangent bulbs of $\M$, and a periodic orbit in $J_c$
changes from attracting to repelling. Points $c$ for which $J_c$ has a rationally
indifferent periodic cycle are parameterized by isolated leaves of the Douady--Hubbard--Thurston
{\em quadratic minor lamination} \cite{Douady_Hubbard_1,Douady_Hubbard_2,Thurston};
and conditional on the notorious MLC conjecture (i.e.\/ the conjecture
that $\Mord$ is locally connected) the quotient of the closed disk by this lamination
gives an abstract topological parameterization of $\Mord$. Again conditional on MLC,
one obtains a surjective parameterization of $\partial \Mord$ by a circle, 
given by external angles of landing rays.

In this paper we associate to $s \in \partial \M$ a {\em pair} of parameters 
$(\theta,\lambda)$ consisting of an {\em external angle} 
$\theta \in \R/2\pi\Z$ and a {\em scale factor} $\lambda \in [1,2]$. From this
pair of numbers one can directly compute the structure of the cut points of $K$,
conditional on Conjecture~\ref{conjecture:A}. 

Not every parameter $(\theta,\lambda)$ is associated to some $s\in \partial \M$, but every 
parameter describes a 1-dimensional piecewise linear dynamical system of a certain
kind. Our cut point criterion for $K$ makes sense as a purely dynamical criterion for these
more general dynamical systems, and in the special case $\lambda = 2$ this dynamical
criterion completely recovers the structure of the 
Douady--Hubbard--Thurston lamination; see \S~\ref{subsection:innermost_leaves_and_DHT}.

\subsection{Uniformization and the Dynamical Lamination}

Suppose $s \in \M$ so that $\Lambda$ is connected. 
The complement $\C - \Lambda$ contains a unique unbounded component, and
we may define the {\em filled attractor} $K$ to be its complement. In other words,
$K$ is the full set associated to $\Lambda$.
For any $s\in \M$ the set $\Lambda$ is locally connected and therefore so is $K$.
Thus there is a conformal isomorphism $u:\C - \overline{\D} \to \C - K$ that extends 
continuously to $\partial u: S^1 \to \partial K$. Note that $\partial K \subset \Lambda$.
See Figure~\ref{uniformization}.

\begin{figure}[htpb]
\centering
\includegraphics[scale=0.2]{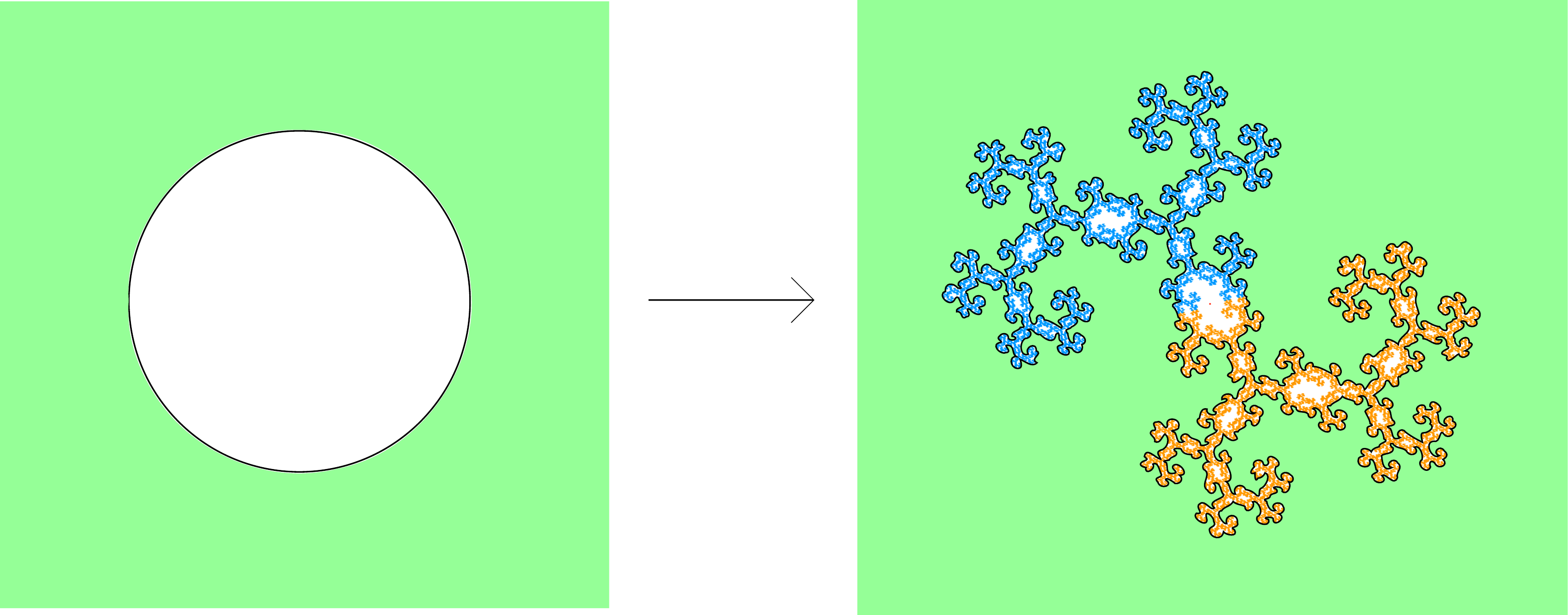}
\caption{The exterior $\C-K$ is uniformized by $\C-\overline{\D}$.}
\label{uniformization}
\end{figure}

The uniformizing map $u$ is unique up to precomposition with a rotation; we will
shortly describe how to make a specific choice. It turns out for $s\in \partial \M$
and for some (but not all) $s$ in the interior of $\M$
that we may divide $S^1$ into two halves, denoted $J_f$ and $J_g$, so that
$\partial u(J_f) \subset f\Lambda$ and $\partial u(J_g) \subset g\Lambda$. 

It is a rather subtle fact that this partition might not be unique, 
but it is possible to make a canonical choice; see \S~\ref{subsection:uniqueness}. 
Having made this choice, we may normalize $u$ so that $J_f$ and $J_g$ are respectively the
left and right half of a circle that has vertically bisected.

We conjecture (Conjecture~\ref{conjecture:A}) that with some
easily understood exceptions (when $s$ is real) providing $s\in \partial \M$
the choice of partition is unique. More explicitly, we conjecture that $\partial K$
decomposes into the images of two connected intervals 
$\partial K \cap fK = u(J_f)$ and $\partial K \cap gK = u(J_g)$
that intersect only at $u(\partial J_f) = u(\partial J_g)$. Our strongest results are conditional
on this conjecture, for which the numerical evidence is eloquent if not yet persuasive; 
see the brief discussion in \S~\ref{subsection:Conjecture_A}.

Here is the key point: since $\partial K$ is contained in $f\partial K \cup g\partial K$,
we may use the uniformizing map $u$ to pull back the dynamics of $\SS$ on $\partial K$ 
(where defined) to a dynamical system on $S^1$.
Specifically, there are intervals $I_f,I_g \subset S^1$ so that $f u(I_f) = u(J_f)$
and $g u(I_g) = u(J_g)$, and we may then obtain a (partially defined) action of $\SS$ on $S^1$
by $u f = f u$ on $I_f$ and $u g = g u$ on $I_g$.

We recall the definition of a lamination of the circle. A {\em leaf} in $S^1$ is
a distinct unordered pair of points in $S^1$. The space of all leaves in $S^1$ is
homeomorphic to an open Mobius band. A {\em lamination} (of the circle) is a closed 
union of leaves, no two of which link in $S^1$.

Now that we have associated a dynamical system on $S^1$ to a parameter $s\in \partial \M$
we may use this dynamics to define the so-called {\em dynamical lamination} $\L$
associated to $s$. Let $\ell$ be the vertical leaf that divides $J_f$ from $J_g$,
and say $\ell$ has {\em depth 0}.

We define $\L$ inductively as follows: if $\mu$ is a leaf in $\L$ of
depth $n$, and both endpoints of $\mu$ are in $I_f$ (resp. both endpoints are in $J_f$) 
then $f\mu$ is in $\L$ (resp. $g\mu$ is in $\L$) and has depth $n+1$. 
Leaves obtained in this way from $\ell$ are said to be of finite depth. Evidently
there are at most $2^n$ leaves of depth $n$. Let $\L$ denote the closure of the
set of finite depth leaves. It turns out (Proposition~\ref{proposition:props_of_L}) that
no two finite depth leaves link in $S^1$, and therefore $\L$ is a
lamination.

Figure~\ref{L_depth_2_with_limit} indicates leaves of $\L$ to depth $2$ in an example.
Corresponding to the leaf $\ell$ there is a crosscut in $K$ dividing
$u(J_f)$ from $u(J_g)$, and the finite depth leaves correspond to 
certain images of this crosscut under elements of $\SS$.

\begin{figure}[htpb]
\centering
\includegraphics[scale=0.2]{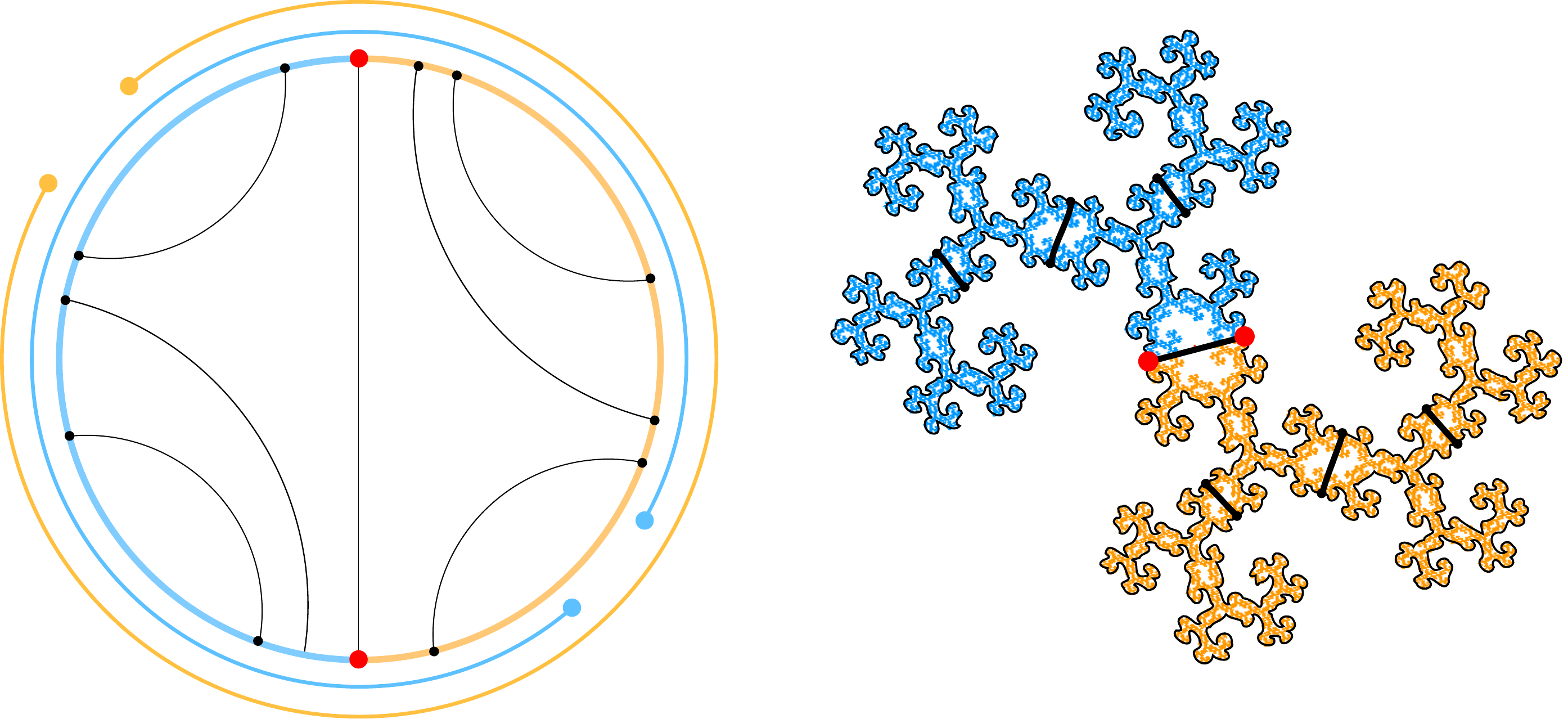}
\caption{Leaves of $\L$ to depth $2$ and corresponding crosscuts of $K$.}
\label{L_depth_2_with_limit}
\end{figure}

The domains of $f^{-1}$ and $g^{-1}$ are the intervals $J_f$ and $J_g$ respectively,
which are very nearly disjoint except on their common endpoints $J_f\cap J_g$.
We may therefore define a single map $H:S^1 \to S^1$ by
$$H(x) := f^{-1}(x) \text{ for } x\in J_f - J_g \text{ and } 
H(x):= g^{-1}(x) \text{ for } x\in J_g - J_f$$
and for $x\in J_f\cap J_g$ define $H(x)$ to be either $f^{-1}(x)$ or $g^{-1}(x)$.
Although it is annoying to deal with many-valued functions, this is the easiest way to
preserve the rotational symmetry, and capture the full dynamics of $f^{-1}$ and 
$g^{-1}$ as far as possible.

\subsection{Statement of Main Theorems}

Our first main theorem (although it is stated and proved second in the body of the paper)
says that the action of $H$ on $S^1$ is topologically conjugate
to a (piecewise, discontinuous) linear action. The following is
a simplified statement of the first main theorem; the more precise statement is 
given in Theorem~\ref{theorem:linearization}.

\begin{linearization_theorem}
Let $s\in \partial \M$. The action of $H$ on $S^1-\lbrace -\pi/2,\pi/2\rbrace$ 
is topologically conjugate to a (piecewise, discontinuous) linear action with constant
slope. Explicitly, there are unique parameters $\lambda \in [1,2]$ and 
$\theta \in \R/2\pi\Z$ and coordinates on $S^1$ so that 
$$H(x) = \lambda x + \theta \text{ for } x \in (-\pi/2,\pi/2) \text{ and }
H(x) = \lambda(x - \pi) + \theta + \pi \text{ for } x \in (\pi/2,3\pi/2)$$
\end{linearization_theorem}

In particular, the dynamics of $H$ on $S^1$ is entirely determined by $\lambda$
and $\theta$. For any pair $\lambda,\theta$ as above (not necessarily arising from some
$s$) let's denote by $E_{\lambda,\theta}:S^1 \to S^1$ the (piecewise, discontinuous)
linear map of this kind. The existence of a {\em semiconjugacy} from $H$ to $E_{\lambda,\theta}$
for some $\lambda,\theta$ follows from general principles from the Milnor--Thurston kneading 
theory. The fact that this is actually a {\em conjugacy} uses some elements of
the theory of Lorenz maps and $\beta$-transformations.

It is a complicated question which pairs $(\lambda,\theta)$ arise for $s\in \partial \M$.
The value $\lambda = 2$ is achieved for uncountably many $s\in \partial \M$.
Such examples were already studied in depth by Bousch \cite{Bousch1,Bousch2} and 
Solomyak \cite{Solomyak}. It turns out that $\lambda = 2$ if and only if $f\Lambda$
and $g\Lambda$ intersect at a single point. In this case $\Lambda$ is a dendrite,
and Eroglu--Rohde--Solomyak show \cite{Eroglu_Rohde_Solomyak} 
that $g:z \to sz+1$ and $\bar{f}:z \to -g(z)$ together are
quasiconformally conjugate to the branches of $h_c^{-1}$ on its (dendritic) Julia
set $c$ for some unique $c\in \partial \Mord$ with $\theta(c) = \theta(s)$. 
Points $s$ with this property are algebraic; the corresponding points $c$ are also
algebraic, and are examples of what are known as Misiurewicz points.

The further study of the space of dynamical systems of the form $S^1,E_{\lambda,\theta}$
is pursued in \S~\ref{section:linear_dynamics}. The connection between the
structure of dynamical cut leaves for $(\lambda,\theta)$ and the Douady--Hubbard--Thurston
lamination is explained in Theorem~\ref{theorem:DHT_lamination}.

We would further like to mention that the use of laminations in the study of IFS, particularly 
their relationship to Rauzy fractals, was developed in some depth in the work of
Victor Sirvent, see especially \cite{Sirvent_Pisot} and \cite{Sirvent_Fibonacci},
and there are connections to the material in this paper that deserve further exploration.

\medskip

How does the dynamics of $H$ on $S^1$ relate to the topology of $\Lambda$?
There is a partially-defined action of $H$ on leaves (i.e.\/ unordered pairs of
distinct points in $S^1$) as follows: if $\mu$ is a leaf with both endpoints in
$J_f$ resp. $J_g$, define $H(\mu) = f^{-1}(\mu)$ resp. $H(\mu) = g^{-1}(\mu)$,
and define $\ell_f: = H(\ell) := f^{-1}(\ell)$ in the special case that $\mu = \ell$ so
that the endpoints are exactly $J_f\cap J_g$. Note
that with this definition, every leaf of $\L$ is in the domain of definition of $H$.

Our second main theorem says that cut points of $\Lambda$ (which are the same as 
cut points of $K$) may be read off from the dynamics of $H$ on leaves. 
Say that a leaf $w\ell$ of $\L$ {\em interacts} with $\ell_f$ if they cross, or
if $w\ell$ separates $\ell_f$ from $\ell$ in $\D$.
Once again, the following is a simplification of the second main theorem; the
precise statement is Theorem~\ref{theorem:interaction}. 

\begin{cut_point_theorem}
Let $s\in \partial \M$ and let $w \in \SS$ have least
length so that $w\ell$ interacts with $\ell_f$, if any (we allow the
possibility that $w$ is empty if $\ell$ crosses $\ell_f$).
\begin{enumerate}
\item{it cannot be the case that $\ell_f$ hides $w\ell$ from $\ell$;}
\item{if $w\ell$ hides $\ell_f$ from $\ell$ then $K$ contains cut points; and}
\item{conditional on Conjecture~\ref{conjecture:A} if $w\ell$ crosses $\ell_f$ 
then $K$ contains no cut points (i.e.\/ it is homeomorphic to a disk).}
\end{enumerate}
\end{cut_point_theorem}

Combining this with the Linearization Theorem, one obtains a practical algorithm
to determine whether $K$ contains cut points for a given $s$. A refinement of this
algorithm lets us describe the structure of the set of cut points 
(conditional on Conjecture~\ref{conjecture:A}) in terms of infinite paths
in a certain directed graph $\IG$ which itself may be constructed algorithmically.
See \S~\ref{subsection:inclusion_graphs}.

\subsection{Acknowledgements}

We would like to thank Bernat Espigul\'e, Toby Hall, Curt McMullen, Steffen Rohde,
Victor Sirvent and Boris Solomyak 
for valuable comments, assistance and encouragement. We would also like to acknowledge
that Claude was used to write and/or modify code used for the numerical investigations
carried out in this paper, and also to create some of the figures that appear in the
Appendix.

\section{Definitions}

Let $\D \subset \C$ denote the open unit disk. In this paper we study a class of
one-dimensional holomorphic dynamical systems called {\em similarity pairs}:

\begin{definition}[Similarity Pair]\label{definition:similarity_pair}
For each $s \in \D$ the {\em similarity pair} associated to $s$
is the semigroup generated by $f:z \to sz-1$ and $g:z \to sz+1$. 
\end{definition}
Strictly speaking, $f$ and $g$ depend on $s$. If we need to emphasize this
dependence we write $f_s$ and $g_s$, though we suppress the subscripts
here and in the sequel whenever this is unambiguous.

We let $\SS$ denote the free semigroup on symbols $f$ and $g$. Thus $s\in \D$
parameterizes a 1 (complex) dimensional family of complex affine actions of $\SS$
on $\C$.

Associated to a parameter $s$ is a canonical subset $\Lambda \subset \C$
called the {\em attractor}, that plays a role in the theory of similarity pairs 
analogous to the Julia set of a rational map, or the limit set of a Kleinian group.

\begin{definition}[Limit Set]\label{definition:limit_set}
For $s\in \D$ the {\em attractor} $\Lambda$ is the unique nonempty compact 
subset of $\C$ with $\Lambda = f\Lambda \cup g\Lambda$. 
\end{definition}

As a set $\Lambda$ may be obtained as follows. First choose any compact set $D_0 \subset \C$
with $f(D_0),g(D_0) \subset D_0$; for instance, $D_0$ could be 
the round disk centered at $0$ of radius $1/(1-|s|)$. Then inductively
define $D_{n+1} = f(D_n) \cup g(D_n)$ and set $\Lambda = \cap_n D_n$.
It is easy to see that $\Lambda$ is equal to the closure of the set of fixed points of
nontrivial elements of $\SS$.

The most important topological property of $\Lambda$ is whether it is connected or
not. Barnsley--Harrington formalized this property in the following definition,
parallel to the definition of the (usual) Mandelbrot set $\Mord$ in terms of the
connectivity of the Julia set $J$ of the quadratic polynomial $h_c:z \to z^2+c$:

\begin{definition}[Barnsley--Harrington Mandelbrot set]
The {\em Barnsley--Harrington Mandelbrot set} $\M$
is the set of $s\in \D$ for which $\Lambda$ is connected.
\end{definition}

The set $\M$ is closed as a subset of the open unit disk $\D$; the union $\M \cup S^1$
is closed (and therefore compact) as a subset of $\C$. We define the boundary
$\partial \M$ to be its frontier in $\D$, i.e.\/ the set of points in $\M$ that
are in the closure of $\D - \M$ in $\D$. We are concerned in this paper with
the structure of $\partial \M$ as a set, and with the topology of $\Lambda$ 
and the dynamics of $\SS$ for $s\in \partial \M$.

It is easy to see that $\Lambda$ is either connected or a Cantor set (at least
if $|s|>0$), and if it is connected, it is locally connected. It will be very
convenient to obtain a priori estimates on $|s|$ for $s \in \M$ and $s \in \partial \M$.
The following is due to Bousch \cite{Bousch1} Proposition~2:

\begin{proposition}[Boundary estimate]\label{proposition:Bousch_bound}
The set $\M$ contains the entire annulus $1/\sqrt{2} \le |s|<1$ and is 
contained in the annulus $1/2 \le |s| < 1$. 
Consequently if $s\in \partial \M$ then $1/2 \le |s| \le 1/\sqrt{2}$.
\end{proposition}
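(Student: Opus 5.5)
We prove the two inclusions separately and then combine them.

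\emph{Lower bound: $|s|<1/2$ implies $s\notin\M$.} The attractor is the image of the continuous coding map $\pi:\{f,g\}^\N\to\C$,
$$\pi(a_1a_2\cdots)=\sum_{n\ge 0}\epsilon_{n+1}s^n, \qquad \epsilon_n=\pm1 \text{ according as } a_n=g \text{ or } f.$$
Hence $g\Lambda=1+s\Lambda$ and $f\Lambda=-1+s\Lambda$. Every point of $s\Lambda$ has modulus at most $|s|/(1-|s|)$. So if $|s|/(1-|s|)<1$, i.e.\ $|s|<1/2$, then $g\Lambda$ lies in $\{\mathrm{Re}\,z>0\}$ and $f\Lambda$ lies in $\{\mathrm{Re}\,z<0\}$.

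Since $\Lambda=f\Lambda\cup g\Lambda$ with both pieces nonempty and compact, $\Lambda$ is a disjoint union of two nonempty compact sets and is disconnected. (For $s=0$ the attractor is $\{\pm1\}$, also disconnected.) Thus $\M\subset\{|s|\ge1/2\}$. This step is routine.

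\emph{Upper bound: $1/\sqrt2\le|s|<1$ implies $\Lambda$ connected.} By the standard criterion for two-map IFS (Hata), $\Lambda$ is connected if and only if $f\Lambda\cap g\Lambda\neq\emptyset$. This holds if and only if $0\in s\Lambda-s\Lambda+2$, i.e.\ if and only if $-1/s$ lies in the set
$$\{\textstyle\sum_{n\ge0}\delta_n s^n : \delta_n\in\{-1,0,1\}\}.$$
Equivalently, there is a power series
$$P(z)=1+\sum_{n\ge1}c_nz^n, \qquad c_n\in\{-1,0,1\},$$
with $P(s)=0$. So the task is to show that every $s$ with $1/\sqrt2\le|s|<1$ is a zero of such a series.

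The plan is a nested-set construction. Let $D=\{\sum_{n\ge0}\delta_n s^n\}$; it satisfies $D=\{-1,0,1\}+sD$. I want to show that $D$ contains a neighbourhood region $R$ of $0$ large enough to contain $-1/s$.

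Proposed approach:
\begin{enumerate}
\item Find a compact set $R$ with $R\subset\{-1,0,1\}+sR$.
\item Deduce $R\subset D$. Iterating step 1, any $z\in R$ has an expansion $z=\delta_0+s z_1$ with $z_1\in R$, and so on; boundedness of $R$ together with $|s|<1$ makes the resulting series converge to $z$.
\item Check that $-1/s\in R$.
\end{enumerate}

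The natural candidate for $R$ is the parallelogram spanned by the directions $1$ and $s$, or a disk of radius about $1/(1-|s|)$ intersected with it. The condition $|s|^2\ge 1/2$ is exactly what makes the two-step map $z\mapsto s^2z$ with digit set $\{-1,0,1\}+s\{-1,0,1\}$ tile. This suggests working with $s^2$ and the lattice-like set of nine digits. The nine translates of $s^2R$ must cover $R$, and this requires $\mathrm{area}(R)\le 9|s|^4\mathrm{area}(R)$, which is comfortably true. The real constraint is geometric, not area-based.

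I expect the main obstacle to be choosing $R$ uniformly for all arguments of $s$, especially near $s$ real, where the digits degenerate onto a line. For real $s$ one checks directly that the interval $[-1/(1-s),1/(1-s)]$ is covered by $\{-1,0,1\}+s[\cdots]$ once $s\ge1/2$. For the general case I would first try the argument from Bousch, Proposition~2, which we may simply cite.

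\emph{Conclusion.} Given both inclusions, suppose $s\in\partial\M$. Since $\M$ is closed in $\D$, $s\in\M$, so $|s|\ge1/2$. If instead $|s|>1/\sqrt2$, then $s$ would lie in the interior of the annulus contained in $\M$, so $s$ would not be in the closure of $\D-\M$. Hence $1/2\le|s|\le1/\sqrt2$.
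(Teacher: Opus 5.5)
The paper gives no argument of its own for this proposition; it attributes the whole statement to Bousch \cite{Bousch1}, Proposition~2. Your lower bound is correct and complete: for $|s|<1/2$ the sets $1+s\Lambda$ and $-1+s\Lambda$ lie in opposite open half-planes. Your deduction for $s\in\partial\M$ is also correct.

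The upper inclusion is where your proposal has a gap. All of the content of your nested-set plan sits in step~1: a compact $R$ with $R\subset\{-1,0,1\}+sR$ and $-1/s\in R$, valid for every $1/\sqrt2\le|s|<1$. You never construct it; the parallelogram is only suggested. So the plan proves nothing on its own, and the inclusion ends up resting only on the citation. That is admittedly all the paper does too.

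Your heuristic for where the threshold comes from is also wrong. Nine digits with contraction $s^2$ tile only when $9|s|^4=1$, i.e.\ $|s|=3^{-1/2}$, and you yourself note the covering inequality is slack at $|s|=1/\sqrt2$. The constant $1/\sqrt2$ comes from the \emph{two} maps $f,g$ of ratio $|s|$ that generate $\Lambda$, i.e.\ from the quantity $2|s|^2$. This suggests working with $\Lambda$ directly rather than with the difference set $D$.

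Doing so gives a short self-contained proof that needs no $R$.
\begin{enumerate}
\item Suppose $|s|>1/\sqrt2$ but $f\Lambda\cap g\Lambda=\emptyset$, and put $\delta=\dist(f\Lambda,g\Lambda)>0$.
\item Take words $w\neq w'$ of length $n$ with longest common prefix $v$. Then $w\Lambda$ and $w'\Lambda$ lie in different ones of $vf\Lambda$ and $vg\Lambda$. These two sets are at distance $|s|^{|v|}\delta\ge|s|^{n-1}\delta$.
\item Choose one point in each $w\Lambda$ with $|w|=n$. This gives $2^n$ points in the disk of radius $\rho=1/(1-|s|)$, with pairwise distances at least $|s|^{n-1}\delta$.
\item The disks of radius $|s|^{n-1}\delta/2$ about these points are disjoint and lie in the disk of radius $\rho+\delta/2$. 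Comparing areas gives $(2|s|^2)^n\le 4|s|^2(\rho+\delta/2)^2/\delta^2$ for every $n$.
\item This is impossible since $2|s|^2>1$. Hence $f\Lambda\cap g\Lambda\neq\emptyset$, so $\Lambda$ is connected by the criterion you already quoted.
\end{enumerate}
This puts the open annulus $1/\sqrt2<|s|<1$ inside $\M$. The packing estimate gives no contradiction on the circle $|s|=1/\sqrt2$ itself. That circle follows instead because $\M$ is closed in $\D$.
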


In fact, it turns out to be surprisingly useful to improve the inequality
$|s| \le 1/\sqrt{2}$ very slightly. This improvement is delicate, since in fact there
are (exactly two) points $s \in \partial \M$ with $|s| = 1/\sqrt{2}$.

\begin{theorem}[Strict inequality]\label{theorem:strict_inequality}
If $s \in \partial \M$ then either $s = \pm i/\sqrt{2}$ or $|s| < 1/\sqrt{2}$.
\end{theorem}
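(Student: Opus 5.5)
The plan is to show that every $s$ with $|s| = 1/\sqrt{2}$ and $s \neq \pm i/\sqrt{2}$ lies in the \emph{interior} of $\M$. By Proposition~\ref{proposition:Bousch_bound} such $s$ already lie in $\M$, so this excludes them from $\partial \M$, and the inequality $|s|\le 1/\sqrt{2}$ then becomes strict away from $\pm i/\sqrt{2}$. We will also need the standard facts that $\Lambda_s$ varies continuously with $s$ in the Hausdorff topology, and that $\Lambda_s$ is connected if and only if $f\Lambda_s \cap g\Lambda_s \neq \emptyset$. Interior points of $\M$ arise from \emph{robust} intersections of $f\Lambda$ and $g\Lambda$. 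The clean sufficient condition we will use is that $f\Lambda_{s_0}$ and $g\Lambda_{s_0}$ contain subsets whose interiors in $\C$ overlap. Alternatively, it suffices that they contain arcs (images of a connected subset under words $fu$ and $gv$) which cross essentially, meaning one separates two points of the other in a small disk. Either configuration survives small perturbation of $s$ by Hausdorff continuity, so $\Lambda_s$ stays connected on a neighborhood of $s_0$.

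The first step is to pass to the second iterate. Since $\Lambda = s^2\Lambda + \{\pm1\pm s\}$, the set $\Lambda$ is the attractor of four similarities of ratio $s^2$ with $|s^2| = 1/2$, so its similarity dimension is exactly $2$. Write $s^2 = \tfrac12 e^{i\phi}$. The excluded case $s = \pm i/\sqrt{2}$ is exactly $s^2 = -1/2$, i.e.\ $\phi = \pi$. In that case the four pieces form an exact self-similar tiling of a parallelogram region with measure-zero overlaps, and I expect the overlaps $f\Lambda \cap g\Lambda$ to be fragile there.

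For $\phi \neq \pi$ the second step is to show that $\Lambda$ has nonempty interior and that $f\Lambda$ and $g\Lambda$ overlap in positive measure. To do this, I would build an explicit compact connected set $D$ (a polygon adapted to the vectors $1$ and $s$) satisfying $D \subset fD \cup gD$. This is the reverse inclusion of a trap, and it forces $D \subset \Lambda$. Since $\operatorname{area}(fD) + \operatorname{area}(gD) = |s|^2\cdot 2\cdot\operatorname{area}(D) = \operatorname{area}(D)$, the covering is by pieces of exactly the right total area. I would therefore aim to exhibit, for $\phi\neq\pi$, a word $w$ such that $fwD$ and $gwD$ overlap in an open set; this is a covering with overlap and a genuine double cover somewhere. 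The rotation by $\phi$ misaligns the pieces relative to the lattice tiling present at $\phi = \pi$, and I expect that misalignment to produce the overlap.

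Once an open overlap $U \subset fD \cap gD$ (or a transverse crossing of two subarcs) is found at $s_0$, openness finishes the argument. For $s$ near $s_0$, the sets $f_sD_s$ and $g_sD_s$ still meet, because the configuration is stable under small motion of finitely many vertices; here $D_s$ is obtained from $D$ by moving its vertices continuously. They meet inside $f\Lambda_s \cap g\Lambda_s$ once one checks that $D_s \subset \Lambda_s$ persists, or more simply one works with crossing arcs inside $\Lambda_{s_0}$, which are stable by Hausdorff continuity. The main obstacle is the second step: producing, uniformly in $\phi \in (-\pi,\pi)$, a concrete robust overlap or crossing, particularly as $\phi \to \pi$ where the overlap degenerates. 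For $\phi$ near $\pi$ I would first try a case analysis on long words $w$, using the tiling structure at $\phi=\pi$ as a model and tracking the first-order shear of the tiles as $\phi$ moves off $\pi$.
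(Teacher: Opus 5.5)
Your reduction is the right one, and it is the paper's: it suffices to show that every $s$ with $|s|=1/\sqrt{2}$, $s\neq\pm i/\sqrt{2}$, is an interior point of $\M$. The gap is in how you propose to certify interiority. Your second step is impossible at every point of the circle, because $2|s|^2=1$ there and so
\[
\operatorname{area}(f\Lambda\cap g\Lambda)=\operatorname{area}(f\Lambda)+\operatorname{area}(g\Lambda)-\operatorname{area}(\Lambda)=(2|s|^2-1)\operatorname{area}(\Lambda)=0 .
\]
Thus $f\Lambda$ and $g\Lambda$ never overlap in positive measure, let alone in an open set. The same count applied to any compact $D$ with $D\subset fD\cup gD$ gives $\operatorname{area}(fD\cap gD)=0$. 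Since such a $D$ lies in $\Lambda$, any open overlap of $fwD$ and $gwD$ would lie in $f\Lambda\cap g\Lambda$. The exact area balance you noticed is therefore the obstruction, not a hint of a double cover, and measure-zero contact is forced for every $\phi$, not only $\phi=\pi$.

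Nor does $\Lambda$ have interior for all $\phi\neq\pi$: at $s=1/\sqrt{2}$ ($\phi=0$) it is a segment. Where $\Lambda$ is a tile other than the rectangle, namely the twindragon $s=(1+i)/2$ and the tame twindragon $s=(1+\sqrt{-7})/4$, the pieces $f\Lambda$ and $g\Lambda$ are interior-disjoint Jordan disks meeting along an arc. At those points there is no overlap or essential crossing to perturb; the paper notes that no ordinary trap exists there. It handles them with affine traps, and with limit traps that certify the nearby parameters $\sigma+C\sigma^{bn}$ ($\sigma$ the renormalization point, $n$ large) for $C$ ranging over a fundamental domain of $\C^*/\langle\sigma^b\rangle$.

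The perturbation step is also unsound as written. For $|s|<1/\sqrt{2}$ the attractor has Hausdorff dimension $<2$, and every neighborhood of your base point contains such $s$. So $D_s\subset\Lambda_s$ cannot persist on any neighborhood. The fallback ``crossing arcs are stable by Hausdorff continuity'' is circular: if $s\notin\M$ then $\Lambda_s$ is a Cantor set containing no arcs, and disjoint Cantor sets can be Hausdorff-close to a pair of crossing arcs.

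What is needed is a criterion that is open in $s$ for genuinely topological reasons. One option is the traps of Calegari--Koch--Walker, where points of $fK-gK$ and $gK-fK$ alternate on $\partial K$. Another is the paper's logarithmic-spiral construction near $i/\sqrt{2}$: an explicit Jordan domain $L^0_t$ contained in the \emph{full set} of $f_tL^0_t\cup g_tL^0_t$, with $f_tL^0_t$ and $g_tL^0_t$ crossing transversely. Passing to filled sets is exactly what evades the area obstruction, since $fK\cup gK$ can be strictly smaller than $K$, so $fK$ and $gK$ may overlap in open sets.

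Finally, even with a correct criterion, your proposal has no mechanism that works uniformly in $\phi$; the key step is left as an expectation. The paper covers the remaining arcs of the circle by a computer-assisted search for ordinary traps.
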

We postpone the proof to Appendix~\ref{section:inequality_proof}.

\subsection{Symmetries of $\M$}\label{subsection:symmetries}

The set $\M$ is invariant under $s \to \bar{s}$. Indeed, 
$$f_{\bar{s}}\bar{z} = \overline{f_s(z)} \text{ and } g_{\bar{s}}\bar{z} = \overline{g_s(z)}$$
In particular, $\Gamma_s$ and $\Gamma_{\bar{s}}$ are conjugate, and $\Lambda_{\bar{s}}$ 
is the complex conjugate of $\Lambda_s$.

The set $\M$ is also invariant under the involution $s \to -s$. This is more
subtle, since
$$f_{-s}(-z) = f_s(z) \text{ and } f_{-s}(z) = -g_s(z)$$
and similarly for $g$. Consequently, for any word $u\in \SS$ we have
$$u_{-s}(z) = (-1)^{|u|} v_s(z)$$
where $v$ is the word obtained from $u$ by changing every second letter of $w$
from $f$ to $g$ or vice versa, starting at the last letter. For example
$$(ggffgfg)_{-s}(z) = - (fggffff)_s(z)$$
Thus as sets $\Lambda(s)$ and $\Lambda(-s)$ are equal, although the dynamics
of $\SS$ on these sets is not straightforwardly conjugate.

Suppose $s \in \M$, equivalently that $f\Lambda$ intersects $g\Lambda$. It follows
that at least one of $ff\Lambda$ and $fg\Lambda$ intersects $g\Lambda$. Consequently,
by replacing $s$ by $-s$ if necessary, we may assume that $fg\Lambda$ intersects
$g\Lambda$.

\section{Dynamics on the boundary of the filled set}\label{section:topological_model_for_boundary}

\subsection{The filled limit set}

\begin{definition}\label{def:filled_limit_set}
Suppose $s \in \M$ so that $\Lambda$ is connected. 
The {\em filled limit set} $K$ is the complement of the 
unique unbounded component of $\C - \Lambda$.
\end{definition}
Another way to define $K$ is to say it is the minimal compact
simply-connected subset of $\C$ that contains $\Lambda$.
We denote by $\partial K$ the boundary of $K$. 
Note that $fK\cup gK \subset K$, and that 
$\partial K \subset f\partial K \cup g\partial K$. Also note that if $f(p) \in
\partial K$ then $p\in \partial K$, and similarly for $g$. 

If $|s|<1/\sqrt{2}$, the Hausdorff dimension of $\Lambda$ is strictly less than 2,
so that $\Lambda$ has no interior. In this case, the interior of $K$ (which may 
be empty or not) contains an open dense subset which is in $\C-\Lambda$. 
On the other hand, if $|s|>1/\sqrt{2}$ then $s$ is in the interior of $\M$ by 
Proposition~\ref{proposition:Bousch_bound}.
Thus, if $s\in \partial \M$ but $\Lambda$ has
interior, then $|s|=1/\sqrt{2}$ which occurs if and only if $s=\pm i/\sqrt{2}$ by
Theorem~\ref{theorem:strict_inequality}. 

\begin{example}\label{example:rectangle}
The value $s = i/\sqrt{2}$ and its complex conjuate are exceptional, and must be
handled as a special case in some arguments in the sequel. Fortunately, $\Lambda$
and the dynamics of $\Gamma$ are very easy to describe for this $s$.
 
The set $\Lambda$ is the rectangle 
$$\Lambda = \lbrace z = x+iy \text{ such that } -2 \le x \le 2 \text{ and }
-\sqrt{2} \le y \le \sqrt{2} \rbrace$$
The set $f\Lambda$ is the `left side' of this rectangle, i.e.\/ the subset of
points with non-positive real part, and $g\Lambda$ is the `right side', i.e.\/
the subset of points with non-negative real part.
\end{example}

\begin{lemma}[Components of $K-\Lambda$]\label{lemma:components_of_complement}
Every component $U$ of $K-\Lambda$ is homeomorphic to a disk, and is of the form
$wV$ for some $w\in \SS$ and some component $V$ of $K-\Lambda$ for which $\partial V$ is neither
entirely contained in $f\Lambda$ nor in $g\Lambda$.
\end{lemma}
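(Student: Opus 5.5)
We prove the two assertions separately: first that every component is a disk, then the descent to a component whose boundary meets both halves.

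\emph{Disk claim.} Let $U$ be a component of $K-\Lambda$; it is open in $\C$. Since $K$ is full, $\C-K$ is connected and unbounded. Any Jordan curve $\gamma\subset U$ has its bounded complementary region contained in $K$, because $\C-K$ is connected, unbounded and disjoint from $\gamma$. If that region met $\Lambda$, then $\Lambda$ would have points both inside and outside $\gamma$. Outside points exist: if $\Lambda$ were contained in the interior of $\gamma$, then $K$ would be too, contradicting $\gamma\subset K-\Lambda$ lying in $K$ but not in its own interior region. This contradicts connectedness of $\Lambda$ (here $s\in\M$). So the interior of $\gamma$ lies in $K-\Lambda$ and hence in $U$. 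Thus $U$ is a simply connected planar domain, hence homeomorphic to a disk by the Riemann mapping theorem; it is bounded, so it is not all of $\C$.

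\emph{Descent.} Note first that $\partial U\subset\Lambda$: $\partial U$ lies in $K$ (as $K$ is closed), and a point of $\partial U$ in $K-\Lambda$ would lie in the open set $\C-\Lambda$. It cannot lie in $\C-K$ either, since $U\subset K$ and $\partial U\subset\overline{K}=K$. Hence it would lie in $K-\Lambda$ but not in $U$, which contradicts $U$ being a component.

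Suppose $\partial U\subset f\Lambda$ (the case $g$ is symmetric). Consider $V':=f^{-1}U$; since $f$ is a similarity, $V'$ is an open disk with $\partial V'=f^{-1}\partial U\subset\Lambda$. We claim $V'$ is a component of $K-\Lambda$. The region $V'$ is bounded by a subset of $\Lambda$, and it lies in the bounded region cut out by $\partial V'$. Since $\Lambda$ is connected, $\C-K$ cannot enter $V'$: $V'$ is a bounded component of $\C-\partial V'$, and $\C-K$ is connected, unbounded and disjoint from $\Lambda\supset\partial V'$. So $V'\subset K$. Moreover, $V'\cap\Lambda=\emptyset$ is needed. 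If $x\in V'\cap\Lambda$, then $f(x)\in U\cap f\Lambda\subset U\cap\Lambda=\emptyset$, a contradiction. So $V'$ is a connected open subset of $K-\Lambda$ with boundary in $\Lambda$, hence a component, and $U=fV'$.

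Iterate. Each step multiplies area by $|s|^{-2}>1$, while all components lie in the bounded set $K$, so the process must stop after finitely many steps. It stops at a component $V$ whose boundary lies in neither $f\Lambda$ nor $g\Lambda$, and then $U=wV$ with $w$ the accumulated word.

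The main subtlety is the case where $\partial U$ lies in $f\Lambda$ but we need $f^{-1}$ to preserve the property of being a component of $K-\Lambda$; this is exactly the argument above using $\partial V'\subset\Lambda$ and fullness. Care is needed only in the degenerate case $s=\pm i/\sqrt2$, where $K=\Lambda$ is a rectangle and there are no components, so the statement is vacuous.
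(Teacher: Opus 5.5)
Your proof is correct and follows the same route as the paper. First, every component is simply connected because $\Lambda$ is connected. Second, repeated pullback by $f^{-1}$ or $g^{-1}$ must terminate because each pullback expands by $|s|^{-1}$; you measure this with area where the paper uses diameter. You also supply the verification the paper only asserts, namely that $f^{-1}U$ is again a component of $K-\Lambda$. Your separate remark about $s=\pm i/\sqrt{2}$ is unnecessary, since the general argument already covers it.
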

\begin{proof}
Since $\Lambda$ is always closed, and is connected for $s\in \M$, 
it follows that every component of $K-\Lambda$
is homeomorphic to a disk. 

If $U$ is a component of $K-\Lambda$ and $\partial U$ is entirely contained in
$f\Lambda$ resp. $g\Lambda$ then $f^{-1}(U)$ resp. $g^{-1}(U)$ is also a component of $K-\Lambda$.  
Since $f^{-1}$ multiplies the diameter of $U$ by $|s|^{-1} > 1$, we can take this 
inverse image only finitely many times before it fails to be the case that $\partial U$ 
is entirely contained in $f\Lambda$ or $g\Lambda$.  Thus
the components of $K-\Lambda$ are all of the form $wU$, where $w$ is a finite 
word in $f$ and $g$, and $U$ is some component of $K - \Lambda$ with
$\partial U$ contained neither entirely in $f\Lambda$ nor in $g\Lambda$.
\end{proof}

\subsection{Uniformization}

The complement of $K$ in $\hat{\C}$ is simply-connected, and since $K$ contains
more than one point, $\hat{\C}-K$ is conformally equivalent to the unit disk.
We choose some conformal uniformization $u:\C -\overline{\D} \to \C-K$ which takes
$\infty$ to $\infty$.

Bousch \cite{Bousch2}, \S~2.4 shows that if $\Lambda$ is connected, it is path
connected and locally connected, so that $u$ extends continuously
to a surjection $u:S^1 \to \partial K$, well-defined up
to the ambiguity of precomposition with a rotation. In particular, Lebesgue
measure on $S^1$ pushes forward under $u$ to a measure on
$\partial K$ of total mass $2\pi$ that we call {\em harmonic measure}. 

\begin{lemma}[Point preimages]\label{lemma:multiple_preimages_implies_cut_point}
For each $p\in \partial K$ the preimage $u^{-1}(p) \in S^1$ is compact,
totally disconnected, and nonempty, and it contains more than one point 
if and only if $p$ is a cut point of $K$.
\end{lemma}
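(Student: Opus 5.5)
We split the statement into three parts: nonemptiness, the structure of the fibre $u^{-1}(p)$, and the cut-point criterion.

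\emph{Nonemptiness.} This is immediate from surjectivity of the continuous extension $u:S^1 \to \partial K$, which Bousch's local connectivity result provides. Compactness holds because $u^{-1}(p)$ is the preimage of a point under a continuous map on the compact space $S^1$.

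\emph{Total disconnectedness.} Suppose $u^{-1}(p)$ contained a nondegenerate arc $A\subset S^1$. Then the extension $u$ would be constant on $A$. By the Riesz (F.\ and M.\ Riesz) uniqueness theorem for bounded holomorphic functions, a conformal map of $\C-\overline{\D}$ whose radial limits equal a constant on a set of positive measure is itself constant. Applying this to $1/(u-p)$ or $u-p$ on a neighbourhood of $A$, which is bounded near $A$, gives a contradiction. Hence every connected subset of $u^{-1}(p)$ is a point.

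\emph{Cut-point criterion.} We pass through external rays $R_t=u(\{re^{it}: r>1\})$. Each $R_t$ lands at $u(e^{it})$, and distinct rays are disjoint.

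Suppose first that $t_1\neq t_2$ both lie in $u^{-1}(p)$. Then $R_{t_1}\cup R_{t_2}\cup\{p,\infty\}$ is a Jordan curve $\gamma$ in $\hat{\C}$, meeting $K$ only at $p$. The two complementary arcs of $S^1-\{t_1,t_2\}$ are each nondegenerate. Pick $t_3$ in one arc and $t_4$ in the other with $u(e^{it_3}),u(e^{it_4})\neq p$. Such choices exist because, by total disconnectedness, $u^{-1}(p)$ cannot contain a whole arc. The rays $R_{t_3}$ and $R_{t_4}$ lie in different components of $\hat{\C}-\gamma$, since $u$ is a homeomorphism of the exterior and the rays $\{re^{it_3}\}$ and $\{re^{it_4}\}$ are separated by the two rays at $t_1,t_2$ in $\C-\overline{\D}$. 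So their landing points lie in different components of $\hat\C-\gamma$, and both lie in $K-\{p\}$. Since $K\cap\gamma=\{p\}$, the set $K-\{p\}$ is disconnected and $p$ is a cut point.

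Conversely, suppose $u^{-1}(p)=\{t\}$ is a single point; we show $K-\{p\}$ is connected. Let $U_1$ be any component of $K-\{p\}$. Because $K$ is full and connected, $\overline{U_1}$ contains $p$. Since $\C-K$ is connected, each component of $K-\{p\}$ must meet $\partial K$. Otherwise that component would be a union of interior points, relatively clopen in $K-\{p\}$, and accumulating only on $p$. This would make $p$ an isolated boundary point of an open set, which is impossible for a full continuum unless the set is $K$ itself.

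Now $\partial K-\{p\}=u(S^1-\{t\})$ is the continuous image of an interval, hence connected. Any two components of $K-\{p\}$ therefore contain points of this connected set, so they coincide. Thus $p$ is not a cut point.

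\emph{Main obstacle.} I expect the delicate step to be this last one: showing rigorously that every component of $K-\{p\}$ meets $\partial K-\{p\}$. I would handle it via the boundary bumping theorem applied to the continuum $K$ and the closed set $\{p\}$. A component $C$ of $K-\{p\}$ has closure meeting $p$. If $C$ missed $\partial K$, then $C$ would be open in $\C$ with $\partial C\subset\{p\}$, which is impossible since a single point does not separate the plane. This shows that $C\cap\partial K\ne\emptyset$.
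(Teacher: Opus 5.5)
Your proposal is correct, but it does not follow the paper's route. The paper gives no argument of its own: it cites these as standard facts about the continuous extension of a conformal map onto the complement of a locally connected full set (Pommerenke, Proposition~2.5 and Theorem~9.19). You instead prove everything directly:
\begin{itemize}
\item nonemptiness and compactness from surjectivity and continuity of $u$;
\item total disconnectedness from a boundary uniqueness theorem;
\item ``two preimages $\Rightarrow$ cut point'' from the Jordan curve $\gamma$ formed by two external rays landing at $p$;
\item the converse from the fact that every component of $K-\{p\}$ meets the connected set $\partial K-\{p\}=u(S^1-\{t\})$.
\end{itemize}
The citation buys brevity. Your version is self-contained and shows that only local connectivity (to get the extension) plus planar topology is needed.

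Three spots should be tightened.
\begin{itemize}
\item In the uniqueness step, $1/(u-p)$ is the wrong function. Either apply the Riesz--Privalov theorem to the bounded holomorphic function $z\,(u(1/z)-p)$ on $\D$, or simply Schwarz-reflect $u-p$ across the arc $A$ where it vanishes and use the identity theorem.
\item In the Jordan-curve step, say why the two sectors cut out by the radial rays at $t_1,t_2$ land in \emph{different} components of $\hat{\C}-\gamma$. Take a small disk $B$ about $re^{it_1}$ with $r>1$. Then $u(B)$ is a neighbourhood of a point of $\gamma$, so it meets both complementary components. But $u(B)-\gamma$ is the union of the two connected sets $u(B_\pm)$, one lying in each sector image, so these lie in different components.
\item In the converse, drop your first justification that each component $C$ of $K-\{p\}$ meets $\partial K$; the ``full continuum unless the set is $K$ itself'' sentence is muddled. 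Keep the argument from your last paragraph, which is right. If $C\subset\inte K$, then a small disk about any point of $C$ lies in $K-\{p\}$ and hence in $C$, so $C$ is open. Also $\overline{C}-C\subset\{p\}$ by maximality. So $C$ would be a nonempty bounded clopen subset of the connected set $\C-\{p\}$, a contradiction.
\end{itemize}
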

\begin{proof}
These are well-known properties of the continuous extension of uniformization
maps, given that $K$ is locally connected. See e.g.\/ Pommerenke \cite{Pommerenke} 
Proposition~2.5 and Theorem~9.19.
\end{proof}

The involution $\iota:z \to -z$ on $\C$ 
takes $K$ (and hence also $\partial K$) to
itself, interchanging the images of $fK$ and $gK$. It extends to a
conformal symmetry of $\C-K$, and to rotation by $\pi$ on $S^1$. By abuse
of notation, we denote by $\iota:S^1 \to S^1$ rotation through $\pi$, so that
$u$ semiconjugates $\iota$ on $S^1$ to $\iota$ on $\partial K$.

\subsection{A partition of $S^1$}

For any $s \in \M$ we have $\partial K \subset f\partial K \cup g\partial K$ 
and therefore we may write
$\partial_f K: = \partial K \cap f\partial K$ and $\partial_g K: = \partial K \cap g\partial K$
and observe that $\partial_f K \cup \partial_g K = \partial K$. 

Define $D_f: = u^{-1} \partial_f K$ and $D_g: = u^{-1} \partial_g K$. These subsets of $S^1$
are both closed, their union is all of $S^1$, and $\iota D_f = D_g$; in particular,
each of them has Lesbesgue measure at least $\pi$. 
We shall show that if $s \in \partial \M$, the set $D_f$ contains a (necessarily unique)
connected component of length at least $\pi$. 

\begin{lemma}[Partition exists]\label{lemma:partition_exists}
For $s\in \partial \M$ the subsets $D_f$ and $D_g$ each contain unique
connected components of length at least $\pi$. Consequently it is possible to 
choose closed connected intervals $J_f\subset D_f$ and $J_g \subset D_g$ each
of length exactly $\pi$, and whose union is $S^1$.
\end{lemma}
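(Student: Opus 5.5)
The plan is to argue by contradiction via a planar linking argument, and then use the hypothesis $s\in\partial\M$ (rather than just $s\in\M$) to show that the linked configuration is stable under perturbation of $s$. Throughout, $D_f$ and $D_g$ are closed, $D_f\cup D_g=S^1$ and $\iota D_f=D_g$.

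\textbf{Step 1 (reduction to interleaving).} Suppose $D_f$ has no component of length $\ge\pi$. Then $D_g=\iota D_f$ has none either. Since the two closed sets cover $S^1$, the open set $S^1-D_g\subset D_f$ is nonempty, and so is $S^1-D_f$. I will show that we can find four cyclically ordered points $a,b,c,d\in S^1$ with
\[
a,c\in D_f-D_g \quad\text{and}\quad b,d\in D_g-D_f .
\]
For this, take a component $C$ of $D_f$ containing a point of $D_f-D_g$. It has length $<\pi$, so the complementary arc $S^1-C$ has length $>\pi$ and cannot lie inside $D_g\cap\iota(C)$-type configurations. More precisely, using $\iota$ one finds a point of $D_f-D_g$ on the far side of $S^1-C$, separated from $C$ by points of $D_g-D_f$ on both sides. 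This is a short measure and ordering argument that I expect to be routine.

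Uniqueness is handled the same way. If $D_f$ had two distinct components of length $\ge\pi$, they would fill $S^1$ up to their endpoints, so $D_g$ would be contained in the gaps between them. This contradicts $\iota D_f=D_g$ together with the previous observation. Once existence and uniqueness are known, choosing $J_f$ inside the long component with length exactly $\pi$ and setting $J_g=\iota J_f$ gives the final assertion. One must check that $J_g\subset D_g$ and that $J_f\cup J_g=S^1$, which follows from $\iota$-equivariance.

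\textbf{Step 2 (linking).} The points $u(a),u(c)$ lie in $fK$ but at positive distance from $gK$, and $u(b),u(d)$ lie in $gK$ but at positive distance from $fK$. Since $f\Lambda$ is a path-connected Peano continuum, we can join $u(a)$ to $u(c)$ by an arc $\alpha$ in $f\Lambda$. Together with the external rays $u(ra)$ and $u(rc)$ for $r>1$, this gives a curve $\Theta$ that divides $\C$ into two pieces. By the cyclic order of the rays, $u(b)$ and $u(d)$ lie in different pieces. Hence any continuum in $g\Lambda$ joining $u(b)$ to $u(d)$ must meet $\alpha$. Moreover, it must do so ``transversally'': it meets $\alpha$ at points away from small neighborhoods $B_b,B_d$ of $u(b),u(d)$, which are disjoint from $fK$.

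\textbf{Step 3 (stability, the main obstacle).} Since $s\in\partial\M$, there are $s'\to s$ with $s'\notin\M$, so that $f\Lambda_{s'}\cap g\Lambda_{s'}=\emptyset$. The points of $\Lambda_{s'}$ with fixed addresses vary continuously in $s'$, and $\Lambda_{s'}\to\Lambda_s$ in the Hausdorff topology. So for each $\epsilon$, the sets $f\Lambda_{s'}$ and $g\Lambda_{s'}$ are $\epsilon$-chain connected, with chains joining points near $u(a),u(c)$ and near $u(b),u(d)$ respectively, lying within $\epsilon$ of $f\Lambda_s$ and $g\Lambda_s$.

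The hard part is converting ``these chains must cross'' into an actual intersection. Chains only give distance $\to0$, not distance $=0$. My first attempt would use self-similarity to upgrade this. The crossing point found in Step 2 has an address $w$, and applying the same linking argument to the rescaled pieces $wf\Lambda$ and $wg\Lambda$ gives crossings at every scale. Then a nested-intersection or compactness argument along the address tree should produce points with equal addresses-limits in $f\Lambda_{s'}$ and $g\Lambda_{s'}$. This would force $s'\in\M$ for all $s'$ near $s$, contradicting $s\in\partial\M$.

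If this fails, the fallback is to replace the Hausdorff-chain argument with a winding-number or degree argument. The idea is to track the ``address crossing'' via a continuous-in-$s'$ family of finite approximants $D_n(s')$, for which the linking of the four prescribed address-points is a robust, open condition. The exceptional parameters $s=\pm i/\sqrt2$ of Example~\ref{example:rectangle} can instead be checked directly.
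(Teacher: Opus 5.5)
Your Step 1 is essentially the paper's combinatorial reduction, and it is correct once written out. Under the contradiction hypothesis, any component $C$ of $D_f$ meeting the open set $S^1-D_g$ has length $<\pi$. There are points $q^\pm\in D_g-D_f$ arbitrarily close to $\partial C$ on the outside, and $\iota q^-\in D_f-D_g$ lies outside the short arc $[q^-,q^+]$. This gives the alternating quadruple. The paper takes a component of maximal length only to guarantee that it meets $D_f-D_g$; your choice avoids that step. Uniqueness needs nothing beyond the fact that two disjoint closed arcs of length $\ge\pi$ cannot fit in $S^1$.

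The genuine gap is Step 3, which carries all the analytic content. You need the fact that an alternating configuration $x^-,y^-,x^+,y^+$ on $\partial K$, with $x^\pm\in fK-gK$ and $y^\pm\in gK-fK$ (a \emph{trap}), forces $s$ into the interior of $\M$. The paper does not prove this; it cites it as Proposition~7.1.6 of \cite{Calegari_Koch_Walker}. It also checks, using connectedness of $gK$, that $x^-\neq x^+$ and $y^-\neq y^+$, which matters because $u$ may identify prime ends.

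Your sketch does not establish the trap criterion, for three reasons.
\begin{enumerate}
\item Step 2 only recovers $f\Lambda\cap g\Lambda\neq\emptyset$, which is already known for $s\in\M$.
\item Hausdorff or chain approximation gives only $\dist(f\Lambda_{s'},g\Lambda_{s'})\to 0$ as $s'\to s$. That happens at every point of $\partial\M$, so it yields no contradiction.
\item The proposed ``self-similarity upgrade'' does not work. Applying the linking argument to $wf\Lambda$ and $wg\Lambda$ just reproduces $w(f\Lambda\cap g\Lambda)$ for the parameter $s$ itself and says nothing about $s'$. For $s'\notin\M$, $f\Lambda_{s'}$ is a Cantor set with no connecting arcs, no filled set and no external rays, so the planar separation of Step 2 cannot even be set up there.
\end{enumerate}

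The winding-number fallback points in the right direction but is not yet an argument. Robustness of a linking condition on $\bigcup_{|w|=n}wD_0$ at each fixed depth $n$ gives a neighbourhood of $s$ that may shrink to $\lbrace s\rbrace$ as $n\to\infty$. What you need is a certificate that is open in the parameter and uniform in $n$, i.e.\ one that reproduces itself under $f$ and $g$, so that every finite approximation stays connected for all $s'$ near $s$. That is exactly what the trap theory of \cite{Calegari_Koch_Walker} supplies. Either cite it, or construct such a self-reproducing certificate yourself.
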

\begin{proof}
The hypothesis that $s\in \partial \M$ enters in the following way.
Our proof depends on the notion of a {\em trap}, introduced in \cite{Calegari_Koch_Walker},
and alluded to in the proof of Theorem~\ref{theorem:strict_inequality} above.
A trap is a set of four distinct points in cyclic order $x^-,y^-,x^+,y^- \subset \partial K$
for which $x^\pm \in fK - gK$ and $y^\pm \in gK - fK$. The existence of a trap
for $K$ certifies that $s$ is in the interior of $\M$, see
\cite{Calegari_Koch_Walker} Proposition~7.1.6. Thus our goal is to show that either
we can find the desired subsets $J_f \subset D_f$ and $J_g \subset D_g$, or we can
find a trap.

The first remark is that if $D_f$ contains a component of length at least $\pi$ it
is unique, since $S^1$ has total length $2\pi$, so we just need to prove
existence. Let's suppose $A$ is a component of $D_f$ of maximal length.
If $\length(A)\ge \pi$ there is nothing to prove, so we suppose $\length(A) < \pi$.
We claim that there must be some $p^+ \in A - D_g$. For, otherwise,
$A$ is contained in a component $B$ of $D_g$ of with $\length(B) > \length(A)$,
and therefore $\iota B$ is a component of $D_f$ with $\length(\iota(B)) > \length(A)$
contrary to the maximality of $\length(A)$.

Since $\length(A) < \pi$ we may choose points $q^\pm \in D_g - D_f$ 
near $\partial A$ so that the oriented interval $[q^-,q^+] \subset S^1$ contains
$p^+$ and has length $<\pi$; in particular, $[q^-,q^+]$ is disjoint from the
oriented interval $[\iota q^-,\iota q^+]$. Define $p^-:=\iota q^-$ and observe
that $p^- \in D_f - D_g$. Note that the four points $p^-, q^-,p^+,q^+$ appear
in circular order in $S^1$, and that they alternate between points
of $D_f - D_g$ and $D_g - D_f$.

Now define $x^\pm = u(p^\pm)$ and $y^\pm = u(q^\pm)$. Observe that by 
construction, $x^\pm \in \partial_f K - \partial_g K$ and $y^\pm \in \partial_g K - \partial_f K$.
We will show that $x^\pm,y^\pm$ form a trap. 

First of all we claim that $x^-$ and $x^+$ are in distinct components of $\partial_f K -
\partial_g K$, and similarly for $y^-$ and $y^+$. 
For $p^-$ and $p^+$ are in distinct components of $D_f - D_g$, so if 
$x^-$ and $x^+$ are in the same component of $\partial_f K -
\partial_g K$ then $K$ must contain a cut point in the complement of $gK$, which
necessarily separates $gK$ into at least two components, each containing one of $y^-$ and
$y^+$. But $gK$ is connected if $K$ is, and the claim is proved.

But now $x^-,y^-,x^+,y^+$ form a trap, so that $s$ is an interior point of $\M$,
contrary to hypothesis.
\end{proof}

Once we have chosen $J_f$ and $J_g$ we may normalize the coordinates on $S^1$ so
that $J_f$ and $J_g$ are divided by the vertical bisector of $S^1$, with $J_f$ on
the left and $J_g$ on the right.

\subsection{Uniqueness of $J_f$}\label{subsection:uniqueness}

It is an awkward detail that the intervals $J_f$ and $J_g$ guaranteed by 
Lemma~\ref{lemma:partition_exists} are not necessarily
unique. We may make a canonical choice by choosing $J_f$ and $J_g$ to be
the leftmost intervals of length $\pi$ in the components of $D_f$ and $D_g$ that contain
them. 

\begin{example}[Real $s$]\label{example:real}
If $s$ is real and satisfies $|s| \in [1/2,1]$ then $\Lambda$ is a real interval.
The subintervals $f\Lambda$ and $g\Lambda$ have a nonempty intersection when
$|s| > 1/2$ so that the intervals $J'_f$ and $J_f$ are different.
\end{example}

Note that the converse of Lemma~\ref{lemma:partition_exists} is not true,
in the sense that there are $s$ in the interior of $\M$ for which 
$D_f$ and $D_g$ still contain (unique) connected components of length at
least $\pi$.

\begin{definition}[Splittable]\label{definition:splittable}
We say $s\in \M$ is {\em splittable} if $D_f$ and $D_g$ have (unique)
connected components of length at least $\pi$.
\end{definition}

Many results that we prove in the sequel apply to all splittable $s$,
and some only to splittable $s$ satisfying additional conditions.

\begin{example}[Interior $s$]\label{example:interior}
Let $s \sim 0.4334 + 0.5258i$ be a root of $z^4 + z^3 + z^2 - z + 1$. Then $s$ is
in the interior of $\M$, and the set $D_f$ is a connected interval of 
length $>\pi$. See Figure~\ref{circle_dividing_false}.

\begin{figure}[htpb]
\centering
\includegraphics[scale=0.2]{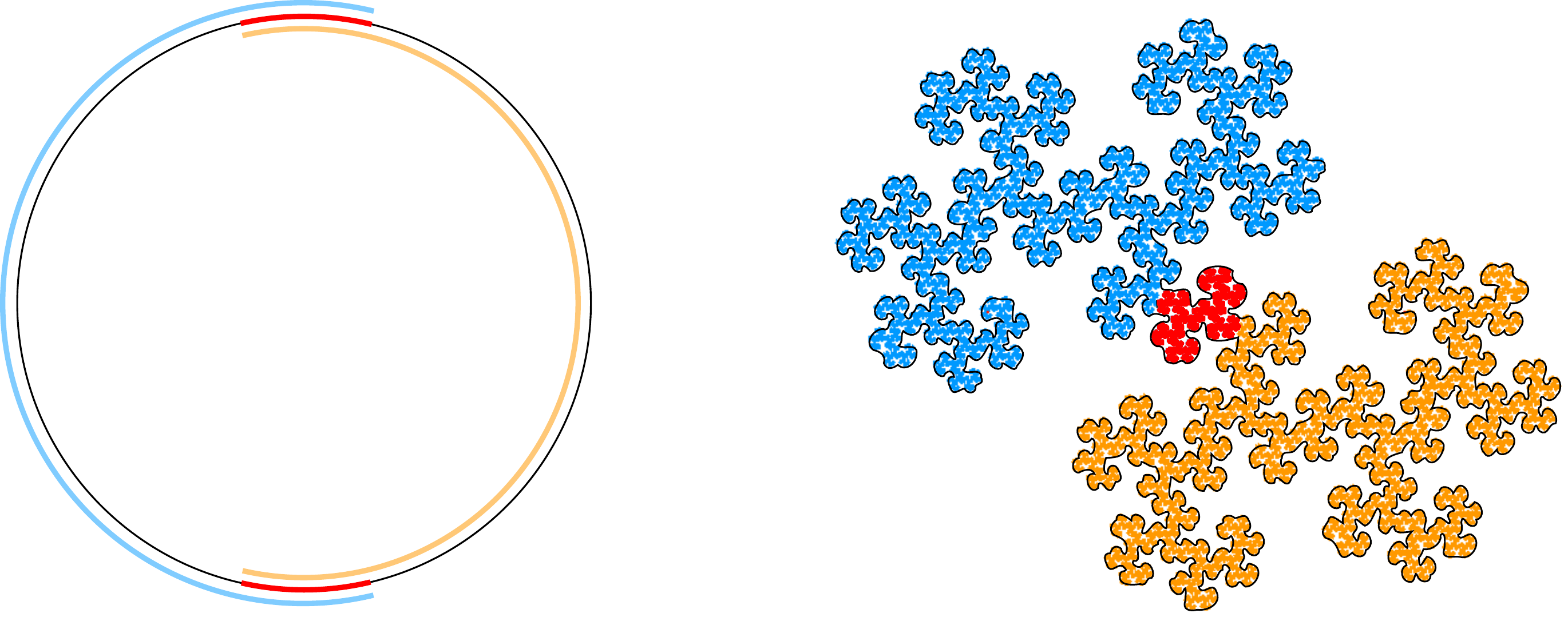}
\caption{For $s \sim 0.4334 + 0.5258i$ a root of $z^4 + z^3 + z^2 - z + 1$ 
in the interior of $\M$ the set $D_f$ is an interval of length $>\pi$.}
\label{circle_dividing_false}
\end{figure}

If $\Lambda$ is the limit set, then $fgfff\Lambda = gfggg\Lambda$, and these common
images of $\Lambda$ are equal to the entire intersection $f\Lambda \cap g\Lambda$. 
Note that $\Lambda$ is a dendrite in this example.
\end{example}

On the other hand, extensive computer experiments suggest the following conjecture:

\begin{conx}\label{conjecture:A}
If $s \in \partial \M$ is not real, then $D_f = J_f$.
\end{conx}

Our results in the sequel are strongest for $s$ satisfying 
Conjecture~\ref{conjecture:A}. We will discuss this conjecture and the
evidence for it in \S~\ref{subsection:Conjecture_A}.

\subsection{Dynamics on $S^1$}

There is a partially-defined action of $\SS$ on $\partial K$. We would like to transport
this action to $S^1$ via the map $u$. Suppose $x \in \partial_f K$ and let
$y = f^{-1}(x) \in \partial K$. If $\alpha$ is a properly embedded ray in $\C - K$
that lands at $x$, then $f^{-1}(\alpha)$ is a properly embedded ray in $\C - K$
that lands at $y$. The arcs $\alpha$ and $f^{-1}(\alpha)$ represent prime ends of 
$\C - K$ and therefore points $p,q \in S^1$ so that $u(p) = x$ and $u(q) = y$.
This lets us define an action of $f^{-1}$ resp. $g^{-1}$ on $D_f$ resp. $D_g$.
The action of each of $f^{-1}$ and $g^{-1}$ is continuous on its 
domain of definition. It might fail to be injective if there is a point
$x \in \partial_f K$ and two arcs $\alpha,\beta$ landing at $x$ and representing
different prime ends of $\C-K$ map to $f^{-1}(\alpha),f^{-1}(\beta)$ representing
the same prime end of $\C-K$. This can only be because $\alpha$ and $\beta$ are
separated by part of $gK - fK$. In particular, the prime ends $\alpha$ and $\beta$
must represent points in the frontier of $D$. Thus in particular $f^{-1}$ (and
likewise $g^{-1}$) is injective on the interior. 

Define $I_f$ to be the closure of $f^{-1}\inte(J_f)$ and define $I_g$
likewise. The image $f^{-1}\inte(J_f)$ is an open interval, so that
$I_f$ is either a closed interval, or it is all of $S^1$; in the latter case we
think of it as a `degenerate interval' and by abuse of notation we let $\partial I_f$ 
denote the single point $S^1 - f^{-1}\inte(J_f)$. If $I_f$ and $I_g$ are
not degenerate, we obtain well-defined and continuous inverses 
$f:I_f \to J_f$ and $g:I_g \to J_g$ so that $uf = fu$ and $ug = gu$ where defined.
If $I_f$ and $I_g$ are degenerate, then $f$ resp. $g$ is not defined on $\partial I_f$
resp. $\partial I_g$.

\begin{lemma}[Contractions]\label{lemma:expanding_map}
The maps $f,g$ are strictly distance decreasing 
for Lebesgue measure on $S^1$ in their domain of definition.
\end{lemma}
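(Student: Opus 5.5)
Since Lebesgue measure on $S^1$ is harmonic measure on $\partial K$ seen from $\infty$ (pushed forward by $u$), the plan is to compare harmonic measures of $A$ and $f(A)$. We show that for every arc $A\subset I_f$ with nonempty interior, $\length(f(A)) < \length(A)$; for points this gives strict decrease of distances along arcs in the domain, and the same argument with the symmetry $\iota$ handles $g$. The inequality should come from the subordination principle for harmonic measure, with strictness supplied by the fact that $fK$ is a proper subset of $K$.

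Let $A\subset I_f$ be an arc and $B=f(A)\subset J_f$. By construction $uf=fu$ on $I_f$, and rays landing along $u(A)$ are carried by $f$ to rays landing along $u(B)$. So $\length(A)$ is $2\pi$ times the harmonic measure at $\infty$ of the prime-end arc $u(A)$ relative to $\C-K$. Since $f$ is a similarity, it maps $\C - K$ conformally onto $\C - fK$, fixing $\infty$. Harmonic measure at $\infty$ is normalized by the expansion $u(z)\sim cz$, and applying $f$ just rescales $c$ by $s$; this scaling does not affect the normalized measure of a set. Hence $\length(A)$ also equals $2\pi$ times the harmonic measure at $\infty$ of the prime-end arc $f(u(A)) = u(B)$ relative to $\C - fK$.

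Next we use $fK \subset K$, so $\C - K \subset \C - fK$. The prime-end arc $u(B)$ lies in $\partial_f K\subset \partial K\cap \partial(fK)$, and the harmonic measure of such a boundary arc can only decrease when the domain shrinks. Concretely, the function $\omega_{fK}(\cdot,u(B))-\omega_K(\cdot,u(B))$ is harmonic on $\C-K$ and has nonnegative boundary values: $0$ on $u(B)$, and $\omega_{fK}\ge 0$ elsewhere on $\partial K$. By the maximum principle it is $\ge 0$, and we evaluate at $\infty$. This gives $\length(B)\le\length(A)$.

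For strictness, $gK - fK$ has nonempty interior relative to the boundary: it contains points of $\partial K - fK$ with positive harmonic measure, since $D_g - D_f$ is nonempty (it contains the interior of $J_g$ minus $D_f$, and otherwise $D_f = S^1$, which is impossible by symmetry). On that set $\omega_{fK}(\cdot,u(B))>0$ because $B$ has nonempty interior, so the difference is not identically zero, and the strong maximum principle gives strict inequality at $\infty$.

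The main obstacle is the bookkeeping with prime ends when $K$ has cut points. The arc $u(B)$ must be treated as a set of prime ends, and we must check that the accesses of $\C-fK$ corresponding to $u(A)$ agree with those of $\C-K$ near $u(B)$. We would handle this by working on the circle directly: the Riemann maps $u$ for $K$ and $f\circ u$ for $fK$ give an inclusion-induced conformal map $\phi: \C - \overline{\D}\to \C-\overline{\D}$ onto the complement of a slit or hull, and the inequality reduces to a standard Löwner-type statement that $\phi$ expands arcs it maps into the unshrunken part of the circle. The degenerate case, where $I_f=S^1$, is handled by excluding the single point $\partial I_f$.
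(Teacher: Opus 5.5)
Your proposal is correct and is essentially the paper's argument. Harmonic measure at $\infty$ is invariant under the similarity $f:\C-K\to\C-fK$, it can only decrease on the common boundary arc when you pass to the smaller domain $\C-K\subset\C-fK$, and the decrease is strict because $K-fK$ is nontrivial. The paper phrases this via the hyperbolic metric and Brownian motion, and you via the maximum principle; your reduction to $\phi=(fu)^{-1}\circ u$ also handles the prime-end bookkeeping more carefully than the paper does.

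One small slip: $\partial K\not\subset fK$ (equivalently $D_f\neq S^1$) follows from $fK$ being a copy of $K$ scaled by $|s|<1$, not from symmetry. In any case, a single point of $\partial K-fK$ already gives your difference of harmonic measures a positive boundary value, so it is not identically zero.
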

\begin{proof}
The domain $\hat{\C} - K$ includes into $\hat{\C} - fK$ and
therefore is strictly distance increasing in the hyperbolic metric. Thus
the harmonic measure on $f \partial K$ in $\hat{\C} - fK$ strictly dominates
the harmonic measure on $f \partial K$ in $\hat{\C} - K$; another way to
see this is that the harmonic measure of a subset $A\subset \partial K$
is the probability that Brownian motion on $\C$ starting at infinity intersects 
$\partial K$ first in $A$. Some set of walks
landing at any $p \in f \partial K$ first enter $K - fK$; the ratio
of the two measures is the fraction of walks that do not. Thus this ratio is
strictly less than $1$; the proof follows.
\end{proof}

\begin{lemma}[Degenerate intervals]\label{lemma:degenerate}
If $I_f$ and $I_g$ are degenerate (i.e.\/ they are both equal to all of $S^1$), 
then $u \partial J_f = 0$ which is a cut point of $\Lambda$. Furthermore in this
case, $\Lambda$ is a dendrite; i.e.\/ a compact path-connected subset which
contains no embedded Jordan curves.
\end{lemma}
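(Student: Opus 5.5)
The plan is to read off both conclusions from the single fact that $f^{-1}$ collapses the two endpoints of $J_f$, together with the symmetry $\iota$.

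\emph{Step 1: the endpoints of $J_f$ have the same image under $u$, and that image is $0$.} Write $\partial J_f = \lbrace a,b\rbrace = \lbrace \pi/2, -\pi/2\rbrace$. Degeneracy of $I_f$ means $f^{-1}$ maps $\inte(J_f)$ homeomorphically onto $S^1 - \lbrace p\rbrace$, where $p=\partial I_f$. By continuity of $f^{-1}$ on $J_f$, both $f^{-1}(a)$ and $f^{-1}(b)$ equal $p$. Since $uf=fu$ where defined, this gives
$$u(a)=f(u(p))=u(b)=:x.$$
The rotation $\iota$ by $\pi$ interchanges $a$ and $b$, and $u$ semiconjugates $\iota$ on $S^1$ to $z\mapsto -z$. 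Hence $-x = u(\iota a) = u(b) = x$, so $x=0$. Because $a\ne b$, the point $u^{-1}(0)$ contains at least two points, and Lemma~\ref{lemma:multiple_preimages_implies_cut_point} shows that $0$ is a cut point of $K$. Since $0\in \Lambda$ (it lies in $\partial K \subset \Lambda$), it is also a cut point of $\Lambda$.

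\emph{Step 2: $fK\cap gK=\lbrace 0\rbrace$ and $K = fK\cup gK$.} Since $f^{-1}(\inte J_f)$ is all of $S^1$ except $p$, the set $u(J_f)$ is a closed curve pinched at $0$ that contains $f\partial K = \partial(fK)$. Similarly $u(J_g)\supset \partial(gK)$.
\begin{itemize}
\item The crosscut of $\C-K$ corresponding to the leaf $\ell$ joins $a$ to $b$ and lands at $0$ from both ends. It therefore separates the plane into two pieces, containing $u(\inte J_f)$ and $u(\inte J_g)$ respectively.
\item The full sets $fK$ and $gK$ have their boundaries on opposite sides of this curve. So they can meet only at $0$, which gives $fK\cap gK=\lbrace 0\rbrace$.
\item A union of two full continua meeting in a single point is again full. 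The set $fK\cup gK$ is such a union, is contained in $K$, and contains $\Lambda$. By minimality of $K$ we get $K=fK\cup gK$.
\end{itemize}

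\emph{Step 3: $K=\Lambda$ has empty interior, hence is a dendrite.} Suppose $V$ is a component of $K-\Lambda$.
\begin{itemize}
\item $V$ is connected, open and misses $0$, so by Step 2 it lies in the interior of $fK$ or of $gK$; say $fK$.
\item Then $\partial V\subset \Lambda\cap fK$. Since $\Lambda\cap fK \subset fK\cup (gK\cap fK) = fK$ and $\Lambda\cap fK = f\Lambda\cup\lbrace 0\rbrace = f\Lambda$, we get $\partial V\subset f\Lambda$.
\item Applying this to every component shows no component of $K-\Lambda$ has boundary meeting both $f\Lambda$ and $g\Lambda$. This contradicts Lemma~\ref{lemma:components_of_complement}, so $K=\Lambda$.
\end{itemize}
Next, $\Lambda$ has no interior. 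For $|s|<1/\sqrt2$ this holds by the dimension bound. The only other candidates, by Theorem~\ref{theorem:strict_inequality}, are $s=\pm i/\sqrt2$. There $\Lambda$ is the rectangle of Example~\ref{example:rectangle}, with $f\Lambda\cap g\Lambda$ a segment, so neither interval is degenerate. Thus $\Lambda=K$ is a compact, full, locally connected, path-connected continuum with empty interior. Such a set contains no Jordan curve, since the curve would bound a disk inside the full set $K$. Hence $\Lambda$ is a dendrite.

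The step I expect to need the most care is Step 2. Turning "the boundaries lie on opposite sides of the pinched crosscut" into a rigorous $fK\cap gK=\lbrace 0\rbrace$ requires some prime-end bookkeeping: I would use that $u$ restricted to each open half-circle parametrizes the boundary of the corresponding side of the crosscut.
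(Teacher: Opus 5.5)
Your Steps 1 and 2 follow the paper's argument. The paper compresses Step 2 into ``by construction it separates $f\Lambda$ from $g\Lambda$''. Your Janiszewski-type argument that $K = fK\cup gK$ supplies something the paper's dendrite argument uses tacitly when it says ``without loss of generality $A\subset fK$''.

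The gap is in the second half of Step 3. To rule out interior points of $\Lambda$, you use the dimension bound for $|s|<1/\sqrt2$. You then use Proposition~\ref{proposition:Bousch_bound} and Theorem~\ref{theorem:strict_inequality} to reduce the remaining cases to $s=\pm i/\sqrt2$. This silently assumes $s\in\partial\M$. The lemma, however, only presupposes that $J_f$ (and hence $I_f$) exists, i.e.\ that $s$ is splittable. It is invoked in exactly that generality in Theorem~\ref{theorem:interaction}, for splittable $s\in\M$. For a splittable interior point with $|s|\ge 1/\sqrt2$, your argument says nothing about whether $K=\Lambda$ has interior. Knowing $K=\Lambda$ is not enough on its own, as the rectangle at $s=i/\sqrt2$ shows. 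Even on $\partial\M$, your route makes an elementary lemma depend on a computer-assisted theorem.

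The missing idea is that the pullback mechanism you already exploit through Lemma~\ref{lemma:components_of_complement} works directly on components of $\inte K$, which is what the paper does. Since $0\in\partial K$, any component $A$ of $\inte K$ misses $0$. By Step 2, $A$ then lies in $\inte(fK)$ or $\inte(gK)$, say the former. So $f^{-1}A$ is a connected open subset of $\inte K$ of diameter $|s|^{-1}\diam(A)$. Choose $A$ with diameter within $\epsilon$ of the supremum $D$ of diameters of components of $\inte K$; for small $\epsilon$ (depending on $|s|$) this gives a contradiction. This works for every $|s|<1$ and shows $\inte K=\emptyset$, hence $K=\partial K\subset\Lambda$. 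Your separate $K=\Lambda$ step and the case analysis on $|s|$ then become unnecessary.

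A minor wording point: in Step 3 what you actually show is that each $\partial V$ lies \emph{entirely} in $f\Lambda$ or entirely in $g\Lambda$. It may still meet both, at $0\in f\Lambda\cap g\Lambda$, so ``no boundary meets both'' is the wrong phrasing. The ``entirely in one'' form is what contradicts Lemma~\ref{lemma:components_of_complement}.
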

\begin{proof}
If $I_f$ and $I_g$ are degenerate, then $f$ and $g$ are
defined on a dense open connected subset of $\partial K$, and they extend uniquely
to continuous functions defined on all of $\partial K$. Since $uf = fu$ on the interior
of $I_f$, it follows that $f u \partial I_f = u \partial J_f$ is a single point,
and since $u$ commutes with $\iota$ this point is fixed by $\iota$ and is therefore
equal to $0$. Since $0$ is in $\partial K$ it is a cut point of $K$ by symmetry,
and by construction it separates $f\Lambda$ from $g\Lambda$, i.e.\/ $fK$ and
$gK$ intersect only at $0$.  
 
We claim that $K$ (and therefore $\Lambda$) is a dendrite. 
For if not, let $A$ be a component of the interior of $K$ of 
diameter $>D-\epsilon$ where $D$ is the supremum of the diameters 
of components of the interior of $K$. Without loss of generality $A\subset fK$.
But then $f^{-1}A$ is a component of the interior of $K$ with diameter
at least $\diam(A)\cdot |s|^{-1}$, contrary to the definition of $A$
providing $\epsilon$ is small enough (depending on $|s|$).
\end{proof}

The symmetry of $\M$ under $s \to -s$ lets us make a simplifying assumption about
the dynamics of $\Gamma$ on $S^1$:

\begin{lemma}[Endpoint intersects]\label{lemma:simplifying_assumption_intervals}
Suppose $s \in \partial \M$. Then after replacing $s$ by $-s$ if necessary,
we may assume that at least one endpoint of 
$I_f$ lies in $J_g$ and similarly with $f$ and $g$ interchanged.
\end{lemma}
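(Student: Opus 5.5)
The plan is to translate the statement into a statement about $K$ and then use the symmetry $s \to -s$ discussed in \S~\ref{subsection:symmetries}. By construction $f:I_f \to J_f$ is a continuous, monotone bijection onto $J_f$ on interiors, and $u f = f u$. Saying that an endpoint of $I_f$ lies in $J_g$ means that $u(\partial I_f)$ contains a point of $\partial_g K$. Since $f u(\partial I_f) = u(\partial J_f)$, this is the same as saying that some point of $u(\partial J_f)$ is the $f$-image of a point of $\partial K \cap gK$, that is, a point of $fgK$. So the condition to arrange is that $fgK$ meets $\partial K$ at (one of) the points $u(\partial J_f)$. By the symmetry $\iota$, which interchanges $f$ and $g$ and rotates $S^1$ by $\pi$, the analogous statement with $f$ and $g$ interchanged follows automatically once the first is proved. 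So only the $f$ version needs proof.

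The key steps are as follows.
\begin{enumerate}
\item Observe that the two points $u(\partial J_f)=u(\partial J_g)$ lie in $\partial_f K\cap \partial_g K$, i.e.\ in $fK\cap gK\cap \partial K$.
\item A point $z$ of $fK\cap \partial K$ with $z=f(y)$ satisfies $y \in \partial K \subset f\partial K\cup g\partial K$. Hence $z \in ff\partial K \cup fg\partial K$.
\item If the endpoint lies in $fg K$, we are done, with one caveat. We must check that the prime end, and not merely the point, lands in $D_g$. For this I would use that $f^{-1}$ is continuous on $D_f$ and that $D_g$ is closed, approximating the endpoint from inside $J_f$.
\item If instead, near both endpoints of $J_f$, the boundary $\partial K$ lies in $ffK$ and not in $fgK$, then pass to $-s$. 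By the formula $u_{-s}(z)=(-1)^{|u|}v_s(z)$, words of length two swap: $(ff)_{-s} = (gf)_s$ up to sign, and $(fg)_{-s}$ corresponds to $(gg)_s$. I would track how $K$, $J_f$ and the endpoints transform under this change. The set $K$ is unchanged, while the roles of $fK$ and $gK$ get exchanged by $z\mapsto -z$ in the relevant generation. The aim is to show that the endpoint, which lay in $ff K$ for $s$, lies in $f_{-s}g_{-s}K_{-s}$ for $-s$. This mirrors the remark at the end of \S~\ref{subsection:symmetries}, where one of $ff\Lambda, fg\Lambda$ must meet $g\Lambda$.
\end{enumerate}

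The main obstacle is step 4. I must verify that the canonical partition $J_f, J_g$ for $-s$ is compatible with the one for $s$, so that the same boundary points serve as the endpoints. Non-uniqueness of the partition (\S~\ref{subsection:uniqueness}) could spoil this. I would first try to handle it by noting that the union $f\Lambda\cap g\Lambda$ at the separating points is intrinsic, and pick the endpoint choice accordingly.

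A fallback is a direct argument. Suppose that near the endpoint $\partial K$ lies in $ffK$ only. Then, applying $f^{-1}$, the point $f^{-1}u(\partial J_f)$ lies in $fK$ only. Combined with the contraction Lemma~\ref{lemma:expanding_map} and Lemma~\ref{lemma:partition_exists}, this would lead to a trap near the endpoints, contradicting $s\in\partial\M$ unless the $-s$ replacement fixes the configuration.
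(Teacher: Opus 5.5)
Your proposal has a genuine gap. Step 4, which you flag as the main obstacle, is never carried out, and the point-level reformulation could not deliver the statement anyway.

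Let $q$ be an endpoint of $I_f$. Knowing $u(q)\in gK$ (equivalently $f(u(q))\in fgK$) only gives $q\in D_g=u^{-1}(\partial_g K)$. The lemma asks for $q\in J_g$, and $D_g$ may be strictly larger than $J_g$; equality is exactly Conjecture~\ref{conjecture:A} (compare \S~\ref{subsection:uniqueness}). Your split into the cases $ffK$ and $fgK$ is also not exclusive, so it does not match the dichotomy you need on the circle.

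The fallback trap argument cannot work either. Suppose for some $s$ both endpoints of $I_f$ lie in $\inte(J_g)$. Then for $-s\in\partial\M$ both endpoints lie in $\inte(J_f)$ (see below). So the configuration you hope to contradict genuinely occurs on $\partial\M$.

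The missing observation is that $s\mapsto -s$ changes nothing on the circle except the inverse branch. Since $f_{-s}=f_s\circ\iota$, $g_{-s}=g_s\circ\iota$ and $\iota K=K$, you get $f_{-s}K=f_sK$ and $g_{-s}K=g_sK$ as sets. Hence $K$, $u$, $\partial_fK$, $D_f$, $D_g$ and the canonical $J_f,J_g$ are literally the same for $s$ and $-s$. This disposes of your compatibility worry. It also shows the first-generation pieces are not exchanged; the exchange happens only in the inverse branch.

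Indeed, on $J_f$ one has $f_{-s}^{-1}=\iota\circ f_s^{-1}$, because the induced action on prime ends is obtained by pulling back rays. So $I_f$ is replaced by $\iota I_f$, which is the circle version of $(fg)_{-s}=(ff)_s$.

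Now if no endpoint of $I_f$ lies in $J_g$, both lie in $\inte(J_f)$. Then both endpoints of $\iota I_f$ lie in $\iota\,\inte(J_f)=\inte(J_g)$. The version with $f$ and $g$ interchanged follows, as you correctly noted, from $I_g=\iota I_f$ and $J_f=\iota J_g$. No analysis of which second-generation pieces contain $u(\partial J_f)$, and no prime-end bookkeeping, is needed.
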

\begin{proof}
Recall from the discussion in \S~\ref{subsection:symmetries}
that $(fg)_{-s}(z) = (ff)_s(z)$. Thus replacing $s$ with $-s$ replaces $I_f$
with $\iota I_f$ (and similarly for $I_g$). The claim follows.
\end{proof}

In the sequel we will often prove our theorems under the assumption
that the conclusion of Lemma~\ref{lemma:simplifying_assumption_intervals} holds.
The generalization to arbitrary $s$ is typically omitted where routine.

\begin{figure}[htpb]
\centering
\includegraphics[scale=0.19]{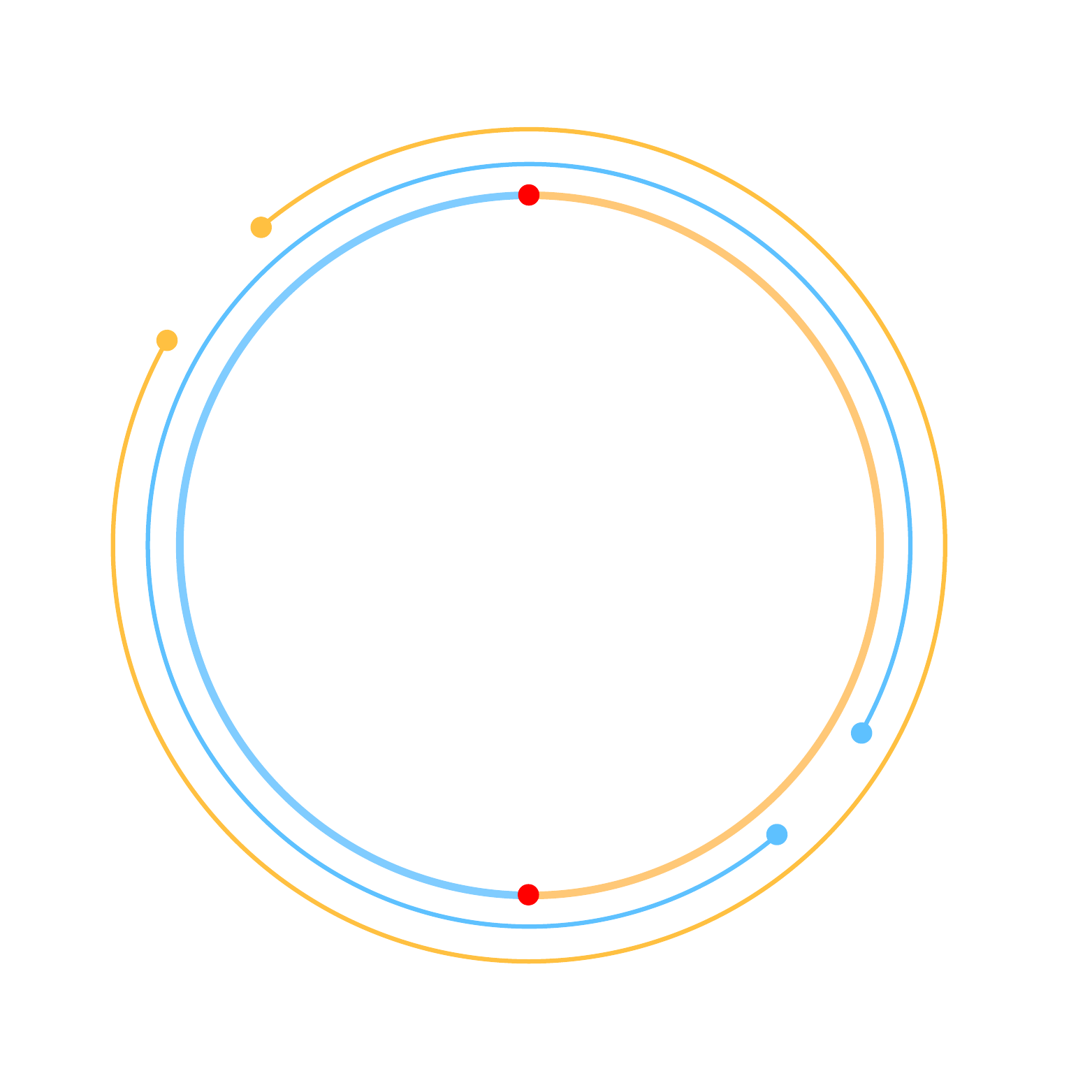}
\includegraphics[scale=0.44]{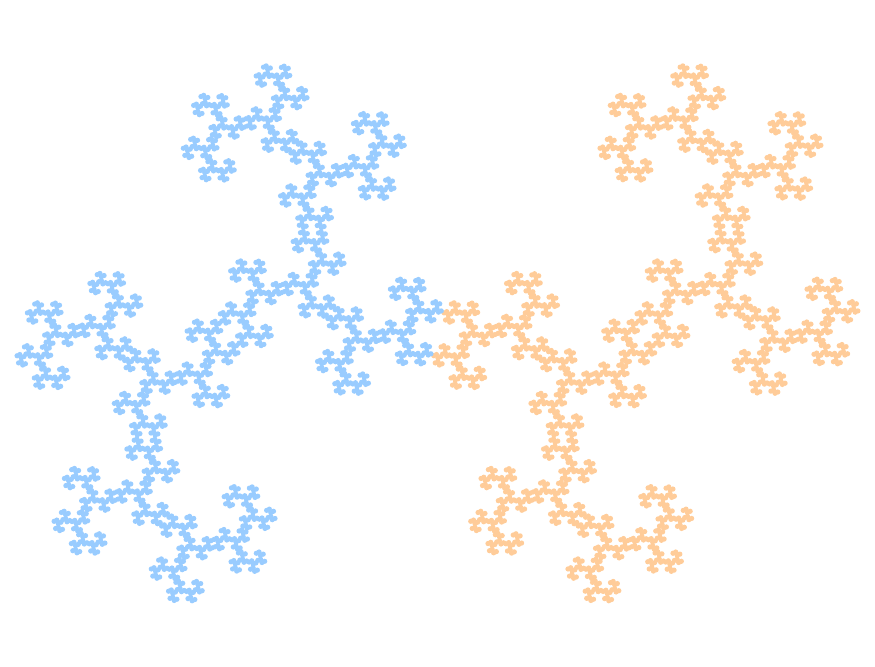}
\caption{A typical configuration of $I_f,I_g,J_f,J_g$
associated to $s \sim 0.399 + 0.498i \in \partial \M$.
The intervals $I_f$ and $J_f$ are in blue, and $I_g$ and $J_g$ are in orange.
The vertical bisector of $S^1$ divides $J_f$ from $J_g$ with $J_f$ on the left.}
\label{Is_and_Js}
\end{figure}

\begin{example}
Lemma~\ref{lemma:expanding_map} implies that the
intervals $I_f$ and $I_g$ both have length strictly $>\pi$.
Figure~\ref{Is_and_Js} shows a typical configuration of $I_f,I_g,J_f,J_g$
associated to $s \sim 0.399 + 0.498i \in \partial \M$.
\end{example}

\begin{remark}
There is some numerical evidence that the conclusion of 
Lemma~\ref{lemma:simplifying_assumption_intervals} holds for $s\in \partial \M$
whenever $\arg(s) \in [-\pi/2,\pi/2]$. We do not make use of this observation in
the sequel.
\end{remark}

\section{Laminations and cut points}
\label{section:laminations_and_cut_points}

\subsection{The dynamical lamination}\label{subsection:dynamical_lamination}

\begin{definition}\label{definition:lamination}
A \emph{leaf} in the circle $S^1$ is an unordered pair of 
distinct points. Two leaves $\mu,\mu'$ are said to {\em cross}
if they are disjoint as subsets of $S^1$, and link in $S^1$. A {\em lamination} $\L$
is a set of leaves, no two of which cross.
\end{definition}

The space of ordered distinct points in $S^1$ is an open annulus $S^1 \times S^1 - \Delta$
where $\Delta$ is the diagonal, and the space of unordered distinct points is the
open M\"obius band $M$ obtained from this annulus by quotienting by the involution that
exchanges the $S^1$ factors.

The definition of lamination that we give in Definition~\ref{definition:lamination} 
is nonstandard, in that the set of leaves is not required to be closed in this
M\"obius band. Note that if $\L\subset M$ is a collection of leaves no two of which 
cross, then the same is true of the closure $\overline{\L} \subset M$; thus if $\L$
is a lamination in our sense, $\overline{\L}$ is a lamination in the usual sense.

If we think of $S^1$ as the ideal boundary of the hyperbolic
plane in the Poincar\'e disk model, then each leaf $\mu \in M$ is the pair of
endpoints of a unique geodesic $\gamma_\mu$ in $\D$. Distinct leaves $\mu,\mu'$
do not cross if and only if the geodesics $\gamma_\mu,\gamma_{\mu'}$ are {\em disjoint}
(note that the geodesics are subsets of the open disk and do not contain their 
`endpoints' which lie at infinity). One sometimes distinguishes between a 
{\em lamination of $S^1$}, meaning a lamination in our sense, and a {\em lamination of
the disk}, meaning a collection of embedded disjoint geodesics in $\D$.
Laminations of $S^1$ are hard to draw, so one usually draws the associated lamination
of the disk; in the sequel we will move back and forth between these two points of 
view as convenient.  See Figure~\ref{lamination}.

\begin{figure}[htpb]
\centering
\includegraphics[scale=0.5]{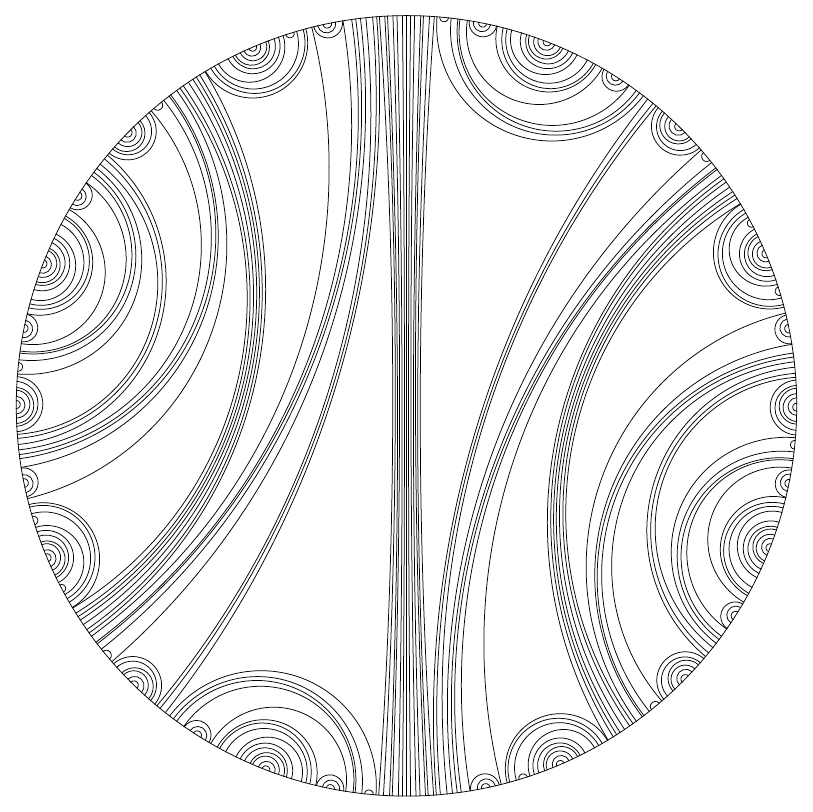}
\caption{A lamination of the circle is indicated by drawing the corresponding lamination of
the disk.}
\label{lamination}
\end{figure}

Associated to $s\in \partial \M$ we have constructed a circle with a partially
defined action of our semigroup $\SS$ on $S^1$, namely we have constructed intervals
$I_f,I_g$ and $J_f,J_g$ so that $f:I_f \to J_f$ and $g:I_g \to J_g$ are defined. 

If $\mu$ is a leaf, and $A \subset S^1$ then we say that $\mu$ is {\em contained in}
$A$ if the points of $\mu$ are in $A$. If $h:A \to B$ is a map between subsets of
$S^1$, and $\mu:=\lbrace x,y\rbrace$ is a leaf contained in $A$, then we define
$h\mu$ to be the leaf $\lbrace h(x),h(y)\rbrace$; note that $h\mu$ is contained in $B$.
If $\mu$ does not lie in $A$, then $h\mu$ is undefined.

The maps $f$ and $g$ have domains $I_f$ and $I_g$ respectively. Thus for a
leaf $\mu$ the leaf $f\mu$ resp. $g\mu$ is defined if and only if $\mu$ is contained
in $I_f$ resp. $I_g$. In particular, every leaf $\mu$ 
determines a suffix-closed subset $\SS_\mu$ 
of elements $w\in \SS$ (maybe empty) for which $w\mu$ is defined. `Suffix-closed' here
means that if $uv \in \SS_\mu$ for words $u,v\in \SS$ then $v\in \SS_\mu$.
The set of leaves $w\mu$ for $w$ ranging over $\SS_\mu$
is called the {\em orbit} of $\mu$.

\begin{definition}[Dynamical lamination]\label{definition:dynamical_lamination}
For $s \in \partial \M$ construct $I_f,I_g,J_f,J_g$ as in 
\S~\ref{section:topological_model_for_boundary}, where we normalize
$J_f$ and $J_g$ to lie to the left and the right (respectively) of the vertical
bisector of $S^1$. Let $\ell$ be the leaf which consists of the common endpoints
of $J_f$ and $J_g$, i.e.\/ $\ell = \lbrace \pi/2, 3\pi/2\rbrace$, and let $\L(s)$
(or just $\L$ if $s$ is understood)
be the orbit of $\ell$ under $\SS_\ell$. We call $\L$ the
{\em dynamical lamination}, although we have not yet justified the name.
\end{definition}
Note that as a set of leaves, $\L = f\L \cup g\L \cup \ell$.
If $\ell' \in \L$ is equal to $w\ell$ for some $w\in \SS_\ell$ with $|w|=n$ minimal,
we say $\ell'$ has {\em depth $n$}. In particular, $\ell$ has depth zero, and
every $\ell' \in \L$ has some well-defined (finite) depth.

\begin{proposition}[Lamination]\label{proposition:props_of_L}
The set of leaves $\L$ is a lamination. It is infinite if and only if $\ell$ is contained
in $I_f$, which happens if and only if $\ell$ is contained in $I_g$. In particular,
either $\L$ is infinite, or it consists entirely of the single leaf $\ell$.
\end{proposition}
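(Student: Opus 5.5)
The plan is to realize every leaf of $\L$ geometrically as a crosscut in $\C-K$ (or through $K$) and to deduce non-crossing from the fact that $f$ and $g$ are embeddings of the plane.

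For $\ell$, take a crosscut $\gamma_\ell$ formed by the two external rays landing at $u(\pi/2)$ and $u(3\pi/2)$, joined by a path inside $K$ (through the point $u(\pi/2)=u(3\pi/2)$ if these coincide). This crosscut separates $\partial_f K$ from $\partial_g K$ in the sense of prime ends. Inductively, if $\mu=\{x,y\}\in\L$ is contained in $I_f$, I realize $f\mu$ by the arc $f(\gamma_\mu)$, suitably modified. Since $f$ is an injective similarity, $f(\gamma_\mu)$ is embedded. Its exterior pieces are rays in $\C-fK$, which I push off into $\C-K$ by the prime-end correspondence used to define the action of $f$ on $I_f$. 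The endpoints at infinity are then $f(x),f(y)$.

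The key step is to show that two leaves $w\ell$ and $v\ell$ never cross. I argue by induction on $|w|+|v|$. If both words begin with the same letter, say $w=fw'$ and $v=fv'$, then $w'\ell$ and $v'\ell$ do not cross by induction. Moreover $f\colon I_f\to J_f$ is an orientation-preserving injection of an interval into an interval (injective on the interior, by the discussion before Lemma~\ref{lemma:expanding_map}). Hence it preserves the cyclic order of the four endpoints, and the images do not cross. If one word begins with $f$ and the other with $g$, then $w\ell$ lies in $J_f$ and $v\ell$ lies in $J_g$. These two intervals meet only in the endpoints of $\ell$, so the leaves cannot link; they can at most share an endpoint, which is not a crossing. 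If one of the words is empty, the leaf $\ell$ has its endpoints at $\partial J_f$, and any other leaf lies entirely in $J_f$ or in $J_g$, so again there is no linking. This purely combinatorial argument avoids crosscuts altogether. I expect the main care to be needed for the boundary cases where $I_f$ is degenerate, or where $f$ fails to be injective at the endpoints. By Lemma~\ref{lemma:degenerate}, in the degenerate case the point $u\partial J_f$ is the cut point $0$, and I handle it directly.

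For the finiteness dichotomy, note that $\ell$ lies in $J_f\cap J_g$, so every leaf of depth at least $1$ arises from $f\ell$ or $g\ell$. The involution $\iota$ satisfies $\iota J_f=J_g$ and $\iota I_f=I_g$, and it fixes $\ell$ because $\ell$ is a diameter. Hence $\ell\subset I_f$ if and only if $\ell\subset I_g$. If $\ell\not\subset I_f$, then $\SS_\ell$ is empty and $\L=\{\ell\}$.

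Conversely, suppose $\ell\subset I_f$, so that $f\ell$ and $g\ell$ are defined. I claim that the orbit is infinite. By Lemma~\ref{lemma:expanding_map}, $f$ strictly contracts lengths, so $f^n\ell$ shrinks whenever it stays defined. I then show that there are infinitely many distinct leaves, for instance by showing that words of each length produce some leaf. Since $I_f\cup I_g$ has total length greater than $2\pi$, each leaf sitting in $J_f$ or $J_g$ lies inside $I_f$ or $I_g$, using the configuration of Lemma~\ref{lemma:simplifying_assumption_intervals}. This guarantees that a leaf of every depth exists. Distinctness then follows because depths strictly shrink the lengths, with the contraction bounded away from $1$ on compact subsets. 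Verifying this containment of each new leaf in some $I_\bullet$ is the step I expect to be the main obstacle.
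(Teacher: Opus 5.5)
Your non-crossing argument is the paper's: strip a common first letter, using that $f\colon I_f\to J_f$ and $g\colon I_g\to J_g$ preserve cyclic order; otherwise the two leaves lie in $J_f$ and $J_g$, which meet only in the endpoints of $\ell$. The crosscut construction in your first paragraph is not needed. The equivalence of $\ell\subset I_f$ and $\ell\subset I_g$ via $\iota$, and $\L=\{\ell\}$ when $\ell\not\subset I_f$, are also fine (then $\SS_\ell$ consists of the empty word only, rather than being empty).

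The gap is in the converse, at exactly the step you flag as the main obstacle, and the reason you give does not work. The inequality $|I_f|+|I_g|>2\pi$ does not force a leaf contained in $J_f$ to lie in $I_f$ or in $I_g$: one endpoint can lie in $I_f\setminus I_g$ and the other in $I_g\setminus I_f$. This really happens whenever $\ell\not\subset I_f$, since then $I_f$ contains exactly one of $\pi/2,3\pi/2$ and $I_g$ the other. So the length count cannot be what drives the argument, and Lemma~\ref{lemma:simplifying_assumption_intervals}, which only locates an endpoint of $I_f$, does not give the containment either.

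The missing observation is elementary and uses the hypothesis. $I_f$ is a closed arc containing the antipodal points $\pi/2$ and $3\pi/2$, so it contains one of the two closed half-circles they bound: $J_f\subset I_f$ or $J_g\subset I_f$. Applying $\iota$ gives $J_g\subset I_g$, respectively $J_f\subset I_g$. In the first case every leaf in $J_f$ lies in $I_f$, so $f^n\ell$ is defined for all $n$. In the second, $f\ell\subset J_f\subset I_g$ and $gf\ell\subset J_g\subset I_f$, so $(gf)^n\ell$ is defined for all $n$.

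Your distinctness step must also be run along such a single chain. Having words of every length act on $\ell$ does not give infinitely many leaves: a priori one may have $t\ell=\ell$ for nontrivial $t$ (the situation of Lemma~\ref{lemma:preperiodic_is_periodic}). Nor is a leaf of greater depth in general shorter than one of smaller depth; for instance $gf\ell$, when defined in the configuration $J_f\subset I_f$, is the image of the long arc cut off by $f\ell$. Along the chain, by contrast, the arc of length less than $\pi$ cut off by each new leaf is the image under $f$ or $g$ of the arc cut off by the previous leaf, starting from the half-circle bounded by $\ell$. These lengths strictly decrease by Lemma~\ref{lemma:expanding_map}, so the leaves are pairwise distinct, and no contraction constant bounded away from $1$ is needed.
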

\begin{proof}
The leaf $\ell$ is contained in $I_f$ if and only if it is contained in $I_g$ by
the symmetry $\iota$.

First we show that $\L$ is a lamination. Suppose not, so that two leaves cross.
Leaves of $\L$ are all of the form $w\ell$ for $w\in \SS_\ell$, so suppose
$u\ell$ crosses $v\ell$ for some pair of words with $|u|+|v|$ minimal. The
words $u$ and $v$ can have no common prefix, since if $u=au'$ and $v=av'$ then
$u'\ell$ and $v'\ell$ already cross. But if $w\in \SS_\ell$ is any word that starts
with $f$ resp. $g$, the leaf $w\ell$ is contained in $J_f$ resp. $J_g$
and lies to the left resp. right of $\ell$. The claim follows.

It remains to show that $\L$ is infinite if and only if $\ell$ is contained in
$I_f$. This condition is necessary, since if $\ell$ is not contained in $I_f$
(and therefore is not contained in $I_g$) then $\SS_\ell$ consists only of the empty word,
and $\L = \lbrace \ell\rbrace$. So suppose $\ell$ is contained in $I_f$.

One of the following must hold:
\begin{enumerate}
\item $J_f \subset I_f$ and $J_g \subset I_g$
\item $J_f \subset I_g$ and $J_g \subset I_f$
\end{enumerate}
The argument is almost identical in either case; for simplicity we will assume
we are in the first case. Since $J_f \subset I_f$ it follows that $f^n \in \SS_\ell$ 
and similarly $g^n$. The map $f$ is a strict contraction by Lemma~\ref{lemma:expanding_map}
and therefore the leaves $f^n\ell$ are all distinct. The proof follows.
\end{proof}

\subsection{Limit leaves} 

Recall our convention that the dynamical lamination $\L$ is not necessarily closed
in the space of unordered distinct points in $S^1$. Let $\overline{\L}$ denote its
closure (so that $\overline{\L}$ is an honest lamination of the circle in the usual 
sense of the word).

\begin{definition}[Limit leaf]\label{definition:limit_leaf}
A leaf $\ell' \in \overline{\L}$ is a {\em limit leaf} if there is a sequence of
words $w_n \in \SS_\ell$ with $|w_n| \to \infty$ so that $w_n\ell \to \ell'$
(we allow the possibility that $w_n\ell = \ell'$ for infinitely many $n$).
\end{definition}

For example, every leaf of $\overline{\L} - \L$ is a limit leaf.

\begin{lemma}[Preperiodic is periodic]\label{lemma:preperiodic_is_periodic}
Suppose $u,v$ are distinct words in $\SS_\ell$ with $u\ell = v\ell$. Then one
of the words $u,v$ has a nontrivial suffix $t$ so that $t\ell = \ell$.
Consequently every leaf of $\overline{\L}$ is a limit leaf.
\end{lemma}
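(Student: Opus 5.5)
We argue by stripping common suffixes and then using the left/right separation from the proof of Proposition~\ref{proposition:props_of_L}. Suppose $u\ell = v\ell$ with $u \ne v$ in $\SS_\ell$. Words act on leaves by composition, so $u = u'c$ and $v = v'c$ with $c$ the longest common suffix. Since $c\ell$ is defined ($\SS_\ell$ is suffix-closed), set $\mu = c\ell$; then $u'\mu = v'\mu$ with $u',v'$ having distinct last letters or one of them empty.

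The key step is to rule out the case where both $u'$ and $v'$ are nonempty. Their first letters are the outermost maps applied, and as in the proof of Proposition~\ref{proposition:props_of_L} a nonempty word starting with $f$ (resp. $g$) sends $\ell$ into $J_f$ (resp. $J_g$). If the first letters differ, the common leaf would lie both in $J_f$ and in $J_g$. Since $J_f \cap J_g$ consists of the two endpoints of $\ell$, the leaf would have to be $\ell$ itself. Then $u\ell = \ell$ and $u$ is itself the desired nontrivial suffix. If the first letters agree, we cancel them: $f$ and $g$ are injective on the interiors of their domains, and the endpoint cases are handled separately. Cancelling shortens the words, so induction on $|u|+|v|$ reduces to the earlier cases. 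This reduction also handles a common prefix in the original words.

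Once the prefixes are stripped, one reduced word is empty, say $v' = \emptyset$ and $u' \neq \emptyset$. Then $u'\mu = \mu$, i.e.\ $u'c\ell = c\ell$. We want a suffix $t$ of $u$ with $t\ell = \ell$. By the cancellation step applied to the pair $(u'c, c)$, we can strip letters from the left. Writing $c = c_1 c'$ and $u'c = a\,c$ with $a \ne \emptyset$, the first letters of $ac$ and $c$ must agree unless the leaf is $\ell$. Peeling off matching leading letters inductively gives either some suffix $c''$ of $c$ with $a'c''\ell = c''\ell$ equal to $\ell$, or $c$ exhausted, giving $a'\ell = \ell$. Either way we obtain a suffix $t$ of $u$ (namely $a'c''$ or a cyclic rotation of $a$) with $t\ell = \ell$ and $t$ nontrivial.

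For the consequence, take any leaf of $\overline{\L}$. If it lies in $\overline{\L} - \L$ it is a limit leaf by definition. If it equals $w\ell$, we would like a fixing word $t \neq \emptyset$ with $t\ell = \ell$ and $t^n \in \SS_\ell$, since then $wt^n\ell = w\ell$ with lengths tending to infinity. If no leaf equals $\ell$ via a nontrivial word, the first part shows all $w\ell$ are distinct, so when $\L$ is infinite the leaves of finite depth accumulate. We still need each $w\ell$ itself to be a limit, which needs more; presumably the intended reading is via the definition allowing repetition, so we would show that $\ell$ is fixed by some $t$ whenever $\L$ is finite-orbit-like.

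\textbf{Main obstacle.} The hardest point is the non-injectivity of $f$ and $g$ at the endpoints of $I_f$ and $I_g$, which obstructs cancelling a common leading letter. We would handle this by noting that the only leaves meeting the frontier are images of $\ell$ itself, which again yields $t\ell = \ell$.
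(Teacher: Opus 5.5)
Your argument for the first assertion is sound and is at heart the paper's: a nonempty word beginning with $f$ (resp.\ $g$) puts $\ell$ into $J_f$ (resp.\ $J_g$). So either the common leaf is $\ell$, or $u,v$ share a first letter, which can be cancelled, and one inducts on $|u|+|v|$.

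Two parts of your write-up are unnecessary. The preliminary stripping of the common suffix and the separate ``peeling'' of the case $v'=\emptyset$ are not needed: the same induction applied directly to $(u,v)$ already covers that case, since if one word is empty the other is the required $t$.

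Your ``main obstacle'' does not exist. The map $f\colon I_f\to J_f$ is by construction the inverse of $f^{-1}|_{J_f}$ (of $f^{-1}|_{\inte J_f}$ in the degenerate case). Hence it is injective on its whole domain, and $f\mu=f\nu$ forces $\mu=\nu$ with no endpoint cases. The non-injectivity discussed in the paper concerns $f^{-1}$ and $g^{-1}$ (i.e.\ $H$), not $f$ and $g$. The workaround you sketch is unjustified and should simply be dropped.

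The genuine gap is the second assertion, which your proposal does not prove. You read ``consequently'' as unconditional and get stuck on the situation where no nontrivial word fixes $\ell$. But the consequence is under the lemma's hypothesis, and without it the claim is false: if $\L=\{\ell\}$ then $\SS_\ell$ consists of the empty word alone, so $\ell$ is not a limit leaf.

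Under the hypothesis, the missing step is short:
\begin{enumerate}
\item Your first part supplies a nontrivial $t$ with $t\ell=\ell$, and $t\in\SS_\ell$ because $\SS_\ell$ is suffix-closed.
\item Then $t^n\ell=\ell$ for every $n$. So for each $w\in\SS_\ell$ the words $wt^n$ lie in $\SS_\ell$, with $wt^n\ell=w\ell$ and $|wt^n|\to\infty$.
\item The definition of limit leaf allows the sequence $w_n\ell$ to be constant, so every leaf of $\L$ is a limit leaf. Leaves of $\overline{\L}-\L$ are limit leaves by definition.
\end{enumerate}
This is the paper's closing step, and it is exactly the form in which the lemma is used in Corollary~\ref{corollary:isolated_limit}.
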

\begin{proof}
Let's suppose without loss of generality that $u,v$ as above are chosen to
minimize $|u|+|v|$. Since $u$ and $v$ are distinct, at least one of them is nontrivial; 
without loss of generality, $u$. If $u\ell = \ell$ then we may take $t=u$.
Otherwise $u\ell$ and $v\ell$ both lie to the left, or both to the right of
$\ell$, so that the first letters of $u$ and $v$ must agree. In particular,
$u=au'$ and $v=av'$ for some nontrivial common prefix $a$, and now $|u'|+|v'| < |u|+|v|$
contrary to the choice of $u,v$.

If $t\ell = \ell$ for some nontrivial $t$ then $wt^n\ell = w\ell$ for any $n$ and any $w$, 
so that every $w\ell\in \L$ is a limit leaf. Since every leaf of $\overline{\L} - \L$
is a limit leaf, we are done.
\end{proof}

\begin{corollary}[Isolated limit]\label{corollary:isolated_limit}
If $\overline{\L}$ contains an isolated limit leaf, then every leaf of $\overline{\L}$ 
is a limit leaf.
\end{corollary}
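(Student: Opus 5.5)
The plan is to reduce to Lemma~\ref{lemma:preperiodic_is_periodic} by showing that an isolated limit leaf forces some finite-depth leaf to be repeated along its own orbit.

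Let $\ell'$ be an isolated limit leaf of $\overline{\L}$. By Definition~\ref{definition:limit_leaf} there are words $w_n \in \SS_\ell$ with $|w_n|\to\infty$ and $w_n\ell \to \ell'$.

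The first step uses isolation. Since $\ell'$ is isolated in $\overline{\L}$, it has a neighborhood in the M\"obius band $M$ that meets $\overline{\L}$ only in $\ell'$. Hence $w_n\ell = \ell'$ for all sufficiently large $n$. In particular $\ell' \in \L$, so $\ell'$ has some finite depth $d$, realized by a word $v$ with $|v| = d$ and $v\ell = \ell'$.

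The second step produces distinct words with the same image. Choose $n$ large enough that $|w_n| > d$ and $w_n\ell = \ell'$. Then $w_n$ and $v$ are distinct words in $\SS_\ell$ with $w_n\ell = v\ell$. Lemma~\ref{lemma:preperiodic_is_periodic} applies, and its conclusion is that every leaf of $\overline{\L}$ is a limit leaf.

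There is no real obstacle here. The only point needing care is that isolation must be taken in the topology of $M$, not in $S^1$, so that the convergence $w_n\ell \to \ell'$ genuinely forces eventual equality.
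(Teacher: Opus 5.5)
Your proof is correct and follows the paper's argument. Isolation forces $w_n\ell=\ell'$ for all large $n$, and two distinct words with the same image then feed into Lemma~\ref{lemma:preperiodic_is_periodic}. Comparing $w_n$ with a minimal-depth word $v$ satisfying $|w_n|>|v|$ guarantees distinctness slightly more cleanly than the paper's comparison of $w_n$ with $w_{n+1}$, which implicitly requires passing to a subsequence of strictly increasing lengths.
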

\begin{proof}
Suppose $\ell' \in \overline{\L}$ is an isolated limit leaf. By definition,
$\ell' = \lim w_n\ell$ for some infinite sequence $w_n$, and since $\ell'$ is isolated 
we must have $w_n \ell = \ell'$ for all sufficiently large $n$. But then $w_{n+1}\ell = w_n\ell$
for distinct $w_n,w_{n+1}$ and therefore Lemma~\ref{lemma:preperiodic_is_periodic} applies.
\end{proof}

We shall show in the sequel (Proposition~\ref{proposition:finite_cut_leaf}) 
that when $\overline{\L}$ contains an isolated limit leaf, $\Lambda$ is a dendrite.

\subsection{Trajectories}\label{subsection:trajectories}

Let's now fix $s\in \partial \M$ and suppose after replacing $s$ by $-s$ if necessary
that at least one endpoint of $I_f$ lies in $J_g$ 
(see Lemma~\ref{lemma:simplifying_assumption_intervals}).

The maps $f$ and $g$ on $S^1$ are invertible on their domains. We may therefore define
$H:S^1 \to S^1$ by
$$H(x) := f^{-1}(x) \text{ for } x\in J_f - J_g \text{ and } H(x):= g^{-1}(x) \text{ for } x\in J_g - J_f$$
and for $x\in J_f \cap J_g$ define $H(x)$ to be {\em either} $f^{-1}(x)$ {\em or} $g^{-1}(x)$.
Thus $H$ is many-valued which is annoying but seems to be inevitable if we want
to maintain the rotational symmetry and capture the full dynamics of $f^{-1}$ and $g^{-1}$.

If $\mu$ is a leaf of $S^1$ other than $\ell$ then it is possible that $\mu$ is contained
in $J_f$ or in $J_g$ but not both. If $\mu$ is contained in $J_f$ define $H\mu = f^{-1}\mu$
and if $\mu$ is contained in $J_g$ define $H\mu = g^{-1}\mu$, and otherwise $H\mu$ is
undefined. We may define $H\ell$ to be either $f^{-1}\ell$ or $g^{-1}\ell$. Notice for
{\em any} leaf $\mu$ and any $n$ the leaf $H^n\mu$ is either undefined, or is defined
up to the involution $\iota$, the latter ambiguity occurring if and only if 
$H^m\mu = \ell$ for some $0 \le m < n$.

Let $H\L$ denote the union of all possible leaves $H\mu$ for all $\mu \in \L$ for which
$H\mu$ is defined.

\begin{lemma}\label{lemma:H_z_leaves_L_z}
If $\L$ is finite then $H\L = \partial I_f \cup \partial I_g$. 
If $\L$ is infinite then $H\L = \L \cup \partial I_f \cup \partial I_g$.
\end{lemma}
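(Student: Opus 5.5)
The plan is to compute $H\mu$ for each $\mu\in\L$ directly from the decomposition $\L = f\L \cup g\L \cup \lbrace \ell\rbrace$ noted after Definition~\ref{definition:dynamical_lamination}. First I would treat $\ell$. Its endpoints are $J_f\cap J_g$, so $H\ell$ may be $f^{-1}\ell$ or $g^{-1}\ell$. Since $f:I_f\to J_f$ is the continuous extension of the inverse of $f^{-1}$ restricted to $\inte(J_f)$, the points of $J_f$ that lie over $\partial J_f$ are exactly $\partial I_f$. So $f^{-1}\ell = \partial I_f$, where in the degenerate case both ends of $J_f$ go to the single point $S^1 - f^{-1}\inte(J_f)$ and we read $\partial I_f$ with the paper's convention. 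Likewise $g^{-1}\ell=\partial I_g$. Hence the contribution of $\ell$ to $H\L$ is exactly $\partial I_f\cup\partial I_g$, and by the symmetry $\iota$ these two sets are $\iota$-images of each other.

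Next I would treat the leaves $\mu\ne \ell$ in $\L$. Each such $\mu$ has the form $w\ell$ with $w$ nonempty in $\SS_\ell$, so $\mu=f\nu$ or $\mu=g\nu$ with $\nu=w'\ell\in\L$. As in the proof of Proposition~\ref{proposition:props_of_L}, $f\nu$ is contained in $J_f$ and lies to the left of $\ell$, so it is not contained in $J_g$; the unique choice is therefore $H\mu=f^{-1}f\nu$. Because $f$ is injective on $I_f$ (it is the inverse of $f^{-1}$ on the interior, and is a strict contraction by Lemma~\ref{lemma:expanding_map}), this equals $\nu$. So $H(f\L\cup g\L)=\L$, and in fact every $\nu\in\L$ arises this way precisely when $f\nu$ or $g\nu$ is defined.

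It remains to split on whether $\L$ is finite, using Proposition~\ref{proposition:props_of_L}. If $\L$ is finite, then $\L=\lbrace\ell\rbrace$ and $\ell$ is not contained in $I_f$, so no nonempty word acts and $H\L=\partial I_f\cup\partial I_g$. If $\L$ is infinite, then $\ell\subset I_f\cap I_g$, so $f\nu$ is defined for every $\nu\in\L$. The reason is that $\nu$ is either $\ell$ or contained in $J_f$ or $J_g$, and we are in one of the two cases of that proof, where $J_f,J_g$ lie in $I_f$ or $I_g$; at least one of $f\nu, g\nu$ is defined. Thus every $\nu\in\L$ equals $H\mu$ for some $\mu\in f\L\cup g\L$, which gives $H\L=\L\cup\partial I_f\cup\partial I_g$.

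The main obstacle I expect is the boundary behaviour in the first step. I need to justify that $f^{-1}$, extended to the endpoints of $J_f$, sends $\partial J_f$ onto $\partial I_f$, including the degenerate case where two prime ends collapse to one point. I would handle this by the continuity of $f^{-1}$ on $D_f$ together with $I_f=\overline{f^{-1}\inte(J_f)}$. A smaller point is checking that $\nu\ne\ell$ with $\nu\subset J_f$ has an endpoint in the interior of $J_f$, so that the choice of $H$ is unambiguous; Lemma~\ref{lemma:preperiodic_is_periodic} ensures there is no other subtlety in identifying leaves.
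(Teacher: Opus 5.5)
Your proposal is correct and is essentially the paper's argument. The two choices for $H\ell$ give $\partial I_f\cup\partial I_g$, and a leaf $fu\ell\neq\ell$ (resp.\ $gu\ell$) is sent by $H$ to $u\ell$. In the infinite case, the containments $J_f\subset I_f$, $J_g\subset I_g$ (or the swapped ones) give the reverse inclusion $\L\subseteq H\L$, which you spell out more explicitly than the paper does.

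There is one slip to fix. The claim ``$f\nu$ is defined for every $\nu\in\L$'' is false in general: when $J_f\subset I_f$ and $J_g\subset I_g$, a leaf contained in $J_g$ need not lie in $I_f$. Your next clause, that at least one of $f\nu,g\nu$ is defined, is what the argument actually uses, and it is correct. Similarly, the fact that $f^{-1}f\nu=\nu$ comes from $f$ being constructed as the inverse of $f^{-1}$, not from $f$ being a contraction.
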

\begin{proof}
No matter what, the possible values of $H\ell$ are $\partial I_f$ and $\partial I_g$.
By Proposition~\ref{proposition:props_of_L} if $\L$ is finite, $\L = \lbrace \ell \rbrace$
and we are done. 

Otherwise $J_f$ is contained in $I_f$ and $J_g$ is contained in $I_g$. Every
leaf of $\L$ is of the form $w\ell$ for some $w\in \SS_\ell$ of minimal length, and
if $w=fu$ then $w\ell$ is contained in $J_f$ and $Hw\ell = u\ell \in \L$ and
similarly if $w=gu$.
\end{proof}

Define a (many-valued) map $T:S^1 \to \lbrace f^{-1},g^{-1}\rbrace^\N$ as follows.
For $x\in S^1$ let $H^n x$ be a valid sequence of iterates of $x$, in the sense that
$H^{i+1} x = H (H^i x)$ for all $i$. Each $H^{i+1} x$ is of the form $f^{-1} H^i(x)$
or $g^{-1} H^i(x)$, and we let $T(x)(i+1)$ be $f^{-1}$ or $g^{-1}$ accordingly.
We call $T(x)$ a {\em trajectory} of $x$. 

\begin{remark}
The representation of $x\in S^1$ by a (possibly) many-valued 
$T(x) \in \lbrace f^{-1},g^{-1}\rbrace^\N$ is no more complicated than (and completely
analogous to) the representation of a real number by its decimal expansion, for
which one has many-valued representations such as $1.0000\cdots = 0.9999\cdots$.
Notice that $x\in S^1$ has a unique trajectory $T(x)$ unless $x$ is one of the
countably many points such that $H^n x \in J_f \cap J_g$ for some $n$. Furthermore,
unless some $p\in J_f \cap J_g$ satisfies $H^n p \in J_f \cap J_g$ for some $n$
(which is false for generic $s$), every $x\in S^1$ has at most two
trajectories.
\end{remark}

For any $x$, for any choice of trajectory $T(x)$ and for any $n\in \N$, let $T(x)|n$ denote
the prefix of $T(x)$ of length $n$.

For $w \in \lbrace f^{-1},g^{-1}\rbrace^n$ define
$$S^1_w: = \lbrace x \in S^1 \text{ such that } T(x)|n = w \text{ for some } T(x)\rbrace$$
and for $T \in \lbrace f^{-1},g^{-1}\rbrace^\N$ define
$$S^1_T: = \lbrace x \in S^1 \text{ such that } T(x) = T \text{ for some } T(x)\rbrace$$

For every $w \in \lbrace f^{-1},g^{-1}\rbrace^n$ the set $S^1_w$ is a finite union of
closed intervals and isolated points, and if $u,v \in \lbrace f^{-1},g^{-1}\rbrace^n$
are distinct, then $S^1_u$ and $S^1_v$ are disjoint except possibly at finitely many
points $x$ for which some forward iterate $H^m(x)$ is contained in $J_f\cap J_g$ for
$m<n$. 

In particular, each $S^1_w$ and each $S^1_T$ is {\em closed} as a subset of
$S^1$ (this is a key property that justifies the many-valued representation
$x \to T(x)$). Furthermore, the collection of all $S^1_w$ for $|w|=n$ (ignoring 
possible isolated points) induces a finite partition of $S^1$ into closed intervals. 

\begin{lemma}[Totally disconnected]\label{lemma:distance_estimate}
Let $x,y \in S^1$ have trajectories $T(x)$ and $T(y)$ with $T(x)|n = T(y)|n$.
Then $|u(x)-u(y)| \le \diam(\Lambda)\cdot |s|^n$. 
In particular, for every $T \in \lbrace f^{-1},g^{-1}\rbrace^\N$ 
the set $S^1_T$ is totally disconnected and contained in the preimage $u^{-1}(p)$
of a single point $p \in \partial \Lambda$.
\end{lemma}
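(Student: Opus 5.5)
The plan is to unwind the trajectory back into the semigroup action on $\partial K$ via the semiconjugacy $u$. Suppose $T(x)|n = T(y)|n = (h_1^{-1},\dots,h_n^{-1})$ with each $h_i\in\lbrace f,g\rbrace$. Set $x_0=x$ and $x_i = H^i x$, the iterates realizing the chosen trajectory, and define $y_i$ likewise. By construction $x_{i+1} = h_{i+1}^{-1}x_i$, meaning $x_i \in J_{h_{i+1}}$ and $x_i = h_{i+1}(x_{i+1})$ with $x_{i+1}\in I_{h_{i+1}}$. The same relations hold for the $y_i$.

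The first step is to push these relations forward to $\partial K$ using $uf=fu$ on $I_f$ and $ug=gu$ on $I_g$. This gives
$$u(x) = h_1 h_2\cdots h_n\, u(x_n) \quad\text{and}\quad u(y) = h_1h_2\cdots h_n\, u(y_n).$$
Some care is needed at endpoints. The relation $uf=fu$ was established on $\inte(J_f)$ pulled back, so I extend it to the closed intervals by continuity of $u$, $f$ and $g$. When $x_i\in J_f\cap J_g$, the chosen branch still satisfies the identity for that branch. In the degenerate case the relation holds on a dense set, and the limiting point is still the prime-end image, so continuity again suffices.

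The second step is the distance bound. Put $w=h_1\cdots h_n$. This is an affine map $z\mapsto s^n z + c$, so it is a similarity with ratio $|s|^n$. Since $u(x_n),u(y_n)\in\partial K\subset\Lambda$, we get
$$|u(x)-u(y)| = |s|^n\,|u(x_n)-u(y_n)| \le |s|^n\diam(\Lambda).$$

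The third step handles $S^1_T$. Any two points $x,y\in S^1_T$ share the prefix $T|n$ for every $n$, so $|u(x)-u(y)|\le \diam(\Lambda)|s|^n\to 0$. Hence $u(x)=u(y)$, and $S^1_T\subset u^{-1}(p)$ for a single point $p$. This $p$ lies in $\partial K\subset \Lambda$, which is the intended meaning of $\partial\Lambda$. By Lemma~\ref{lemma:multiple_preimages_implies_cut_point}, $u^{-1}(p)$ is totally disconnected, so its subset $S^1_T$ is too.

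The main obstacle I expect is the first step: making sure the identity $u h = h u$ holds at every point the trajectory visits, including the boundary points of $I_f$ and $I_g$ and the ambiguous points of $J_f\cap J_g$. I would handle this purely by continuity. The maps $f,g$ on $S^1$ are the continuous extensions of the inverses of $f^{-1}$ and $g^{-1}$ on the interiors, and both sides of $uh=hu$ are continuous on the closed domain, so equality on a dense subset gives equality everywhere.
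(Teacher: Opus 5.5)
Your proof is correct and is essentially the paper's argument. You write $x=wx_n$ and $y=wy_n$ with $|w|=n$, use that $w$ commutes with $u$ and acts as a similarity of ratio $|s|^n$, and then invoke total disconnectedness of the fibers of $u$ (Lemma~\ref{lemma:multiple_preimages_implies_cut_point}). Your extra care at endpoints is harmless but unnecessary: $f^{-1}$ and $g^{-1}$ are defined directly via prime ends on all of $D_f\supset J_f$ and $D_g\supset J_g$, so $u(h^{-1}p)=h^{-1}(u(p))$ holds at every point of $J_h$ (including $J_f\cap J_g$ and the degenerate case) with no limiting argument.
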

\begin{proof}
By abuse of notation we write $T(x)|n = T(y)|n = w^{-1}$ for some $w\in \SS$
with $|w|=n$. By the definition of $H$ and $T$ there are elements $x',y'\in S^1$
(not unique) with $w x'=x$ and $w y'=y$. Now, $|u(x')-u(y')|\le \diam(\Lambda)$
and therefore $|u(wx')-u(wy')| \le \diam(\Lambda)\cdot |s|^n$.

In particular, any two points $x,y \in S^1$ with $T(x) = T(y)$ satisfy $u(x)=u(y)$.
Since the fibers of $u$ are totally disconnected by 
Lemma~\ref{lemma:multiple_preimages_implies_cut_point}, the lemma is proved.
\end{proof}

Lemma~\ref{lemma:distance_estimate} has the following consequence.

\begin{lemma}[Images dense]\label{lemma:no_intervals_of_trajectories}
For every open interval $I \subset S^1$ there is $p\in \partial J_f$ and $w\in \SS$
for which $wp \in I$.
\end{lemma}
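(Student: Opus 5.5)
The plan is to iterate $H$ forward on $I$ and argue that the image must eventually meet $J_f\cap J_g=\partial J_f$. If it never does, every point of $I$ would share a single trajectory, which contradicts Lemma~\ref{lemma:distance_estimate}.

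First, set $I_0:=I$. Suppose $I_0,\dots,I_{k}$ have been defined as open intervals, and that $I_0,\dots,I_{k-1}$ contain no point of $\partial J_f$. Each such $I_j$ is connected and misses $\partial J_f$, so it lies in $\inte(J_f)$ or in $\inte(J_g)$. On that interval $H$ is the single-valued map $f^{-1}$ or $g^{-1}$. As explained in the paper, these maps are continuous and injective on $\inte(J_f)$ and $\inte(J_g)$.

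Hence we may set $I_{j+1}:=H(I_j)$. It is the continuous injective image of an open interval in $S^1$, so it is again an open interval: it is connected and non-degenerate, and it is open by invariance of domain. Moreover, every $x\in I$ has the same length-$(k+1)$ prefix of trajectory, namely $a_1\cdots a_{k+1}$ with $a_j\in\lbrace f^{-1},g^{-1}\rbrace$ determined by which of $\inte(J_f)$ or $\inte(J_g)$ contains $I_{j-1}$.

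Next I show the process must stop. If $I_j\cap\partial J_f=\emptyset$ for all $j$, then all points of $I$ have one and the same infinite trajectory $T$, so $I\subset S^1_T$. But Lemma~\ref{lemma:distance_estimate} says $S^1_T$ is totally disconnected, and it cannot contain an open interval. So there is a least $n\ge 0$ with $I_n\cap\partial J_f\neq\emptyset$.

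Finally, I extract the word. Pick $p\in I_n\cap\partial J_f$, and let $x\in I$ be the unique point with $H^n x=p$ along the single-valued branches above. Write $H^n|_I=a_n\circ\cdots\circ a_1$ and let $w\in\SS$ be the inverse word, so that $a_j=f^{-1}$ contributes the letter $f$ and $a_j=g^{-1}$ contributes $g$. At each step $I_{j-1}\subset \inte(J_f)$, say, and $I_j=f^{-1}(I_{j-1})\subset I_f$. Since $f\circ f^{-1}=\mathrm{id}$ on $\inte(J_f)$, we can undo the steps one at a time, so $w$ is defined at $p$ and $wp=x\in I$, as required. The case $n=0$ is trivial: there $w$ is empty and $p\in I$.

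The only delicate point I anticipate is the bookkeeping that keeps $H^n|_I$ single-valued and injective, so that the $I_j$ stay genuine open intervals, and that the inverse word $w$ really is defined at $p$. Both are guaranteed by the minimality of $n$: no earlier $I_j$ touches the ambiguous set $J_f\cap J_g$, which is exactly where $H$ is many-valued and where $f^{-1}$ or $g^{-1}$ may fail to be injective.
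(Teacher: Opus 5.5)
Your proof is correct and follows the same route as the paper. The paper says only that if no $wp$ landed in $I$, then the trajectory $T$ would be constant on $I$, contradicting Lemma~\ref{lemma:distance_estimate}. You supply the bookkeeping the paper leaves implicit: the forward images $H^j(I)$ remain open intervals inside $\inte(J_f)$ or $\inte(J_g)$ until the first one meets $\partial J_f$, and the inverse word then carries that point back into $I$.
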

\begin{proof}
Suppose not. Then $T$ would be constant on $I$, contrary to Lemma~\ref{lemma:distance_estimate}.
\end{proof}

Let $\T\subset \lbrace f^{-1},g^{-1}\rbrace^\N$ denote the union of all trajectories
of all points in $S^1$. Define an equivalence relation $\sim$ on $\T$ by $T \sim T'$ 
if there is a finite sequence $x_i \in S^1$ for $0\le i<n$ and trajectories
$$T = T_0,T_1,\cdots,T_n = T'$$
so that for each $0\le i < n$, both $T_i$ and $T_{i+1}$ are trajectories of $x_i$.

We may give $\lbrace f^{-1},g^{-1}\rbrace^\N$ the product topology (for which
it is homeomorphic to a Cantor set) and then topologize $\T$ as a subspace, and
$\T/\sim$ as a quotient of this subspace.

\begin{lemma}[Compact]\label{lemma:compact_Hausdorff}
The space $\T/\sim$ is compact.
\end{lemma}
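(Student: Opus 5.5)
Since $\T/\sim$ is a quotient of $\T$ and continuous images of compact spaces are compact, it suffices to show that $\T$ is compact. Because $\lbrace f^{-1},g^{-1}\rbrace^\N$ with the product topology is a compact metrizable space (a Cantor set), this reduces to showing that $\T$ is \emph{closed} in $\lbrace f^{-1},g^{-1}\rbrace^\N$. So the plan is: take a sequence of trajectories $T_k = T(x_k)$ converging to some $T\in \lbrace f^{-1},g^{-1}\rbrace^\N$, and produce a point $x\in S^1$ having $T$ as one of its trajectories.

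The key input is the observation made just before Lemma~\ref{lemma:distance_estimate}: for each finite word $w$ the set $S^1_w$ is closed in $S^1$, and likewise $S^1_T = \bigcap_n S^1_{T|n}$. Given $T_k \to T$, for each $n$ we have $T_k|n = T|n$ for all $k$ large, so $x_k \in S^1_{T|n}$ for all large $k$. Pass to a subsequence with $x_k \to x$ by compactness of $S^1$. Closedness of $S^1_{T|n}$ gives $x \in S^1_{T|n}$ for every $n$. The sets $S^1_{T|n}$ are nested and nonempty, and $x$ lies in all of them, so $x\in \bigcap_n S^1_{T|n}$.

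The remaining point, which I expect to be the main (if modest) obstacle, is to pass from ``$x\in S^1_{T|n}$ for every $n$'' to ``$T$ is a trajectory of $x$'', since a priori the witnessing finite trajectories for different $n$ might not be prefixes of a single infinite one. I would handle this by a König's lemma / compactness argument. For each $n$ the set of valid length-$n$ iterate sequences of $x$ with labels $T|n$ is finite and nonempty. These sets form an inverse system under truncation, because $H$ is many-valued only at the two points of $J_f\cap J_g$, where each branch is determined by the label. An inverse limit of nonempty finite sets is nonempty, which yields an infinite valid sequence of iterates of $x$ with labels $T$. Indeed, since the label at each step determines which of $f^{-1},g^{-1}$ is applied, the iterate sequence is actually determined by $T$, and compatibility is automatic once each $H^i x\in J_f$ or $J_g$ as required. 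This membership holds because the $J$'s are closed and the condition passes to the limit. Hence $x\in S^1_T$, so $T\in\T$, proving $\T$ closed, hence compact, and therefore $\T/\sim$ is compact.
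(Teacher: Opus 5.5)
Your proposal is correct and follows essentially the same route as the paper: you reduce to showing $\T$ is closed in the Cantor set $\lbrace f^{-1},g^{-1}\rbrace^\N$, and you use that the nested sets $S^1_{T|n}$ are closed (hence compact) and nonempty, so their intersection contains a point having $T$ as a trajectory. Your sequential-compactness step is the paper's nested-compact-sets step written out explicitly. Your observation that the labels determine the iterates makes explicit the identity $\bigcap_n S^1_{T|n} = S^1_T$, which the paper uses tacitly.
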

\begin{proof}
It suffices to show that $\T$ is compact, equivalently that it is a closed subspace
of $\lbrace f^{-1},g^{-1}\rbrace^\N$.
For every $n$ and for every $w\in \lbrace f^{-1},g^{-1}\rbrace^n$ the set $S^1_w\subset S^1$
is closed and therefore compact. Thus if $S^1_{w_n}$ is nonempty for every
$w_n: = T|n$, then $S^1_T$ is nonempty too, so that $T \in \T$.
\end{proof}

We have already seen that if $x$ and $y$ have common trajectories, then $u(x)=u(y)$.
Thus, by the definition of $\sim$ and $\T$, the map $u:S^1 \to \partial K$ factors
through a map from $\T/\sim$ to $\partial K$ that by abuse of notation we denote
$u:\T/\sim \; \to \partial K$. 

\begin{proposition}[Address is conditionally fiber]\label{proposition:address_is_fiber}
Suppose $s \in \partial \M$. The map $u:\T/\sim\;\to\partial K$ is continuous and 
surjective. 

Furthermore, if Conjecture~\ref{conjecture:A} holds for $s$, then 
$u:\T/\sim\;\to\partial K$ is a homeomorphism, and $u(x)=u(y)$ if
and only if some trajectory of $x$ is equal to some trajectory of $y$.
\end{proposition}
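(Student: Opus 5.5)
Surjectivity is immediate, because every $x\in S^1$ has at least one trajectory and $u:S^1\to\partial K$ is onto. For continuity, I will first show that the induced map $\hat u:\T\to\partial K$ is continuous, and then invoke the universal property of the quotient topology. Let $T,T'\in\T$ agree on their first $n$ letters, and pick $x\in S^1_T$ and $y\in S^1_{T'}$. Then $T(x)|n=T(y)|n$, so Lemma~\ref{lemma:distance_estimate} gives $|u(x)-u(y)|\le \diam(\Lambda)|s|^n$. Hence $\hat u$ is in fact uniformly continuous for the product topology; it is well defined by the same lemma. By the definition of $\sim$, $\hat u$ is constant on equivalence classes, since consecutive trajectories in a chain share a point $x_i$. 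So it descends to a continuous map $u:\T/\sim\;\to\partial K$. This settles the unconditional part.

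For the conditional part, it suffices to prove injectivity, namely that $u(x)=u(y)$ implies some trajectory of $x$ equals some trajectory of $y$. Injectivity then follows, and so does the final assertion; the converse of that assertion is Lemma~\ref{lemma:distance_estimate}. A continuous bijection from the compact space $\T/\sim$ (Lemma~\ref{lemma:compact_Hausdorff}) to the Hausdorff space $\partial K$ is a homeomorphism, so nothing more is needed.

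So assume Conjecture~\ref{conjecture:A}, so that $D_f=J_f$ and $D_g=J_g$, and suppose $u(x)=u(y)=p$. I will construct trajectories inductively, letter by letter. If $p\notin gK$, then $x,y\in D_f-D_g=J_f-J_g$, and both are forced to apply $f^{-1}$. By the prime-end construction of $f^{-1}$ on $S^1$, this gives $u(f^{-1}x)=u(f^{-1}y)=f^{-1}p$. The case $p\notin fK$ is symmetric.

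The remaining case is $p\in\partial_fK\cap\partial_gK$. Here the conjecture says $u^{-1}(\partial_fK)=J_f$, so $x$ and $y$ both lie in $J_f$, and we may choose $H=f^{-1}$ for both. The hard part is that $x$ and $y$ might lie in $J_f\cap J_g$ only "at the boundary". One must check that choosing $f^{-1}$ is allowed even if, say, $x$ lies in the interior of $J_g$. This cannot happen: if $x\in\inte J_g$ then $x\notin J_f=D_f$, so $u(x)\notin\partial_fK$, a contradiction. Thus $u(x)\in\partial_fK\cap\partial_gK$ forces $x\in J_f\cap J_g$, i.e.\ $x$ is an endpoint of $\ell$, and likewise for $y$. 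So both points admit either branch, and choosing the same branch for both keeps $u(H x)=u(H y)$.

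Iterating this step, we produce simultaneous trajectories with $T(x)|n=T(y)|n$ for every $n$, which gives a common trajectory. The main obstacle is verifying that the inverse branches on $S^1$ really intertwine $u$ with $f^{-1}$ on $\partial K$ at every step. This needs care at the non-injective frontier points of $D_f$ discussed in the dynamics subsection, and I expect it to be where Conjecture~\ref{conjecture:A} is used in an essential way. Without the conjecture, a point in $\partial_fK\cap\partial_gK$ could have preimages in $D_f-J_f$ whose trajectories are forced to differ. In that case one only obtains $\sim$-equivalence at best, or fails altogether.
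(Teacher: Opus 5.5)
Your proof is correct and follows essentially the paper's argument. The paper takes trajectories of $x,y$ with a maximal common prefix $w^{-1}$ and gets a contradiction at the divergence point from $D_f=J_f$. That is your case analysis run as a contradiction instead of a forward letter-by-letter construction, which has the small advantage of not needing a maximal finite common prefix to exist.

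The worry in your last paragraph is unnecessary. The identity $u\circ f^{-1}=f^{-1}\circ u$ on all of $D_f$ holds by the prime-end definition of $f^{-1}$ on $S^1$, and non-injectivity at frontier points does not affect it. Conjecture~\ref{conjecture:A} enters only through $u^{-1}(\partial_f K)=J_f$, which you already used.
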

\begin{proof}
The map $u:\T \to \partial K$ is surjective since $u:S^1 \to \partial K$ is. 
It is continuous by Lemma~\ref{lemma:distance_estimate}; thus, by the definition
of the quotient topology, $u:\T/\sim\; \to \partial K$ is continuous too.
So we just need to check it is injective.

We show, under the hypothesis of Conjecture~\ref{conjecture:A} that if $x,y\in S^1$
have $u(x)=u(y)$ then some trajectory of $x$ is already equal to a 
trajectory of $y$. Choose trajectories $T(x)$ and $T(y)$ with maximal length
prefix in common, so that $T(x)|n = T(y)|n = w^{-1}$ for $w\in \SS$, and 
let $x',y' \in S^1$ be such that $wx'=x$ and $wy'=y$. Thus in particular
$wu(x')=wu(y')$ so that $u(x')=u(y')$. Since $n$ is maximal
it must be the case that one of $x'$ is in $J_f-J_g$ and one is in $J_g-J_f$.
Under the hypothesis that Conjecture~\ref{conjecture:A} holds this implies that
one of $u(x')$ and $u(y')$ is in $f\Lambda - g\Lambda$ and one is in
$g \Lambda - f\Lambda$, contrary to $u(x')=u(y')$. 

This implies that $u:\T/\sim\;\to\partial K$ is injective (and therefore a
homeomorphism). For, if $u(T)=u(T')$ where $T$ and $T'$ are trajectories of
$x$ and $y$, then (as we have just shown) $x$ and $y$ have a common trajectory $T''$,
and then $T\sim T'' \sim T'$. The proof follows.
\end{proof}

\subsection{Cut points}\label{subsection:cut_points}

Our goal is now to use the dynamics of $H$ and the structure of $\L$ to describe the
set of cut points of $\Lambda$. Note that a point $p \in K$ is a cut point for
$\Lambda$ if and only if it is a cut point for $K$. 

\begin{definition}[Cut leaf]\label{definition:cut_leaf}
A leaf $\mu:=\lbrace x,y\rbrace$ in $S^1$ with $u(x)=u(y)$ is called a
{\em cut leaf}.
A leaf $\mu$ with $H^n\mu$ defined for all $n$ is called a {\em dynamical cut leaf}.
\end{definition}

A point $p\in \partial K$ is a cut point of $K$ (and $\Lambda$) if and only
if the cardinality of $u^{-1}(p)$ is greater than 1. In particular,
if $\lbrace x,y\rbrace$ is a cut leaf then $u(x)=u(y)$ is a cut point and
vice versa. 

It turns out that every dynamical cut leaf is a cut leaf, and conditional on 
Conjecture~\ref{conjecture:A} the converse is also true:

\begin{lemma}[Conditional classification of cut leaves]\label{lemma:topological_cut_point_classification}
Let $\mu:=\lbrace x,y\rbrace$ be a leaf in $S^1$. The following are equivalent:
\begin{enumerate}
\item $x$ and $y$ have trajectories that are equal; and
\item $\mu$ is a dynamical cut leaf; i.e.\/ the iterates $H^n \mu$ are defined for all $n$. 
\end{enumerate}
Moreover, these equivalent conditions imply that $\mu$ is a cut leaf.

Furthermore, if $s \in \partial \M$ and Conjecture~\ref{conjecture:A} 
holds for $s$, then if $\mu$ is a cut leaf, the
two equivalent conditions above hold for $\mu$.
\end{lemma}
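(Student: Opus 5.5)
The plan is to prove the equivalence of (1) and (2) directly from the definitions of $H$ on leaves and of trajectories. Then we get (1) $\Rightarrow$ cut leaf from Lemma~\ref{lemma:distance_estimate}, and the conditional converse from Proposition~\ref{proposition:address_is_fiber}.

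For (1) $\Rightarrow$ (2), suppose $x$ and $y$ share a trajectory $T$, realized by iterates $H^n x$ and $H^n y$. At each step $T(n+1)$ is the same letter, say $f^{-1}$. So $H^n x$ and $H^n y$ both lie in $J_f$, or both in $J_g$ when the letter is $g^{-1}$. Hence the leaf $H^n\mu=\lbrace H^n x, H^n y\rbrace$ is contained in the corresponding interval, and $H^{n+1}\mu$ is defined by applying the same inverse to both endpoints. We must check that $H^n x\neq H^n y$ for every $n$, so that $H^n\mu$ is really a leaf. This holds because $f^{-1}$ and $g^{-1}$ are injective on $J_f$ and $J_g$: the maps $f,g$ are genuine functions on $I_f,I_g$ with images $J_f,J_g$, so distinct points have distinct images under the inverse branch. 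If $H^n\mu=\ell$ we may choose either branch; we choose the one dictated by $T$.

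Conversely, for (2) $\Rightarrow$ (1), suppose $H^n\mu$ is defined for all $n$. By definition each $H^n\mu$ lies in $J_f$ or in $J_g$, and $H^{n+1}\mu$ applies a single inverse branch to both endpoints. Recording that branch at each step gives a sequence $T$ that is simultaneously a valid trajectory of $x$ and of $y$. The only ambiguity is at $\ell$, where either branch is allowed both for the leaf and for the points, so the choices are consistent.

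Given (1), Lemma~\ref{lemma:distance_estimate} gives $|u(x)-u(y)|\le \diam(\Lambda)|s|^n$ for all $n$, so $u(x)=u(y)$ and $\mu$ is a cut leaf.

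For the conditional converse, assume Conjecture~\ref{conjecture:A} and that $\mu$ is a cut leaf, so $u(x)=u(y)$. Proposition~\ref{proposition:address_is_fiber} states that in this case some trajectory of $x$ equals some trajectory of $y$, which is (1). The main subtlety is the injectivity bookkeeping in the first step, together with the multivalued choice of $H$ at $J_f\cap J_g$. Everything else is an unwinding of definitions plus the two cited results.
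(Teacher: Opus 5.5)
Your proposal is correct and is essentially the paper's proof. You get (1)$\Leftrightarrow$(2) by unwinding the definitions of $H$ on leaves and of trajectories, with the only branch ambiguity occurring at $\ell$. You then use Lemma~\ref{lemma:distance_estimate} for the cut-leaf conclusion and Proposition~\ref{proposition:address_is_fiber} for the conditional converse.

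One small caveat concerns your added check that $H^n x\neq H^n y$. It is only valid when $I_f,I_g$ are nondegenerate: if $I_f=I_g=S^1$, then $f^{-1}$ sends both endpoints of $J_f$ to the single point $\partial I_f$, so $H\ell$ collapses to a point. The paper does not require $H^n\mu$ to remain a genuine pair; compare Lemma~\ref{lemma:lambda=2}, which declares $\ell$ a dynamical cut leaf in exactly this degenerate situation. So that injectivity step is unnecessary and should be dropped rather than asserted in general.
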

\begin{proof}
The equivalence of the first two conditions follows
directly from the definition: the trajectories $T(x)$ and $T(y)$ are equal if
and only if for all $n$ the points $H^n(x)$ and $H^n(y)$ are both in $J_f$,
or both in $J_g$. But this is exactly the condition that $H^n\mu$ is defined for
all $n$. Lemma~\ref{lemma:distance_estimate} now implies that $u(x)=u(y)$,
i.e.\/ $\mu$ is a cut leaf. 

Conversely, under the conditional assumption that $s \in \partial \M$ 
and Conjecture~\ref{conjecture:A} holds for $s$, 
Proposition~\ref{proposition:address_is_fiber} says that if $u(x)=u(y)$ then
$x$ and $y$ have trajectories in common, so that $\mu$ is a dynamical cut leaf.
\end{proof}

It turns out that one can decide if there are dynamical cut leaves algorithmically
by examining the combinatorial configuration of finitely many leaves of $\L$.
This is expressed in Theorem~\ref{theorem:interaction}, to be proved in the
sequel. We first prove some prepatory lemmas. Our first observation is that 
when $\L$ is finite, there are no dynamical cut leaves:

\begin{lemma}[Unique trajectories]\label{lemma:unique_trajectories}
Suppose that $\ell$ is not contained in $I_f$.
Then no two distinct points in $S^1$ can share the same trajectory;
equivalently, there are no dynamical cut leaves. In particular, 
if $s\in \partial \M$ and Conjecture~\ref{conjecture:A} holds for $s$ then 
$u:S^1 \to \partial K$ is a homeomorphism, and $K$ has no cut points.
\end{lemma}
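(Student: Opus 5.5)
Suppose $\ell$ is not contained in $I_f$, so by Proposition~\ref{proposition:props_of_L} it is not contained in $I_g$ either. The plan is to argue by contradiction: suppose some leaf $\mu=\lbrace x,y\rbrace$ with $x\neq y$ has $T(x)=T(y)$. By Lemma~\ref{lemma:topological_cut_point_classification} this is the same as saying $H^n\mu$ is defined for all $n$. The idea is that the iterates $H^n\mu$ must eventually hit $\ell$, and then $\ell$ itself would have to lie in $I_f$ or $I_g$, which we have excluded.

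\textbf{Step 1: the iterates must meet $\ell$.} By Lemma~\ref{lemma:expanding_map}, $f$ and $g$ strictly decrease Lebesgue distance, so $H=f^{-1}$ or $g^{-1}$ strictly expands it on each of $J_f$, $J_g$. I will use this expansion to show that the two points of $H^n\mu$ cannot stay in a common half forever without eventually spanning $\ell$.
\begin{itemize}
\item Let $\mu_n=H^n\mu$ and let $A_n$ be the shorter arc of $S^1$ bounded by $\mu_n$.
\item While $A_n$ lies inside $J_f$ (or inside $J_g$), $H$ maps $A_n$ homeomorphically onto an arc bounded by $\mu_{n+1}$, and that image arc is strictly longer.
\item Strict expansion alone does not force divergence, so I will use compactness. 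Define $\phi(t)=\inf\lbrace \length(h^{-1}A)/\length(A)\rbrace$ over arcs $A\subset J_f$ or $A\subset J_g$ of length at least $t$.
\item By continuity, $\phi(t)>1$ for each $t>0$. So the lengths grow by a definite factor and reach length $\ge\pi$ in finitely many steps.
\item Every leaf has both endpoints in one of $J_f$, $J_g$ only if its arc stays inside that half, and such an arc has length at most $\pi$.
\item Hence some $\mu_m$ has its two endpoints in $J_f$ and $J_g$ respectively; definability of $H\mu_m$ then forces $\mu_m=\ell$.
\end{itemize}
One detail needs care: the image of an arc might wrap, with $H$ mapping the arc onto the complementary arc. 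In that case the image arc has length greater than $\pi$, and the same conclusion follows directly.

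\textbf{Step 2: contradiction.} Now $H\ell$ must be defined, and it equals $\partial I_f$ or $\partial I_g$. By Lemma~\ref{lemma:H_z_leaves_L_z}, $H^{n}\mu$ is defined for all $n$, so $H^k\ell$ is defined for every $k$. In particular, $H(H\ell)$ requires $\partial I_f$ (say) to lie in a single half $J_f$ or $J_g$. This by itself is not contradictory. So instead I would apply Step 1 again to $\partial I_f$ to get that some later iterate equals $\ell$, giving $t\ell=\ell$ for a nontrivial word in inverse form. Reading this forward, $\ell=w\ell$ for some nontrivial $w\in\SS$, which requires $\ell\subset I_f\cup I_g$ as the domain of the last letter applied. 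That contradicts the hypothesis.

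\textbf{Conclusion.} It follows that there are no dynamical cut leaves. Under Conjecture~\ref{conjecture:A}, Lemma~\ref{lemma:topological_cut_point_classification} then says there are no cut leaves, so $u$ is injective by Lemma~\ref{lemma:multiple_preimages_implies_cut_point}. Being a continuous bijection from a compact space, $u$ is a homeomorphism, and $K$ has no cut points.

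\textbf{Main obstacle.} I expect the uniform expansion in Step 1 to be the main obstacle, particularly near the shared endpoints $J_f\cap J_g$ where $H$ is multivalued. I would first try the compactness argument for $\phi$ described above, treating the choice of branch at $\ell$ separately.
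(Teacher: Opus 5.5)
Your proposal has a genuine gap in Step~1, in the ``wrap'' case you set aside. Suppose $H(A_n)$ has length $>\pi$. Nothing in your argument prevents $\mu_{n+1}$ from having both endpoints in a single closed half. In that case its short arc $A_{n+1}$ is the \emph{complement} of $H(A_n)$, of length $2\pi-\length(H(A_n))$, and this need not exceed $\length(A_n)$; it can be tiny. So the growth stalls, and ``length $>\pi$'' gives no conclusion. Your bullet saying that the arc of a leaf lying in one half has length $\le\pi$ applies only to the short arc, not to the arc $H(A_n)$ you are tracking.

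This case is not exotic. If $J_f\subset I_f$ and $\mu\subset J_f$ hides $f\ell$, then $f^{-1}(A_0)\supset J_f$. Moreover, Step~1 never invokes $\ell\not\subset I_f$, so it would prove that every dynamical cut leaf eventually equals $\ell$ and hence lies in $\L$. That is false in general: a leaf with $H^m\mu=\ell$ has the form $w\ell$, yet there are dynamical cut leaves outside $\L$, e.g.\ the diagonals in Example~\ref{example:cut_not_limit}. Your expansion argument applies verbatim to the linear model, so it cannot be correct without the hypothesis.

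The missing idea is exactly where the hypothesis enters. $H(A_n)$ is a subarc of $I_f$ or of $I_g$. Since $\mu_{n+1}$ must lie in one closed half, the subarc of that half bounded by $\mu_{n+1}$ is either $H(A_n)$ or its complement. In the latter case $H(A_n)$ contains the opposite closed half, hence both points of $\ell$, so $\ell\subset I_f$ or $\ell\subset I_g$. This is excluded by hypothesis and Proposition~\ref{proposition:props_of_L}. Hence $A_{n+1}=H(A_n)$ at every step, the lengths grow by a factor at least $\phi(\length(A_0))>1$ while staying $\le\pi$, and you get a contradiction at once. Step~2 is then unnecessary: the case $\mu_m=\ell$ is covered, since $H(A_m)$ is all of $I_f$ or $I_g$, which has length $>\pi$.

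The same observation shows your remark in Step~2 is mistaken. If $\ell_f$ lay in one closed half, then $I_f$, of length $>\pi$, would contain the other half; so under the hypothesis $H^2\ell$ is simply undefined.

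With this repair your route is a valid alternative to the paper's. The paper uses the hypothesis to see that $I_f\cap J_f$ and $I_f\cap J_g$ are connected. By induction each cylinder $S^1_w$ is then an interval, so an infinite cylinder is connected and totally disconnected (Lemma~\ref{lemma:distance_estimate}), hence a point. Your uniform expansion plays the role of total disconnectedness, and both arguments ultimately rest on $I_f$ not containing both points of $\ell$.
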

\begin{proof}
Suppose $\ell$ is not contained in $I_f$. Then $I_f\cap J_f$ and $I_f \cap J_g$
both consist of connected closed intervals, and similarly for $I_g$. It
follows by induction that for every $w\in \lbrace f^{-1},g^{-1}\rbrace^n$ the
set $S^1_w$ is either empty or consists of a single connected interval or isolated point, 
and therefore for every infinite word 
$\omega \in \lbrace f^{-1},g^{-1}\rbrace^\N$ the set $S^1_\omega$ is either empty, 
or consists of a single point. 

Now apply Lemma~\ref{lemma:topological_cut_point_classification} and 
Proposition~\ref{proposition:address_is_fiber}.
\end{proof}

On the other hand, limit leaves are dynamical cut leaves: 

\begin{lemma}\label{lemma:limit_leaves_give_cut_points}
Every limit leaf of $\overline{\L}$ is a dynamical cut leaf (and therefore a cut leaf). 
Consequently if $\overline{\L}$ has limit leaves, $K$ has cut points.
\end{lemma}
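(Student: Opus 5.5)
The plan is to show that a limit leaf $\ell'=\lim w_n\ell$ (with $|w_n|\to\infty$) has $H^k\ell'$ defined for every $k$. Once that holds, Lemma~\ref{lemma:topological_cut_point_classification} makes $\ell'$ a cut leaf, and Lemma~\ref{lemma:multiple_preimages_implies_cut_point} then gives a cut point $u(x)=u(y)$ of $K$.

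First fix $k$. For $n$ large we have $|w_n|>k$, and we pass to a subsequence on which the length-$k$ prefix of $w_n$ is a constant word $a=a_1\cdots a_k$. Write $w_n=a v_n$ with $v_n\in\SS_\ell$ by suffix-closure.

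Next, by the proof of Lemma~\ref{lemma:H_z_leaves_L_z}, if $w\ell$ has first letter $f$ then $w\ell\subset J_f$ and $Hw\ell$ is the leaf obtained by deleting that letter. Iterating, the leaf $w_n\ell$ satisfies
\[
w_n\ell\subset a_1 a_2\cdots a_j(J_{a_{j+1}})\quad\text{for each } j<k,
\]
where $J_{a}$ denotes $J_f$ or $J_g$ according to $a$. Each of these sets is a closed interval, being the image of a closed interval under a continuous injective map on its domain. Moreover $a_1\cdots a_j$ extends continuously to the closure of its domain, since $f:I_f\to J_f$ and $g:I_g\to J_g$ are continuous on closed intervals.

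Taking limits, both endpoints of $\ell'$ lie in the closed set $a_1\cdots a_j(J_{a_{j+1}})\cap a_1\cdots a_j(I_{a_{j+1}})$ for every $j$. Peeling letters off one at a time, $\ell'\subset J_{a_1}$ and $H\ell'=a_1^{-1}\ell'$, which is the limit of $a_2\cdots a_kv_n\ell$. Inductively, $H^j\ell'$ is defined and equals $\lim a_{j+1}\cdots a_k v_n\ell$ for all $j\le k$. Here continuity of $f^{-1}$ on $J_f$ and of $g^{-1}$ on $J_g$ (the maps defining $H$) justifies passing the limit through.

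Since $k$ was arbitrary, $H^k\ell'$ is defined for all $k$. Equivalently, by a diagonal argument on prefixes, the infinite word which is a limit point of the $w_n$ in $\{f,g\}^{\N}$ gives a common trajectory for the two endpoints.

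The main obstacle is degeneracy in the limit. The limit points must remain distinct: this holds because $\ell'$ is a leaf of $\overline{\L}$ in the open M\"obius band. Each iterate $H^j\ell'$ must also stay a genuine leaf rather than collapse. This holds because $f^{-1}$ and $g^{-1}$ are expanding and injective on the interiors of $J_f,J_g$, and at the frontier the many-valuedness of $H$ at $J_f\cap J_g$ only enlarges the set of allowed choices, so the prefix letter $a_j$ is always a legitimate choice.

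If $I_f$ is degenerate, one must also check that no limit endpoint lands at the undefined point $\partial I_f$. In that case I would instead appeal to Lemma~\ref{lemma:degenerate}, which directly shows that $0$ is a cut point.
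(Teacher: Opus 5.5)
This is essentially the paper's argument. The paper likewise passes to a subsequence with stabilizing prefixes and uses that the sets $S^1_w$ (points admitting a trajectory with prefix $w$) are closed, which is exactly your peeling step via closedness of $J_f,J_g$ and continuity of $f^{-1},g^{-1}$ on them. From this it concludes that the two endpoints of the limit leaf share a trajectory, and then invokes Lemma~\ref{lemma:topological_cut_point_classification}.

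One small correction: your peeling step never uses the forward maps $f,g$, so it works verbatim when $I_f$ is degenerate. There is no need to fall back on Lemma~\ref{lemma:degenerate}, which only gives the second assertion of the statement and not that the limit leaf is a dynamical cut leaf. Phrasing the conclusion as a common trajectory of the endpoints, as the paper does, also removes your worry about some $H^j\ell'$ collapsing to a point.
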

\begin{proof}
Suppose $\mu = \lim w_n\ell$ is a limit leaf. Let $w_n = \lbrace x_n,y_n\rbrace$
and $\mu = \lbrace x,y\rbrace$ where $x_n \to x$ and $y_n \to y$. By the definition
of $w_n\ell$ the points $x_n$ and $y_n$ have trajectories $T(x_n)$ and $T(y_n)$ with
$T(x_n)|n = T(y_n)|n = w_n^{-1}$. Pass to a subsequence if necessary so that each
$w_n^{-1}$ is a prefix of $w_{n+1}^{-1}$. Then $x_m,y_m \in S^1_{w_n^{-1}}$ for all
$m\ge n$ and therefore the same is true for $x$ and $y$. In particular, $x$ and
$y$ have a trajectory in common, i.e.\/ $\mu$ is a dynamical cut leaf.
\end{proof}

The next proposition addresses the question of when leaves of $\L$ are cut leaves.
This turns out to be a very highly constrained situation.

\begin{proposition}[Finite cut leaf]\label{proposition:finite_cut_leaf}
The following are equivalent for splittable $s\in \M$:
\begin{enumerate}
\item some $\ell' \in \L$ is a cut leaf; and
\item every $\ell' \in \L$ is a cut leaf.
\end{enumerate}
Furthermore if either holds, then $\Lambda$ is a dendrite. In particular,
this holds if $\overline{\L}$ contains an isolated limit leaf.
\end{proposition}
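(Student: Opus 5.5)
The plan is to reduce everything to the single leaf $\ell$, exploiting the equivariance $uf=fu$, $ug=gu$ and the fact that $f,g$ are injective similarities of $\C$.

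\emph{Step 1: (1) implies (2).} Suppose $\ell'=w\ell$ is a cut leaf, say $\ell'=\lbrace x,y\rbrace$ with $u(x)=u(y)$. Write $\ell=\lbrace a,b\rbrace$. Since $w\ell$ is defined, $x=wa$ and $y=wb$, so
$$w\,u(a)=u(wa)=u(wb)=w\,u(b).$$
Because $w$ acts on $\C$ as an injective similarity, $u(a)=u(b)$, so $\ell$ is itself a cut leaf. Conversely, if $\ell$ is a cut leaf then for every $v\in\SS_\ell$ we get $u(va)=v\,u(a)=v\,u(b)=u(vb)$, so every leaf of $\L$ is a cut leaf. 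The implication (2) implies (1) is trivial.

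\emph{Step 2: $\Lambda$ is a dendrite.} Now assume $\ell$ is a cut leaf and set $p=u(\pi/2)=u(3\pi/2)$. The symmetry $\iota$ swaps the two endpoints of $\ell$ and commutes with $u$, so $p=\iota p=-p$, i.e.\ $p=0$. Since $J_f$ and $J_g$ are the two arcs cut off by $\ell$, we have $\partial K=u(J_f)\cup u(J_g)$ with $u(J_f)\subset fK$ and $u(J_g)\subset gK$, glued only at the point $0$. The goal is to show $fK\cap gK=\lbrace 0\rbrace$, as in Lemma~\ref{lemma:degenerate}. I would argue as follows. The leaf $\ell$ corresponds to a crosscut through $0$ made of two external rays, and this crosscut separates $\C-K$ into a part adjacent to $u(J_f)$ and a part adjacent to $u(J_g)$. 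Any point of $fK\cap gK$ other than $0$ would have to lie in the interior of $K$ or on $\partial K$. If it lies on $\partial K$, it has a preimage in $J_f$ or in $J_g$ away from $\ell$, and this contradicts the splitting $u(J_f)\subset f\partial K$, $u(J_g)\subset g\partial K$, using maximality of the components $J_f,J_g$ of $D_f,D_g$. If it lies in the interior, a Jordan-curve argument in $K$ should rule it out, since $fK$ and $gK$ are full sets whose boundaries meet $\partial K$ only along $u(J_f)$ and $u(J_g)$.

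Once $K$ is known to have the cut point $0$ separating $fK$ from $gK$, I would rerun the diameter argument of Lemma~\ref{lemma:degenerate}. Take a component $A$ of $\inte K$ of nearly maximal diameter. Then $A$ lies in $fK$ or $gK$, say $fK$, and $f^{-1}A$ is a component of $\inte K$ whose diameter is larger by the factor $|s|^{-1}$, a contradiction. Hence $K=\Lambda$ has no interior and no Jordan curves, so $\Lambda$ is a dendrite.

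\emph{Step 3: the isolated limit leaf case.} If $\overline{\L}$ has an isolated limit leaf, Corollary~\ref{corollary:isolated_limit} shows that every leaf of $\overline{\L}$, in particular $\ell$, is a limit leaf. Lemma~\ref{lemma:limit_leaves_give_cut_points} then makes $\ell$ a (dynamical) cut leaf, so (1) holds and Steps 1 and 2 apply.

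The main obstacle is the claim in Step 2 that $fK\cap gK=\lbrace 0\rbrace$. Handling the interior points of $K$, and the possibility that $D_f$ is strictly larger than $J_f$ (which is allowed for splittable $s$ without Conjecture~\ref{conjecture:A}), requires care. If the direct argument fails, my fallback is to show that $\ell$ is a cut leaf forces $I_f$ and $I_g$ to be degenerate, and then quote Lemma~\ref{lemma:degenerate}. The idea is that $f^{-1}$ should extend continuously across the point $0$, because both endpoints of $\ell$ land at $0$.
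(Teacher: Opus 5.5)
Your Steps 1 and 3 are fine and match the paper. In Step 3 you pass through Corollary~\ref{corollary:isolated_limit} to make $\ell$ itself a limit leaf, whereas the paper observes that the isolated limit leaf already lies in $\L$; either works.

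\textbf{The gap is in Step 2.} You aim for $fK\cap gK=\lbrace 0\rbrace$, and none of your routes to it holds up.
\begin{itemize}
\item \emph{Boundary points.} A point $q\neq 0$ of $fK\cap gK\cap\partial K$ cannot be interior to $fK$, so it lies in $f\partial K$ and its angles lie in $D_f$. If one of those angles is in the interior of $J_g$, nothing is contradicted. Splittability only gives $J_f\subset D_f$ and $J_g\subset D_g$. Moreover $J_f$ is just the leftmost length-$\pi$ subinterval of a component of $D_f$, and that component may extend past $\partial J_f$ into $J_g$. Excluding this is precisely Conjecture~\ref{conjecture:A}, so there is no maximality to invoke.
\item \emph{Interior points.} This case is only asserted, not argued.
\item \emph{The fallback.} It is unsupported. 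That $\ell$ is a cut leaf says both endpoints of $I_f$ land at $f^{-1}(0)=1/s$, i.e.\ $\ell_f$ is again a cut leaf. It does not say the endpoints of $\ell_f$ coincide. The combinatorics alone cannot force degeneracy: for $E_{0,\lambda}$ with $\lambda=(1+\sqrt5)/2$ one checks $FF\ell=\ell_G$, so $\ell=GFF\ell$ is a periodic (hence dynamical cut) leaf while $I_F\neq S^1$.
\end{itemize}
The condition $fK\cap gK=\lbrace 0\rbrace$ is the conclusion of Lemma~\ref{lemma:degenerate}, which the paper ties to degeneracy of $I_f$ (i.e.\ $\lambda=2$). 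So you are trying to prove something stronger than the proposition needs, and possibly false in this generality.

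\textbf{The missing point.} The diameter argument only needs each component of $\inte K$ to lie in $fK$ or in $gK$. This follows from the crosscut you already wrote down.
\begin{itemize}
\item The rays at angles $\pi/2$ and $3\pi/2$ landing at $0$ cut $K$ into $K_L\cup K_R$ with $K_L\cap K_R=\lbrace 0\rbrace$.
\item The frontier of $K_L$ in $\C$ lies in $u(J_f)\subset f\partial K$.
\item Because $fK$ is full, $K_L\subset fK$. Otherwise a point of $K_L\setminus fK$ could be joined to $\infty$ inside the connected open set $\C\setminus fK$, and that path would have to cross the frontier of $K_L$, which lies in $fK$.
\item Likewise $K_R\subset gK$.
\item A component of $\inte K$ is connected and misses $0\in\partial K$, so it lies in $K_L$ or in $K_R$.
\end{itemize}
This is exactly the paper's claim that ``$0$ disconnects $K$ into two components, one contained in $fK$ and one contained in $gK$''. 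Your diameter argument then finishes the proof without ever deciding whether $fK\cap gK$ is larger than $\lbrace 0\rbrace$.
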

\begin{proof}
If $\ell':=\lbrace x,y\rbrace$ is a leaf and $f\ell'$ is defined
then $fu(x) = u(f(x))$ and $fu(y)=u(f(y))$ so $\ell'$ is a cut
leaf if and only if $f\ell'$ is. The equivalence of the two conditions
follows from this.

Next we show either condition implies $\Lambda$ is a dendrite. 
Let $\ell=\lbrace x,y\rbrace$ which by hypothesis
is a cut leaf, and let $p=u(x)=u(y)$. Since $\ell$ is invariant under $\iota$
so is $p$, hence $p=0$. Thus the point $0$ disconnects $K$ into two components, 
one contained in $fK$ and one contained in $gK$ (we do not assume that $s\in \partial \M$
or that Conjecture~\ref{conjecture:A} holds). We shall show that $K$
(and therefore also $\Lambda$) is a dendrite.

The proof is very similar to the proof of Lemma~\ref{lemma:degenerate}.
Suppose $K$ is not a dendrite, equivalently that it has nonempty interior.
Let $A$ be a component of the interior of $K$ of diameter at least $D-\epsilon$, 
where $D$ is the supremum of the diameters of components of the interior of $K$.
Since $0$ disconnects $K$ into two components, $A$ is entirely contained in $fK$
or in $gK$; without loss of generality, say it is contained in $fK$. 
But then $f^{-1}A$ is also contained in the interior of $K$, and has
diameter at least $\diam(A)\cdot |s|^{-1}$, contrary to the
definition of $A$ providing $\epsilon$ is small enough (depending on $|s|$).
In particular, $K$ (and hence $\Lambda$) is a dendrite after all.

If $\overline{\L}$ contains an isolated limit leaf $\mu$ then since $\mu$ is a
limit of leaves in $\L$, it must actually be a leaf of $\L$. Since any
limit leaf is a dynamical cut leaf (and therefore a cut leaf) by
Lemma~\ref{lemma:limit_leaves_give_cut_points}, the hypothesis of the
proposition holds and $\Lambda$ is a dendrite.
\end{proof}

It is asserted in \cite{Solomyak}, p.1930 that $\Lambda$ is a dendrite if and only
if $f\Lambda \cap g\Lambda$ consists of a single point; Solomyak attributes this
to Bandt--Keller \cite{Bandt_Keller}. One direction is true; if 
$|f\Lambda \cap g\Lambda| = 1$ then $\Lambda$ is a dendrite and $s\in \partial \M$;
see Lemma~\ref{lemma:degenerate}.
However there are many examples of dendritic $\Lambda$ 
where $|f\Lambda \cap g\Lambda|=\infty$ and
$s$ is contained in the interior of $\M$. This happens for many algebraic $s$
for which there are words $u,v$ with $|u|=|v|$ starting with $f$ and $g$ respectively
for which $u\Lambda = v\Lambda = f\Lambda \cap g\Lambda$;
see Example~\ref{example:interior}.

One reason that it is difficult to give a complete description of the set of $s$ with
dendritic $\Lambda$ is that there are examples for which $\Lambda$ is not a
dendrite, but comes remarkably close in a certain sense.

\begin{example}[Topological $\R$-tree]\label{example:R_tree}
Let $s \sim 0.41964 + 0.60629i$ be a root of $z^3 + z^2 - z + 1$. Then
$\Lambda$ is homeomorphic to a closed disk, and $fgff \Lambda = gfgg \Lambda$ 
(in particular $fgff z = s^4 z = gfgg z$ for all $z\in \C$) and
furthermore $fgff\; \inte(\Lambda) = f\;\inte (\Lambda) \cap g\; \inte (\Lambda)$ where
$\inte$ denotes interior.

On the other hand, it turns out that $\Lambda$ contains a dense path-connected subset $T$
satisfying $T = fT \cup gT$ in which every two points are joined by a 
unique path and every point is a cut point (of course, $T$ is not closed).
The set $T$ in its path topology is a topological $\R$-tree, 
and a suitable limit of rescalings of $T$ gives an example of what is known as a 
{\em half-zipper} \cite{Calegari_Gwynne}.

Let's first describe one path $\alpha$ in $T$, and then we can take 
$T = \bigcup_{w\in \SS} w\alpha$. The path $\alpha$ joins the fixed point of $f$
to the fixed point of $g$, and is obtained as the limit of a sequence of
polygonal paths $\alpha_n$ joining $f^n(0)$ to $g^n(0)$ that interpolate
a sequence of points $w_i(0)$ where $|w_i|=n$ for each $i$, and where 
$w_0 = f^n$, and $w_N = g^n$ for some $N$. 
Here is a recursive procedure to define $w_{i+1}$ assuming $w_i$ is not equal to $g^n$.
We will find a string $\sigma$ in $w_i$ where either $|\sigma|=4$ or
$\sigma$ is a suffix of $w_i$ with $|\sigma|\le 3$ such that
either $\sigma$ is equal to one of $fgff$ or $gfgg$ (if $|\sigma|=4$), 
or $\sigma$ is a prefix of one of them (if $|\sigma|<4$). 
Call such a substring {\em flippable}. Then we will `flip the bits' of $\sigma$, 
i.e.\/ change $f$s to $g$s and vice versa; this will produce $w_{i+1}$. 

Here is how to find the correct flippable substring $\sigma$. 
Our first guess for $\sigma$ is $\tau$, the maximal
length substring of $w_i$ of length $\le 4$ that begins with $f$. If no such substring
exists, then $w_i = g^n$ and therefore $i=N$. If $\tau$ is flippable,
set $\sigma = \tau$; otherwise $\tau$ starts like one of $fgff$ or $gfgg$ but has
a first `incorrect' letter. Let $\tau'$ be the maximal length $\le 4$ substring of $w_i$
that starts at the first incorrect letter of $\tau$, and then update $\tau \leftarrow \tau'$.
Continue in this way until $\tau$ is flippable (this must happen at some point, because
the first letter of $\tau$ keeps moving to the right, and any string of length 
$1$ is flippable).

If $w_{i+1}$ is obtained from $w_i$ by flipping a string of length $4$ then
$w_{i+1}(0) = w_i(0)$. Otherwise $w_{i+1}(0)$ and $w_i(0)$ are joined by a
straight segment of $\alpha_n$ of length $d(w_{i+1}(0),w_i(0)) \le C\cdot|s|^n$. Furthermore,
by construction, $\alpha_{n+1}$ is obtained from $\alpha_n$ by replacing each such
straight segment by at most two segments of length $\le C\cdot|s|^{n+1}$. Thus
the $\alpha_n$ converge to a limit $\alpha_\infty$. One may check that
$\alpha_\infty$ is a Jordan arc; the arc $\alpha$ is its interior. See Figure~\ref{half_zipper}.

\begin{figure}[htpb]
\centering
\includegraphics[scale=0.5]{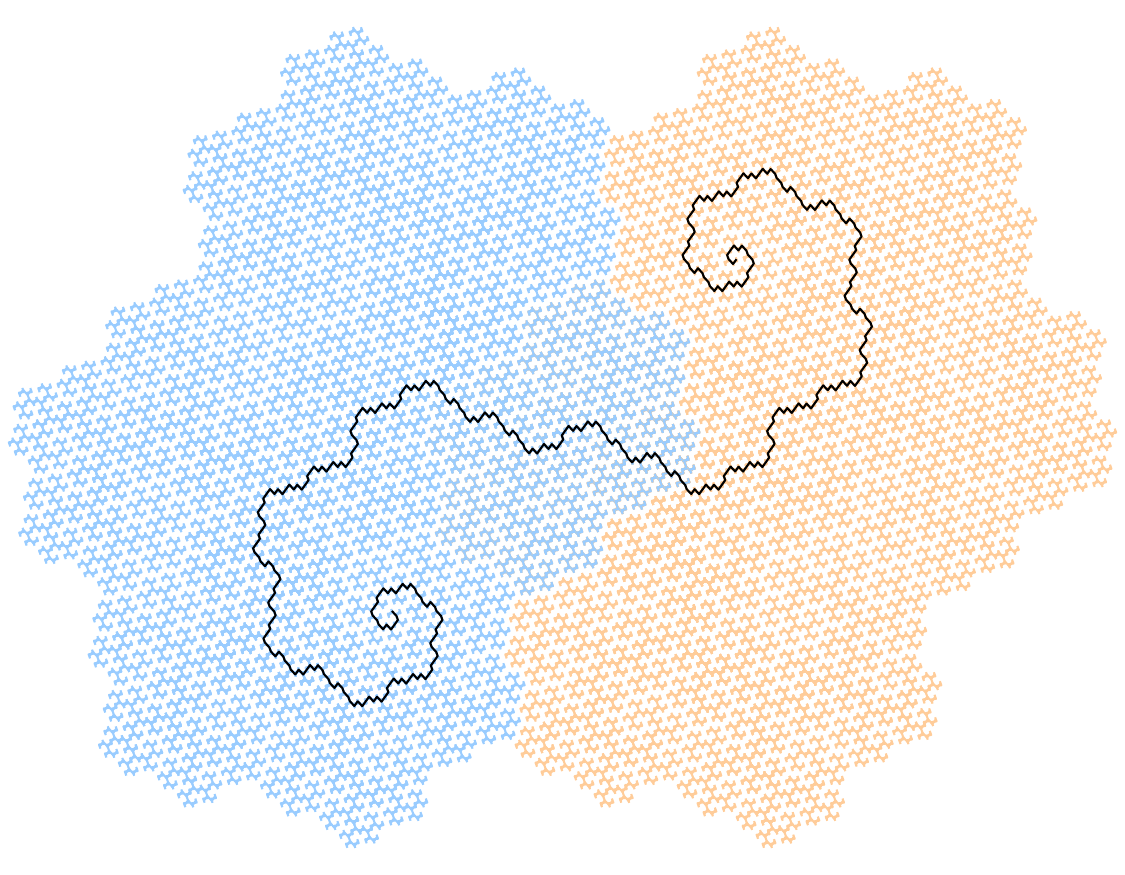}
\caption{For $s \sim 0.41964 + 0.60629i$ a root of $z^3 + z^2 - z + 1$ 
there is a topological $\R$-tree $T$ dense in $\Lambda$ with $T = fT \cup gT$.
Here $fT$ is in blue and $gT-fT$ is in orange.
The tree $T$ is the union of iterates $T = \bigcup_w w\alpha$ where $\alpha$ is
the interior of a certain Jordan arc (in black) joining the fixed points of $f$ and $g$.}
\label{half_zipper}
\end{figure}
\end{example}

\subsection{Cut leaves and limit leaves}\label{subsection:cut_versus_limit}

Lemma~\ref{lemma:limit_leaves_give_cut_points} says that every 
limit leaf of $\L$ is a dynamical cut leaf and therefore a cut leaf. 

What about the converse? Suppose $s\in \partial\M$ is such that there
is $p\in K$ for which $u^{-1}p$ has cardinality at least 4. For any two
points $x,y\in u^{-1}p$ the leaf $\mu:=\lbrace x,y\rbrace$ is a cut leaf; in particular
one may find a pair of cut leaves that cross. On the other hand, limit leaves
are contained in the lamination $\overline{\L}$ and therefore never cross.

Are there any $s\in \partial \M$ with this property? Whenever $f\Lambda \cap g\Lambda$
consists of a single point, $\Lambda$ is a dendrite, and unless $s$ is real, this
dendrite has countably infinitely many points $p$ for which $u^{-1}p$ has cardinality
at least $3$. Extensive computer experiments have failed to find an example where
some $u^{-1}p$ has cardinality $>3$ and therefore we are motivated to make the 
following


\begin{conjecture}[Order 3]\label{conjecture:cut_is_limit}
Let $s \in \partial \M$ and suppose
Conjecture~\ref{conjecture:A} holds for $s$. Then every cut leaf is a limit leaf.
\end{conjecture}

In \S~\ref{section:linear_dynamics} we shall describe a class of dynamical
systems that generalizes the action of $H$ on $S^1$. In this family one may 
find many examples with dynamical cut leaves that are not limit leaves based on 
cardinality arguments as above; see Example~\ref{example:cut_not_limit}.

\subsection{An algorithm to detect cut leaves}
\label{section:topological_classification_of_actions}

Lemma~\ref{lemma:topological_cut_point_classification} gives a
sufficient condition (and, conditional on 
Conjecture~\ref{conjecture:A} a necessary one) 
for $\Lambda$ to have cut points. However, 
there is no obvious way to turn this condition into a 
finite certificate one way or the other.

In this section we give a finite algorithm to 
determine whether or not there are dynamical cut leaves, and therefore
conditional on Conjecture A, to determine whether or not $\Lambda$ has cut points.

\begin{definition}
\label{definition:topological_leaf_hiding}
Let $\mu$ and $\nu$ be leaves which do not cross $\ell$. 
We say that $\mu$ {\em hides} $\nu$ if the geodesic
$\gamma_\mu$ separates the geodesic $\gamma_\ell$ from the
geodesic $\gamma_\nu$ in $\D$. 

We say that two leaves \emph{interact} if they cross, or if one 
of them hides the other.
\end{definition}

We have already seen in Lemma~\ref{lemma:degenerate} that if
$I_f$ (and therefore also $I_g$) are degenerate, then $\Lambda$ is a dendrite. 
Otherwise we may define $\ell_f$ and $\ell_g$ to be the leaves 
with endpoints $\partial I_f$ and $\partial I_g$ respectively. 

It may be that one of these leaves (and therefore both of them) cross $\ell$; 
this happens exactly when $\L = \lbrace \ell\rbrace$.
Otherwise by replacing $s$ by $-s$ if necessary we may assume that $\ell_f$ lies
entirely in $J_g$ (see Lemma~\ref{lemma:simplifying_assumption_intervals}).

\begin{theorem}[Interaction]\label{theorem:interaction}
Let $s\in \M$ be splittable. If $I_f$ (and therefore also $I_g$) is degenerate,
$\Lambda$ is a dendrite. 

Otherwise, assume (by replacing $s$ by $-s$ if necessary) that
$\ell_f$ has at least one endpoint in $J_g$. Let $w \in \SS$ have least length 
so that $w\ell$ interacts with $\ell_f$, if any 
(we allow the possibility that $w$ is empty if $\ell$ crosses $\ell_f$).
The following are true:
\begin{enumerate}
\item{some such $w$ exists;}
\item{the letters of $w$ alternate between $f$ and $g$;} 
\item{it cannot be the case that $\ell_f$ hides $w\ell$ from $\ell$;}
\item{if $w\ell$ hides $\ell_f$ from $\ell$ then there are dynamical cut leaves 
(and consequently $\Lambda$ contains cut points); and}
\item{if $\ell_f$ crosses $w\ell$ then there are no dynamical cut leaves.}
\end{enumerate}
In particular, 
if $s\in \partial \M$ and Conjecture~\ref{conjecture:A}
holds, then if $\ell_f$ crosses $w\ell$ there are no cut points in $\Lambda$.
\end{theorem}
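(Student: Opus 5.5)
The plan is to analyse the configuration of $I_f,I_g,J_f,J_g$ on the circle and track the leaves $w\ell$ for $w$ alternating, using the fact that $f,g$ are strict contractions (Lemma~\ref{lemma:expanding_map}) and that $\iota$ conjugates $f$ to $g$. The degenerate case is Lemma~\ref{lemma:degenerate}. If $\ell$ crosses $\ell_f$ we may take $w$ empty; by Proposition~\ref{proposition:props_of_L} this is the case $\L=\lbrace\ell\rbrace$, and Lemma~\ref{lemma:unique_trajectories} gives (5). So we assume $\ell\subset I_f\cap I_g$, with $\ell_f$ in $J_g$. Since $|I_f|>\pi$, the complementary arc $S^1-I_f$ is a short arc inside $J_g$. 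The leaf $\ell_f$ cuts off exactly this arc from the rest of the disk, and its side away from $\ell$ is $S^1-\inte(I_f)$.

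\textbf{Items (1) and (2).} Consider the leaf $g\ell\subset J_g$. Either it interacts with $\ell_f$, or it lies on the $\ell$-side of $\ell_f$, so $g\ell\subset I_f$. In the latter case $fg\ell$ is defined and lies in $J_f$. The key observation is that any $w$ starting with $ff$ or $gg$ has $w\ell$ nested inside some shorter leaf $f\ell'$ or $g\ell'$, whose image position relative to $\ell_f$ is already decided. Concretely, I would show by induction that the leaves $w\ell$ for non-alternating $w$ are hidden behind an alternating leaf of no greater length; hence the minimal interacting $w$ must alternate. For existence, the alternating leaves $(fg)^n\ell$ and $(gf)^n\ell$ shrink, by contraction, toward the fixed points of $fg$ and $gf$. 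I would argue that if none of them ever interacts with $\ell_f$, then every $(gf)^n\ell$ stays in $I_f$, so $H$ is defined along them forever. The limit leaf would then be a cut leaf forced to sit in $S^1-I_f$ or on $\ell_f$, contradicting the placement of $\partial I_f=H\ell$. This step I expect to be the main obstacle: pinning down where the limit of the alternating leaves lies relative to $\ell_f$ requires exploiting $\ell_f=H\ell$ and the rotation symmetry carefully.

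\textbf{Item (3).} Suppose $\ell_f$ hides $w\ell$. Write $w=aw'$, with $w'\ell$ not interacting with $\ell_f$ by minimality. Then $w\ell=a(w'\ell)$ lies in $S^1-I_f$. But the image of $f$ is $J_f$ and the image of $g$ restricted to $I_g$ is $J_g$, so we need $a=g$. Applying $H=g^{-1}$ sends the hidden region to the complement of $I_g$ intersected with the image, and $w'\ell$ then lands beyond $\ell_g=\iota\ell_f$. That contradicts minimality, or forces $w'\ell$ to cross $\ell$, which is impossible since $\L$ is a lamination.

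\textbf{Items (4) and (5).} If $w\ell$ hides $\ell_f$, the region behind $w\ell$ contains $S^1-I_f$. Then the closed arc $A$ cut off by $w\ell$ satisfies $H(A)\supset$ a region containing $\ell$, and I would build a nested sequence of leaves $w^k\ell$ (or $w$-periodic ones) using $w^{-1}$ mapping into itself. These converge to a limit leaf, which is a dynamical cut leaf by Lemma~\ref{lemma:limit_leaves_give_cut_points}; consequently $\Lambda$ has cut points. If instead $\ell_f$ crosses $w\ell$, I would mimic Lemma~\ref{lemma:unique_trajectories}. I would show by induction, using the crossing to cut the circle at $\partial I_f$, that each $S^1_v$ is a single interval or point. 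Then each $S^1_T$ is a point, so there are no dynamical cut leaves, and the final conditional statement follows from Lemma~\ref{lemma:topological_cut_point_classification}.
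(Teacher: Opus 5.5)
Your treatment of the degenerate case, of empty $w$, and of (2) is fine. Item (2) is the exposed-leaves argument: $f\ell$ hides $ffu'\ell$, the hiding leaf is strictly shorter, and hiding is transported by $g$ because shorter leaves do not interact with $\ell_g$. However, (1) is left unproved, and the arguments for (3), (4) and (5) each rest on a step that fails.

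\emph{Item (1).} Nothing forces the limit of $(gf)^k\ell$ into $S^1-I_f$ or onto $\ell_f$. The direct route is Lemma~\ref{lemma:no_intervals_of_trajectories}, applied to the open arc $S^1-I_f$ cut off by $\ell_f$. Some image of an endpoint of $\ell$ lands in that arc, and the corresponding leaf interacts with $\ell_f$.

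\emph{Item (3).} Since $g^{-1}$ maps $J_g$ onto $I_g$, the leaf $w'\ell=g^{-1}(w\ell)$ can never land beyond $\ell_g$. In fact $w'\ell$ (alternating, starting with $f$) sits harmlessly in $J_f\cap I_g$ beside $\ell_g$. There is no local contradiction at all.

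\emph{Item (4).} If $|w|$ is odd, then $w$ maps $J_g$ onto the arc behind $w\ell$, which lies inside $J_g$. Since $w$ is a strict contraction, the nested leaves $w^k\ell$ shrink to the fixed point of $w$. The limit is degenerate and gives no cut leaf. ``$w^{-1}$ maps the arc over itself'' is precisely the mechanism that kills the limit. For even $|w|$ you give no reason the limit is nondegenerate.

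\emph{Item (5).} The claim that every $S^1_v$ is a single interval is false once $\L$ is infinite. Indeed $S^1_{(f^{-1},g^{-1})}=f(I_f\cap J_g)$ has two components, because $I_f\cap J_g$ is $J_g$ minus the open arc $S^1-I_f$. A dynamical cut leaf is exactly a pair of points lying in different components of the same $S^1_v$ for every prefix $v$. So no connectivity induction can rule such leaves out.

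The missing idea is a comparison of finite ideal polygons. Let $P_f,P_g$ be the polygons bounded by $\ell$ and the $2n$ exposed alternating leaves of depth $\le n$. Let $Q_f,Q_g$ be obtained from them by replacing $w^c\ell$ and $w\ell$ with $\ell_g$ and $\ell_f$. The map $f$ sends the sides $\ell_f,\ell,u\ell$ of $Q_g$ to the sides $\ell,f\ell,fu\ell$ of $P_f$. Hence $f$ maps $Q_g$ onto $P_f$, taking ideal segments (arcs of $S^1$ between consecutive sides) onto ideal segments; symmetrically $g$ maps $Q_f$ onto $P_g$.

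If $\ell_f$ hides $w\ell$, then $Q_g\subsetneq P_g$. So $f$ would map ideal segments of total length less than that of $P_g$, which equals that of $P_f$ by $\iota$-symmetry, onto those of $P_f$. This contradicts Lemma~\ref{lemma:expanding_map} and gives (3).

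If $w\ell$ hides $\ell_f$, then $P_g\subset Q_g$. The alternating sequence $w\ell,fw\ell,gfw\ell,\dots$ then always has its two endpoints on distinct ideal segments of $P_f$ or $P_g$. Each such leaf cuts off the arc behind at least one of finitely many sides, so the leaf lengths are bounded below. A subsequence therefore converges to a nondegenerate limit leaf, which gives (4).

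If $\ell_f$ crosses $w\ell$, first push a putative dynamical cut leaf forward until it hides $f\ell$, so its endpoints lie on distinct ideal segments of $P_f$. Let $\sigma_f$ be the ideal segment of $Q_f$ containing an endpoint of $w^c\ell$, and $\sigma'_f$ the sub-segment of $P_f$ inside it. The first return map $\sigma'_f\to\sigma_f$ is expanding and moves their common vertex, so some iterate of it is undefined everywhere. Thus some forward image of the leaf has an endpoint in $\sigma_f-\sigma'_f$ or its $\iota$-analogue. That image crosses $\ell_f$ or $\ell_g$, and two steps later $H$ is undefined. This gives (5).
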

\begin{proof}
If $I_f,I_g$ are degenerate, Lemma~\ref{lemma:degenerate} implies
that $\Lambda$ is a dendrite. So suppose $I_f,I_g$ are not degenerate, and
$\ell_f$ has at least one endpoint in $J_f$.

The proof reduces to a careful study of the action of $f$ and $g$ on leaves of
$\L$ together with $\ell_f$ and $\ell_g$. Under our assumption about
$\ell_f$, if $\ell$ does not cross $\ell_f$
then $\ell_f$ lies to the right of $\ell$, so if $w\ell$
interacts with $\ell_f$ the word $w$ must begin with $g$. For any
word $w\in \SS$ let $w^c$ denote the word obtained from $w$ by replacing each
$f$ with $g$ and vice versa; i.e.\/ $w^c$ is obtained from $w$ by `flipping the bits'
of $w$, as in Example~\ref{example:R_tree}. Thus if $w\ell$ interacts with $\ell_f$,
it follows that $w^c\ell$ interacts with $\ell_g$. We fix the word $w$ and this
notation for the remainder of the proof.

\medskip 

Since $I_f$ and $I_g$ are not degenerate, $\ell_f$ and $\ell_g$
are honest leaves. Let $I \subset S^1$ be the open interval bounded by 
the endpoints of $\ell_f$. 
Lemma~\ref{lemma:no_intervals_of_trajectories} says that $I$ 
contains $wp$ for $p\in J_f$ and some (least) $w\in \SS$. 
But then $w\ell$ interacts with $\ell_f$. This proves (1).

\medskip

The next step is to prove (4) (and along the way, (2)). Let $w$ be chosen with $|w|=n$ minimal so that
$w\ell$ interacts with $\ell_f$. We will show that if $w\ell$ hides $\ell_f$ then $\L$
has a limit leaf, and therefore by Lemma~\ref{lemma:limit_leaves_give_cut_points}
it will follow that $\Lambda$ has a cut point.

If $A$ is any set of leaves not crossing
$\ell$, then $\lambda \in A$ is {\em exposed} if no other $\mu \in A$ hides
$\lambda$ from $\ell$. For any such $A$ we can take the subset of exposed leaves.

\begin{lemma}[Exposed leaves]\label{lemma:exposed_leaves}
Let $A$ be the set of leaves of $\L-\ell$ of depth $\le n$ where $n=|w|$ minimal
that interacts with $\ell_f$. Then the set $A_E$ of exposed leaves of $A$ contains
exactly $2n$ elements which are all and only those of the form $u\ell$ where the
letters of $u$ alternate between $f$ and $g$.
\end{lemma}
\begin{proof}
First of all, $u\ell$ is to the left resp. right of $\ell$ if $u$ starts
with $f$ resp. $g$. In particular, $f\ell$ hides $ffu'\ell$ and $g\ell$ hides
$ggu'\ell$ for any $|u'|\le n-2$. Thus if $u\ell$ is exposed, $u$ cannot
begin with $ff$ or $gg$.

Second of all, if $|u|<n$ and starts with $f$ then $u\ell$ does not
interact with $\ell_g$ (by the definition of $n$). Thus if $u\ell$ hides $v\ell$
then $gu\ell$ hides $gv\ell$; see Figure~\ref{non_interact_hide}.

\begin{figure}[htpb]
\labellist
\small\hair 2pt
\pinlabel $\ell$ at 100 110
\pinlabel $\ell_g$ at 55 140
\pinlabel $g\ell$ at 150 130
\pinlabel $\color{blue}u\ell$ at 75 70
\pinlabel $\color{red}v\ell$ at 50 42
\pinlabel $\color{blue}gu\ell$ at 140 50
\pinlabel $\color{red}gv\ell$ at 150 5
\endlabellist
\centering
\includegraphics[scale=0.8]{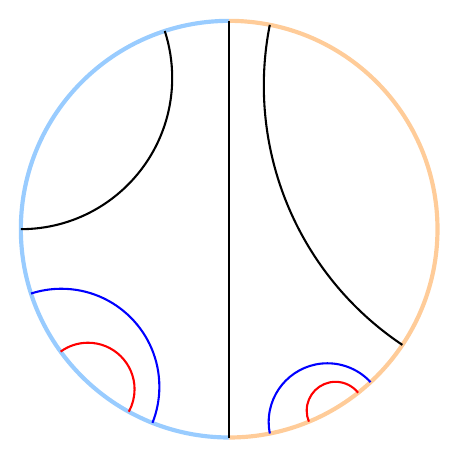}
\caption{If $u$ starts with $f$ and $u\ell$ hides $v\ell$ and does 
not interact with $\ell_g$ then $gu\ell$ hides $gv\ell$.}
\label{non_interact_hide}
\end{figure}

In particular, if $v\ell$ is hidden with $|v|<n$, then $fv\ell$ and $gv\ell$ are 
hidden. It follows that the only exposed leaves $u\ell$ must have $u$ alternate
between $f$ and $g$. By induction and the previous discussion none of these hide
each other. The proof follows.
\end{proof}

Note that Lemma~\ref{lemma:exposed_leaves} proves (2).

Now let's suppose $w\ell$ of least depth hides $\ell_f$, and similarly $w^c\ell$ of
least depth hides $\ell_g$. By construction, $w,w^c\in A_E$ are the unique leaves
of depth $n$. Let $B_E$ be the set of leaves obtained from $A_E$ by removing
$w\ell$ and $w^c\ell$, and adding $\ell_f$ and $\ell_g$. 
Let $P_f$ be the infinite hyperbolic `polygon' in $\D$ bounded by $\ell$ and the
leaves of $A_E$ on the left of $\ell$, and $Q_f$ the `polygon' bounded by $\ell$
and the leaves of $B_E$ on the left of $\ell$. Since $w^c\ell$ hides $\ell_f$
it follows that $P_f$ is contained in $Q_f$; and likewise $P_g$ is contained in $Q_g$.
The edges of each of $P_f$ and $Q_f$ alternate between leaves, and segments in
$J_f$; call the edges which are segments of $J_f$ the {\em ideal segments}. See
Figure~\ref{dynamics_of_P_and_Q}.

\begin{figure}[htpb]
\labellist
\small\hair 2pt
\pinlabel $P_f$ at 80 110
\pinlabel $Q_g$ at 140 110
\pinlabel $\ell_g$ at 32 140
\pinlabel $w^c\ell$ at 70 138
\pinlabel $w\ell$ at 153 82
\pinlabel $\ell_f$ at 188 80
\endlabellist
\centering
\includegraphics[scale=0.8]{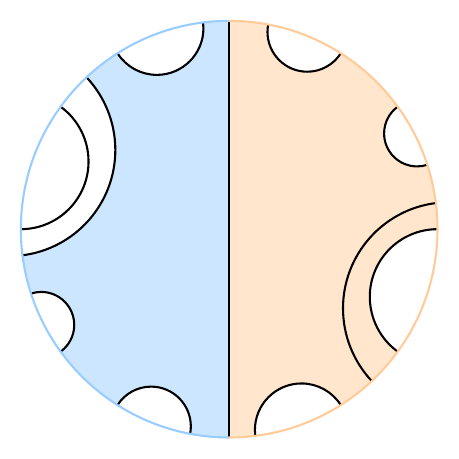}
\caption{The polygon $P_f$ is contained in $Q_f$. Since $f:Q_g \to P_f$ takes
ideal segments to ideal segments, the restriction $f:P_g \to P_f$ takes
distinct ideal segments inside distinct ideal segments.}
\label{dynamics_of_P_and_Q}
\end{figure}

By construction, $f$ takes $Q_g$ to $P_f$, taking ideal segments to ideal segments;
in particular, $f$ takes $P_g$ inside $P_f$, taking distinct ideal segments of $P_g$
inside distinct ideal segments of $P_f$, and likewise for $g:P_f \to P_g$. In particular,
every element of the infinite sequence of leaves 
$$w\ell \to fw\ell \to gfw\ell \to fgfw\ell \to \cdots$$
has endpoints on {\em distinct} ideal segments of $P_f$ or $P_g$. In particular, this
sequence of leaves has a subsequence that converges to a nontrivial limit $\ell'$
contained in $\overline{\L}$. In other words, $\L$ has a limit leaf, which is 
necessarily a dynamical cut leaf by Lemma~\ref{lemma:limit_leaves_give_cut_points}. 
This proves (4).

\medskip

The same argument explains why $\ell_f$ cannot hide $w\ell$, for otherwise
(with notation as above) the map $f$ would take $Q_g$ to $P_f$ as before, but
now the sum of the lengths of the ideal segments of $Q_g$ is {\em strictly smaller} 
than the sum of the lengths of the ideal segments of $P_f$, so that $f$ would act as
a strict expansion, contrary to the fact that it acts as a strict contraction
by Lemma~\ref{lemma:expanding_map}. This proves (3).

\medskip

Finally we show that if $\ell_f$ crosses $w\ell$ there are no dynamical cut leaves.

Define polygons $P_f$ and $Q_f$ as
before (and similarly for $P_g$ and $Q_g$). Note that the hypothesis that 
$\ell_f$ crosses $w\ell$ means that neither of $P_g$ and $Q_g$ contains the other. 
Note that $H$ takes $P_g$ to $Q_f$ and $P_f$ to $Q_g$, taking
ideal segments to ideal segments.

We suppose to the contrary that a dynamical cut leaf $\mu$ exists, 
and without loss of generality we may assume it is to the left of $\ell$.
The image $H^n\mu$ is defined for all $n$ if and only if $H^n\mu$ never crosses $\ell$,
which implies that $H^n\mu$ never crosses any leaf of $\L$. In particular,
the endpoints of $\mu$ lie in the ideal segments of $P_f$. Suppose $\mu$ does
not hide $f\ell$. Then $\mu$ hides an interval $I\subset J_f$ from $\ell$ that
is disjoint from $f\ell$, and therefore $H\mu = f^{-1}\mu$ hides an interval 
$H I = f^{-1} I \subset J_g$ from $\ell$. If $H\mu$ does not hide $g\ell$ we
can repeat this process inductively obtaining a sequence of intervals $H^n I$ all
disjoint from $f\ell$ and $g\ell$. But $\diam(u H^n I) = |s|^{-1} \cdot \diam(u H^{n-1} I)$
and therefore this process must stop at some finite time. Hence, after replacing
$\mu$ with some finite $H^n \mu$ if necessary, we may assume $\mu$ hides $f\ell$ from $\ell$.
In particular, the endpoints of $\mu$ lie on {\em distinct} ideal segments of $P_f$.

Now, since $w^c\ell$ crosses $\ell_g$ there is a unique ideal segment 
$\sigma_f$ of $Q_f$ that contains an endpoint of $w^c\ell$ in its interior, and
a subsegment $\sigma'_f$ of $P_f$ contained in $\sigma_f$ (and likewise
there are $\sigma_g$ in $Q_g$ and $\sigma'_g$ in $P_g$). Let $\sigma:=\sigma_f\sqcup \sigma_g$
and $\sigma':=\sigma'_f \sqcup \sigma'_g$. Every point in an ideal segment of $Q_f$
is either in $\sigma_f -\sigma'_f$ or its is contained in an ideal segment of $P_f$ and
therefore its image under $H$ is contained in an ideal
segment of $Q_g$, and similarly with $f$ and $g$ interchanged. Thus, iterating $H$ defines a
first return map $R:\sigma' \to \sigma$. We may iterate $R$ to obtain a partially defined
first return map $R': \sigma'_f \to \sigma_f$. By construction, this map is 
orientation-preserving, and is either undefined everywhere, or 
moves the common vertex $p \in \sigma'_f \cap \sigma_f$ strictly off itself. 
To see this, note that some iterate $H^m p$ is an endpoint of
$\ell_f$ or $\ell_g$ in the interior of some ideal segment of $P_g$ or $P_f$. 
Since $R'$ is strictly expanding on its domain of definition 
in the Lesbesgue measure by Lemma~\ref{lemma:expanding_map}, it follows that
some iterate of $R'$ is undefined everywhere. See Figure~\ref{dynamics_of_P_and_Q_cross}.

\begin{figure}[htpb]
\labellist
\small\hair 2pt
\pinlabel $P_f$ at 80 110
\pinlabel $Q_g$ at 140 110
\pinlabel $\ell_g$ at 32 140
\pinlabel $w^c\ell$ at 70 138
\pinlabel $w\ell$ at 153 82
\pinlabel $\ell_f$ at 188 80
\pinlabel $\color{red}\sigma_f$ at 20 180
\pinlabel $\color{blue}\sigma_f'$ at 35 210
\endlabellist
\centering
\includegraphics[scale=0.8]{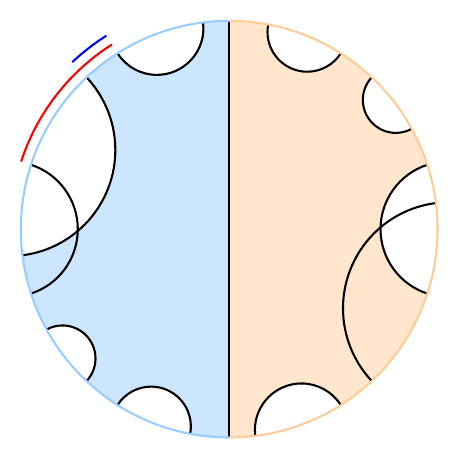}
\caption{When $w^c\ell$ and $\ell_g$ cross we obtain ideal segments $\sigma_f$ in
$Q_f$ and a subsegment $\sigma'_f$ in $P_f$. The first return map of $\sigma'_f$
to $\sigma_f$ is eventually undefined.}
\label{dynamics_of_P_and_Q_cross}
\end{figure}

This means that as we iterate $H$ on $\mu$ there is a first $n$ for which at least
one vertex of $H^n\mu$ is in $\sigma - \sigma'$. But $\mu$ (and therefore $H^n\mu$)
has vertices on distinct ideal segments of $P_f$ or $P_g$ (depending what side of
$\ell$ it is on). It follows that $H^n\mu$ crosses $\ell_f$ or $\ell_g$ transversely,
so that $H^{n+1}\mu$ crosses $\ell$ transversely and $H^{n+2}\mu$ is undefined.
This proves (5).

Finally, if $s\in \partial \M$ and satisfies Conjecture~\ref{conjecture:A} then
cut leaves are dynamical cut leaves by Lemma~\ref{lemma:topological_cut_point_classification}.
This completes the proof.
\end{proof}

\section{Linearization}\label{section:linear_model}

In this section, we show that the action of $H$ on $S^1$ is topologically
conjugate to a (discontinuous) linear map, depending on two parameters
$\lambda \in [1,2]$ and $\theta \in S^1$. 

\subsection{A linear model}

We parameterize $S^1$ as $\R/2\pi\Z$. 
If $A,B$ are closed intervals in $S^1$, there is a unique linear map from
$A$ to $B$ which preserves the circular order. We define a two (real) parameter
family of piecewise continuous linear endomorphisms of $S^1$.

\begin{definition}[Linear model]\label{definition:linear_model}
Let $J_F$ and $J_G$ be the intervals in $S^1$ to the left and to the right
of the vertical bisector; i.e.\/ their common endpoints are at $\pi/2$ and $3\pi/2$.
For $\lambda \in [1,2]$ and $\theta \in S^1$ define $G_{\theta,\lambda}^{-1}:J_G \to S^1$ and
$F_{\theta,\lambda}^{-1}:J_F \to S^1$ by 
$$G_{\theta,\lambda}^{-1}(t) = \lambda t + \theta \text{ and } F_{\theta,\lambda}^{-1} = \iota G_{\theta,\lambda}^{-1} \iota$$
Then define
$E_{\theta,\lambda}:S^1 \to S^1$ by
$$E_{\theta,\lambda}(x) = F_{\theta,\lambda}^{-1}(x) \text{ for } x\in J_F - J_G \text{ and }
E_{\theta,\lambda}(x) = G_{\theta,\lambda}^{-1}(x) \text{ for } x \in J_G - J_F$$
and for $x\in J_F \cap J_G$ define $E_{\theta,\lambda}(x)$ to be either 
$F_{\theta,\lambda}^{-1}(x)$ or $G_{\theta,\lambda}^{-1}(x)$. Thus $E_{\theta,\lambda}$ is
many-valued in the same way as $H$.

Define $I_F$ and $I_G$ to be the images of $J_F$ and $J_G$ under 
$F_{\theta,\lambda}^{-1}$ and $G_{\theta,\lambda}^{-1}$, and say that $I_F$ and $I_G$ are
degenerate if they are equal to all of $S^1$ (which happens if and only if $\lambda=2$).

Define $F_{\theta,\lambda}$ and $G_{\theta,\lambda}$ to be inverse to $F_{\theta,\lambda}^{-1}$
and $G_{\theta,\lambda}^{-1}$ on their domains of definition, unless $\lambda = 2$ in which case
$F_{\theta,\lambda}$ is undefined on the common image $F_{\theta,\lambda}^{-1}(\partial J_F)$.
\end{definition}
In the sequel we will suppress subscripts where this is unambiguous. 

\subsection{Linearization}\label{subsection:linearization}
The main theorem of this section is to show that,
conditional on Conjecture~\ref{conjecture:A}, for any $s\in \partial \M$ there 
are unique parameters $\lambda \in [1,2]$ and $\theta\in S^1$ for which $H$ is
topologically conjugate to $E_{\theta,\lambda}$.

Note that this theorem is {\em unconditional} in the sense that
it does not depend on Conjecture~\ref{conjecture:A}. It also applies to any
$s\in \M$ in the boundary or not which is splittable 
(see Definition~\ref{definition:splittable}) and satisfies $|s| < 1/\sqrt{2}$.

\begin{theorem}[Linearization]\label{theorem:linearization}
Suppose that one of the following holds for $s\in \M$:
\begin{enumerate}
\item{$s\in \partial \M$; or}
\item{$s\in \M$ is splittable and $|s|<1/\sqrt{2}$}
\end{enumerate}
and suppose further (after replacing $s$ by $-s$ if necessary) 
that $\ell_f$ has at least one endpoint in $J_g$.

Then there are unique parameters $\lambda \in [|s|^{-1},2]$ and $\theta \in S^1$
and a homeomorphism $\sigma:S^1 \to S^1$ commuting with $\iota$ that conjugates $H,f,g$
to $E_{\theta,\lambda},F_{\theta,\lambda},G_{\theta,\lambda}$ on their domains of definition.
\end{theorem}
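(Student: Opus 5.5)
The plan is to build the conjugacy from an invariant measure of maximal entropy type, following Milnor--Thurston kneading theory and the Parry construction for $\beta$-transformations. I view $H$ as a degree-one-per-branch, piecewise monotone, many-valued circle map with two branches $f^{-1}:J_f\to I_f$ and $g^{-1}:J_g\to I_g$ that are interchanged by $\iota$. Each branch is orientation preserving, and by Lemma~\ref{lemma:expanding_map} each is strictly expanding for Lebesgue measure.

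The target model $E_{\theta,\lambda}$ has constant slope $\lambda$. So I look for a non-atomic, fully supported, $\iota$-invariant probability measure $m$ on $S^1$ with the conformality property $m(H A)=\lambda\, m(A)$ for every interval $A$ contained in $J_f$ or in $J_g$. The coordinate $\sigma(x)=2\pi\, m([\pi/2,x])$ then sends $J_f,J_g$ to $J_F,J_G$ and turns each branch into an affine map of slope $\lambda$. Commuting with $\iota$ then forces exactly the form of $E_{\theta,\lambda}$, with $\theta$ read off from $\sigma(f^{-1}(\pi/2))$.

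First I take $\lambda$ to be the exponential growth rate of the number of laps, namely the number of intervals in the partition $\{S^1_w: |w|=n\}$. The partition has at most $2^n$ pieces, so $\lambda\le 2$. Each branch image has Lebesgue length greater than $\pi$, since $|I_f|>\pi$, and this should give $\lambda>1$.

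For the sharper bound $\lambda\ge|s|^{-1}$ I compare with $\partial K$. The pieces $S^1_w$ map under $u$ to sets $w\,\partial K\cap\partial K$. Each such set has diameter comparable to $|s|^n$, and together they cover $\partial K$, which has positive one-dimensional size because it is a nondegenerate continuum. So at least about $|s|^{-n}$ pieces are needed, giving $\lambda\ge|s|^{-1}$. Case (2) uses $|s|<1/\sqrt2$ to ensure $\lambda<2$ when needed; case (1) reduces to this via Theorem~\ref{theorem:strict_inequality}, with $s=\pm i/\sqrt2$ handled directly from Example~\ref{example:rectangle}.

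Next I construct $m$ by the standard kneading and transfer-operator argument. I set $m(S^1_w)=\lim_n N_n(w)/N_n$, where $N_n(w)$ counts the $n$-laps inside $S^1_w$, taking a subsequence and a Cesàro limit if necessary. I expect this to give a semiconjugacy from $H$ to $E_{\theta,\lambda}$ that is monotone, degree one and commutes with $\iota$, along the lines of Milnor--Thurston and Parry.

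The main obstacle is upgrading this semiconjugacy to a conjugacy. That requires $m$ to have no atoms and full support, so that $\sigma$ collapses no interval. For full support, I argue as follows. Suppose $\sigma$ collapsed an open interval $I$. Then $H^n$ would be defined on $I$ with constant trajectory, because $m(I)=0$ forces $I$ to be lap-free in the limit. This contradicts Lemma~\ref{lemma:distance_estimate} and Lemma~\ref{lemma:no_intervals_of_trajectories}: some $wp\in I$ with $p\in\partial J_f$ would split $I$ and give it positive mass, since $\lambda>1$ forces mass to grow. For atoms, an atom would have to be a periodic point with a fixed multiplier. I would rule this out with the conformality relation together with $\lambda>1$, in the manner of the theory of Lorenz maps and $\beta$-transformations.

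Uniqueness of $(\lambda,\theta)$ follows because $\lambda$ is a topological invariant (the entropy is $\log\lambda$), and $\theta$ is determined by where the conjugated branch sends $\pi/2$ once $\sigma$ is normalized to fix $\pi/2$ and commute with $\iota$.
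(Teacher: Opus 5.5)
Your semiconjugacy step matches the paper: you take the lap growth rate as $\lambda$, bound $\lambda\ge|s|^{-1}$ by comparing $u$-diameters in $\partial K$, and invoke Milnor--Thurston/Preston to get a monotone $\iota$-equivariant semiconjugacy. The upgrade to a conjugacy, however, has a genuine gap.

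You argue that if $\sigma$ collapses an open interval $I$, so that $m(I)=0$, then $I$ is ``lap-free''. Then $H^n$ would be continuous on $I$ with constant trajectory, contradicting Lemma~\ref{lemma:no_intervals_of_trajectories}. That inference is false. $m(I)=0$ only says that the number of $n$-laps inside $I$ grows at an exponential rate strictly smaller than $\lambda$. It does not say that $I$ avoids the preimages of the break points. The standard mechanism is renormalization: take an interval containing a break point whose orbit closes up into a cycle of intervals carrying entropy less than $\log\lambda$. The semiconjugacy crushes it to a point, even though preimages of the break points are dense in it. This is exactly why Theorem~\ref{theorem:lorenz} carries a non-renormalizability hypothesis. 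It is also why ``some $wp$ splits $I$'' together with ``$\lambda>1$'' gives no lower bound on $m(I)$.

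A symptom of the gap is that your argument never genuinely uses $|s|<1/\sqrt{2}$. That hypothesis is not there to force $\lambda<2$; indeed $\lambda=2$ occurs (Lemma~\ref{lemma:lambda=2}). If your argument were correct it would give a conjugacy for every splittable $s$. The paper explicitly says it cannot do this (see the Remark following the proof of Theorem~\ref{theorem:linearization}).

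The missing idea is a metric argument in $\partial K$ showing that collapsed intervals containing break points cannot exist. This is where $|s|^{-2}>2$ enters, since each branch multiplies $u$-diameters by exactly $|s|^{-1}$.

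\emph{One break point (Lorenz case).} The paper shows $h$ is not renormalizable. Suppose $[a,b]$ were a renormalization interval, and let $[a,c_1)$ be the larger of its two halves, so $\diam(u[a,c_1))\ge\diam(u[a,b])/2$. Then $H^n$ with $n\ge2$ carries $[a,c_1)$ continuously into $[a,b]$ or $\iota[a,b]$. This is impossible since $|s|^{-n}>2$, and Ding--Glendenning then applies.

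\emph{Two break points.} The paper shows the collapsed set $T$ is forward and backward invariant modulo break points. If $T$ misses the break points, its components are homtervals, and a component of maximal size gets expanded; this is the only case your argument covers. Otherwise $T$ contains neighbourhoods of both break points and of $0$ and $1$. The end components $Y_0,Y_3$ then return under $h^{n+1}$, $n\ge1$, cut once, into $Y_0\cup Y_3$. Assuming $\diam(u\alpha Y_0)\ge\diam(u\alpha Y_3)$, the larger piece $Y$ of $Y_0$ satisfies $|s|^{-n-1}\diam(u\alpha Y)\le\diam(u\alpha Y_0)\le 2\diam(u\alpha Y)$, contradicting $|s|^{-2}>2$.

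Your proposal needs an argument of this kind in place of the lap-free claim: a factor of $2$ lost by cutting at a break point, weighed against at least two steps of expansion by $|s|^{-1}>\sqrt{2}$.
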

Thus Theorem~\ref{theorem:linearization} provides a {\em linear model} for
the dynamics of $H$. 

\medskip

Recall by Theorem~\ref{theorem:strict_inequality} (to be proved in
Appendix~\ref{section:inequality_proof})
that for $s\in \partial \M$ we have $|s| \le 1/\sqrt{2}$ with equality if and
only if $s = \pm i/\sqrt{2}$. This is handled as a special case.

\begin{example}[$s=i/\sqrt{2}$]\label{example:i_on_sqrt_2}
Recall from Example~\ref{example:rectangle} that $\Lambda$ is the rectangle
$$\Lambda = \lbrace z = x + iy \text{ such that } -2 \le x \le 2 \text{ and } -\sqrt{2} \le y \le \sqrt{2}\rbrace$$
The image $f\Lambda$ is the subset of $\Lambda$ with $x \le 0$ and the
image $g\Lambda$ is the subset with $x\ge 0$; thus their intersection is the
interval $z = it$ for $t\in [-\sqrt{2},\sqrt{2}]$. In this case, 
$\Lambda = K$ and $\partial K$ is a polygonal Jordan curve of total length
$4+2\sqrt{2}$. The dynamics of $f^{-1}$ and $g^{-1}$ on $\partial K$ is
already linear with respect to Lesbesgue measure on $\partial K$. In particular,
$H$ is conjugate to $E_{\theta,\lambda}$ for $\lambda = \sqrt{2}$ and
$\theta = \pi/2$.
\end{example}
For the remainder of this section we therefore make the assumption only
that $s$ is splittable (equivalently that $H$ is defined at all) and
that there is a strict inequality $|s| < 1/\sqrt{2}$.

The proof of Theorem~\ref{theorem:linearization} will take up the rest
of this section. The outline is as follows.
First, we show how Milnor--Thurston's kneading theory gives us a monotone map
$s:S^1 \to S^1$ commuting with $\iota$ which is a {\em semiconjugacy} from
$H,f,g$ to $E,F,G$, and then we show this semiconjugacy is an honest conjugacy.

Let us remark that points of discontinuity are usually treated in this theory
in a somewhat ad hoc manner. A semiconjugacy (or conjugacy) to a linear map is typically
obtained by constructing an invariant measure without atoms, and then integrating it; thus
the (grand) orbits of any countable set of points are irrelevant. For this reason we
will usually ignore the many-valuedness of $H$ and $E$, or the special role of
the points $J_f\cap J_g$.

The maps $H$ and $E$ both commute with $\iota$. 
Since we are looking for a conjugacy that commutes with $\iota$
it is convenient to choose an orientation-preserving homeomorphism
$\alpha:[0,1] \to J_f$ and define $h:[0,1] \to [0,1]$ by 
$$h(t):= \alpha^{-1} H \alpha(t) \text{ if } H\alpha(t) \in J_f \text{ and }
h(t):= \alpha^{-1} \iota H \alpha(t) \text{ otherwise }$$

Evidently to prove the theorem it will suffice to find a conjugacy from $h$ to a 
piecewise linear map. First we find a semiconjugacy. We do this following
Preston \cite{Preston}, \S~11 (\S~7 in the 2003 revision), and we follow the
discussion and notation there.

\begin{definition}[Piecewise monotone map]
\label{definition:piecewise_monotone}
Let $I:=[0,1]$. A map $h:I \to I$ is {\em piecewise monotone} if there is
$m \ge 0$ and points $0 < c_1 < \cdots < c_m < 1$ called the
{\em points of discontinuity} so that $h$ is and strictly increasing on every 
complementary component of the points of discontinuity.

These components are called the \emph{laps} of $h$,
and the set of piecewise monotone maps is denoted $N(I)$.
\end{definition}
Note that the set $N(I)$ is closed under composition.

\begin{definition}[Counting laps]
\label{definition:counting_laps}
For a piecewise monotone map $h \in N(I)$, 
let $l(h)$ denote the number of laps of $h$.
The {\em lap growth rate} of $h$, denoted $\lambda(h)$, is
defined to be
$$\lambda(h) := \inf_{n\ge 1} l(h^n)^{1/n}$$ 
\end{definition}

Providing $\lambda(h)>1$, the map $h$ is semiconjugate to
a linear map:
\begin{theorem}[Preston \cite{Preston}, Theorem~11.4]\label{theorem:Preston_linear_model}
Let $h \in N(I)$ with $\lambda(h)>1$.  Then 
there exists a continuous, monotone (not necessarily strictly monotone) surjective map 
$\sigma : I \to I$ and $e \in N(I)$ with $e$ 
piecewise linear with slope $\lambda(h)$ such that 
$\sigma h = e\sigma$.
\end{theorem}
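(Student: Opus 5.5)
Since this is Preston's theorem (the Milnor--Thurston semiconjugacy for piecewise monotone maps with positive entropy), I would prove it by building an invariant nonatomic measure from lap counting and integrating it, as follows.

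\textbf{Lap-counting measure.} For each subinterval $J\subset I$ and each $n$, let $l(h^n|J)$ be the number of laps of $h^n$ restricted to $J$. Set $\lambda=\lambda(h)$ and define
$$\mu(J):=\lim_{t\to\lambda^{-1}}\frac{\sum_n l(h^n|J)\,t^n}{\sum_n l(h^n)\,t^n}.$$
This is a normalized limit of the generating functions as $t$ increases to the radius of convergence $1/\lambda$. Submultiplicativity $l(h^{m+n})\le l(h^m)l(h^n)$ shows that $\lambda=\lim l(h^n)^{1/n}$ and that the radius of convergence is exactly $1/\lambda$. If the denominator stays bounded at $t=1/\lambda$, I would pass to a subsequential limit along $t_k\uparrow 1/\lambda$ instead.

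\textbf{Properties of $\mu$.} I would establish three properties.
\begin{enumerate}
\item \emph{Finite additivity.} Laps of $h^n$ on $J_1\cup J_2$ differ from the sum over $J_1$ and $J_2$ by at most one. After normalizing by a divergent series, this error disappears.
\item \emph{Scaling under $h$.} On a single lap $J$ of $h$, $l(h^{n+1}|J)=l(h^n|hJ)$. Multiplying by $t$ and letting $t\to 1/\lambda$ gives $\mu(hJ)=\lambda\,\mu(J)$.
\item \emph{No atoms.} A point contributes at most one lap at every level, so its numerator series is bounded by $\sum t^n$. That series stays finite at $t=1/\lambda<1$, while the denominator diverges, so the ratio tends to $0$.
\end{enumerate}

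\textbf{The semiconjugacy.} Define $\sigma(x)=\mu([0,x])$. It is continuous because $\mu$ has no atoms, monotone, and surjective onto $[0,1]$. Define $e$ on each lap $(c_i,c_{i+1})$ by $e(\sigma(x))=\sigma(h(x))$. To see that $e$ is well defined: if $\sigma(x)=\sigma(y)$ with $x,y$ in the same lap, then $\mu([x,y])=0$, so $\mu(h[x,y])=0$ by the scaling property, and hence $\sigma h(x)=\sigma h(y)$. The scaling property also shows that $e$ is linear of slope $\lambda$ on each $\sigma$-image of a lap. Laps of zero measure collapse under $\sigma$ to points and can be absorbed, so $e\in N(I)$ and $\sigma h=e\sigma$.

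\textbf{Main obstacle.} The hard step is justifying the limit as $t\to\lambda^{-1}$ and proving the scaling identity rigorously when the generating series might converge at $t=1/\lambda$. I would handle this, as Milnor--Thurston do, either by working with $\limsup$ over a chosen sequence or by replacing $\mu$ with a weak-$*$ limit of normalized lap-count measures. This is where $\lambda>1$ is essential.
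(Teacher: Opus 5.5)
The paper gives no proof of this statement; it quotes Preston. Your outline is the standard Milnor--Thurston lap-counting construction behind that theorem, and it matches the paper's own remark that such semiconjugacies come from integrating a nonatomic invariant measure. The route is right, but two steps fail as written.

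\emph{Continuity of $\sigma$.} Your $\mu$ is a limit of ratios, so a priori it is only a finitely additive function on intervals. For such a set function, vanishing on points does not give $\mu((x,x+\epsilon))\to 0$. For example, $\nu([a,b])=1$ if $a\le x<b$ and $0$ otherwise is finitely additive and vanishes on points, yet $y\mapsto\nu([0,y])$ jumps at $x$.

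The missing estimate is as follows. Write $L(J,t)=\sum_n l(h^n|J)t^n$. If $J$ lies in a single lap of $h^N$, then $l(h^{n+N}|J)=l(h^n|h^NJ)\le l(h^n)$, so
$$L(J,t)\le \sum_{n<N}l(h^n)t^n+t^N L(I,t),$$
and hence $\mu(J)\le\lambda^{-N}$. Since $h^N$ has finitely many discontinuities, each one-sided neighbourhood $(x,x+\epsilon)$ or $(x-\epsilon,x)$ lies in a single lap of $h^N$ once $\epsilon$ is small. Therefore $\sigma$ is continuous. This step, together with $\sum_n t^n$ staying bounded at $t=1/\lambda$, is where $\lambda>1$ is genuinely used.

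\emph{The limit.} Your ``main obstacle'' is misdiagnosed, and one of your proposed fixes would not work.
\begin{itemize}
\item The denominator cannot stay bounded. Since $\lambda$ is defined as $\inf_n l(h^n)^{1/n}$, you have $l(h^n)\ge\lambda^n$ for every $n$, so $L(I,t)\to\infty$ as $t\uparrow 1/\lambda$.
\item You do not need the limit to exist. The ratios lie in $[0,1]$, so you can take a Banach limit (or a non-principal ultrafilter limit) as $t\uparrow 1/\lambda$. Finite additivity, the scaling identity $\mu(hJ)=\lambda\mu(J)$ and the bound $\mu(J)\le\lambda^{-N}$ all hold, after dividing by $L(I,t)$, up to error terms tending to $0$. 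For instance, $L(J,t)=1+t\,L(hJ,t)$ exactly on a lap of $h$. So these relations pass to such a limit simultaneously for all intervals.
\item A $\limsup$, by contrast, is only subadditive. It would break exactly the additivity you need for $\sigma(hy)-\sigma(hx)=\lambda(\sigma(y)-\sigma(x))$.
\item A weak-$*$ limit of measures would still require the smallness estimate above to rule out atoms.
\end{itemize}

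With these two repairs the rest of your argument goes through: the well-definedness and slope of $e$, and the collapsing of null laps.
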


Our first task therefore is to show that $\lambda(h)>1$. In fact, we obtain an
explicit estimate in terms of $|s|$.
\begin{lemma}[Growth estimate]\label{lemma:growth_estimate}
For $h:I \to I$ as above,
$\lambda(h) \ge |s|^{-1} > 1/\sqrt{2}$.
\end{lemma}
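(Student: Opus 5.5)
The plan is to bound the number of laps of $h^n$ from below by comparing the harmonic (Lebesgue) measure on $S^1$ with the Euclidean geometry of $\Lambda$. (The displayed inequality $|s|^{-1} > 1/\sqrt{2}$ should presumably read $|s|^{-1} > \sqrt{2}$; in any case $|s|^{-1}>1$ since $|s|<1$.)

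First I would relate laps of $h^n$ to the intervals $S^1_w$ for $|w| = n$. Since $h$ is $H$ folded by $\iota$ onto $J_f$, each lap of $h^n$ on $[0,1]$ corresponds, via $\alpha$, to a maximal subinterval of $J_f$ on which the length-$n$ trajectory is constant up to the $\iota$-folding. So $l(h^n)$ is comparable (up to a factor of $2$, harmless after $n$-th roots) to the number $N_n$ of closed intervals in the partition of $S^1$ by the sets $S^1_w$, $|w|=n$. It therefore suffices to show $N_n \ge c\,|s|^{-n}$ for some constant $c>0$.

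To get this, I would use Lemma~\ref{lemma:distance_estimate}. Each partition interval $S^1_w$ is mapped by $u$ into a set of diameter at most $\diam(\Lambda)\,|s|^n$, namely (a subset of) $w^{-1}$ applied to $\partial K$, i.e.\/ a piece of $v\Lambda$ with $|v|=n$. Now $u$ maps $S^1$ onto $\partial K$, which is a continuum of diameter $\ge d>0$. Covering $\partial K$ by $N_n$ connected pieces $u(S^1_w)$, each of diameter $\le C|s|^n$, forces $N_n \ge d/(C|s|^n)$. This is because the images of consecutive intervals overlap at endpoints, so a chain of them must span $\partial K$ (or any arc in it of length at least $\diam\partial K$). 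Hence $\lambda(h) = \inf_n l(h^n)^{1/n}$ satisfies $\lambda(h) \ge \lim_n (c|s|^{-n})^{1/n} = |s|^{-1}$.

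One subtlety is that $\lambda(h)$ is defined as an infimum, not a limit. The sequence $l(h^n)$ is submultiplicative, so the infimum equals the limit (Fekete's lemma), and the lower bound passes to it. The main obstacle I expect is making the lap-counting correspondence precise. I need to check that distinct laps of $h^n$ really give distinct intervals whose $u$-images are connected and of diameter $\le C|s|^n$, which is Lemma~\ref{lemma:distance_estimate} applied to points sharing a trajectory prefix. I also need to handle the countably many many-valued points, which only merge or split intervals at finitely many endpoints and do not affect exponential growth.
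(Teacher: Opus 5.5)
This is essentially the paper's argument: cut $S^1$ into the pieces on which the length-$n$ itinerary under $H$ is constant, bound the $u$-image of each by $|s|^n\diam(K)$ (Lemma~\ref{lemma:distance_estimate}), and use that these connected images cover the continuum $\partial K$ to get on the order of $|s|^{-n}$ pieces. Your submultiplicativity/Fekete step is a small improvement, since it absorbs the constants and the factor relating $H^n$ on $S^1$ to $h^n$ on $I$ that the paper's ``$k\ge|s|^{-n}$'' glosses over; the identification of laps of $h^n$ with constant-itinerary intervals, which you flag as the main obstacle, is likewise taken for granted in the paper, and you are right that the intended bound is $|s|^{-1}>\sqrt{2}$.
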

\begin{proof}
We need to show that the number of points of discontinuity of 
$h^n$ grows like at least $|s|^{-n}$. Equivalently, we need to
show that the number of points of discontinuity of 
$H^n$ grows like at least $|s|^{-n}$.

For every interval $I \subset S^1$ the image $uI$ is a connected
subset of $\partial K$ of positive diameter. Applying either
$f^{-1}$ or $g^{-1}$ multiplies diameters by $|s|^{-1}$; thus, 
if $n$ is such for which  $|s|^{-n}\diam(uI) > \diam(K)$ then
$H^n|I$ must contain at least one point of discontinuity.

If we divide $S^1-\partial J_f$ into 
maximal disjoint subintervals $I_1,\cdots,I_k$ on 
which $H^n$ is continuous then $\diam(u I_i) < |s|^n\diam(K)$ for all $i$. 
On the other hand, $\cup_i uI_i = \partial K$ so
$\sum_i \diam(uI_i) \ge \diam(K)$. Thus $k\ge |s|^{-n}$ and the
lemma is proved.
\end{proof}

It follows that $H$ is semiconjugate to some unique $E_{\theta,\lambda}$ with
$\lambda \ge |s|^{-1}$ by a semiconjugacy commuting with $\iota$. 

To promote this semiconjugacy to a conjugacy we need to consider an analysis
of cases, depending on the number of discontinuity points of $h:I \to I$.

\begin{lemma}[Number of discontinuities]\label{lemma:number_of_discontinuities}
If $I_f$ entirely contains $J_f$ (equivalently, if $\L$ is infinite)
then $h$ has two points of discontinuity; otherwise it has one point of discontinuity.
\end{lemma}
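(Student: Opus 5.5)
We locate the discontinuities of $h=\alpha^{-1}\circ(\iota\text{-folded }H)\circ\alpha$ on $[0,1]$ directly. On $\alpha^{-1}(\inte J_f)=(0,1)$ the map $H$ equals $f^{-1}$, which is continuous and orientation preserving from $\inte J_f$ onto the open arc $\inte I_f$ (Lemma~\ref{lemma:expanding_map} and the construction of $I_f$). The folding map $S^1\to J_f$, which is the identity on $J_f$ and $\iota$ on $J_g$, is continuous and orientation preserving except at the two points $\partial J_f=\lbrace \pi/2,3\pi/2\rbrace$. There it jumps from one endpoint of $J_f$ to the other; at these points the two values of $\alpha^{-1}$ are $0$ and $1$. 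So the points of discontinuity of $h$ are exactly the parameters $t\in(0,1)$ with $f^{-1}\alpha(t)\in\partial J_f$. Equivalently, they correspond to the points of $\partial J_f=\ell$ lying in $\inte I_f$. The task therefore reduces to counting the endpoints of $\ell$ in the interior of $I_f$.

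Next we use the normalization that $\ell_f=\partial I_f$ has at least one endpoint in $J_g$, as in Lemma~\ref{lemma:simplifying_assumption_intervals}. We also use that $I_f$ has length strictly greater than $\pi$ by Lemma~\ref{lemma:expanding_map}, so $I_f$ cannot lie inside either half. There are two cases.

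\emph{Case 1: $\ell\subset I_f$.} Then both endpoints of $\ell$ lie in $I_f$, and by Proposition~\ref{proposition:props_of_L} this is equivalent to $\L$ being infinite. Since $I_f$ is an arc containing both endpoints of $J_f$, it contains either $J_f$ or $J_g$. The normalization, which puts an endpoint of $\ell_f$ in $J_g$, is meant to force $J_f\subset I_f$, matching the first alternative in the proof of Proposition~\ref{proposition:props_of_L}. We must also check that the endpoints of $\ell$ are interior points of $I_f$ and not equal to $\partial I_f$. The borderline situation is $\ell_f=\ell$. In that case $f\ell$ is undefined away from its endpoints, and we treat it either as a degenerate sub-case (using Lemma~\ref{lemma:degenerate}) or as a non-generic coincidence that is irrelevant up to the countable-orbit ambiguity already allowed for $H$. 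This gives two discontinuities.

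\emph{Case 2: $\ell\not\subset I_f$.} Here $I_f$ contains at most one endpoint of $\ell$. Since $I_f$ is an arc of length $>\pi$ with an endpoint in $J_g$, it is not contained in $J_g$. Hence it must cross from $J_g$ into $J_f$ through exactly one point of $\ell$, giving exactly one discontinuity. We also rule out zero discontinuities: that would force $I_f\subset J_f$ or $I_f\subset J_g$, which is impossible by length.

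The main obstacle is the bookkeeping at boundary coincidences: endpoints of $I_f$ equal to $\pm\pi/2$, and the degenerate case $I_f=S^1$. We plan to handle these by appealing to the convention that many-valued points and countable orbits are ignored, and by invoking Lemma~\ref{lemma:degenerate} for the degenerate case. A secondary point is verifying the "equivalently" clause, namely that $\ell\subset I_f$ implies $J_f\subset I_f$ rather than $J_g\subset I_f$ under the normalization. We will check this by tracking where $\partial I_f$ sits relative to $\ell$.
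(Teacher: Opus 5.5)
Your proposal is correct and follows the paper's argument. The paper's proof consists only of the observation that the discontinuities of $h$ correspond to points of $\partial J_f=\ell$ lying in the interior of $I_f$. Your case analysis, using length$(I_f)>\pi$ and the normalization that $\ell_f$ has an endpoint in $J_g$, just makes the resulting count of one or two explicit.

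One small correction: the borderline case $\ell_f=\ell$ that you single out cannot occur. It would force $I_f$ to be $J_f$ or $J_g$, which have length $\pi$, contradicting Lemma~\ref{lemma:expanding_map}. The genuine edge case is $\ell_f$ sharing exactly one endpoint with $\ell$. There $J_f\subset I_f$, but only one endpoint of $\ell$ lies in $\inte I_f$, so $h$ has only one discontinuity. The paper tacitly ignores this case, just as you propose to.
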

\begin{proof}
The discontinuities of $h$ correspond to points of $\partial J_f$ in the interior of
$I_f$.
\end{proof}

First let's consider the case that $\L=\lbrace\ell \rbrace$, equivalently
that $h$ has one point of discontinuity. In this case $h$ is an example of
what is known as a Lorenz map. See \cite{Ding} for background.

\begin{definition}[Lorenz map]\label{definition:Lorenz}
A {\em Lorenz map} is a piecewise monotone map $h:I \to I$ whose domain
has one break point $c_1$, and for which there are $v_0,v_1$ so that
$h$ takes $[0,c_1)$ to $[v_0,1)$ and $(c_1,1]$ to $(0,v_1]$.

A Lorenz map is {\em topologically expanding} if the preimages of the 
break point under iterates of $h$ are dense in $[0,1]$.

A Lorenz map is {\em renormalizable} if there is a proper interval
$[a,b] \subset [0,1]$ containing $c_1$ and integers $n,m>1$ so that the map
$g:[a,b] \to [a,b]$ defined by 
$$g(x): = h^n(x) \text{ if } x \in [a,c_1) \text{ and } g(x): = h^m(x) \text{ if } x \in (c_1,b]$$
is conjugated to a Lorenz map by a (orientation-preserving) homeomorphism
$[a,b] \to [0,1]$.

A Lorenz map with constant slope is called a {\em $\beta$-transformation}.
\end{definition}
Evidently if $\L = \lbrace \ell \rbrace$ then $h$ is a 
topologically expanding Lorenz map. See Figure~\ref{h_graph_crossing}.

\begin{figure}[htpb]
\centering
\includegraphics[scale=0.7]{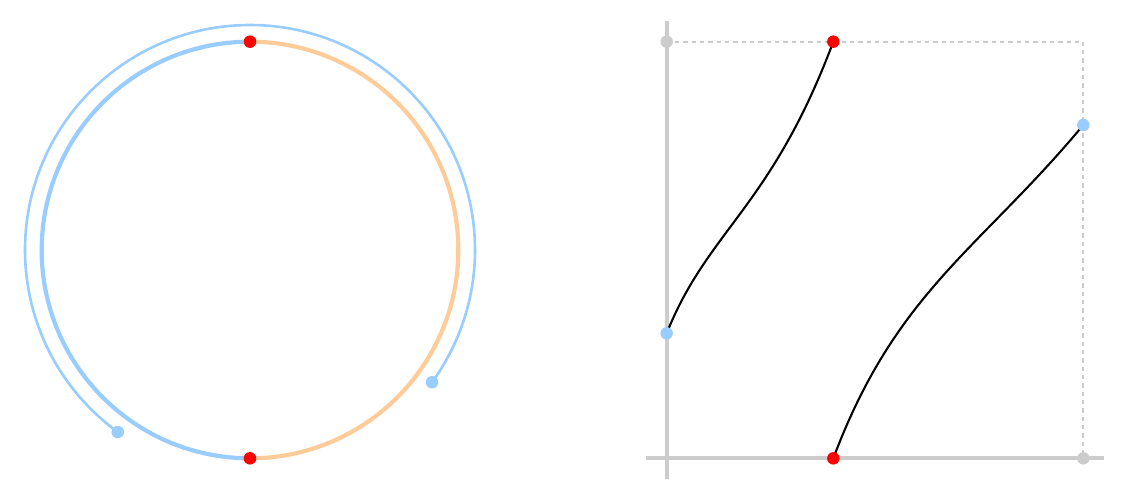}
\caption{When $\ell_f$ crosses $\ell$ the graph of $h$ has one discontinuity.}
\label{h_graph_crossing}
\end{figure}

The following theorem is a special case of \cite{Ding,Glendenning}. 

\begin{theorem}[\cite{Ding}, Appendix (10)]\label{theorem:lorenz}
An expanding Lorenz map which is not renormalizable 
is conjugate to a $\beta$-transformation, i.e.\/ to a 
piecewise linear map of constant slope.
\end{theorem}

\begin{proposition}[Crossing is linear]\label{proposition:crossing_linear}
Suppose $\L = \lbrace \ell \rbrace$; equivalently, suppose $\ell_f$ crosses $\ell$.
Then $h$ is conjugate to a $\beta$-transformation, and $H$ is conjugate to
$E_{\theta,\lambda}$ for some unique $\theta \in S^1$ and $\lambda \in [1,2]$.
\end{proposition}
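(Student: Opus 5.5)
The plan is to apply Theorem~\ref{theorem:lorenz} to the map $h:[0,1]\to[0,1]$ built from $H$ above, then transfer the result back to $S^1$ using the $\iota$-symmetry. There are three hypotheses to check: that $h$ is a Lorenz map, that it is topologically expanding, and that it is not renormalizable.

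\textbf{Lorenz map.} When $\ell_f$ crosses $\ell$, Lemma~\ref{lemma:number_of_discontinuities} says $h$ has exactly one point of discontinuity $c_1$. This point corresponds to the endpoint of $\partial J_f$ lying in the interior of $I_f$. Since $f^{-1}$ is orientation preserving, on each side of $c_1$ the map $h$ is increasing. One branch is $\alpha^{-1}f^{-1}\alpha$ and the other is $\alpha^{-1}\iota f^{-1}\alpha$, and together they cover $[0,1]$ in the Lorenz pattern $[0,c_1)\to[v_0,1)$, $(c_1,1]\to(0,v_1]$.

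\textbf{Topologically expanding.} The preimages of $c_1$ under iterates of $h$ correspond to points $x$ with $H^nx\in\partial J_f$. By Lemma~\ref{lemma:no_intervals_of_trajectories} (equivalently Lemma~\ref{lemma:distance_estimate}), every open interval meets such a point. Hence these preimages are dense in $[0,1]$.

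\textbf{Not renormalizable.} I expect this to be the main obstacle. Suppose $[a,b]\ni c_1$ is a proper interval with return times $n,m>1$ on which the first-return map is again a Lorenz map. Then $h^n$ maps $[a,c_1)$ onto a subinterval ending at $b$, and $h^m$ maps $(c_1,b]$ onto one starting at $a$. The images $h^j([a,c_1))$ for $0<j<n$ and $h^j((c_1,b])$ for $0<j<m$ then cannot contain $c_1$ in their interior.

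I would first try to contradict this with the growth estimate. The preimages of $c_1$ inside $(a,c_1)$ come only through returns, so the lap count of $h^N$ restricted to $[a,b]$ grows at rate at most $\lambda(g)^{1/\min(n,m)}\le 2^{1/2}$. That alone contradicts nothing, since Lemma~\ref{lemma:growth_estimate} only gives $\lambda(h)\ge|s|^{-1}>\sqrt2^{-1}$.

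So I would instead use the geometry of $K$. The interval $u([a,b])\subset\partial K$ has positive diameter, and its iterates $H^j$ for $j<n$ avoid the cut $\partial J_f$. By the diameter-growth argument in Lemma~\ref{lemma:growth_estimate}, diameters multiply by $|s|^{-1}$ at each step and must stay below $\diam K$. One then checks whether the renormalization interval leads to a trap, as in Lemma~\ref{lemma:partition_exists}, or to a violation of the strict contraction of Lemma~\ref{lemma:expanding_map}. That contraction forces the return branches $h^n,h^m$ to be strictly expanding in Lebesgue measure and to cover $[a,b]$. Comparing lengths (total $\ge 2(b-a)$ against $\le b-a$) should give the contradiction, since the two return branches have disjoint domains covering $[a,b]$ and their images overlap in the Lorenz pattern.

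\textbf{Conclusion.} Once the three hypotheses hold, Theorem~\ref{theorem:lorenz} gives a conjugacy $\sigma_0$ from $h$ to a $\beta$-transformation of slope $\lambda$. We have $\lambda\in[|s|^{-1},2]$: the lower bound comes from Lemma~\ref{lemma:growth_estimate} and invariance of the lap growth rate under conjugacy, and the upper bound holds because $h$ has two laps. Next, extend $\sigma_0$ to $\sigma:S^1\to S^1$ by $\sigma=\alpha_0\sigma_0\alpha^{-1}$ on $J_f$ and $\sigma=\iota\sigma|_{J_f}\iota$ on $J_g$. This $\sigma$ commutes with $\iota$ and conjugates $H$ to $E_{\theta,\lambda}$, where $\theta$ is read off from the image of the discontinuity.

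For uniqueness, $\lambda$ equals the topological entropy (the lap growth rate), which is a conjugacy invariant. Given $\lambda$, the parameter $\theta$ is determined by the kneading data of the orbits of $\partial J_f$.
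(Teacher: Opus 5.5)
Your checks that $h$ is a Lorenz map and is topologically expanding match what the paper treats as evident, and the transfer back to $S^1$ via $\iota$ is fine. The gap is in non-renormalizability, which is the entire content of the paper's proof. Your proposed contradiction rests on Lemma~\ref{lemma:expanding_map}, but that lemma only says $f^{-1}$ and $g^{-1}$ expand Lebesgue measure on $S^1$ by \emph{some} factor greater than $1$, with no uniform bound. No length comparison can turn that into a contradiction. Suppose the return branches expand $[a,c_1)$ and $(c_1,b]$ by factors $\kappa_1,\kappa_2$, and write $x=c_1-a$, $y=b-c_1$. Containment of the images in $[a,b]$ only gives $\kappa_1x\le b-a$ and $\kappa_2y\le b-a$, hence $b-a=x+y\le (b-a)(1/\kappa_1+1/\kappa_2)$. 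This is contradictory only if $1/\kappa_1+1/\kappa_2<1$, for instance if both factors exceed $2$. Your claimed total of at least $2(b-a)$ is not justified, since the two branch images need not each fill $[a,b]$. The appeal to traps is a red herring.

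The missing idea is to measure in $\partial K$ with the Euclidean metric, where the expansion is exact and quantified. You also need the standing hypothesis $|s|<1/\sqrt2$ (Theorem~\ref{theorem:strict_inequality}, with $s=\pm i/\sqrt2$ handled separately), which your argument never uses.
\begin{itemize}
\item Since $h^n$ is continuous on $[a,c_1)$, the map $H^n$ acts on $\alpha[a,c_1)$ as $w^{-1}$ for a single word $w$ with $|w|=n\ge 2$. So $u$-diameters are multiplied by exactly $|s|^{-n}>2$, and likewise on $(c_1,b]$ with $m\ge 2$.
\item The sets $u\alpha[a,c_1]$ and $u\alpha[c_1,b]$ share the point $u\alpha(c_1)$. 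By the triangle inequality, one of them has diameter at least $\frac12\diam(u\alpha[a,b])$.
\item The image of that piece under the return map therefore has diameter strictly greater than $\diam(u\alpha[a,b])$. Yet it lies in $u\alpha[a,b]$ or in its image under $z\mapsto -z$, which has the same diameter. This is the contradiction.
\end{itemize}
This is exactly where $|s|^{-1}>\sqrt2$ enters. The bound $|s|^{-1}>1/\sqrt2$ you copied from Lemma~\ref{lemma:growth_estimate} is a typo there and would not suffice.
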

\begin{proof}
By Theorem~\ref{theorem:lorenz} it suffices to show that $h$ is not renormalizable.
Suppose $h$ is renormalizable for some interval $[a,b] \subset [0,1]$ containing $c$,
and without loss of generality let's suppose $\diam(u[a,c_1)) \ge \diam(u(c_1,b]) \ge
\diam(u[a,b])/2$. The map $H^n$ sends $[a,c_1)$ continuously into $[a,b]$ or $\iota[a,b]$
for some $n\ge 2$.
On the other hand, $f^{-1}$ and $g^{-1}$ scale distances by $|s|^{-1}$ which is
strictly greater than $\sqrt{2}$ by hypothesis. Thus $\diam(u H^n[a,c_1)) > \diam(u[a,b])$
which is a contradiction.
\end{proof}

Now let's consider the case that $J_f \subset I_f$; equivalently, that
$\ell$ and $\ell_f$ do not cross. The map $h:I \to I$ has two points of discontinuity;
see Figure~\ref{h_graph_no_crossing}.

\begin{figure}[htpb]
\centering
\includegraphics[scale=0.7]{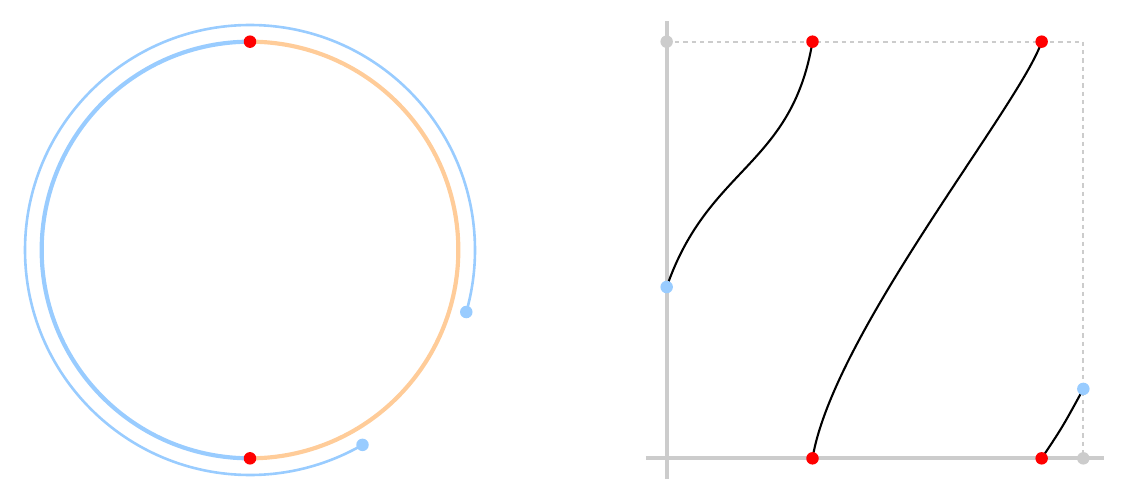}
\caption{When $\ell_f$ does not cross $\ell$ the graph of $h$ has two discontinuities.}
\label{h_graph_no_crossing}
\end{figure}

\begin{proof}[Proof of Theorem~\ref{theorem:linearization}]
If $s\in \partial \M$ then either $s=\pm i/\sqrt{2}$
in which case the theorem follows as in 
Example~\ref{example:i_on_sqrt_2} or $|s| < 1/\sqrt{2}$. Otherwise
$|s| < 1/\sqrt{2}$ is true by assumption.

If $\L$ is finite (equivalently, if $\ell$ crosses $\ell_f$) then
the theorem follows from Proposition~\ref{proposition:crossing_linear}.
Thus we may assume $\ell$ does not cross $\ell_f$, so that $h:I \to I$
has two points of discontinuity $0 < c_1 < c_2 < 1$ as in Figure~\ref{h_graph_no_crossing}.

By Theorem~\ref{theorem:Preston_linear_model} and Lemma~\ref{lemma:growth_estimate}
the map $h:I \to I$ is {\em semiconjugate} to a piecewise linear map
with constant slope by some monotone surjective map $\sigma:I \to I$.
The map $h$ has two discontinuity points
Let $T \subset (0,1)$ be the set of interior points of $I$ where $\sigma$ is locally constant.
Note that $T$ is a union of (countably many) open intervals.

\begin{lemma}[$T$ is invariant]\label{lemma:T_is_invariant}
The set $T$ is both forward and backward invariant under $h$ modulo the
discontinuity points $\cup c_i$. In other words, $h(T - \cup c_i) \subset T$
and $(h^{-1}(T)) \cap (0,1) \subset T$.
\end{lemma}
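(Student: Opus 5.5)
Both inclusions will come from the semiconjugacy equation $\sigma h = e\sigma$ together with the fact that $e$ is piecewise linear of constant slope $\lambda(h)>1$, hence injective on each of its laps. Since $\sigma$ is monotone and continuous, $T$ is exactly the union of the interiors of the nondegenerate fibers $\sigma^{-1}(y)$ of $\sigma$. So it suffices to show two things. First, $h$ carries a maximal open interval of constancy of $\sigma$, minus the $c_i$, into an interval of constancy. Second, $h$ pulls such an interval back into one.

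For forward invariance, let $x\in T-\cup c_i$ and choose an open interval $U\ni x$ on which $\sigma$ is constant, shrunk so that $U$ avoids the $c_i$. Then $h$ is continuous and strictly increasing on $U$, so $h(U)$ is an open interval containing $h(x)$ (open in $(0,1)$). We also have $\sigma(h(U)) = e(\sigma(U))$, which is a single point. Hence $\sigma$ is constant on the nondegenerate interval $h(U)$, and $h(x)\in T$ whenever $h(x)$ is an interior point of $I$. Endpoints $0,1$ can be handled by the convention that $T\subset(0,1)$, and this is the only place where care is needed.

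For backward invariance, let $x\in(0,1)$ with $h(x)\in T$. If $x$ is one of the $c_i$ there is nothing to prove, since these points are excluded. Otherwise $h$ is a homeomorphism from a small neighborhood $U$ of $x$ onto an open interval $h(U)\subset T$, and $\sigma$ is constant on $h(U)$. The relation $e(\sigma(U)) = \sigma(h(U))$ then says that $e$ maps the connected set $\sigma(U)$ to a point. The main step is to rule out $\sigma(U)$ being a nondegenerate interval. Shrink $U$ so that $\sigma(U)$ lies in a single lap of $e$. This is possible because $\sigma$ is continuous and the discontinuities of $e$ correspond to $\sigma(c_i)$: if $\sigma(x)$ happens to be a break point of $e$, we work one-sidedly on each half of $U$ separately. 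On a lap, $e$ is affine with slope $\lambda>1$, so $e$ is injective there, and $\sigma(U)$ must therefore be a single point. Thus $\sigma$ is constant near $x$, i.e.\ $x\in T$.

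The only delicate point is the one just mentioned: $\sigma(x)$ may be a discontinuity of $e$ even though $x$ is not a discontinuity of $h$. The one-sided argument handles this, since on $U\cap[x,1)$ and on $U\cap(0,x]$ separately each image under $\sigma$ lies in the closure of one lap of $e$. Each one-sided image is therefore a point, both equal to $\sigma(x)$, so $\sigma$ is constant on all of $U$.
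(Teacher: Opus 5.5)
Your proposal is correct and is essentially the paper's argument: both inclusions come from $\sigma h = e\sigma$, using that $h$ is a local homeomorphism away from the $c_i$ and that $e$ is constant on no nondegenerate interval, since it has finitely many laps of slope $\lambda(h)>1$. Your one-sided treatment of a break point of $e$ spells out what the paper's remark ``$\sigma(x,y)$ is a point iff $\sigma h(x,y)$ is'' leaves implicit; continuity of $\sigma$ plus finiteness of the break points already suffices, so you do not need the claim that the breaks of $e$ are the $\sigma(c_i)$.

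One correction: the backward inclusion $(h^{-1}(T))\cap(0,1)\subset T$ does not exclude the $c_i$. Rather, each $h(c_i)$ is an endpoint $0$ or $1$ of $I$ (because $H\alpha(c_i)\in\partial J_f$), so $c_i\notin h^{-1}(T)$; this is the observation the paper makes when it notes that $h^{-1}(x,y)\cap(0,1)$ avoids $\cup c_i$.
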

\begin{proof}
By assumption $\sigma$ conjugates $h$ to the piecewise linear map
$e:I \to I$ of constant slope $\lambda(h)$. In particular, $e \sigma (x,y) = \sigma h (x,y)$
for any open interval $(x,y)\subset I$ so $\sigma (x,y)$ is a single point (equivalently,
$(x,y)\subset T$) if and only if $\sigma h(x,y)$ is. 

If $p \in T - \cup c_i$ there
is an open interval $p \in (x,y) \subset (0,1) - \cup c_i$ for which
$h:(x,y) \to h(x,y)$ is a homeomorphism. Hence $h(p)$ is in $T$. Conversely,
for any $p \in T$ there is an open interval $p \in (x,y)$ in the interior of $I$,
and $h^{-1}(x,y) \cap (0,1)$ is a union of open intervals disjoint from $\cup c_i$. 
In particular, for each $q\in h^{-1}(x,y) \cap (0,1)$
there is an open interval $q \in (x',y')$ with $h:(x',y') \to (x,y)$ a homeomorphism onto
its image, and therefore $(x',y')$ (and consequently $q$) is in $T$.
\end{proof}

We would like to show that $T$ is empty, which would imply that $\sigma$ is a homeomorphism
and the theorem would be proved. So let's suppose $T$ is nonempty, and we will derive
a contradiction. Let's also introduce the notation $T^c:=(0,1)-T$. Note that by
Lemma~\ref{lemma:T_is_invariant} we have $(h T^c) \cap (0,1) \subset T^c$ 
and $(h^{-1} T^c) \cap (0,1) \subset T^c$. Notice too that by the definition of $\sigma$,
the set $T^c$ is relatively closed in $(0,1)$ and has no isolated points.

Suppose $T$ is disjoint from $\cup c_i$. Then $h(T) \subset T$ by 
Lemma~\ref{lemma:T_is_invariant} and furthermore $h$ is a homeomorphism when restricted
to each component of $T$. But $h$ is strictly expanding on $[0,1]$, so if $I$ is
a component of $T$ of largest diameter, $h(I)$ would be a component of even larger
diameter, which would be a contradiction. So without loss of generality we may assume $T$ 
contains one of the discontinuity points, say $c_1$. Since $T$ is open, it
contains an open neighborhood of $c_1$, and therefore by Lemma~\ref{lemma:T_is_invariant}
$T \cup \lbrace 0,1\rbrace$ contains open neighborhoods of $0$ and $1$. This implies
in turn by another application of Lemma~\ref{lemma:T_is_invariant} that $c_2$ is in $T$. 
Thus $T^c$ is a Cantor set properly contained 
in the interior of $(0,1)$ and disjoint from both $c_1$ and $c_2$.

The intersections of $T^c$ with each of the three intervals $(0,c_1)$, $(c_1,c_2)$ and
$(c_2,1)$ are contained in three minimal closed proper intervals that we label $X_1$, $X_2$
and $X_3$. We assume these are all nonempty; the case that one or more are empty is easier,
and may be treated by a similar argument. In particular,
Write $(0,1)- \cup X_i$ as the union of four disjoint open intervals $Y_0$, $Y_1$, $Y_2$
and $Y_3$ from left to right, each in $T$. For $i=1,2$ the interval $Y_i$ contains the
point $c_i$ of discontinuity, and therefore $h (Y_i-c_i) \subset Y_0 \cup Y_3$. Note
that every component of $T$ is either contained in one of the $Y_i$, or in the interior
of some $X_j$. See Figure~\ref{partition_of_I}.

\begin{figure}[htpb] 
\labellist
\small\hair 2pt
\pinlabel $Y_0$ at 35 15
\pinlabel $Y_1$ at 100 15
\pinlabel $Y_2$ at 195 15
\pinlabel $Y_3$ at 220 15
\pinlabel $X_1$ at 66 42
\pinlabel $X_2$ at 152 42
\pinlabel $X_3$ at 208 42
\endlabellist
\centering
\includegraphics[scale=0.7]{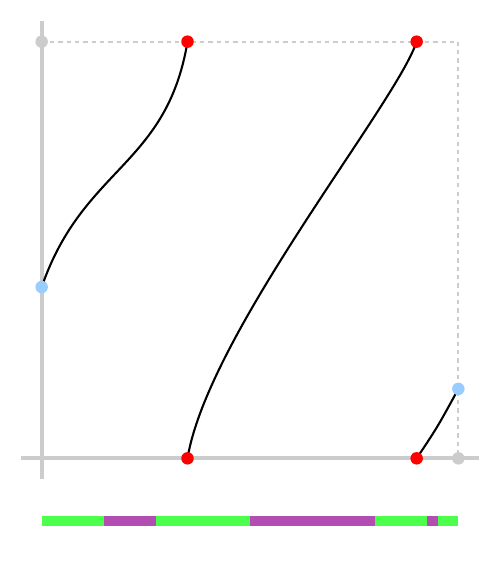}
\caption{The interval $(0,1)$ is partitioned into the closed intervals
$X_1,X_2,X_3$ and the open intervals $Y_0,Y_1,Y_2,Y_3$.}
\label{partition_of_I}
\end{figure}

On the other hand, the iterates $h^jY_0$ and $h^jY_3$ must strictly increase in length 
as $j$ increases if they are
disjoint from the points of discontinuity, and therefore there are least integers $n,m\ge 1$
with $h^nY_0 \subset Y_1 \text{ or } Y_2$ and $h^mY_3 \subset Y_1 \text{ or } Y_2$.
Hence the restriction of $h^{n+1}$ to $Y_0$ has exactly one discontinuity
point which is of the form $h^{-n} c_i$ for $i=1$ or $2$, and $h^{n+1}$ maps
the complementary components of this point in $Y_0$ continuously into $Y_0$ and $Y_3$,
and likewise for $h^{m+1}|Y_3$.

Each $Y_i$ maps under $\alpha$ to an interval in $S^1$ and then by $u$ to a connected
subset of $\partial K$, and we suppose for
simplicity that $$\diam(u \alpha Y_0) \ge \diam(u \alpha Y_3)$$
(the case of the opposite inequality is treated perfectly analogously). 
The map $h^{n+1}|Y_0$ is continuous on the subintervals
to the left and to the right of its unique discontinuity point; 
let $Y$ be whichever subinterval has bigger
diameter under $u \alpha$. Then $\diam(u \alpha Y) \ge \diam(u \alpha Y_0)/2$ but
$h^{n+1}$ maps $Y$ continuously either into $Y_0$ or $Y_3$, and therefore 
$$|s|^{-n-1}\diam(u\alpha Y) =\diam (u \alpha h^{n+1} Y) \le \diam(u\alpha Y_0)$$
But this contradicts $n\ge 1$ and $|s|<1/\sqrt{2}$. This contradiction
shows that $T$ is empty after all, and $\sigma$ is a homeomorphism, and the theorem is proved.
\end{proof}

\begin{remark}
It is remarkable that the condition $|s|<1/\sqrt{2}$ has such strong dynamical 
consequences for $H$. If $s \in \M$ is splittable 
but has $|s| \ge 1/\sqrt{2}$ we still obtain a semi-conjugacy of $H$ 
to a linear model, and thereby obtain invariants $\lambda$ and $\theta$, but we
do not know that these invariants capture the full dynamics of $H$.
\end{remark}

\begin{lemma}[$\lambda =2$]\label{lemma:lambda=2} 
Let $s\in \partial \M$ have associated parameters $\theta,\lambda$. 
If $\lambda = 2$ then $\ell$ is a dynamical cut leaf, and consequently
$\Lambda$ is a dendrite.
\end{lemma}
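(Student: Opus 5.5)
Since $\sigma$ is a homeomorphism commuting with $\iota$ that conjugates $H,f,g$ to $E_{\theta,\lambda},F_{\theta,\lambda},G_{\theta,\lambda}$ (Theorem~\ref{theorem:linearization}), I will work entirely in the linear model and transport the conclusion back at the end. The plan is to show that when $\lambda=2$ the intervals $I_F,I_G$ are degenerate, to deduce that $I_f,I_g$ are degenerate, and then to read off that $\ell$ is a dynamical cut leaf.

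First, the degeneracy. $G^{-1}_{\theta,2}$ maps $J_G$, an interval of length $\pi$, linearly with slope $2$. Its image therefore has length $2\pi$, so $I_G=S^1$ and the two endpoints $\pi/2,3\pi/2$ of $J_G$ go to the same point of $S^1$. By the $\iota$-symmetry the same holds for $F^{-1}$. Pulling back by $\sigma$, $f^{-1}$ maps the interior of $J_f$ onto the complement of a single point. Hence $I_f=S^1$, i.e.\ $I_f$ is degenerate, and likewise $I_g$; moreover $f^{-1}(\pi/2)=f^{-1}(3\pi/2)$ as limits from the interior, and similarly for $g^{-1}$. Lemma~\ref{lemma:degenerate} then gives directly that $u\partial J_f=0$ is a cut point and that $\Lambda$ is a dendrite, which is the second assertion.

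Next, $\ell$ itself. Write $\ell=\{x,y\}$. Because both endpoints of $J_f$ map under $f^{-1}$ to the single point $\partial I_f$, the leaf "$H\ell$" collapses to a point. So the literal iterate $H^n\ell$ is not a leaf for $n\ge1$, and "dynamical cut leaf" has to be read through Lemma~\ref{lemma:topological_cut_point_classification}(1): I must show that $x$ and $y$ have a common trajectory. Take the trajectory of $x$ that starts with $f^{-1}$, and the trajectory of $y$ that also starts with $f^{-1}$ (allowed, since $y\in J_f\cap J_g$). After one step they land on the same point $f^{-1}(x)=f^{-1}(y)$, and can continue identically from there. Their trajectories agree, so $\ell$ satisfies condition (1) and is a dynamical cut leaf. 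As an independent check, $u(x)=u(y)$ follows from Lemma~\ref{lemma:distance_estimate}, or simply from $0=u(x)=u(y)$.

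The main obstacle is making the collapse $f^{-1}(x)=f^{-1}(y)$ rigorous at the endpoints, where $f^{-1}$ was defined through prime ends and where, by the discussion in \S3, injectivity can fail. I would argue by continuity of the conjugacy: $\sigma$ is a homeomorphism, and $F^{-1}$ extended continuously to the closed interval $J_F$ identifies its endpoints. So $f^{-1}=\sigma^{-1}F^{-1}\sigma$ on $J_f$ identifies $x$ and $y$, and this is compatible with the continuity of $f^{-1}$ on its closed domain $D_f\supset J_f$.
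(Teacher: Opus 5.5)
Your proposal is correct and follows the paper's route: $\lambda=2$ forces $I_f,I_g$ to be degenerate via the linear model, which makes the forward iterates of $\ell$ (equivalently, a common trajectory of its endpoints) always defined, and Lemma~\ref{lemma:degenerate} then gives the dendrite conclusion. Your extra care in treating the collapsed leaf $H\ell$ through condition (1) of Lemma~\ref{lemma:topological_cut_point_classification}, and in using continuity of $f^{-1}$ on $J_f$ to identify the two endpoint images, is a reasonable tightening of the paper's one-line assertion that $H^n\ell$ is defined for all $n$.
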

\begin{proof}
If $\lambda = 2$ then $I_f$ and $I_g$ are degenerate. In particular, 
the forward iterates $H^n\ell$ are defined for all $n$ so that $\ell$ is a
dynamical cut leaf. The fact that $\Lambda$ is a dendrite follows from
Lemma~\ref{lemma:degenerate} or Proposition~\ref{proposition:finite_cut_leaf}.
\end{proof}

\section{Linear dynamics and laminations}
\label{section:linear_dynamics}

Recall in \S~\ref{section:linear_model} we described a 2-parameter
family of (discontinuous) linear dynamical systems on $S^1$.
For $\lambda \in [1,2]$ and $\theta \in S^1$ we defined a map
$E_{\theta,\lambda}:S^1 \to S^1$ in Definition~\ref{definition:linear_model}.
If we think of $S^1$ as $\R/2\pi$, then 
$E$ acts on $[-\pi/2,\pi/2]$ by
$$E:t \to \lambda t + \theta \text{ for } t \in [-\pi/2,\pi/2]
\text{ and } E:t \to \pi + \lambda (t-\pi) + \theta 
\text{ for } t \in [\pi/2,3\pi/2]$$
(recall that $E$ is many-valued on $\pm \pi/2$).

By Theorem~\ref{theorem:linearization} for any $s\in \partial \M$
the dynamics of $H,f,g$ on $S^1$ is conjugate to the dynamics of
$E_{\theta,\lambda},F_{\theta,\lambda},G_{\theta,\lambda}$ 
on $S^1$ for some unique $\theta,\lambda$.
Thus, conditional on Conjecture~\ref{conjecture:A} we may determine
the structure of the set of cut points of $\Lambda$ entirely from
the parameters $\theta$ and $\lambda$.
In the sequel we suppress subscripts whenever possible and just
write $E,F,G$. We also write $\SS^E$ for the free abstract semigroup
on the generators $F$ and $G$.

On the other hand, there are many parameters $\theta,\lambda$
that do not arise from $s\in \partial M$. For any
$\theta,\lambda$ we may construct leaves $\ell,\ell_F,\ell_G$
and a lamination $\L$ analogous to those constructed in
\S~\ref{subsection:dynamical_lamination}, and define limit leaves
and dynamical cut leaves in the obvious way. It does not make
sense to define cut leaves as distinct from dynamical cut leaves,
since there is no analog of the map $u$ in the general context. 

The lamination $\L$ is finite (and consists only of $\ell$) if
and only if $\ell$ crosses $\ell_F$. If $\ell$ does not
cross $\ell_F$, then as in Theorem~\ref{theorem:interaction}
we may find a word $w\in \SS^E$ of least length for which $w\ell$
interacts with $\ell_F$ and ask whether
$w\ell$ hides $\ell_F$ from $\ell$, in which case there are limit leaves
and therefore dynamical cut leaves, or $w\ell$ crosses $\ell_F$, 
in which case there are no dynamical cut leaves.

In this broader context there are many examples of parameters $\theta,\lambda$
for which there are dynamical cut leaves that are {\em not} limit leaves;
see Example~\ref{example:cut_not_limit}. Conjecture~\ref{conjecture:cut_is_limit} 
says that this cannot happen for $\theta,\lambda$ associated 
to $s\in \partial \M$.

\subsection{A simplifying assumption}
\label{section:linear_simplifying_assumption}

Throughout this section, we will mostly assume that $|\theta| \le \pi/2$
where convenient. In other words, the domain of $F$ is centered at a 
point which is inside its image. If $|\theta|>\pi/2$ one
obtains analogous results by proofs which are also entirely analogous.
Thus making this assumption saves us from having to switch between
cases. This simplifying assumption is analogous to the simplifying
assumption on $s$ we have frequently made that at least one endpoint 
of $I_f$ lies in $J_g$.

\subsection{Innermost leaves and the Douady--Hubbard--Thurston lamination}\label{subsection:innermost_leaves_and_DHT}

The set of pairs $\lambda \in [1,2]$ and $\theta \in S^1$ is parameterized
by an annulus. We let $\sigma$ denote a typical pair $(\theta,\lambda)$ 
and denote the parameter space of pairs by $\P$.
Within this annulus we distinguish interesting subsets
corresponding to the following features of the dynamical system
$E_{\theta,\lambda}$ on $S^1$, including

\begin{enumerate}
\item{$\P_\infty$, the parameters for which $\L$ is infinite
(equivalently, $\L$ has more than one leaf); and}
\item{$\P_\limit$, the parameters for which $\L$ contains a limit leaf.}
\end{enumerate}

Because of the symmetry of $\P$ under $\theta \to \theta+\pi$ we restrict attention
to the subset where $\theta \in [-\pi/2,\pi/2]$.
Figure~\ref{triangle_list} indicates $\P_\infty$ in blue and $\P_\text{limit}$
in green. 

\begin{figure}[htpb]
\centering
\includegraphics[scale=0.2]{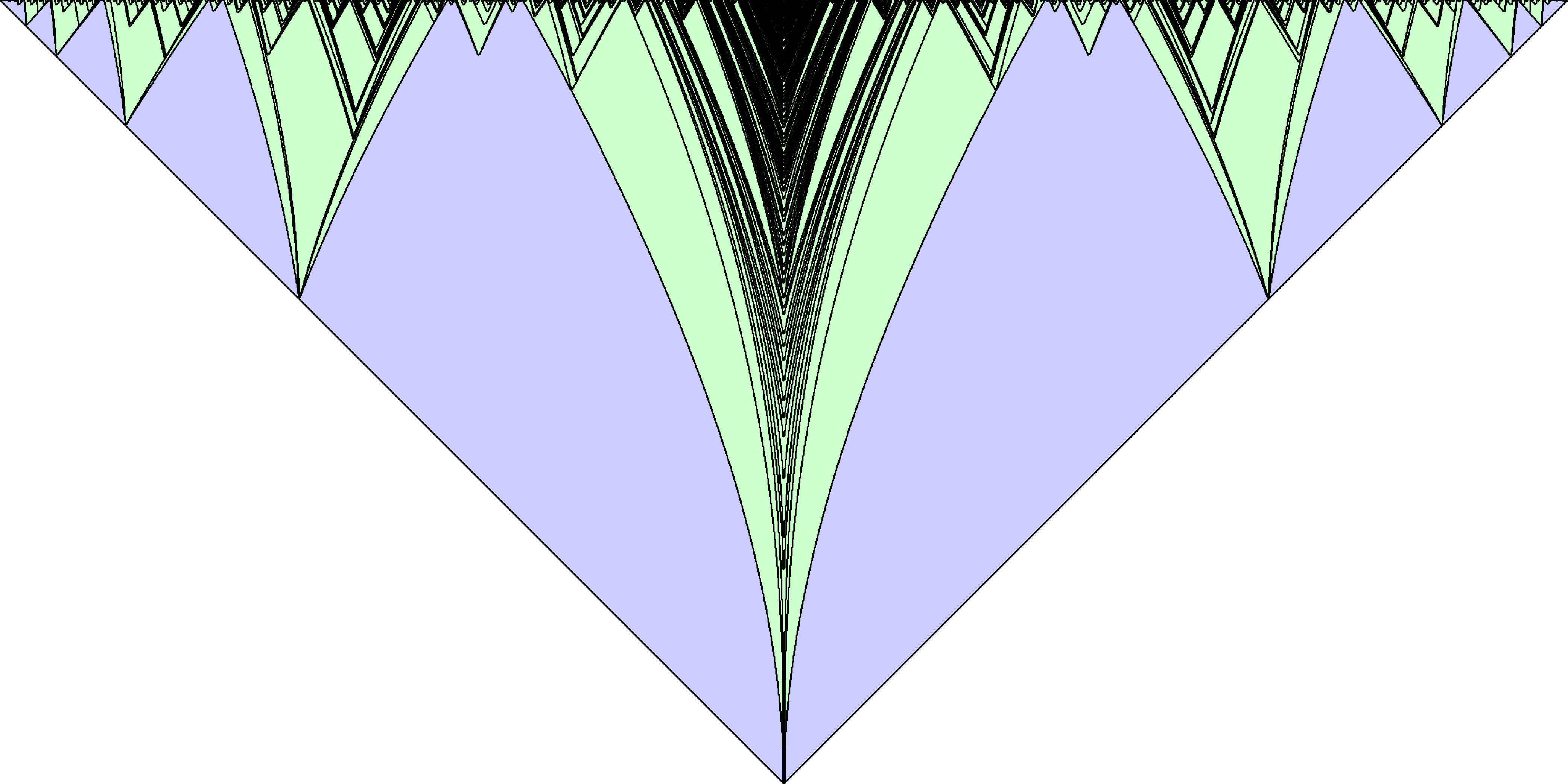}
\caption{Half a fundamental domain for the annulus $\P$. 
The angle $\theta\in [-\pi/2,\pi/2]$ parameterizes the horizontal
coordinate, and the multiplier $\lambda\in [1,2]$ parameterizes the vertical 
coordinate. The subspace $\P_\infty$ is in blue and
the subspace $\P_\limit$ is in green. The meaning of the `inner' green triangles
will be explained in the sequel.}
\label{triangle_list}
\end{figure}

These sets are invariant under $\theta \to - \theta$. 
The set $\P_\infty$ is very easy to describe: it is the set of $\lambda,\theta$
for which $\ell$ does not cross $\ell_G$. Since the endpoints of $\ell_G$ are
$\pm\lambda\pi/2 + \theta$ it follows that $\P_\infty$ is the blue triangle
$\lambda \ge 1 + 2|\theta|/\pi$.

\medskip

The set $\P_\limit$ is much more complicated, and its structure turns out (somewhat
surprisingly) to be related to the Douady--Hubbard--Thurston lamination model
for the abstract (ordinary) Mandelbrot set.

Recall that the set $\P_\limit$ parameterizes
$\theta,\lambda$ for which some leaf of $\L - \ell$ hides $\ell_G$. By
Theorem~\ref{theorem:interaction} this is the union of the `outermost' green
triangles in Figure~\ref{triangle_list}, and are associated to the words
$G,GF,GFG,GFGF,GFGFG,\cdots$ of shortest length for which $w\ell$ can hide $\ell_G$, by
Theorem~\ref{theorem:interaction}.

What is the meaning of the other (`inner') green triangles? 
Fix $\lambda,\theta$ and suppose $w\ell$ hides $\ell_G$ for some word $w$
beginning with $f$. Let $\L_w$ be the
finite set of leaves of the form $v\ell$ where $v$ is a suffix of $w$ or $v^c$ is a
suffix of $w$ where as before, 
$v^c$ is the word obtained from $v$ by replacing each $F$ by $G$ and
vice versa. Note that $v^c\ell = \iota v\ell$. 
In other words, $\L_w$ is the finite symmetric (under $\iota$) set of leaves obtained from
$\ell$ by applying the letters of $w$ (from the right) one by one, together with the
images of these leaves under $\iota$. 

\begin{definition}[Innermost]\label{definition:innermost}
For $\lambda,\theta$ and $w$ say that $w\ell$ is {\em innermost} in $\L_w$ if 
$w\ell$ hides $\ell_G$ from every other leaf of $\L_w$ (and likewise $w^c$
hides $\ell_F$ from every other leaf of $\L_w$).
\end{definition}

\begin{lemma}[Triangles]\label{lemma:triangles}
Fix $\lambda_0,\theta_0$ and $w$ and suppose $w\ell$ is innermost in $\L_w$, and let
$\Delta$ be the connected region in the $\lambda,\theta$ plane containing
$\lambda_0,\theta_0$ with these properties. Then $\Delta$ is a curvilinear green
`triangle' with one `horizontal' side $\Delta^*$ on the line $\lambda=2$, and two 
`vertical' sides $\Delta^\pm$
which are (contained in) real algebraic curves, each of which is a solution of 
$2\theta/\pi = p(\lambda)/(\sum_{j=0}^n \lambda^j)$
for some polynomial $p$ with integer coefficients of degree $|w|+1$ in $\lambda$.
The `vertical' side $\Delta^-$ resp. $\Delta^+$ consists of exactly the parameters 
in $\Delta$ for which $w\ell$ shares its left resp. right endpoint in common with 
$\ell_G$ (as oriented by the interval $J_F$ that contains them both).
\end{lemma}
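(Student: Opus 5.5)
The plan is to compute the endpoints of every leaf in $\L_w$ explicitly as functions of $(\theta,\lambda)$. Then the innermost condition becomes a finite list of strict inequalities between affine-rational functions, and $\Delta$ is a connected component of the region they cut out. Work in the coordinate $t\in\R/2\pi\Z$. Here $G=G_{\theta,\lambda}:I_G\to J_G$ is the affine map $t\mapsto (t-\theta)/\lambda$, and $F=\iota G\iota$ is $t\mapsto \pi+(t-\pi-\theta)/\lambda$. The endpoints of $\ell$ are $\pm\pi/2$. Applying the letters of $w=a_1\cdots a_n$ from the right, each endpoint of $w\ell$ has the form
$$\frac{\pm\pi/2}{\lambda^{n}}+\sum_{j} \frac{\epsilon_j\pi-\theta}{\lambda^{j}}$$
with $\epsilon_j\in\{0,1\}$ determined by the letters (reduced mod $2\pi$ in the obvious way). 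By Theorem~\ref{theorem:interaction}(2) the letters alternate, so this is an explicit alternating sum. Multiplying through by $\lambda^n$, each endpoint equals $(\pi q(\lambda)-\theta\sum_{j=0}^{n-1}\lambda^j)/\lambda^n$ for an integer-coefficient polynomial $q$ (up to factors of $1/2$). The endpoints of $\ell_G$ are $\theta\pm\lambda\pi/2$.

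Next, I write the innermost condition as inequalities. The condition that $w\ell$ hides $\ell_G$ from $\ell$, and from every other leaf of $\L_w$, is a finite conjunction of order conditions among finitely many endpoints on $J_F$ (or on $J_G$ after applying $\iota$). Each such order condition compares two expressions of the above form, so it is a strict inequality $\theta<$ or $>$ a rational function of $\lambda$. The boundary of $\Delta$ therefore lies on curves where two of these endpoints coincide. I will argue that on a connected component only three kinds of coincidence can be active:
\begin{enumerate}
\item the left endpoint of $w\ell$ equals the left endpoint of $\ell_G$, which gives $\Delta^-$;
\item the right endpoints coincide, which gives $\Delta^+$;
\item $\lambda=2$, where $I_G$ becomes degenerate and $\ell_G$ is no longer a genuine leaf, which gives $\Delta^*$.
\end{enumerate}
Solving $\theta+\sigma\lambda\pi/2=(\text{endpoint of } w\ell)$ for $\theta$ multiplies by $\lambda^n$ and collects the $\theta$ terms into $\theta(\lambda^n+\sum_{j<n}\lambda^j)=\theta\sum_{j=0}^n\lambda^j$. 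This yields $2\theta/\pi=p(\lambda)/\sum_{j=0}^n\lambda^j$ with $p\in\Z[\lambda]$ of degree $n+1$, the leading term coming from $\lambda^{n}\cdot\lambda/2$ after clearing the $1/2$. This is exactly the stated form.

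The main obstacle is excluding the other coincidences: showing that, within the region where $w\ell$ already hides $\ell_G$, no other leaf $v\ell\in\L_w$ can touch $w\ell$ or $\ell_G$ on the boundary before one of the sides above is reached. My first approach is to use Lemma~\ref{lemma:exposed_leaves} together with monotonicity. By that lemma the leaves $v\ell$ for proper suffixes $v$ are exposed and nested in the alternating pattern, and $w\ell=a_1(v\ell)$ is obtained from a leaf of $\L_w$ by the contraction $F$ or $G$. Hence if $w\ell$ met another leaf $v\ell$ of $\L_w$, then applying $H$ repeatedly (which is an expanding affine map and preserves incidences) would produce a coincidence between $\ell$ and a shorter leaf, or between $\ell_G$ and a shorter word. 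The first is impossible by Proposition~\ref{proposition:props_of_L}. The second contradicts the minimality of $|w|$ implicit in $w\ell$ being innermost.

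To finish, I would check connectivity and the triangular shape. For fixed $\lambda$ slightly less than $2$, the condition is an interval in $\theta$ bounded by the two curves. The width of $\ell_G$ is $2\pi-\lambda\pi$ (its complement in $S^1$), which shrinks to $0$ as $\lambda\to 2$, while the width of $w\ell$ is $\pi/\lambda^n$. So the two curves separate from each other as $\lambda$ increases toward $2$. In the other direction, they meet at a vertex where the width of $\ell_G$ equals the width of $w\ell$. This gives the curvilinear triangle.
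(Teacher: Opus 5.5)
There is a genuine gap. Your algebra is fine: whatever lifts are chosen, each endpoint of $w\ell$ has $\theta$-coefficient $-\sum_{j=1}^n\lambda^{-j}$. Equating with $\theta\pm\lambda\pi/2$ therefore gives denominator $\sum_{j=0}^n\lambda^j$ and a numerator of degree $n+1$.

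The gap is in the step you call the main obstacle, and in the shape argument. Both silently assume that $w$ is the \emph{shortest} word with $w\ell$ interacting with $\ell_G$: you invoke Theorem~\ref{theorem:interaction}(2) for alternation, Lemma~\ref{lemma:exposed_leaves}, and ``the minimality of $|w|$ implicit in $w\ell$ being innermost''.

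\begin{itemize}
\item Innermost in $\L_w$ only compares $w\ell$ with its own suffix leaves and their $\iota$-images, so it implies no minimality. The lemma is meant precisely to cover the inner triangles, whose words are neither minimal nor alternating. For instance, take $\theta=0$ and $\lambda$ slightly below $2$, so $\ell_G$ shrinks to the point $\pi$. Then $F\ell\approx\{3\pi/4,5\pi/4\}$ already hides $\ell_G$, yet $FF\ell\approx\{7\pi/8,9\pi/8\}$ and $FFG\ell\approx\{13\pi/16,19\pi/16\}$ are innermost in $\L_{FF}$ and $\L_{FFG}$. For such $w$, Lemma~\ref{lemma:exposed_leaves} says nothing, and your exclusion step has no content.
\item Your claim that a coincidence between $\ell$ and a shorter leaf is impossible by Proposition~\ref{proposition:props_of_L} is false. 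That proposition forbids crossings, not shared endpoints. A leaf $u\ell$ sharing an endpoint with $\ell$ becomes, after one more application of $H$, exactly a shorter leaf sharing an endpoint with $\ell_F$ or $\ell_G$; this is what $Gw\ell$ does along $\Delta^\pm$.
\item The width claim fails for the same reason. When a suffix leaf hides $\ell_F$ and the next letter is $F$, the map $F$ carries the \emph{complementary} arc. In the regime above, $FG\ell$ has width $2\pi/\lambda-\pi/\lambda^2$, and $FFG\ell$ has width $3\pi/8$ rather than $\pi/8$ at $\lambda=2$.
\end{itemize}

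What repairs this is the paper's continuity observation, plus a version of your push-back argument that contradicts innermostness rather than minimality.
\begin{itemize}
\item While $w\ell$ (resp.\ $w^c\ell$) strictly separates $\ell_G$ (resp.\ $\ell_F$) from all other leaves of $\L_w$, every letter used to build $\L_w$ is applied to a leaf in the interior of its domain. So the lifts are locally constant, and $\L_w$ moves continuously by your explicit formulas.
\item Suppose at a limiting parameter with $\lambda<2$ some other leaf $v\ell$ of $\L_w$ meets $w\ell$, while $\ell_G$ is still strictly inside. Strip common first letters, using injectivity of $F$ on $I_F$ and $G$ on $I_G$, until the words first disagree. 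This produces a nonempty suffix leaf $u\ell$ sharing an endpoint with $\ell$.
\item Then $Hu\ell$, or its $\iota$-image, is a strictly shorter leaf of $\L_w$ sharing an endpoint with $\ell_G$. That is impossible: by continuity its endpoints lie outside the open arc cut off by $w\ell$, and that arc contains the endpoints of $\ell_G$.
\end{itemize}
This argument works for every $w$.

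For the shape, what survives is this: both endpoints of $w\ell$ carry the same $\theta$-coefficient. So for fixed $\lambda$ the width of $w\ell$ is independent of $\theta$, and $\ell_G$ slides across it at relative speed $\sum_{j=0}^n\lambda^{-j}$, which gives an interval in $\theta$. The separation and vertex claims, however, need the true width of $w\ell$, computed from the actual lifts, not $\pi/\lambda^n$.
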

\begin{proof}
Consider the finite dynamical set of words $\SS_w$ of the form $v$ where $v$ or $v^c$ 
is a suffix of $w$, so that $\L_w$ is the orbit of $\ell$ under $\SS_w$. Since
$w\ell$ resp. $w^c\ell$ are innermost, and hide $\ell_G$ resp. $\ell_F$ from the other
leaves, and since $G$ resp. $F$ is defined and continuous on the complement of $\ell_G$ resp.
$\ell_F$, it follows that $\L_w$ varies {\em continuously} as a function of
$\lambda,\theta$ until some endpoint of $w\ell$ collides with $\ell_G$. Collision from
the left resp. right determines the curve $\Delta^-$ resp. $\Delta^+$. We claim these
are (contained in) real algebraic curves of the desired kind.

Let's change variables on $S^1$ by choosing coordinates $\tau:=2\theta/\pi$
so that $S^1 = \R/4\Z$. In these coordinates we have, on their 
respective domains of definition,
$$G^{-1}: x \to \lambda x + \tau  \text{ and }
F^{-1}: x \to \lambda(x-2) + 2 + \tau$$
and similarly,
$$G: x \to \lambda^{-1}(x-\tau) \text{ and } F: x \to \lambda^{-1}(x - \tau - 2) + 2$$
In these coordinates, the endpoints of $\ell$ are $\pm 1$ and therefore
the endpoints of $\ell_F$ are $2 - (2\pm 1)\lambda + \tau$. 
Likewise the endpoints of $w_n\ell$ are inductively of the form 
$\tau\cdot \sum_{j=1}^n \lambda^{-j} + p(\lambda^{-1})$ for
some degree $n$ integer polynomial $p$ whose coefficients depend on
which lift of $\tau$ one chooses at each stage. Equating these gives
$$\tau\cdot \sum_{j=1}^n \lambda^{-j} + p(\lambda^{-1}) = \tau + 2 - (2\pm 1)\lambda$$
and rearranging gives the desired identity. 
\end{proof}

Note that the `vertex' of $\Delta$ where $\Delta^-$ and $\Delta^+$ intersect 
occurs exactly when $w\ell = \ell_G$.

Let $\P_w$ denote the subset of $\P$ for which $w\ell$ hides $\ell_G$ and is
innermost in $\L_w$. Thus $\P_w$ consists of a finite collection of triangles
$\Delta$ as in Lemma~\ref{lemma:triangles}. These are the green triangles in
Figure~\ref{triangle_list}.

\begin{lemma}[Triangles nest]\label{lemma:triangles_nest}  
Let $\Delta,\Delta'$ be triangles in $\P_w,\P_{w'}$. Then either $\Delta$ and $\Delta'$
are disjoint or one is entirely contained inside the other.
\end{lemma}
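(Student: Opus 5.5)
Our approach is to exploit the structure of the boundary of a triangle established in Lemma~\ref{lemma:triangles}. By that lemma, each triangle $\Delta\subset \P_w$ is the closure of a connected region on which the finite configuration $\L_w$ moves continuously. Its boundary consists of the horizontal side $\Delta^*$ on $\lambda=2$ and of the two vertical sides $\Delta^\pm$, along which an endpoint of $w\ell$ collides with the corresponding endpoint of $\ell_G$. The main point is then to show that no side of $\Delta'$ can enter the interior of $\Delta$ unless all of $\Delta'$ does. Suppose $\Delta$ and $\Delta'$ intersect but neither contains the other. Since both triangles have their top side on $\lambda=2$ and are curvilinear triangles bounded by graphs over $\lambda$, some vertical side of one must pass through the interior of the other; say a point of $\Delta'^{\pm}$ lies in $\inte(\Delta)$. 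We will derive a contradiction from the dynamics at such a parameter.

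Fix a parameter $\sigma$ in $\inte(\Delta)\cap \Delta'^{\pm}$. At $\sigma$, $w\ell$ strictly hides $\ell_G$ and is innermost in $\L_w$, while $w'\ell$ shares an endpoint with $\ell_G$ and hides it from all leaves of $\L_{w'}$. Since $w\ell$ strictly hides $\ell_G$, the leaf $w'\ell$, which touches $\ell_G$ at an endpoint, must lie in the region between $w\ell$ and $\ell_G$, or else cross $w\ell$. Crossing is impossible because $\L$ is a lamination (Proposition~\ref{proposition:props_of_L}; the same argument applies verbatim to $E_{\theta,\lambda}$). Hence $w\ell$ hides $w'\ell$, and in particular $w'\neq w$. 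There are two cases to consider.

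\emph{Case $|w'|>|w|$.} Write $w'$ via its first letters. Because $w\ell$ hides $w'\ell$ and both lie in $J_F$, we apply $H$ repeatedly, peeling letters of $w'$ and $w$ simultaneously, in the manner of the proof of Lemma~\ref{lemma:exposed_leaves}. Each application is a homeomorphism on the relevant region as long as no leaf straddles $\ell_F$ or $\ell_G$, so the hiding relation is preserved. We aim to show that this peeling reaches a suffix $v$ of $w'$ with $v\ell$ lying strictly on the wrong side of $\ell_G$ relative to $w'\ell$. This contradicts innermostness of $w'\ell$ in $\L_{w'}$, or else forces $v\ell = w\ell$ up to $\iota$, so that $w$ is a suffix of $w'$ (or of $w'^c$). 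In the latter situation we must check that $\Delta'$ lies entirely in $\Delta$. As $\sigma$ moves inside $\Delta'$, the configuration $\L_{w'}\supset \L_w$ varies continuously, and $w'\ell$ stays hidden by $w\ell$. Consequently $w\ell$ cannot collide with $\ell_G$ without $w'\ell$ first crossing $\ell_G$. So $\Delta'$ cannot exit $\Delta$ through $\Delta^\pm$, and the only remaining boundary is $\lambda=2$, which is shared. This gives $\Delta'\subset\Delta$.

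\emph{Case $|w'|\le |w|$.} We argue symmetrically with the roles reversed. At $\sigma$, $w'\ell$ is hidden by $w\ell$ yet is innermost for its own family, so $w\ell\notin\L_{w'}$. A suffix argument shows that $w'$ must be a suffix of $w$, giving $w'\ell\in \L_w$. But then $w\ell$ innermost in $\L_w$ means that $w\ell$ hides $\ell_G$ from $w'\ell$, which contradicts the fact that $w'\ell$ touches $\ell_G$.

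We expect the main obstacle to be the peeling step: showing that ``$w\ell$ hides $w'\ell$ with both innermost near $\ell_G$'' forces one word to be a suffix of the other (up to complement). We would try first to mimic Figure~\ref{non_interact_hide}. The leaves preceding the first interaction with $\ell_F,\ell_G$ transform by homeomorphisms preserving hiding, and the minimality and alternation of Theorem~\ref{theorem:interaction}(2) pin down the combinatorics. The contraction property, analogous to Lemma~\ref{lemma:expanding_map}, with $E$ expanding by $\lambda>1$, rules out the leftover configurations.
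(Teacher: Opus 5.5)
\textbf{The gap.} The peeling step is not merely unproven: the suffix relation it is meant to produce is false, in both of your cases. Take $\lambda=2$, $w=FFG$ and $w'=FF$. At $\theta=0$ one computes $F\ell=\{3\pi/4,5\pi/4\}$, $FG\ell=\{5\pi/8,11\pi/8\}$, $FF\ell=\{7\pi/8,9\pi/8\}$, $FFG\ell=\{13\pi/16,19\pi/16\}$ and $\ell_G=\{\pi\}$. So both $FF\ell$ and $FFG\ell$ hide $\ell_G$, each is innermost in its own family, and $FF\ell$ is the inner one.

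Now follow the endpoints as $\theta$ varies. $FF\ell$ meets $\ell_G$ at $\theta=\pm\pi/14$, while $FFG\ell$ meets it at $\theta=\pm\pi/10$; these are the minors $\{3/7,4/7\}$ and $\{2/5,3/5\}$. Hence, near $\lambda=2$, the sides of the $FF$-triangle run through the interior of the $FFG$-triangle. At such a $\sigma$ you are exactly in your case $|w'|\le|w|$, yet $FF$ is a suffix of neither $FFG$ nor $GGF$. Moreover the configuration is not contradictory, since these two triangles genuinely nest.

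Your case-2 argument uses only the local picture at $\sigma$ and never the global assumption that neither triangle contains the other. It would therefore ``refute'' every nested pair whose inner word is shorter, which already shows it cannot be correct. Case 1 fails in the same way. The period-6 triangle for $w'=FFFGG$ (top side $[-\pi/18,\pi/18]$) lies inside the $FFG$-triangle, yet $FFG$ is a suffix of neither $FFFGG$ nor $GGGFF$. So your dichotomy ``contradiction with innermostness, or $w$ is a suffix of $w'$'' is false as well.

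\textbf{The missing idea.} No word combinatorics is needed. At any common parameter, $w\ell$ and $w'\ell$ are leaves of the single lamination $\L$ (Proposition~\ref{proposition:props_of_L}), and both hide $\ell_G$. Hence the inner one hides $\ell_G$ from the other and is innermost in $\L_w\cup\L_{w'}$. The continuity statement of Lemma~\ref{lemma:triangles} then gives that the triangle of the inner leaf lies in the triangle of the outer leaf, by the following reasoning:
\begin{itemize}
\item as the parameter moves through the inner triangle, the inner leaf keeps hiding $\ell_G$;
\item the two leaves can never cross, so their nesting cannot reverse;
\item hence the outer leaf can never collide with $\ell_G$, and that collision is the only way to leave its triangle other than through $\lambda=2$.
\end{itemize}
This is the paper's proof. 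It is also essentially the argument in your last paragraph of Case 1. The difference is that it needs no inclusion $\L_w\subset\L_{w'}$, only non-crossing of leaves of $\L$ at each parameter. It gives containment directly from any common point, so you need neither the case split nor the preliminary claim that a side of one triangle passes through the interior of the other.
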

\begin{proof}
Suppose there is a parameter $\lambda,\theta$ in $\Delta \cap \Delta'$. For that
parameter we can simultaneously form $\L_w$ and $\L_{w'}$. Both $w\ell$ and
$w'\ell$ hide $\ell_G$ from $\ell$ and therefore without loss of generality
$w\ell$ hides $\ell_G$ from $w'\ell$ so that in fact $w\ell$ is innermost in
$\L_w \cup \L_{w'}$. But then evidently $\Delta \subset \Delta'$ by
Lemma~\ref{lemma:triangles}.
\end{proof}

Recall for each component $\Delta$ of each $\P_w$ that $\Delta^*$ denotes the
`horizontal' edge of $\Delta$, which is an interval in the circle $\lambda = 2$.
Lemma~\ref{lemma:triangles_nest} implies that 
the collection of intervals $\Delta^*$ ranging over all components
of all $\P_w$ are pairwise disjoint or nested in the circle $\lambda=2$. 
Notice by the way that the
endpoints of these intervals are all numbers of the form $p\pi/(2^{n+2}-2)$ for
various integers $p$, by Lemma~\ref{lemma:triangles}.

Take a fundamental domain $\theta \in [-\pi/2,\pi/2]$ in the circle $\lambda = 2$
and map this to $\R/2\pi\Z$ by $\theta \to \pi + 2\theta$. The endpoints of each 
interval $\Delta^*$ map to (the endpoints of) a leaf in $\R/2\pi\Z$, and by
Lemma~\ref{lemma:triangles_nest} this collection of leaves is non-crossing, and
form a lamination of $S^1$ that we denote $\L_{DHT}$
with closure $\overline{\L}_{DHT}$. See Figure~\ref{triangles_wrapped_around_M}.

\begin{figure}[htpb]
\centering
\includegraphics[scale=0.2]{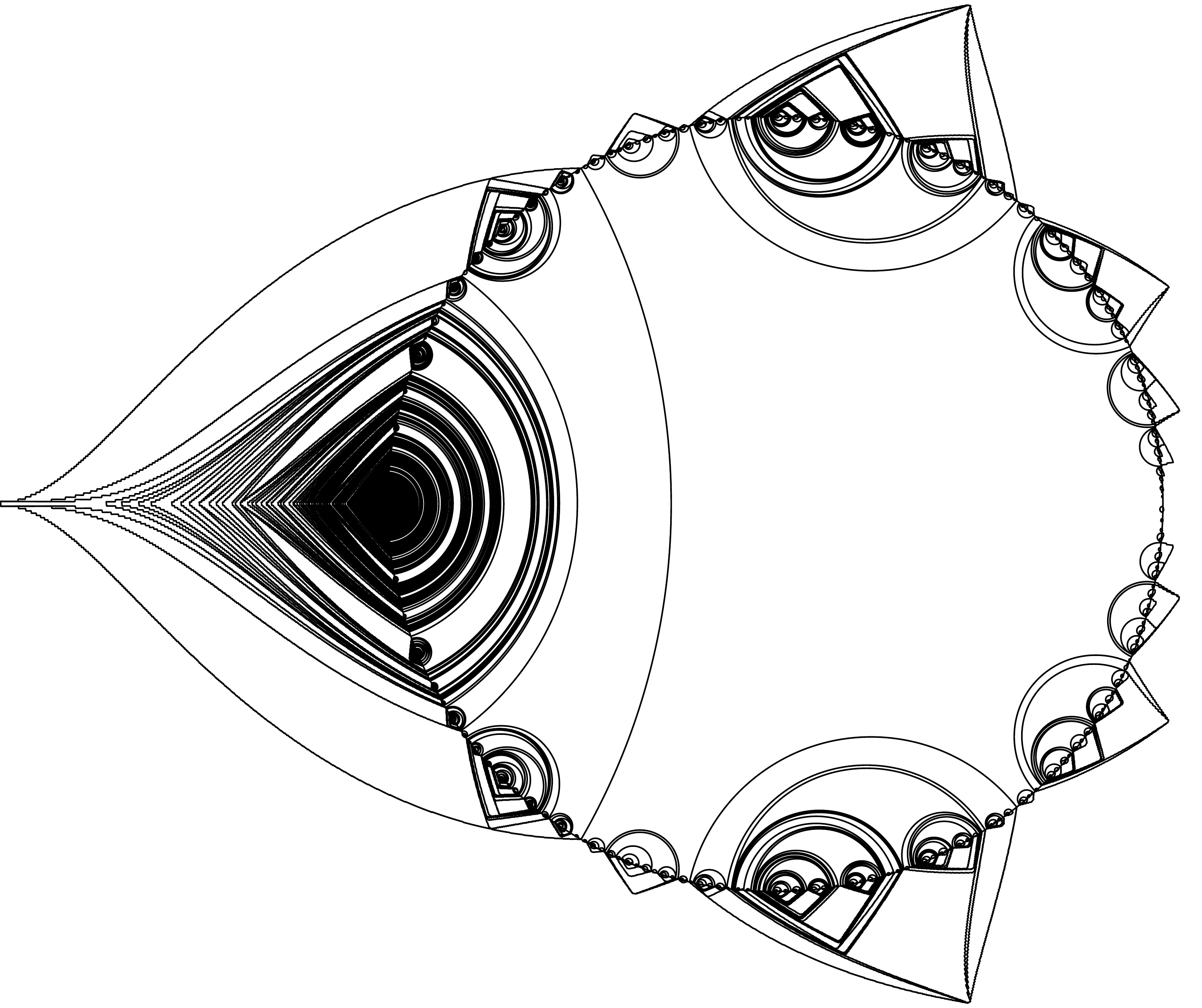}
\caption{Half a fundamental domain for the annulus $\P$ wrapped around a circle. 
Endpoints of intervals $\Delta^*$ become endpoints of leaves of the lamination $\L_{DHT}$.}
\label{triangles_wrapped_around_M}
\end{figure}

This lamination should look very familiar. It is the Douady--Hubbard--Thurston
quadratic minor lamination. See \cite{Thurston, Douady_Hubbard_1, Douady_Hubbard_2}
for a description of this lamination in terms of quadratic minors, and its relation
to the external ray parameterization of the ordinary Mandelbrot set $\M_\ord$.

\begin{theorem}[$\overline{\L}_{DHT}$ and $\M_\ord$]\label{theorem:DHT_lamination}
The lamination $\overline{\L}_{DHT}$ is the Douady--Hubbard--Thurston 
quadratic minor lamination, and the quotient of $\overline{\D}$ by $\overline{\L}_{DHT}$
is the abstract Mandelbrot set.
\end{theorem}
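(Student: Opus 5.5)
We will identify the leaves of $\L_{DHT}$ with Thurston's quadratic minor leaves by running the doubling map on the circle $\lambda=2$ and comparing its combinatorics with those of $E_{\theta,2}$. The identification of the quotient of $\overline{\D}$ with the abstract Mandelbrot set is then Thurston's theorem \cite{Thurston}, applied verbatim once the two laminations coincide.

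\textbf{Step 1: the model at $\lambda=2$.} At $\lambda=2$ the intervals $I_F,I_G$ are degenerate, and $F^{-1},G^{-1}$ are both restrictions of a single doubling-type map. Concretely, in the coordinate $\tau$ of Lemma~\ref{lemma:triangles}, the map $x\mapsto 2x+\tau$ on $\R/4\Z$ is compatible with $\iota$. Passing to the quotient circle $S^1/\iota$ conjugates $E_{\theta,2}$ to the doubling map $z\mapsto z^2$ up to rotation. We normalize so that the image of $\ell$ is a diameter whose endpoints are the two preimages of a point $c$. The point $c$ corresponds to $\pi+2\theta$, which is the critical value. In Thurston's language, $\ell$ becomes the major leaf and $\ell_G$ (whose endpoints collapse) becomes the minor. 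This explains the substitution $\theta\to\pi+2\theta$.

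\textbf{Step 2: endpoints of $\Delta^*$ are minor leaves.} By Lemma~\ref{lemma:triangles}, $\Delta^*$ is the set of $\theta$ on $\lambda=2$ for which $w\ell$ hides $\ell_G$ and is innermost in $\L_w$. Its two endpoints are the parameters where $w\ell$ shares an endpoint with $\ell_G$; at the vertex, $w\ell=\ell_G$. In the doubling picture this says that the critical value is the landing point of a periodic orbit: $w\ell$ pulls back along $\ell$ and returns, so the endpoint angles are periodic of period $|w|+1$ under doubling. This matches the denominators $2^{n+2}-2$. The innermost condition says that no shorter leaf of the orbit separates, which is exactly Thurston's criterion for a periodic leaf to be a minor, namely that its orbit leaves are disjoint and none lies in the region between the minor and the two majors. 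So each $\Delta^*$ gives a leaf of Thurston's $QML$. Conversely, given a periodic minor $m$ of period $p$, we take $\theta$ to be its midpoint. The forward orbit of $m$ under the linear model then yields a word $w$ of length $p-1$ with $w\ell$ innermost hiding $\ell_G$, and the continuity argument of Lemma~\ref{lemma:triangles} recovers the triangle whose base is $m$.

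\textbf{Step 3: closures.} Thurston shows that $QML$ is the closure of its periodic (equivalently, non-degenerate, non-limit) minor leaves. Steps 1 and 2 give a bijection between these leaves and the leaves of $\L_{DHT}$. Taking closures, using Lemma~\ref{lemma:triangles_nest} for non-crossing, yields $\overline{\L}_{DHT}=QML$.

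\textbf{Main obstacle.} The hardest part is the converse direction in Step 2. We must check that Thurston's minor condition translates exactly into the innermost condition for the finite set $\L_w$, including $\iota$-images and the choice of lifts $F$ versus $G$. We must also check that nothing degenerates at $\lambda=2$, where $F,G$ fail to be defined at one point. Our first approach would be to verify that both conditions are equivalent to ``the orbit of $\ell_G$ under $H$ stays in one side until returning,'' and to compare them via the kneading sequences.
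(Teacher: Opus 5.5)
There is a genuine gap in the forward direction of Step~2, which you treat as the easy part. The dynamics at $\theta=\theta^-$ shows that $\pi+2\theta^-$ is periodic under doubling. The (different) dynamics at $\theta=\theta^+$ shows that $\pi+2\theta^+$ is periodic. But nothing in your argument shows that these two points are the endpoints of a \emph{single} leaf whose forward orbit under $z\mapsto z^2$ is a non-crossing cycle in which that leaf is innermost, and that is what Thurston's minor criterion requires. The innermost condition of Definition~\ref{definition:innermost} is a statement about the non-periodic leaf $w\ell$ at one parameter. It says nothing directly about the doubling orbit of $\{\pi+2\theta^-,\pi+2\theta^+\}$. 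A kneading comparison cannot bridge this by itself: after conjugating to $z\mapsto z^2$ the critical diameter is $R_\theta\ell$, which moves with $\theta$, so itineraries at $\theta^-$ and $\theta^+$ are taken with respect to different partitions. The only parameter-independent objects available are periodic leaves of the doubling map, and you never produce one. So ``each $\Delta^*$ gives a leaf of $QML$'' is asserted rather than proved, and the bijection in Step~3 inherits the gap.

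The paper's proof supplies exactly this missing piece.
\begin{itemize}
\item Replace $E$ by $E'=G^{-1}$ on $J_G$ and $E'=\iota F^{-1}$ on $J_F$. This map is continuous of slope $2$ on the whole circle, and $R_\theta$ conjugates it to $z\mapsto z^2$. Two corrections to your Step~1 belong here. Your passage to $S^1/\iota$ does not work as stated, since $\ell$ has $\iota$-related endpoints and collapses to a point there. Also, the degenerate $\ell_G$ is only the critical value, not the nondegenerate minor attached to $\Delta^*$.
\item For each $\theta\in[\theta^-,\theta^+]$, the leaves $\ell_G=G^{-1}\ell,\ w\ell,\ (wG)w\ell,\ (wG)^2w\ell,\dots$ are nested between $\ell_G$ and $\ell$. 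At $\theta^\pm$ this holds because $\ell_G$ is an endpoint of $w\ell$ fixed by $wG$. It then holds throughout the interval by continuity, since no iterate can cross $\ell$.
\item Hence these leaves converge to a $wG$-fixed leaf $\mu_\theta$, and $R_\theta\mu_\theta$ is fixed by the $(|w|+1)$-st iterate of doubling.
\item There are only finitely many such leaves, and $\mu_\theta$ varies continuously, so $\nu:=R_\theta\mu_\theta$ is independent of $\theta$.
\item At $\theta^\pm$ the point $\ell_G=\pi+\theta^\pm$ is an endpoint of $\mu_{\theta^\pm}$. Evaluating there gives $\nu=\{\pi+2\theta^-,\pi+2\theta^+\}$.
\item Taking limits of $(wG)^n$ applied to $\L_w(\theta)$ gives a finite cycle of leaves with $\mu_\theta$ innermost. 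Its $R_\theta$-image is a forward-invariant finite lamination with innermost leaf $\nu$, so $\nu$ is the minor.
\end{itemize}
Your converse direction needs the same $wG$-cycle to produce the word $w$ and the triangle, though the paper is equally terse there.
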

\begin{proof}
For $\lambda=2$ we may define a map $E':S^1 \to S^1$ by $E'=G^{-1}$ on
$J_G$ and $E'=\iota F^{-1}$ on $J_F$. Then $E'$ is continuous, of constant slope $2$.
In fact, if we let $R_\theta:S^1 \to S^1$ be rotation
by $\theta$, then $R_\theta$ conjugates $E'$ to $z \to z^2$. 

Let $\L$ be the $E$-invariant dynamical lamination $\L$.
By the $\iota$-invariance of $\L$, the image $R_\theta(\L)$ is the invariant
dynamical lamination for $z \to z^2$ with critical leaf $R_\theta(\L)(\ell)$,
in the terminology of \cite{Thurston}. 

Each arc in $S^1$ bounded by a leaf of $\L_{DHT}$ is associated to an
interval $\Delta^*$ for some component $\Delta$ of some $\P_w$. When $\lambda=2$
(i.e.\/ on the interval $\Delta^*$) the `leaves'
$\ell_F$ and $\ell_G$ degenerate to antipodal points; after conjugacy by $R_\theta$
the leaf $\ell_G$ becomes the critical value $v$ 
(i.e.\/ the image of the critical leaf under $z \to z^2$). Since
the degenerate leaf $\ell_G$ is just the point $\pi+\theta$ its image under
$R_\theta$ is $\pi+2\theta$; this is the `meaning' of the map from $\theta$
coordinates to Douady--Hubbard--Thurston coordinates on $S^1$.

\medskip 

Let the $\theta$ coordinates of $\Delta^*$ be $[\theta^-,\theta^+]$.
This interval corresponds to a leaf 
$\lbrace \pi+2\theta^-,\pi+2\theta^+\rbrace$
in $\L_{DHT}$ by definition. We explain the meaning of this leaf in
dynamical terms.

For each $\theta \in [\theta^-,\theta^+]$ we get a dynamical lamination 
$\L(\theta)$ for which $w\ell$ hides the point $\ell_g$ and is innermost in
$\L_w(\theta)$. If we think of
$\ell_G$ morally as the `leaf' $G^{-1}\ell$ then we claim that the
sequence of `leaves'
$$\ell_G = G^{-1}\ell, w\ell, (wG)w\ell, (wG)^2w\ell,\cdots$$
are all nested between $\ell_G$ and $\ell$, and therefore 
converge to a limit $\mu_\theta:=\lim_{n \to \infty} (wG)^nw\ell$ which is a
stable fixed leaf for $wG$. To see this, note that when $\theta =\theta^\pm$,
the point $\ell_G$ is one of the endpoints of $w\ell$ which is fixed by $wG$.
Since $(wG)w\ell$ can't cross $\ell$, and since it has one endpoint on
$\ell_G$, it hides $w\ell$ from $\ell$, and so on for the other iterates by
induction. Thus the claim is proved for this value of $\theta$, and evidently
it is therefore true by continuity throughout $[\theta^-,\theta^+]$, since
for no intermediate value can any iterate $(wG)^n\ell$ ever cross $\ell$.

In the same way we can apply powers $(wG)^n$ to each leaf of $\L_w(\theta)$ and 
obtain a finite lamination $\L'_w(\theta)$ containing $\mu$ as an innermost leaf.
Applying letters of $wG$ in order from right to left permutes the leaves of
$\L'_w(\theta)$ in a finite cycle.

Applying $R_\theta$ to $\L'_w(\theta)$ gives a finite lamination $\L''_w(\theta)$ 
on which $z \to z^2$
is periodic of period $|w|+1$. The leaf $\nu(\theta):=R_\theta \mu$ is innermost in this
finite lamination. Since this is true for every $\theta$ in the interval 
$[\theta^-,\theta^+]$, and since periodic leaves of $z \to z^2$ of fixed period 
are isolated, it follows that $\nu(\theta)$ and $\L''_w(\theta)$ are 
{\em independent} of $\theta$ in the interval $[\theta^-,\theta^+]$; i.e.\/ we
may refer unambiguously to $\nu$ and $\L''_w$. Taking $\theta=\theta^\pm$
we see that the endpoints of $\nu$ are precisely 
$\lbrace \pi + 2\theta^-,\pi + 2\theta^+\rbrace$. Thus $\nu$ is itself the
leaf of $\L_{DHT}$ associated to the interval $\Delta^*$.

By construction, the finite lamination $\L''_w$ is forward invariant. Since $\nu$ is
innermost, it is the image of the `largest' leaf of $\L''_w$ (the `major'); in other
words, $\nu$ is the minor in the terminology of Thurston. 
There is a unique (backward) invariant dynamical lamination 
containing $\L''_w$ for which no leaf crosses the major (or its image under
$\iota$). In particular $\nu$ is a leaf of the Douady--Hubbard--Thurston lamination,
and running this construction backwards, all the quadratic minor leaves arise this way.
\end{proof}

Let us spell out this correspondence in a few special cases.

\begin{example}[$w=F$]
First consider the case $w=F$. The region $\P_w$ consists of a single 
triangle $\Delta$ in this case. The degenerate leaf $\ell_G$ is just
the point $\pi+\theta$. The leaf $F\ell$ has endpoints $(\pi/2 +\theta -\pi)/2 + \pi$
and $(-\pi/2 +\theta - \pi)/2 + \pi$.
Thus $F\ell$ has $\ell_G$ as an endpoint when $\theta = \pm \pi/6$ so that
$[\theta^-,\theta^+] = [-\pi/6,\pi/6]$ in this case.

For $\theta = -\pi/6$ the map $FG$ has attracting fixed points at 
$\pi+\theta = 5\pi/6$ and $3\pi/2$, and the leaves $(fg)^nf\ell$ converge to
the leaf $\mu_{-\pi/6}:=\lbrace 5\pi/6,3\pi/2\rbrace$ which is evidently 
a stable fixed leaf of $FG$. Applying $R_\theta$ takes
$\mu_{-\pi/6}$ to $R_{-\pi/6}\mu_{-\pi/6} = \lbrace 2\pi/3,4\pi/3\rbrace$; see
Figure~\ref{DHT_example_1}.

\begin{figure}[htpb]
\centering
\includegraphics[scale=0.7]{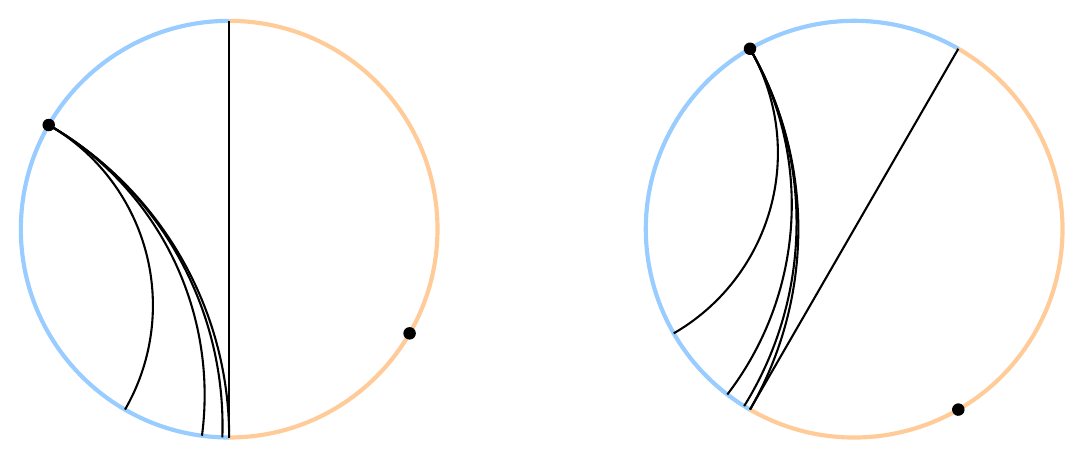}
\caption{The stable fixed leaf $\mu_{-\pi/6}=\lbrace 5\pi/6,3\pi/2\rbrace$ of $FG$ 
is taken by $R_{-\pi/6}$ to the unstable fixed leaf $\lbrace 2\pi/3,4\pi/3\rbrace$
of $z \to z^4$.}
\label{DHT_example_1}
\end{figure}

For $\theta = \pi/6$ the map $FG$ has attracting fixed points at
$\pi+\theta = 7\pi/6$ and $\pi/2$, and now $(FG)^nF\ell$ converges to the
leaf $\mu_{\pi/6}:=\lbrace \pi/2,7\pi/6\rbrace$ which is a stable fixed leaf of $FG$. 
Applying $R_\theta$ takes
$\mu_{\pi/6}$ to $R_{\pi/6}\mu_{\pi/6} = \lbrace 2\pi/3,4\pi/3\rbrace = R_{-\pi/6}\mu_{-\pi/6}$.

The leaf $\nu:=R_{\pi/6}\mu_{\pi/6}=R_{-\pi/6}\mu_{-\pi/6}$ is an 
(unstable) fixed leaf of the square of $z \to z^2$. It corresponds to
the indifferent periodic point of period 2 in the Julia 
set of $z \to z^2+c$ for $c=-0.75$.
\end{example}

\begin{example}[$w=FG$]
Let's consider another example, $w=FG$. The region $\P_w$ consists of
three triangles; one of them intersects $\lambda=2$ in the interval 
$[\theta^-,\theta^+]=[3\pi/14,5\pi/14]$.
For $\theta$ in this interval the leaves $(FGG)^nFG\ell$ converge to
an attracting fixed leaf $\mu_\theta$ for $fgg$, and $R_\theta\mu_\theta$ 
is a repelling periodic leaf of period $3$ for $z \to z^2$. The associated
Julia set is $z \to z^2+c$ for $c\sim -0.125 -0.649i$, the root point of the
bulb whose center is the corabbit.
\end{example}

\begin{figure}[htpb] 
\labellist
\small\hair 2pt
\pinlabel $\ell$ at 103 100
\pinlabel $w_5\ell$ at 124 100
\pinlabel $G\ell$ at 138 55
\pinlabel $GF\ell$ at 161 125
\pinlabel $w_3\ell$ at 150 172
\pinlabel $w_4\ell$ at 130 192
\pinlabel $\ell_F$ at 127 220
\endlabellist
\centering
\includegraphics[scale=1.0]{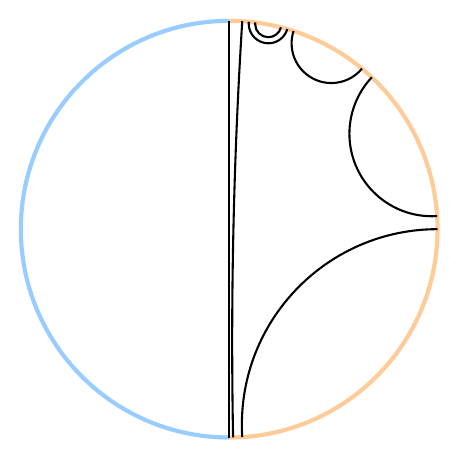}
\caption{An example in $\Delta_4$. The sequence of leaves $w_n\ell$ spirals
positively and accumulates on an ideal 5-gon.}
\label{DHT_example_2}
\end{figure}

\begin{example}[Diagonal cut leaves]\label{example:cut_not_limit}
Suppose for some $\theta,\lambda$ that the lamination $\L$ has a complementary
ideal polygon $P$ with at least 4 boundary leaves, all of which are limit leaves
(and consequently also dynamical cut leaves). In particular,
all the ideal vertices of $P$ have the same trajectories, and therefore the
diagonals are all dynamical cut leaves; however such diagonals cannot be nontrivial
limits of ordinary leaves, and therefore if they are not in $\L$ they are not limit
leaves at all.

For each $n$, let $w_n$ be the prefix of $(GF)^\infty$ of length $n$, so that
$w_0$ is empty, $w_1 = G$, $w_2 = GF$, $w_3=GFG$ and so on. For each $n$ there is a triangle
component $\Delta_n$ of $\P_{w_n}$ for which $w_n \ell$ hides $\ell_G$ from $\ell$, and
the lamination $\overline{\L}$ contains a limit $(n+1)$-gon whose leaves are limits
from outside of the leaves $w_{(n+1)m+i}\ell$ for fixed $i$ mod $(n+1)$ and $m \to \infty$.
See Figure~\ref{DHT_example_2} for an example with $n=4$.

For example, taking $\lambda \sim 1.915$ and $\theta \sim 1.295$ gives the lamination
$\overline{\L}$ depicted in Figure~\ref{4_sided_polygon_L}. This parameter is contained
in the triangle $\Delta_3$ and therefore $\overline{\L}$ has (ideal) 4-gons 
whose boundary leaves are limit leaves. In particular, the diagonals are 
dynamical cut leaves which are not limit leaves. Extensive computer experiments have failed
to find $s\in \partial \M$ for which $(\lambda,\theta)$ is contained in any
$\Delta_n$ for $n>2$; this motivates Conjecture~\ref{conjecture:cut_is_limit}, but we
do not have any real theoretical justification for this.

\begin{figure}[htpb]
\centering
\includegraphics[scale=0.2]{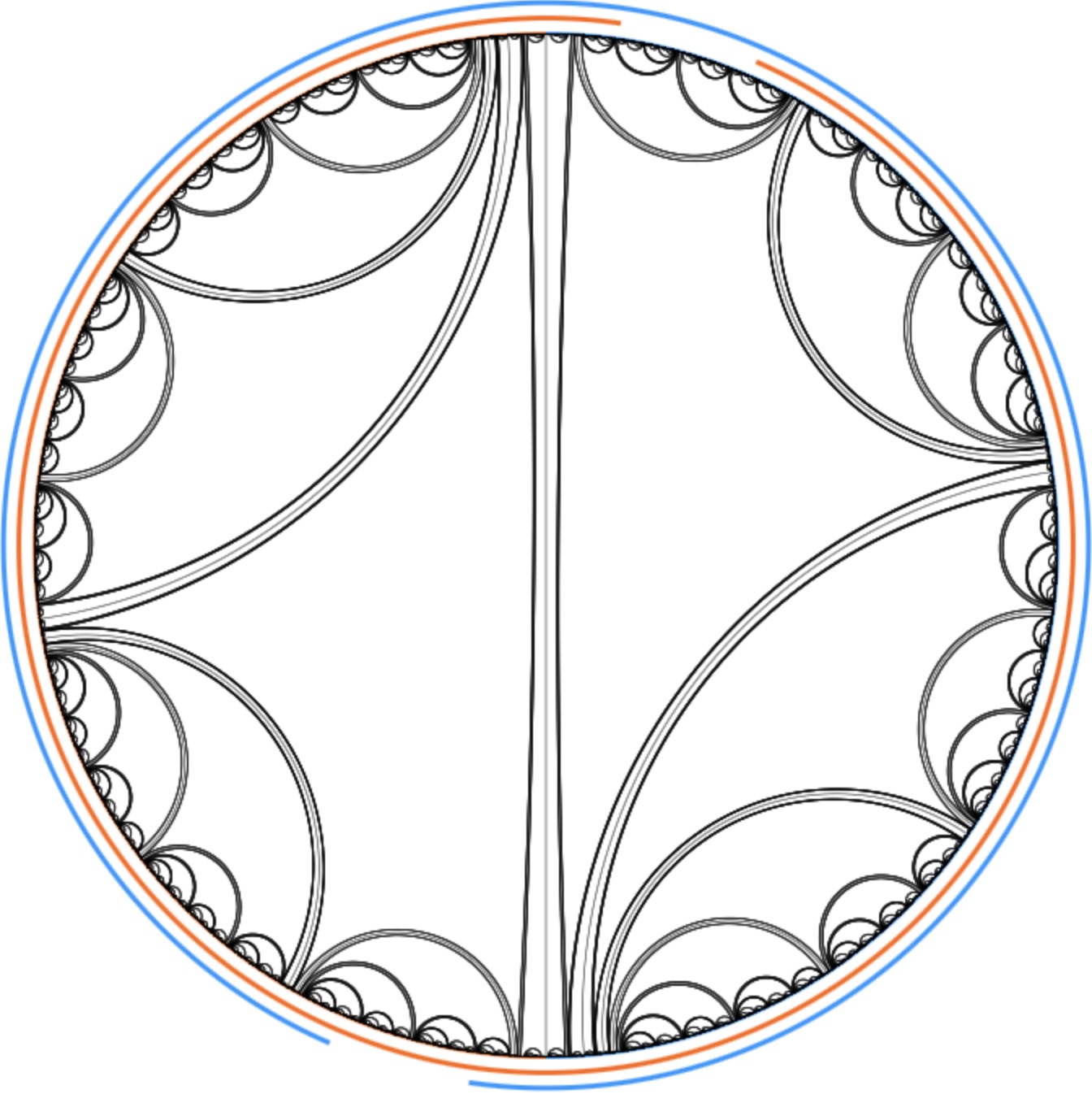}
\caption{The lamination $\L$ has 4-sided polygons of limit leaves. The diagonals are
dynamical cut leaves which are not limit leaves.}
\label{4_sided_polygon_L}
\end{figure}
\end{example}

\subsection{Inclusion graphs}\label{subsection:inclusion_graphs}

The goal of this section is to give an algorithm that recursively generates a
directed graph so that dynamical cut leaves in $S^1$ 
correspond to infinite directed paths in the graph. For $\lambda,\theta$ associated
to a parameter $s \in \M$ these cut leaves give cut points in $K$, and if $s\in \M$
and satisfies the conclusion of Conjecture~\ref{conjecture:A} these are 
all the cut points of $K$.

Throughout the remainder of this section we 
fix $\theta,\lambda$, the intervals $I_F,I_G,J_F,J_G$ and the (partially defined) maps
$E,F,G$. We assume that $\theta,\lambda \in \P_\infty$, i.e.\/ that $J_F \subset I_F$
and $J_G \subset I_G$ otherwise there are no dynamical cut leaves at all.

The technical tool that keeps tracks of compact families of leaves is the notion
of a {\em thick leaf}:
\begin{definition}[Thick leaf]\label{definition:thick_leaf}
Let $C,D$ be disjoint subsets of $S^1$, each a closed interval or a single point.
The \emph{thick leaf}
$(C,D)$ is the set of all leaves (i.e.\/ unordered distinct pairs of points) in $S^1$
with one element in $C$ and one in $D$.
\end{definition}
We say that a leaf $\mu:=\lbrace x,y\rbrace$ is {\em contained} in the thick leaf
$(C,D)$ if one of $x,y$ is in $C$ and the other is in $D$.

If $C$ and $D$ are both contained in $I_F$ then the thick leaf $F(C,D)$ is defined,
and similarly for $G(C,D)$ if $C$ and $D$ are both contained in $I_G$. Likewise, if
$C$ and $D$ are both contained in $J_F$, or both contained in $J_G$, then 
$E(C,D)$ is defined.

Two thick leaves are particularly impotant. The leaf $F\ell$ has endpoints $\lbrace
F\pi/2,F3\pi/2\rbrace \subset J_F$, and we define 
$C_F:=[\pi/2,F\pi/2]$ and $D_F:=[F3\pi/2,3\pi/2]$. Likewise define 
$C_G:=\iota C_F$ and $D_G:=\iota D_F$.

\begin{lemma}[Image lands in thick]\label{lemma:eventually_in_C_f_D_f}
Let $\mu:=\lbrace x,y\rbrace$ be a cut leaf with trajectory $T = T(x) = T(y)$. 
Then there is some finite $n$ so that the image of $\mu$ under the 
prefix $T|n$ lies in $(C_F,D_F)$ or $(C_G,D_G)$.
\end{lemma}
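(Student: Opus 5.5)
The argument runs by iterating $E$ on $\mu$ and watching which side of $\ell$ and of $E\ell$ the iterate lies on, using expansion to force the iterates eventually to straddle $E\ell$. Here a ``cut leaf'' is taken to mean a dynamical cut leaf, since in the linear setting these are the only kind. So $E^n\mu$ is defined for every $n$ and never crosses $\ell$. Write $\mu_n := E^n\mu$, which is the image of $\mu$ under the prefix $T|n$. Each $\mu_n$ has both endpoints in $J_F$ or both in $J_G$. By the $\iota$-symmetry ($\iota(C_F,D_F)=(C_G,D_G)$) it is enough to handle the case $\mu_n\subset J_F$.

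The key observation concerns when $\mu_n\subset J_F$ lies in $(C_F,D_F)$. The thick leaf $(C_F,D_F)$ consists exactly of the leaves in $J_F$ that either equal $F\ell$ or hide $F\ell$ from $\ell$. Equivalently, one endpoint lies in the arc of $J_F$ between $\pi/2$ and $F(\pi/2)$, and the other lies between $F(3\pi/2)$ and $3\pi/2$. So if $\mu_n$ is not in $(C_F,D_F)$, its two endpoints lie in a single complementary arc of $J_F$ cut by the endpoints of $F\ell$. There are three such arcs: the two end arcs $[\pi/2,F\pi/2)$ and $(F3\pi/2,3\pi/2]$, and the middle arc strictly between $F\pi/2$ and $F3\pi/2$. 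Points in the middle arc map under $E=F^{-1}$ into $J_G$, and points in an end arc map into $J_F$ (given $J_F\subset I_F$, which holds since we are in $\P_\infty$).

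Now I define a length functional. Let $a_n$ be the length of the shorter arc of $S^1$ cut off by $\mu_n$ on the side away from $\ell$, meaning the arc inside $J_F$ or $J_G$ hidden by $\mu_n$. Suppose $\mu_n$ is not in $(C_F,D_F)\cup(C_G,D_G)$. Then this hidden arc $A_n$ lies inside one complementary arc of the corresponding $F\ell$ or $G\ell$. On that arc $E$ is a single linear branch of slope $\lambda$ with no discontinuity in $A_n$, because the discontinuities sit at $\partial J_F$, i.e.\ at the endpoints of $\ell$. Hence $E$ maps $A_n$ homeomorphically and linearly onto the hidden arc $A_{n+1}$ of $\mu_{n+1}$, and $a_{n+1}=\lambda a_n$. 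This is the same argument as in the proof of Theorem~\ref{theorem:interaction}(5), with $\diam\, u$ replaced by linear length.

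One point needs checking here: $A_{n+1}$ should be the hidden side of $\mu_{n+1}$ rather than the complementary arc containing $\ell$. This holds because $E(A_n)$ is an arc of length $\lambda a_n$. If that exceeded $\pi$, the image would contain an endpoint of $\ell$, so $\mu_{n+1}$ would cross $\ell$ or equal it, and the next iterate would not be defined.

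So if the conclusion failed for all $n$, we would get $a_n=\lambda^n a_0\to\infty$ when $\lambda>1$, contradicting $a_n\le\pi$.

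The main obstacle is the degenerate case $\lambda=1$, where there is no expansion. In the setting of $s$ this case does not arise, because Theorem~\ref{theorem:linearization} gives $\lambda\ge|s|^{-1}>1$. For general $(\theta,\lambda)\in\P_\infty$, having $\lambda=1$ forces $\theta=0$, and then $E$ is an isometry. In that case I would either exclude it or check by hand that $F\ell=\ell$, so that $C_F$ and $D_F$ are single points. For the main case I would also need to handle the boundary situation where $\mu_n$ shares an endpoint with $F\ell$ without containing it; this is covered by the closed intervals $C_F$ and $D_F$.
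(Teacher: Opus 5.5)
Your argument is correct in outline, and it takes a different, more direct route than the paper's. The paper introduces an auxiliary leaf $\ell'$ joining the fixed points of $F^{-1}$ and $G^{-1}$ and argues in two stages:
\begin{enumerate}
\item expansion forces some iterate to straddle $\ell'$;
\item if that iterate is hidden by $F\ell$, powers of $F^{-1}$ (which expand about the fixed point lying in $F(J_F)$) push it outside $F\ell$, and the preserved straddling forces it into $(C_F,D_F)$.
\end{enumerate}
You avoid fixed points altogether. The hidden-arc length $a_n$ is multiplied by exactly $\lambda$ at every step with $\mu_n\notin(C_F,D_F)\cup(C_G,D_G)$. Since $a_n\le\pi$, the process must stop, and you get the explicit bound $n\le\log(\pi/a_0)/\log\lambda$. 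The paper's first stage tacitly uses the same hidden-arc expansion, so your version is shorter and more self-contained. What the paper's route adds is the extra information that the landing iterate crosses $\ell'$ and that the final stretch of the orbit is a pure power of $F^{-1}$ (or $G^{-1}$).

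Several details need correcting, though none affects the strategy.

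First, your mapping claim is reversed. $F^{-1}$ maps the closed middle arc $[F\pi/2,F3\pi/2]=F(J_F)$ onto $J_F$, and maps each closed end arc into $I_F\setminus\mathrm{int}(J_F)\subset J_G$. The corrected fact is what actually settles your ``one point needs checking'': $E(A_n)$ lies in a single half, so it is the hidden arc of $\mu_{n+1}$. Your stated justification is incomplete. An image arc of length $>\pi$ could contain \emph{both} endpoints of $\ell$, in which case $\mu_{n+1}$ does not cross $\ell$. That case is excluded only because $A_n\not\supseteq F(J_F)$, which is equivalent to $\mu_n\notin(C_F,D_F)$.

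Second, the claim ``if $\mu_n\notin(C_F,D_F)$ then both endpoints lie in one complementary arc'' needs two additions:
\begin{enumerate}
\item $\mu_n$ cannot cross $F\ell$, since otherwise $\mu_{n+1}$ would cross $\ell$;
\item the three arcs must be taken closed.
\end{enumerate}
For example, a leaf $\{F\pi/2,y\}$ with $y$ in the open middle arc is \emph{not} in $(C_F,D_F)$, contrary to your closing remark. It has to be handled by the expansion step, which works once the arcs are closed.

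Finally, at $\lambda=1$ (which forces $\theta=0$ and $E=\mathrm{id}$) the lemma is actually false. Every leaf in $J_F$ is then a fixed dynamical cut leaf, while $(C_F,D_F)=\{\ell\}$. So checking by hand will not rescue that case; it must be excluded, exactly as the paper's appeal to uniform expansion implicitly does.
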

\begin{proof}
Let $\ell'$ be the leaf of $S^1$ with endpoints on the unique fixed points
of $F^{-1}$ and $G^{-1}$. Note that $F^{-1}$ does not typically fix the other
endpoint of $\ell'$ and similarly for $G^{-1}$.
Note also that $\ell'$ crosses $\ell$, and is (therefore) not a 
leaf of $\L$. Suppose $\mu$ does not cross $\ell'$. Then there is a unique
interval bounded by the endpoints of $\mu$ that does not contain an endpoint
of $\ell$, and both $F^{-1}$ and $G^{-1}$ expand the length of this
interval by $\lambda$. It follows that there is a finite forward iterate 
$E^n\mu$ for which the endpoints of $\mu$ are on different sides of $\ell'$.
Since a cut leaf and its forward images can never cross $\ell$, we may
assume $E^n\mu$ is to the left of $\ell$ (say).

Then either $E^n\mu$ is contained in $(C_F,D_F)$ or it is hidden from $\ell$
by $F\ell$. Again by uniform expansion of $F^{-1}$ on $J_F$ 
there is some least $m$ so that $E^{n+m}\mu = F^{-n}E^n\mu$ 
is to the left of $\ell$ and to the right of $F\ell$.
Since by hypothesis the endpoints of $E^n\mu$ are on opposite sides
of $\ell'$ the same is true of $E^{n+m}\mu$; thus $E^{n+m}\mu$
has endpoints on either side of $F\ell$ in $J_F$ and
is contained in $(C_F,D_F)$.
\end{proof}

Thick leaves may be {\em sliced}:
\begin{definition}[Slicing big leaves]\label{definition:cutting}
Let $(C,D)$ be a thick leaf and let $\mu$ be any leaf. 
Let $(C,D)\backslash \mu$ be the set of leaves contained in $(C,D)$ that do not
cross $\mu$. 

Then $(C,D)\backslash\mu$ is the union of zero, one, or two thick leaves, depending
on whether $\mu$ has endpoints in $(C,D)$, and whether it links leaves in $(C,D)$ or not.
We call this collection of thick leaves the result of {\em slicing} $(C,D)$ by $\mu$.
\end{definition}

There are several combinatorial possibilities for $(C,D)\backslash \mu$:
\begin{enumerate}
\item{if $\mu$ crosses $(C,D)$ but does not have an endpoint in $C$ or $D$ then
$(C,D)\backslash \mu$ is empty;}
\item{if $\mu$ has an endpoint $x$ in $C$ but not $D$ (say)
then $(C,D)\backslash \mu$ consists of a single thick leaf of the form $(C',D)$ where
$x$ divides $C$ into two intervals, and one of them is $C'$;}
\item{if $\mu$ has one endpoint in $C$ and one in $D$ then $(C,D)\backslash\mu$ 
consists of two thick leaves, each of the form $(C',D')$ where $C',D'$ are subintervals
of $C,D$ divided by the endpoints of $\mu$; and}
\item{if $\mu$ has two endpoints in $C$ (say) then $(C,D)\backslash \mu$ consists of
two thick leaves, each of the form $(C',D)$ where $C'$ is one of two subintervals
of $C$ obtained by removing a subinterval bounded by $\mu$.}
\end{enumerate}

\begin{definition}[Intersection of thick leaves]\label{definition:thick_intersect}
Two thick leaves {\em intersect} if there is a leaf contained in both of them. 
A thick leaf $(C,D)$ {\em contains} a thick leaf $(C',D')$ if $C' \subset C$ and
$D' \subset D$ (up to permutation). 
\end{definition}
Note that if $D,E$ are disjoint leaves then $(C,D)$ and $(C,E)$ do not intersect
(as thick leaves), even though the set of geodesics in $\D$ consisting of leaves
contained in one and the other certainly will intersect in $\D$.

We are now ready to recursively define the {\em inclusion graph} $\IG$ 
associated to a parameter $\theta,\lambda$. 

The recursion operates on two queues $V,W$ (i.e.\/ finite data buffers for
which items are added and removed on a first in, first out (FIFO) basis)
whose elements are pairs $((C,D),n)$ where $(C,D)$ is a thick leaf, 
and $n$ is a non-negative integer. The stack $V$ is initialized to 
consist of the pair 
$$V := ((C_F,D_F),0) , ((C_G,D_G),0)$$
and $W$ is initialized to the empty queue
(actually, although items are added to $W$ they are never taken off).

One pass of the recursion takes an element off the end of the queue $V$ 
and either puts a new element on $W$, or adds new elements to the start
of $V$ as follows.

First, pop an element $((C,D),n)$ off the end of $V$. The case $n=0$ is
handled specially; for now let's assume $n>0$. 
We claim that the following holds:
\begin{enumerate}
\item{$E^n$ is defined on the thick leaf $(C,D)$; and}
\item{either $E^n(C,D)$ is contained in $(C_F,D_F)$ or $(C_G,D_G)$, or it
is contained on one side of $\ell$ and no leaf in $E^n(C,D)$ crosses
$F\ell$ or $G\ell$.}
\end{enumerate}
This claim will follow by induction.

Assuming this claim, here is how the recursion proceeds.
If $E^n(C,D)$ is contained in $(C_F,D_F)$ or $(C_G,D_G)$ then push 
$((C,D),n)$ on $W$. Otherwise, $E^n(C,D)$ is on one side of $\ell$ and
no leaf in $E^n(C,D)$ crosses $F\ell$ or $G\ell$ (note that it is
possible that every leaf in $E^n(C,D)$ has both endpoints in
$C_G$ for example). It follows that $E^{n+1}(C,D)$ is
defined and lies entirely on one side of $\ell$. Slice $E^{n+1}(C,D)$ by
$F\ell$ or $G\ell$ depending which side of $\ell$ it is on.
This produces between $0$ and $2$ new thick leaves; their preimages
under $E^{n+1}:(C,D) \to E^{n+1}(C,D)$ are thick leaves contained 
in $(C,D)$. For each of these $(C',D')$ push $((C',D'),n+1)$ to the 
start of the queue $V$.

Notice by construction that $E^{n+1}(C',D')$ is indeed defined, and
by construction it is either contained in $(C_F,D_F)$ or $(C_G,D_G)$ or
it is contained on one side of $\ell$ and no leaf in $E^{n+1}(C',D')$
crosses $F\ell$ or $G\ell$. This proves the induction step of our claim. 

It remains to discuss the special case $n=0$ which applies only to the 
initial elements on $V$. For simplicity we explain the case
of $((C_F,D_F),0)$; the other case is analogous. 
In this case we form $E(C_F,D_F) = F^{-1}(C_F,D_F)$, 
slice the result by $G\ell$ to obtain new thick leaves, take their images 
$(C',D')$ under $F$ (i.e.\/ $E^{-1}$), 
and push $((C',D'),1)$ back on $V$. Evidently these
new elements (if any) satisfy the base case of the induction, so the claim
is proved and the algorithm is defined. 

When the stack $V$ is empty, the list $W$ consists entirely of elements
of the form $((C,D),n)$ where $(C,D)$ and $E^n(C,D)$ are both
thick subleaves of $(C_F,D_F)$ or $(C_G,D_G)$. We do {\em not} assume
$W$ is finite; nevertheless, because $V$ is a FIFO stack, it makes sense
to run the recursion for possibly countably infinitely many steps until 
$V$ is empty.

Let us now define the {\em inclusion graph}.

\begin{definition}[Inclusion graph]\label{definition:IG}
The inclusion graph $\IG$ has one vertex for each element of $W$
and a directed edge from $((C,D),n)$ to $((C',D'),n')$ if $(C',D')$ is
contained in $E^n(C,D)$.
\end{definition}

\begin{example}[Inclusion graphs]\label{example:IG}
Figure~\ref{inclusion_graph_examples} gives examples of inclusion graphs arising
from various points on $\partial \M$. Note that directed loops in $\IG$ correspond
to cut points with periodic orbits under $H$.
\begin{figure}[htpb]
\centering
\includegraphics[scale=0.15]{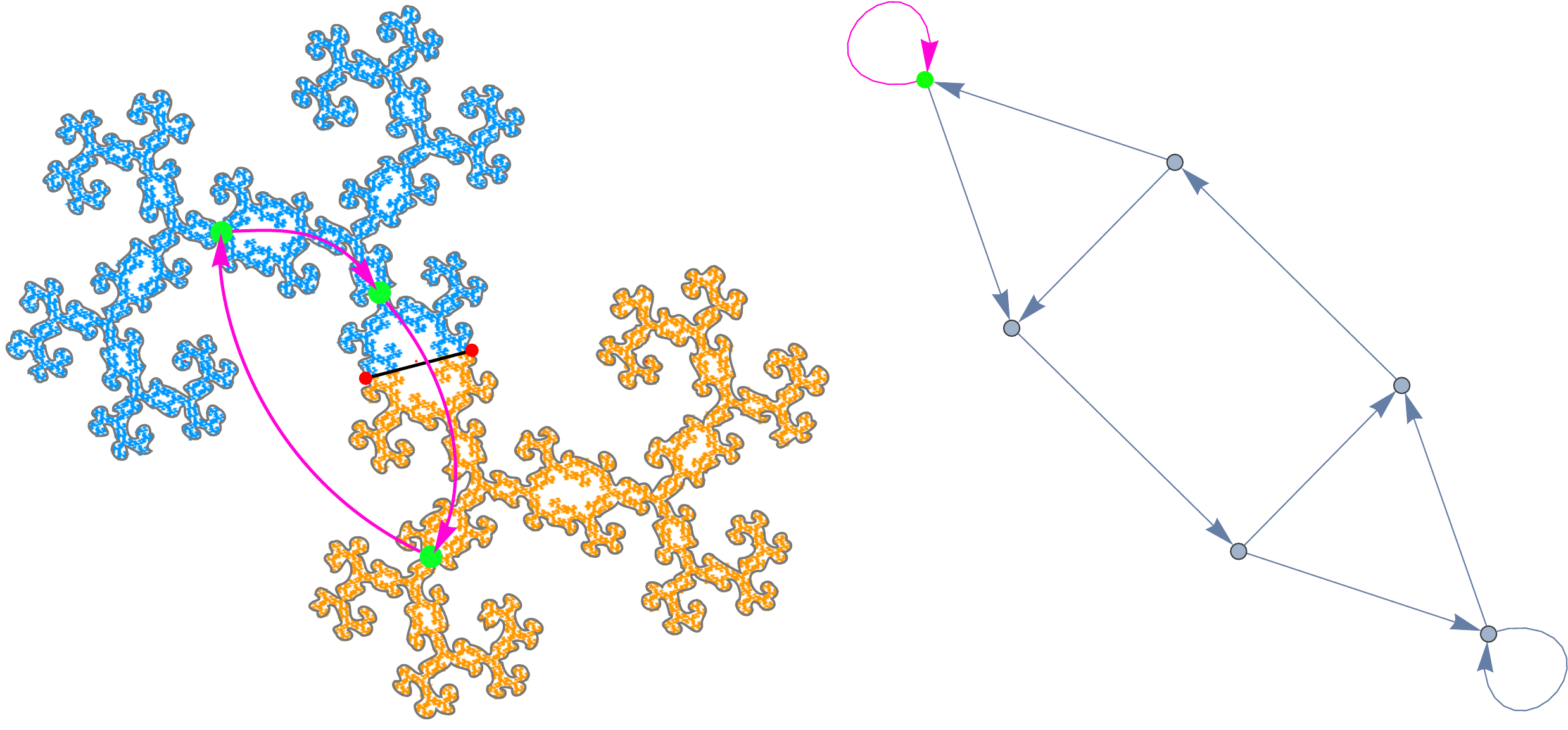}
\includegraphics[scale=0.15]{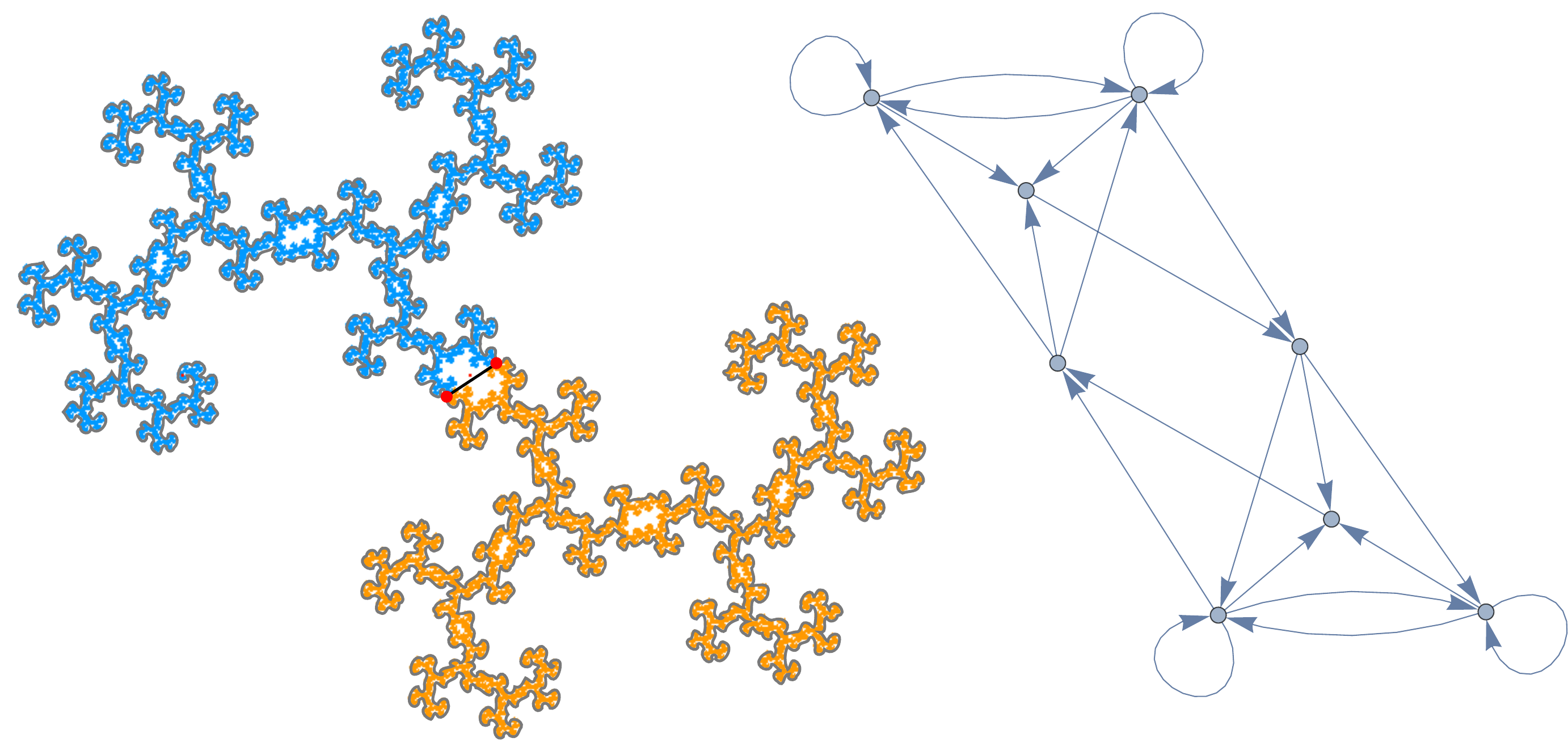}

\includegraphics[scale=0.15]{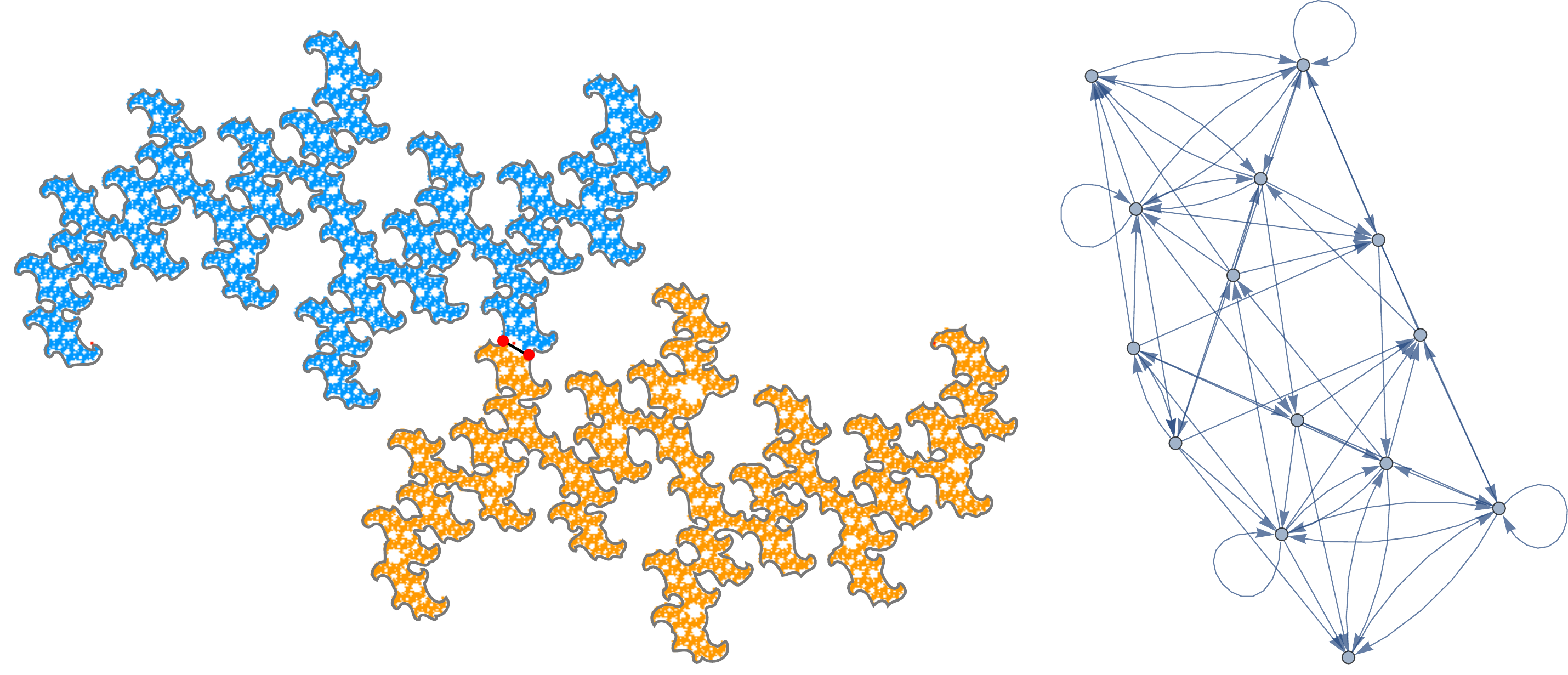}
\includegraphics[scale=0.15]{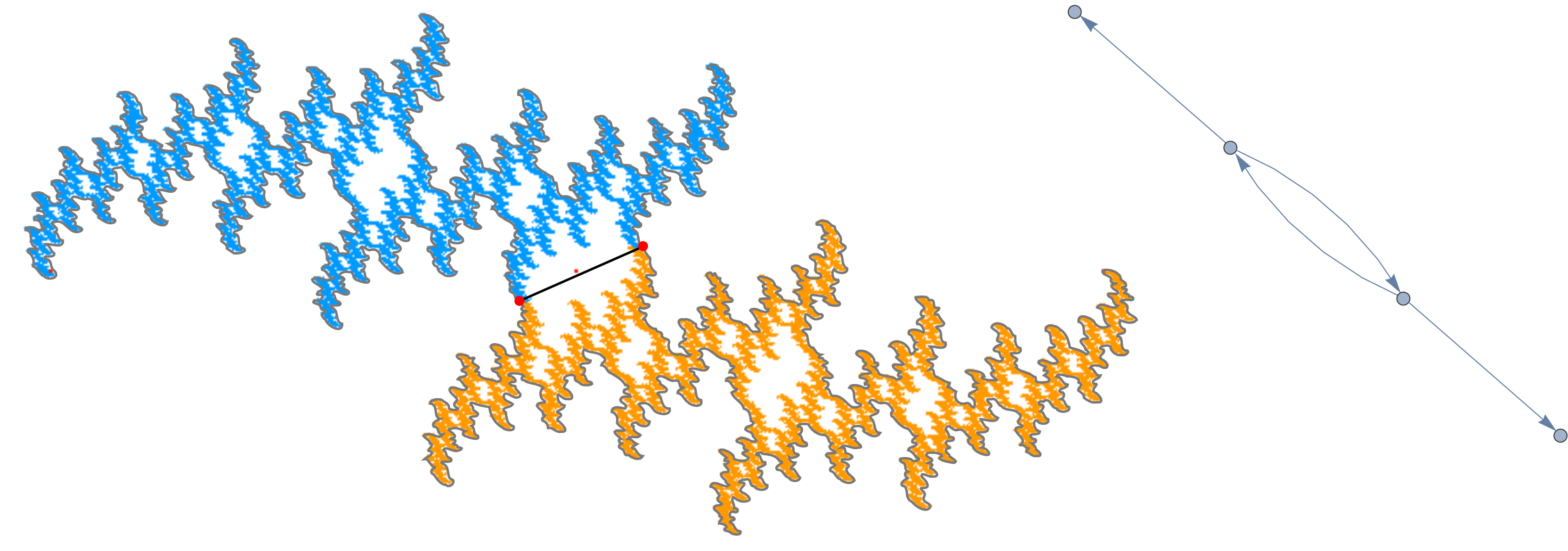}
\caption{Inclusion graph examples. Directed loops in $\IG$ give rise to cut points
with periodic orbits under $H$.}
\label{inclusion_graph_examples}
\end{figure}
\end{example}

\begin{theorem}[Inclusion graph]\label{theorem:inclusion_graph}
Every infinite path in $\IG$ determines a unique dynamical cut leaf, and
every dynamical cut leaf is the image of one of these under some finite
word $w$.
\end{theorem}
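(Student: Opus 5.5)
Both directions come from combining three ingredients: the recursion defining $\IG$, Lemma~\ref{lemma:eventually_in_C_f_D_f}, and uniform expansion by $\lambda>1$.

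\emph{Paths give cut leaves.} Take an infinite directed path $v_0\to v_1\to\cdots$ in $\IG$ with $v_i=((C_i,D_i),n_i)$. By Definition~\ref{definition:IG}, $(C_{i+1},D_{i+1})$ is contained in $E^{n_i}(C_i,D_i)$. By the construction, $E^{n_i}$ restricted to $(C_i,D_i)$ is a composition of the branches $F^{-1},G^{-1}$ that is well defined on the whole thick leaf. So we may pull back and set
$$K_m:=E^{-n_0}E^{-n_1}\cdots E^{-n_{m-1}}(C_m,D_m)\subset (C_0,D_0),$$
which is a nested sequence of nonempty thick leaves. Each pullback contracts both intervals by a factor $\lambda^{-n_i}$ with $n_i\ge 1$. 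Hence the $C$- and $D$-intervals of $K_m$ shrink to single points $x$ and $y$, and $x\ne y$ because $C_0\cap D_0=\emptyset$. The leaf $\mu=\{x,y\}$ is therefore unique. For every $N$, $E^N\mu$ is defined, since it lies in an iterated image of a thick leaf on which the relevant branch composition is defined. So $\mu$ is a dynamical cut leaf. The one point to check is that along the path the two endpoints never become separated by $\ell$. This holds because each vertex's image lies in $(C_F,D_F)$ or $(C_G,D_G)$, and every intermediate stage lies on one side of $\ell$, by the inductive claim proved in the construction.

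\emph{Cut leaves come from paths.} Let $\mu$ be a dynamical cut leaf. By Lemma~\ref{lemma:eventually_in_C_f_D_f}, after applying some finite prefix of its trajectory, $\mu_0:=E^{k}\mu$ lies in $(C_F,D_F)$ or $(C_G,D_G)$. So $\mu$ is the image of $\mu_0$ under the word $w$ inverse to this prefix. Next run the recursion starting from the root $((C_F,D_F),0)$, say, and follow $\mu_0$ through it. Whenever a thick leaf containing $\mu_0$ is sliced by $F\ell$ or $G\ell$, the corresponding iterate of $\mu_0$ does not cross that leaf, for otherwise a later iterate would cross $\ell$ and $E$ would become undefined. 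Therefore $\mu_0$ lies in one of the pieces, which is pushed back onto $V$. This descent must terminate with an element $((C,D),n)$ placed on $W$. If it did not, $\mu_0$ would remain in the "one side, crossing neither $F\ell$ nor $G\ell$" regime forever. Under expansion by $\lambda$ the lengths of the complementary arcs of $E^n\mu_0$ grow, so the leaf would eventually have to link $F\ell$ or $G\ell$, or enter $(C_F,D_F)$ or $(C_G,D_G)$, exactly as in the proof of Lemma~\ref{lemma:eventually_in_C_f_D_f}. Once $((C,D),n)$ is on $W$, the leaf $E^n\mu_0$ lies in $E^n(C,D)$, which is contained in $(C_F,D_F)$ or $(C_G,D_G)$. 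Repeating the same descent from there yields a vertex $((C',D'),n')$ with $(C',D')\subset E^n(C,D)$, which is an edge of $\IG$. Iterating produces an infinite path, and by uniqueness from the first part this path determines $\mu_0$.

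\emph{Main obstacle.} The delicate step is making sure that the slicing and recursion bookkeeping really captures every leaf. Leaves with an endpoint exactly on $F\ell$, $G\ell$ or $\partial J_F$ may fall on slice boundaries, and here I would use closedness of the thick leaves together with the many-valued convention for $E$. A second subtlety is that the edge definition uses containment in $E^n(C,D)$, which is a weaker condition than "being a child in the recursion". So I need to check that the vertex reached by re-descending from $E^n\mu_0$ is indeed contained in $E^n(C,D)$. I would argue this from the fact that the pieces produced from a root form a partition, up to boundaries, of the unsliced leaves, together with nesting of the slicing.
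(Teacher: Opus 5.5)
Your route is the paper's. In one direction you pull back along a path to get nested thick leaves shrinking to a single leaf. In the other you combine Lemma~\ref{lemma:eventually_in_C_f_D_f} with the successive returns to $(C_F,D_F)\cup(C_G,D_G)$. The first direction is fine, and so is your termination argument for the descent.

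The gap is exactly the step you flag: that the vertex $((C',D'),n')$ of $W$ containing $E^n\mu_0$ satisfies $(C',D')\subset E^n(C,D)$. The paper's proof also passes over this; it only asserts that the successive returns determine a path in $\IG$. Your proposed justification, that the pieces of a root partition its surviving leaves and slicing is nested, cannot work. The endpoints of the intervals of a vertex of $W$ are backward preimages of $\partial J_F$, because they are slicing points pulled back. By contrast, $E^n(C,D)$ is bounded by forward images of slicing points and of $\partial C_F,\partial D_F$, i.e.\/ typically by points on the forward orbit of $\partial J_F$. So $E^n(C,D)$ is in general not a union of vertices of $W$. A vertex can meet it without being contained in it, and this is not merely a boundary effect.

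Here is a concrete case. Use the coordinates from the proof of Lemma~\ref{lemma:triangles} ($S^1=\R/4\Z$, $\ell=\{\pm1\}$) and take $\lambda=9/5$, $\tau=1/10$.
\begin{itemize}
\item $G\ell=\{-11/18,1/2\}$ hides $\ell_F=\{-1/10,3/10\}$, so dynamical cut leaves exist.
\item The recursion from $(C_F,D_F)=([1,25/18],[5/2,3])$ produces exactly two vertices: $(P_1,1)$ with $E(P_1)=(C_G,D_G)$, and $(P_2,2)$ with $E^2(P_2)=([16/25,1],C_G)$, where $16/25=G^{-1}(3/10)$.
\item By symmetry $(\iota P_1,1)$ is a vertex, with $\iota P_1=([-8/9,-11/18],[1/2,58/81])$. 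Its second interval straddles $16/25$.
\end{itemize}
So $\iota P_1$ meets $E^2(P_2)$, but there is no edge $(P_2,2)\to(\iota P_1,1)$.

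The theorem survives in this example only because $\iota P_1\cap E^2(P_2)$ happens to contain no dynamical cut leaf. Following that region forward, its boundary tracks the orbit of $16/25$, and after a few returns it meets no vertex of $W$.

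What is missing, then, is a genuinely dynamical argument. You need to show that a dynamical cut leaf never lies in $u\cap E^n(C,D)$ for a vertex $u\not\subset E^n(C,D)$. It would also suffice that this happens only finitely often along its orbit, since the finite word $w$ can absorb finitely many such returns. Alternatively you could change the edge relation, but then the first direction must be redone. With edges defined by mere intersection, the nested intersection along an infinite path can be empty, and this already happens in the example above.
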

\begin{proof}
Suppose we have an infinite directed path $((C_i,D_i),n_i)$ for which
$(C_{i+1},D_{i+1})$ is contained in $E^{n_i}(C_i,D_i)$ for all 
$i=0,1,2,\cdots$. By abuse of notation we form the nested sequence of thick leaves
$$\cdots \subset E^{-n_2-n_1-n_0}(C_3,D_3) \subset E^{-n_1-n_0}(C_2,D_2)\subset E^{-n_0}(C_1,D_1) \subset (C_0,D_0)$$
and let $Z$ be the intersection of the family. By construction this is nonempty,
and every leaf in $Z$ is a dynamical cut leaf. Since every element of $Z$ has
the same forward trajectory, $Z$ consists of a single leaf by 
uniform expansion of $F^{-1}$ and $G^{-1}$ on their domains of definition,
analogous to the conclusion of Lemma~\ref{lemma:distance_estimate}.

Conversely, let $\mu$ be a dynamical cut leaf with trajectory $T$. 
By Lemma~\ref{lemma:eventually_in_C_f_D_f} some finite forward image
$E^n\mu$ is contained in $(C_F,D_F)$ or $(C_G,D_G)$, and repeated forward
images must return to $(C_F,D_F)$ or $(C_G,D_G)$ infinitely often. The
successive returns determine an infinite directed path in $\IG$ with associated
dynamical cut leaf $Z$, and $\mu = wZ$ for some finite $w$ as claimed.
\end{proof}

\begin{question}
Is there a pair $\lambda,\theta$ for which $\IG$ is infinite? Is there such a
pair arising from some $s\in \partial \M$?
\end{question}

\section{Computation}\label{section:computing_parameters}

\subsection{Relationship of $s$ to $\lambda$ and $\theta$}\label{subsection:computing_parameters}

Suppose $s \in \partial \M$ and for the sake of argument, let's suppose further
that $s$ satisfies Conjecture~\ref{conjecture:A}. The number $s$ determines
$\lambda$ and $\theta$, albeit very indirectly.

We have already seen (Lemma~\ref{lemma:growth_estimate}) that $\lambda \ge |s|^{-1}$. 
In this section we will derive an inequality relating $\arg(s)$ to $\lambda$ and
$\theta$.

Since $E$ commutes with $\iota$ it is convenient to work on the quotient $S^1/\iota$.
Let's make a linear change of coordinates so that this quotient circle is $\R/\Z$.
The dynamics of $E$ on this quotient circle may be given in suitable coordinates by 
$$T: x \to \lambda \lbrace x \rbrace + \phi \pmod \Z$$
where $\lbrace x \rbrace$ denotes the fractional part of a real number, 
and where $\phi = (1-\lambda)/2 + \theta/\pi$. The advantage of
expressing things in these coordinates is that $T$ is manifestly the composition of
a $\beta$-transformation of $S^1$ with a rigid rotation; such a map is known as
a {\em generalized $\beta$-transformation} or sometimes as a {\em linear mod one
transformation}, see. e.g \/ \cite{Bruin}.

It turns out that for certain points $p$ in $\partial K$ the dynamics of $H$ on $p$
becomes rather simple:

\begin{proposition}[Hull points rotation]\label{proposition:convex_rotation}
Let $p \in \partial K$ be contained in the boundary of the convex hull of $K$. 
Let $u x = p$. Then by abuse of notation, the action of $T$ on the forward
$T$-orbit of $x$ is topologically semiconjugate to a rotation by $-\arg(s)/\pi$.
\end{proposition}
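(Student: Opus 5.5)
The idea is to relate points on the boundary of the convex hull of $K$ to directions of supporting lines, and to observe that $f^{-1}$ and $g^{-1}$ act on supporting directions by a rigid rotation. Let $C$ denote the convex hull of $K$, which is also the convex hull of $\Lambda$. Since $C = \mathrm{hull}(fC\cup gC)$ and $f,g$ are similarities with the same linear part $z\mapsto sz$, support functions transform simply. Write $h_C(v)=\max_{z\in C}\mathrm{Re}(z\bar v)$ for $|v|=1$. Then
\[ h_{fC}(v)=h_C(\bar s v/|\bar s|)\,|s| - \mathrm{Re}(\bar v), \]
and similarly for $g$ with $+\mathrm{Re}(\bar v)$. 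Hence
\[ h_C(v)=|s|\,h_C(e^{-i\arg s}v)+|\mathrm{Re}\, v|. \]

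The plan has three steps.

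\emph{Step 1: supporting directions move by rotation.} For $p\in\partial C\cap\partial K$, let $N(p)\subset S^1$ be the closed arc of outward normal directions of supporting lines of $C$ at $p$. Suppose $v\in N(p)$, and say $\mathrm{Re}\, v<0$, so that the supporting half-plane in direction $v$ meets $fC$ and $p\in fC$. Then $f^{-1}p$ lies on $\partial C$ with outward normal $e^{-i\arg s}v$. Indeed, $f^{-1}$ maps the supporting line at $p$ to a line supporting $f^{-1}(fC)=C$, rotated by $-\arg s$. Thus $H(p)=f^{-1}p$ or $g^{-1}p$ is again a hull point, and the normal arcs satisfy $N(Hp)\supseteq e^{-i\arg s}N(p)$. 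One has to check that the correct branch is used, i.e.\ that $p\in f\Lambda$ precisely when the normal satisfies $\mathrm{Re}\, v\le 0$. This follows because the extreme point of $fC\cup gC$ in direction $v$ comes from $fC$ exactly when $-\mathrm{Re}\,\bar v\ge\mathrm{Re}\,\bar v$.

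\emph{Step 2: transport to $S^1$.} The external ray landing at a hull point $p$ can be chosen to lie in the normal cone. The uniformizing map $u$ restricted to the set of hull prime ends is monotone with respect to the circular order of normal directions, so we obtain a monotone, circular-order-preserving map $\nu$ from the hull points of $S^1$ to $S^1$ (normal directions), with a point of $N(p)$ chosen consistently. Step 1 then says $\nu(Hx)=e^{-i\arg s}\nu(x)$ up to the choice within the arc. Passing to the quotient by $\iota$, under which normals $v$ and $-v$ are identified, converts this into rotation by $-\arg(s)/\pi$ on $\R/\Z$. The map $\nu$ commutes with $\iota$ by symmetry of $K$.

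\emph{Step 3: semiconjugacy.} Restrict to the forward $T$-orbit of $x$, and conjugate via $\sigma$ from Theorem~\ref{theorem:linearization}. The monotone map $\nu\circ\sigma^{-1}$ from the orbit closure to $\R/\Z$ intertwines $T$ with the rotation, which gives the topological semiconjugacy. Orbits that hit the ambiguous points $J_f\cap J_g$ are handled by choosing the branch consistent with the normal direction, as in Step 1.

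I expect the main obstacle to be the ambiguity of normal arcs at corners of $C$: when $p$ is a vertex of the hull, $N(p)$ is a nondegenerate arc, and $\nu$ must be defined so that the equivariance holds exactly rather than just as inclusion of arcs. My first attempt would be to fix a single normal direction $v_0$ at $x$ and define $\nu(H^n x)=e^{-in\arg s}v_0$. One then checks, using the convexity argument of Step 1, that this is well defined and monotone along the orbit. Monotonicity should follow because distinct orbit points with circularly ordered normals correspond to circularly ordered boundary points of a convex curve.
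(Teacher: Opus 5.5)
Your proposal is correct and follows essentially the same route as the paper. The paper takes the specific leftmost point $p=\sum_{j\ge 0}\epsilon_j s^j$, with $\epsilon_j$ chosen by the sign of $\mathrm{Re}(s^j)$; this is exactly your branch rule with $v_0=-1$. It observes that the supporting half-planes $A_n=w_n^{-1}A$ and the outward normal rays $R_n=w_n^{-1}R$ have directions rotated by $-n\arg s$, and concludes because these rays are pairwise disjoint, so their circular order at infinity is the order of their directions. Your support-function bookkeeping handles a general hull point directly, where the paper only says the general case is analogous, and it is more explicit than the paper about the branch choice and the vertex/normal-arc ambiguity.
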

\begin{proof}
We shall prove this for a specific point $p$ in the boundary of the convex hull of
$K$, but the proof in general is the same. 

\begin{definition}
For $s\in \partial \M$ a {\em leftmost} point $p\in K$ is any point whose real
coordinate is the most negative.
\end{definition}

Evidently one choice of $p$ is given by $p = \sum_{j\ge 0} \epsilon_j s^j$ where
$\epsilon_j = -1$ if the real part of $s^j$ is non-negative, and $\epsilon_j = 1$ otherwise. 
This choice of $p$ is unique unless some finite power $s^n$ is totally imaginary.
If we define 
$$k_n: = \begin{cases} f \text{ if } \epsilon_n = -1 \\
g \text{ if } \epsilon_n = 1
\end{cases}$$ then we may think of $p$ as the image of $0$ under the right-infinite 
product $p = k_0 k_1 k_2 \cdots 0$; i.e.\/ if we define $w_n: = k_0 k_1 \cdots k_{n-1}$ 
(so that $|w_n|=n$) then $p = \lim_{n \to \infty} w_n 0$.

Any leftmost point is contained in the boundary of the convex hull of $K$; in fact,
a supporting halfspace for $p$ is the half space $A$ bounded by the vertical line with
the same real coordinate as $p$. Let $R \subset A$ be the straight ray starting at $p$ and 
parallel to the negative real axis (so $R$ is perpendicular to $\partial A$ at $p$). 
The corresponding prime end determines $x\in S^1$ with $u(x) = p$.

For any $n$ we may determine $p_n:= u H^n(x)$ by $p_n = \sum_{j\ge n} \epsilon_j s^{j-n}$.
Note that $p_n = w_n^{-1} p$ for all $n$. Furthermore, 
the point $p_n$ is also contained in the boundary of the convex hull of $K$, and
a supporting halfspace for $p_n$ is the half space $A_n:= w_n^{-1} A$, since if
there were a point $q\in \Lambda$ in the interior of $A_n$ then $w_n q$ would lie in the
interior of $A$. In particular,
the rays $R_n: = w_n^{-1}R$ are all disjoint, and therefore their circular order at
infinity (which agrees with the circular order on the orbit $H^n(x)$ in $S^1$) is the
same as the circular order on the set of angles $n \arg{s^{-1}}$. 

This proves the theorem in the case of leftmost $p$; the general case is perfectly
analogous.
\end{proof}
Notice that if $\arg(s)/\pi$ is rational the specific leftmost $p$
we construct in Proposition~\ref{proposition:convex_rotation} is periodic.

On the other hand, for a generalized $\beta$-transformation $T$ one has the following
proposition, whose statement and proof we learned from Toby Hall:

\begin{proposition}[Rotation interval]\label{proposition:rotation_interval}
For $\beta \in [1,2]$ and $\phi \in [0,1]$
let $T: x \to \beta \lbrace x \rbrace + \phi \pmod \Z$ be a generalized 
$\beta$-transformation, and let $\tilde{T}: x \to [x] + \lambda \lbrace x \rbrace + \phi$
be a real-valued lift, where $[x]$ denotes the floor (i.e.\/ the integer part) of $x$. 
Define continuous monotone nondecreasing maps $\tilde{T}_0,\tilde{T}_1: \R \to \R$ by
$$\tilde{T}_0(x):= \begin{cases}
\tilde{T}(x) \text{ for } 0 \le {x} \le 1/\beta \\
[x] + 1 + \theta \text{ for } 1/\beta \le {x} < 1
\end{cases}$$
and
$$\tilde{T}_1(x):= \begin{cases}
[x] + \beta-1 + \theta  \text{ for } 0 \le {x} \le (\beta-1)/\beta \\
\tilde{T}(x) \text{ for } (\beta-1)/\beta \le {x} < 1
\end{cases}$$
Let the rotation numbers of $\tilde{T}_0$ and $\tilde{T}_1$ be $r_0$ and $r_1$
respectively. Then for every $r \in [r_0,r_1] \pmod \Z$ there is an $x \in S^1$ so that
the action of $T$ on the forward $T$-orbit of $x$ is semi-conjugate to rotation by $r$,
and conversely.
\end{proposition}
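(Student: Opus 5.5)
We will compare $\tilde T$ with the two continuous nondecreasing degree-one lifts $\tilde T_0 \le \tilde T \le \tilde T_1$, which are obtained by replacing the decreasing jump of $\tilde T$ on each fundamental interval by a flat plateau (at the right end for $\tilde T_0$, at the left end for $\tilde T_1$). The first step is to check that $\tilde T_0 \le \tilde T \le \tilde T_1$ pointwise on $\R$, that each $\tilde T_i$ commutes with $x \to x+1$, and that each $\tilde T_i$ is monotone. Given this, rotation numbers $r_0$ and $r_1$ exist, and by monotonicity $r_0 \le r_1$. Moreover, the one-parameter family of truncations interpolates between them: for $c \in [0,1]$, let $\tilde T_c$ be the continuous degree-one map that agrees with $\tilde T$ outside an interval of length $1-1/\beta$ placed so that the plateau sits at height $c$. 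This family is monotone in $c$, and it is continuous in $c$ in the $C^0$ topology. Hence the rotation number of $\tilde T_c$ takes every value in $[r_0,r_1]$ by continuity and monotonicity of the rotation number (standard Poincar\'e theory for degree-one monotone circle maps).

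For the forward direction, fix $r\in[r_0,r_1]$ and choose $c$ with $\rho(\tilde T_c)=r$. A nondecreasing degree-one circle map with rotation number $r$ has an orbit on which it is order-preserving and which is semiconjugate to $R_r$: a periodic orbit if $r$ is rational, a point of the minimal set if $r$ is irrational. We then take such an orbit $x$ and need it to avoid the plateau of $\tilde T_c$, where $\tilde T_c \ne \tilde T$. The key point is that the plateau maps to a single point. If $r$ is irrational, the minimal set meets the interior of the plateau in at most one gap, so we may take an orbit in the Cantor/minimal set that never enters the open plateau. If $r$ is rational, we choose the plateau height $c$ so that the plateau's image is not attracted into it, or simply pick the periodic orbit of $\tilde T_c$ lying off the plateau; adjusting $c$ within the interval $\rho^{-1}(r)$ gives this freedom. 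On such an orbit $\tilde T = \tilde T_c$, so $T$ acts on the orbit preserving cyclic order with rotation number $r$. A cyclic-order-preserving map of an orbit with rotation number $r$ is semiconjugate to $R_r$; the semiconjugacy is obtained by sending $\tilde T^n(\tilde x) - \text{(integer part)}$ to $nr$ mod $1$ and checking order compatibility.

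For the converse, suppose the forward $T$-orbit of $x$ is semiconjugate to rotation by $r$. Then $\lim (\tilde T^n(\tilde x)-\tilde x)/n = r$. Since $\tilde T_0\le\tilde T\le\tilde T_1$ and the $\tilde T_i$ are monotone, induction gives $\tilde T_0^n(\tilde x)\le \tilde T^n(\tilde x)\le \tilde T_1^n(\tilde x)$. Dividing by $n$ yields $r_0\le r\le r_1$.

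The main obstacle is the forward direction, specifically producing an orbit of $\tilde T_c$ that avoids the plateau region where $\tilde T_c$ and $\tilde T$ differ, especially in the rational case at the endpoints $r_0,r_1$. There we would exploit that the plateau endpoints themselves map to the discontinuity values of $T$, so an orbit touching the plateau boundary is still a genuine (many-valued) $T$-orbit.
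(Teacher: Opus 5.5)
Your architecture matches the paper's. You use the sandwich $\tilde T_0\le\tilde T\le\tilde T_1$ for the converse. You interpolate with a one-parameter family of plateau maps whose rotation numbers fill $[r_0,r_1]$. Then you look for an orbit of some $\tilde T_c$ that avoids the open plateau, where $\tilde T_c=\tilde T$. The irrational case is fine; the clean reason is that the invariant measure of $\tilde T_c$ has no atoms, so it gives the open plateau zero mass.

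The genuine gap is the rational case, which you flag but do not close. Your first suggestion, choosing $c$ so that the plateau is not attracted back into itself, cannot be carried out in general. Take $\beta=3/2$, $\phi=3/4$. Then $r_0=r_1=1$, and for every map $\tilde T_c$ in your family the plateau value lies (mod $\Z$) in the interior of that plateau, so every $\tilde T_c$ has a superattracting fixed point there. The orbit you actually need is the repelling fixed point $x=1/2$.

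Adjusting $c$ inside $\rho^{-1}(r)$ gives no leverage here, because $\rho^{-1}(r)$ is the whole family. Your idea does work at an endpoint of $\rho^{-1}(r)$ lying inside the family, since a periodic orbit through the open plateau persists under perturbation of $c$; it breaks down exactly when $r_0=r_1$. Your second suggestion, ``simply pick the periodic orbit lying off the plateau'', assumes what must be proved.

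The missing idea is the paper's index argument. If $\rho(\tilde T_c)=p/q$, set $G=\tilde T_c^{\,q}-p$. A fixed point of $G$ whose orbit meets the open plateau has $G$ locally constant around it, so it is an isolated zero of $G-\mathrm{id}$ at which the sign goes from $+$ to $-$. A continuous $1$-periodic function with zeros cannot have only zeros of that kind, since between two consecutive ones there would have to be another zero. So some periodic orbit of rotation number $p/q$ (an unstable or non-isolated one) misses the open plateau, and this holds for every such $c$.

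A secondary point, which the paper's proof also glosses over: in the converse you pass from ``the orbit is semiconjugate to rotation by $r$'' to $\lim(\tilde T^n\tilde x-\tilde x)/n=r$. That step requires $\tilde T$ to be increasing on the full lift of the orbit, i.e.\ a twist (well-ordered) orbit; without this it fails. In the example above, $\{1/10,9/10\}$ is a $2$-cycle of $T$, so it is trivially compatible with rotation by $1/2$. Yet $\tilde T^2(1/10)=1/10+2$, so its lifted rotation number is $1$, and $1/2\notin[r_0,r_1]$ mod $\Z$. You should build the twist condition into the definition of a rotation-compatible orbit. The orbits you construct in the forward direction satisfy it automatically, being orbits of the monotone map $\tilde T_c$.
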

\begin{proof}
If $T$ is semi-conjugate to a rotation on the orbit of $x$ we say that $x$ has a
`rotation-compatible orbit'.

Observe that the maps $\tilde{T}_0$ and $\tilde{T}_1$ 
are contained in a 1-parameter family of maps
$\tilde{T}_t:\R \to \R$ for $t\in [0,1]$ that all commute with integer translation, 
and that agree with $\tilde{T}$ outside a family of `plateaux' where the maps are locally
constant; different $\tilde{T}_t$ are obtained by moving these plateaux up and down. 

Evidently $\tilde{T}_0(x) \le \tilde{T}(x) \le \tilde{T}_1(x)$ for every
$x$, so if there is a point $x \in S^1$ with a rotation-compatible orbit, after
lifting $x\to \R$ the rotation number of $\tilde{T}$ on $x$ is in the interval
$[r_0,r_1]$.

Conversely, for every $r\in [r_0,r_1]$ there is some $t$ for which
$\tilde{T}_t$ has rotation number $r$, and on any orbit for $\tilde{T}_t$ that is disjoint
from the plateaux, the maps $\tilde{T}_t$ and $\tilde{T}$ agree. When $r$ is
irrational one of the endpoints of the plateau can never have a forward orbit
that lands in the plateau (or else the rotation number would be rational); when 
$r$ is rational, there must be an unstable periodic orbit for index reasons 
which can't intersect the plateau interior. 
\end{proof}

\begin{corollary}\label{corollary:rotation_interval}
With notation as above, $-\arg(s)/\pi$ is contained in the interval $[r_0,r_1]$
associated to the generalized $\beta$-transformation with parameters $\lambda$
and $\phi$ where $\phi = (1-\lambda)/2 + \theta/\pi$.
\end{corollary}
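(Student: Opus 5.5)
The corollary should follow by combining Proposition~\ref{proposition:convex_rotation} with the converse direction of Proposition~\ref{proposition:rotation_interval}, transported through the conjugacy of Theorem~\ref{theorem:linearization}. The approach is to produce a point of $S^1/\iota \cong \R/\Z$ whose forward $T$-orbit is rotation-compatible with rotation number $-\arg(s)/\pi$. Then Proposition~\ref{proposition:rotation_interval} forces that rotation number to lie in $[r_0,r_1]$.

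First, take $p$ to be the leftmost point constructed in the proof of Proposition~\ref{proposition:convex_rotation}, and let $x\in S^1$ be the point determined by the horizontal ray $R$. That proposition says the circular order of the orbit $H^n(x)$ in $S^1$ agrees with the circular order of the angles $n\arg(s^{-1})$, namely the directions of the disjoint rays $R_n$. Next, apply the conjugacy $\sigma$ of Theorem~\ref{theorem:linearization}, which is an orientation-preserving homeomorphism commuting with $\iota$. It carries this orbit to an $E_{\theta,\lambda}$-orbit with the same circular order.

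Then pass to the quotient by $\iota$ and change coordinates to $\R/\Z$, where $E$ becomes $T:x\to\lambda\lbrace x\rbrace+\phi$ with $\phi=(1-\lambda)/2+\theta/\pi$. The care needed here is that quotienting by $\iota$ halves the circle. The ray directions $-n\arg(s)$ in $\R/2\pi\Z$ therefore become angles $-n\arg(s)/\pi$ after normalizing the half-circle to unit length. I would argue this as follows: the rays $R_n$ and $\iota R_n$ together form a symmetric family of disjoint rays, and their cyclic order descends to the quotient as the cyclic order of $\lbrace -n\arg(s)/\pi \bmod 1\rbrace$. This shows the projected $T$-orbit is semiconjugate (order-compatibly) to rotation by $-\arg(s)/\pi$. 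Finally, the forward direction of Proposition~\ref{proposition:rotation_interval} (``and conversely'') gives that any rotation-compatible orbit has rotation number in $[r_0,r_1]$, via the sandwich $\tilde T_0\le\tilde T\le\tilde T_1$.

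I expect the main obstacle to be justifying that an order-compatible orbit actually has rotation number $-\arg(s)/\pi$ for the lift $\tilde T$, rather than merely having the same cyclic order as some rotation. When $\arg(s)/\pi$ is irrational, the cyclic order of an infinite orbit determines the rotation number uniquely, so this case is clean. When it is rational, the orbit is periodic, and I would compute the rotation number by counting how many times the lifted orbit wraps per period; this requires checking that the lift chosen in Proposition~\ref{proposition:rotation_interval} matches the sign and normalization conventions (hence the minus sign, since $H$ corresponds to $s^{-1}$). The remaining sign and offset bookkeeping for $\phi$ I would verify by checking the case $\lambda=2$, where $T$ is a rotated doubling map.
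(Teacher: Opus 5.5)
Your route is the one the paper intends: combine Proposition~\ref{proposition:convex_rotation} with the ``conversely'' clause of Proposition~\ref{proposition:rotation_interval}. Your passage to $S^1/\iota$ is also fine, since the rays $R_n$ and $\iota R_m$ are outward normal rays of the $\iota$-symmetric convex hull and are therefore pairwise disjoint.

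The gap is exactly at the step you flag, and your fix for it is wrong. The cyclic order of an orbit does \emph{not} determine its rotation number for the lift $\tilde T(x)=[x]+\lambda\{x\}+\phi$, and this includes the irrational case. Cyclic order pins down the rotation number of a monotone degree-one extension of $T$ restricted to the orbit. But $\tilde T$ drops by $\lambda-1$ at the integers, so an orbit straddling the cut can pick up an extra unit of lifted displacement on a set of points of positive frequency.
\begin{itemize}
\item Rational example: take $\lambda=2$, $\phi=1/2$. The orbit $1/14\mapsto 9/14\mapsto 11/14\mapsto 1/14$ is cyclically a rotation by $1/3$. Its lifted displacements are $8/14,16/14,18/14$, which sum to $3$, so its $\tilde T$-rotation number is $1$. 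Here $r_0=r_1=1$.
\item Irrational example: for $\lambda=2$, $T$ is conjugate to doubling by $x\mapsto x+\phi$. A Sturmian doubling orbit with irrational rotation number $\alpha$ and invariant measure $\nu$ gives a $T$-orbit with the cyclic order of rotation by $\alpha$. Its $\tilde T$-rotation number is $\alpha+\nu([0,\phi))$, which is $\not\equiv\alpha$ whenever $0<\nu([0,\phi))<1$.
\end{itemize}
So cyclic order alone cannot yield the corollary. Your plan to ``count the wraps'' in the rational case needs exactly the datum that is missing: where the orbit sits relative to the cut. The paper's one-line proof of the ``conversely'' clause makes the same identification without comment, so citing Proposition~\ref{proposition:rotation_interval} does not close the gap.

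The missing ingredient is the geometry of hull points.
\begin{itemize}
\item Where the cut sits. Support functions satisfy $h_{gK}(\psi)-h_{fK}(\psi)=2\cos\psi$. So a hull point whose outward normal $\psi$ has $\cos\psi>0$ lies in $gK-fK$, and its prime end lies in $\mathrm{int}\,J_g$ (because $J_f\subset D_f$); symmetrically for $J_f$. After transporting by $\sigma$, which carries $J_g$ to $J_G$, the integers of the lift correspond to the vertical normal directions.
\item Lifted normal direction. Assign to each lifted orbit point $\tilde y$ a lifted normal direction $\tilde\Psi(\tilde y)$. It is nondecreasing in $\tilde y$, satisfies $\tilde\Psi(\tilde y+1)=\tilde\Psi(\tilde y)+\pi$, and takes values in $[-\pi/2,\pi/2)$ for $\tilde y\in[0,1)$.
\item The integer $k$. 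Since $g^{-1}$ rotates normals by $-\arg s$, we have $\tilde\Psi(\tilde T y)=\tilde\Psi(y)-\arg s+2\pi k(y)$ with $k(y)\in\Z$.
\item $k$ is constant. Take orbit points $y<y'$ in $[0,1)$. Then $2\pi(k(y')-k(y))$ equals the increment of $\tilde\Psi$ across $[\tilde Ty,\tilde Ty']$ minus the increment across $[y,y']$. The first lies in $[0,2\pi]$, because $\tilde Ty'-\tilde Ty<\lambda\le 2$. The second lies in $(0,\pi]$, because distinct orbit points have distinct normals. Hence $k(y')=k(y)$.
\item Conclusion. Therefore $\tilde\Psi(\tilde T^n\tilde x)=\tilde\Psi(\tilde x)+n(2\pi k-\arg s)$. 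Since $\tilde\Psi(\tilde y)=\pi\tilde y+O(1)$, the $\tilde T$-rotation number of $x$ exists and equals $2k-\arg(s)/\pi\equiv-\arg(s)/\pi$.
\end{itemize}
Only at this point does the sandwich $\tilde T_0^n\le\tilde T^n\le\tilde T_1^n$ place $-\arg(s)/\pi$ in $[r_0,r_1]$ modulo $\Z$. The sandwich is valid because $\tilde T_0$ and $\tilde T_1$ are nondecreasing.
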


Unfortunately, Corollary~\ref{corollary:rotation_interval} does not constrain
$\theta$ very much given $\lambda$ and $-\arg(s)/\pi$. Figure~\ref{rotation_number_intervals} 
shows the values of $r_0$ (in blue) and $r_1$ (in red) 
for $\phi \in [0,1]$ and for $\lambda = \sqrt{2}$.

\begin{figure}[htpb]
\labellist
\small\hair 2pt
\pinlabel $\phi$ at 250 5
\pinlabel $r$ at 5 340
\endlabellist
\centering
\includegraphics[scale=0.4]{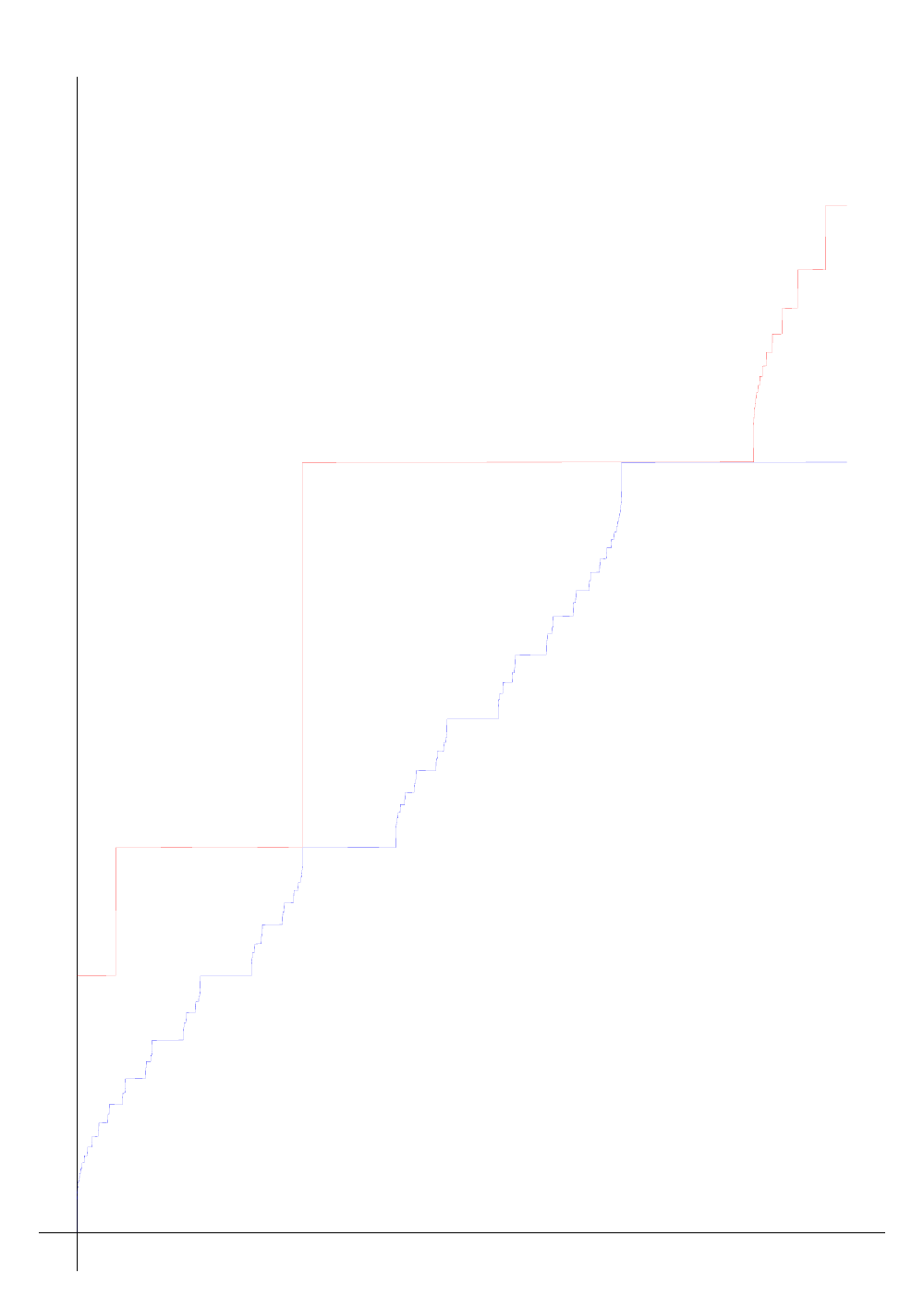}
\caption{The graph of $r_0$ (in blue) and $r_1$ (in red) for $\phi \in [0,1]$ and
$\lambda = \sqrt{2}$. There are many intervals of values of $\phi$ for which both
$r_0$ and $r_1$ are constant.}
\label{rotation_number_intervals}
\end{figure}

\subsection{Computing $\lambda$ and $\theta$}\label{subsection:computing_lambda_theta}

To actually compute $\lambda$ and $\theta$ we need to get our hands dirty. 
For $s\in \partial \M$ we fix a depth $N$, define $\Lambda_N$ to be the
set of points $w$ of the form $w = \sum_{j=0}^{N-1} \epsilon_j s^j$ where each
$\epsilon_j = \pm 1$, and define a potential function
$$F(z): = \sum_{w\in \Lambda_N} |z - w|^{-2}$$
We may then compute a connected contour of $F(z)=C$ close to $\partial K$ and derive a mesh 
$P:=P(C,N)$ of points together with a circular order on it. In practice we used $N=16$.
It is important to choose $C$ big enough 
that the contour comes very close to 
$\partial K$, but not so big that it is sensitive to the `mesh size' of
the approximation $\Lambda_N \sim \Lambda$. The constant $C$ was chosen dynamically as
a function of $s$ to be the largest value to meet some obvious numerical criteria. 

For each point $p\in P$ we compute whether it is closer to $f\Lambda$ or $g\Lambda$
and thereby define $H' p$ to be $f^{-1}p$ or $g^{-1}p$ accordingly. Then we take the
result, and find a nearby element of $P$ by first moving $H' p$ along the gradient of
the potential to the correct contour, then moving perpendicularly along the gradient until
we get sufficiently close to a point in $P$. The result is $H p$, and we obtain in this
way a map $H:P \to P$. We expect (and experiments bear this out) that $H'$ is 
monotone increasing over $P$ in the circular order 
except at two discontinuities (where $H' p$ switches from
$f^{-1}p$ to $g^{-1}p$ or vice versa). 

Having obtained a discrete combinatorial approximation $H:P \to P$ to the `true' map
$H:S^1 \to S^1$ we may compute the average slope of $H$
over these monotone intervals as an approximation to $\lambda$, 
and for each of the midpoints $m$ of these monotone intervals, determine $H(m)-m$ as an
approximation to $\theta$ (in units for which the circularly ordered set has total
length $2\pi$). The reliability of these estimates depends on how close our map $H$ is to
being linear on its monotone intervals (on a scale large compared to the mesh size);
in our experiments, the slope of $H$ seemed to deviate only to order $1\%$, so we took our
estimates to be reliable, although admittedly we have no theoretical justification for this,
or an estimate of the error. 

A more principled method would be to estimate the $H$-invariant measure on $P$
by the lap-counting formula of Flatto--Lagarias \cite{Flatto_Lagarias} and then use this
measure to compute $\lambda$ and $\theta$.

\begin{figure}[htpb]
\centering
\includegraphics[scale=0.2]{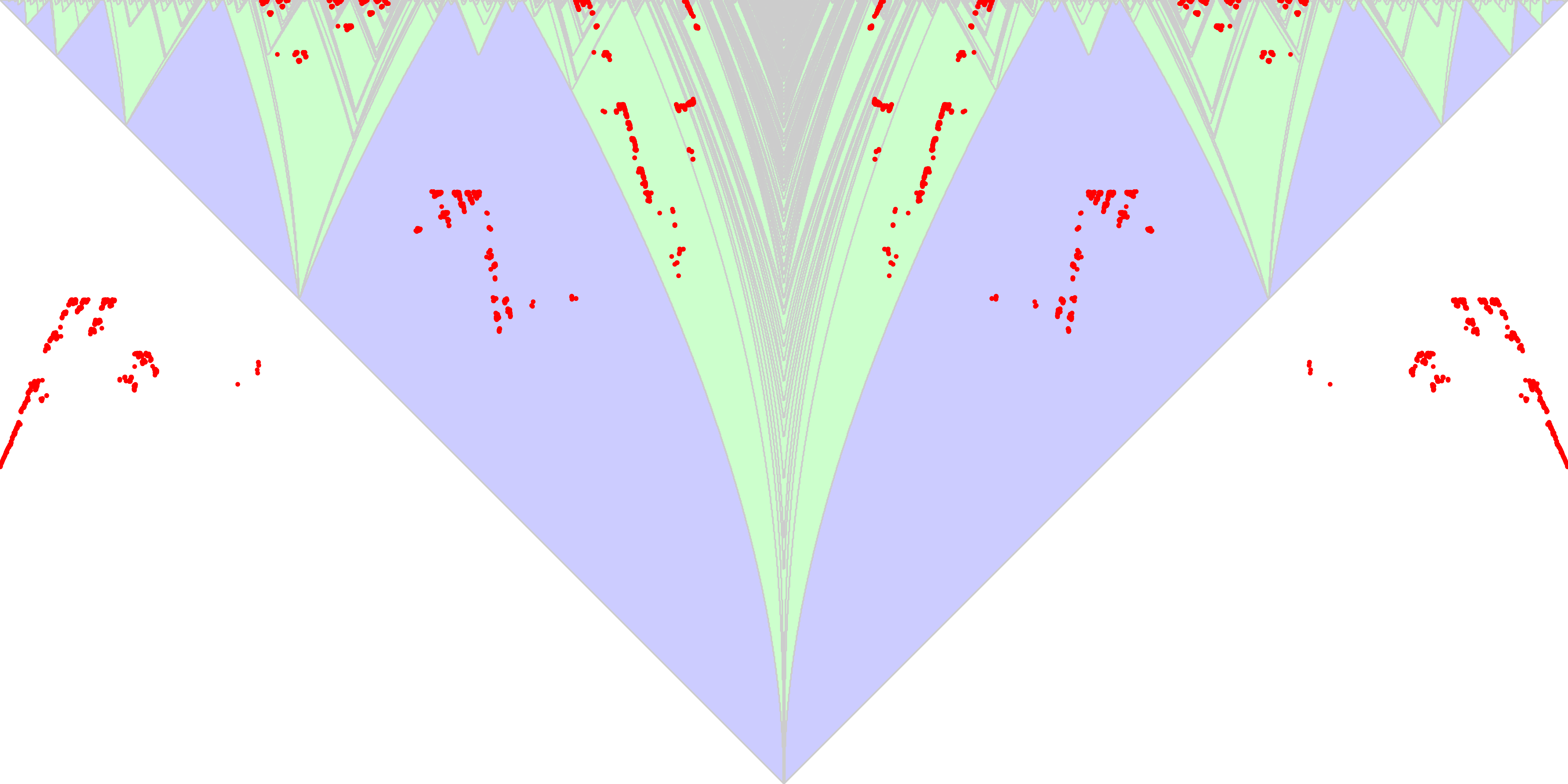}
\caption{Values of $\lambda,\theta$ for $s\in \partial \M$.}
\label{triangles_and_values}
\end{figure}

The results are depicted in Figure~\ref{triangles_and_values} for half a fundamental
domain. The $\lambda,\theta$ parameters (in red) are superimposed over 
Figure~\ref{triangle_list}. The only parameters for which there are dynamical cut
leaves are contained in $\Delta_2$ and $\Delta_3$ (up to symmetry), consistent with
Conjecture~\ref{conjecture:cut_is_limit}.

The figure reveals a great deal of apparent structure; for example, zooming in near
$\lambda =2$, $\theta = 0.5$ one sees a set that resembles the attractor of a linear IFS,
suggesting the existence of some sort of dynamics in the parameter plane. These particular
points are all contained in $\Delta_3$, and the ones with $\lambda=2$ correspond to 
external rays landing on the $1/3$ bulb of $\Mord$. For comparison, let $\CC$ be the
Cantor set in $[0,1]$ which is the attractor of the two generator linear IFS
$$\phi_0: t \to (t-1/6)/4 + 1/6 \quad \phi_1: t \to (t-1/4)/2 + 1/4$$
The image of $\CC$ (offset slightly from the line $\lambda = 2$) under the change
of coordinates $\theta = \pi(t-1/2)$ is imposed on Figure~\ref{zoomed_vals_with_Cantor}.

\begin{figure}[htpb]
\centering
\includegraphics[scale=0.3]{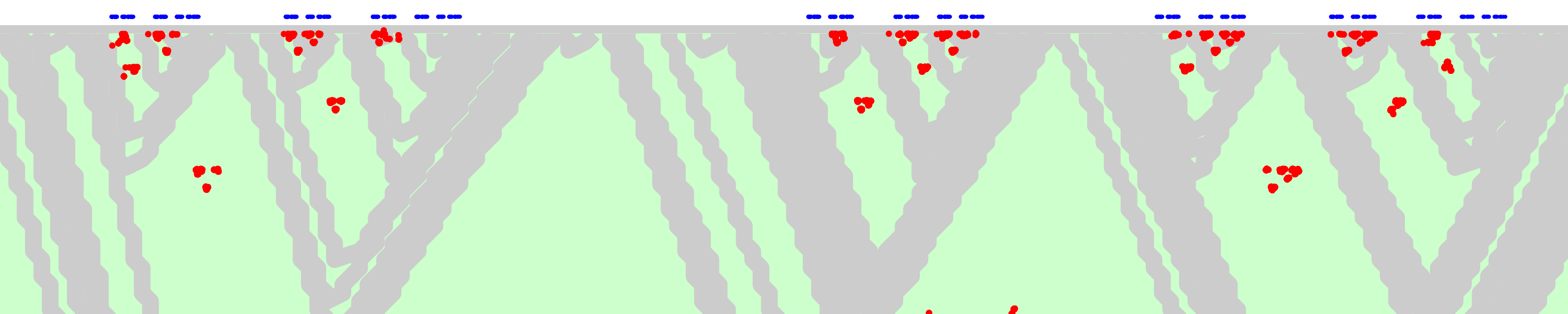}
\caption{Zooming in Figure~\ref{triangles_and_values} near $\lambda=2$, $\theta = 0.5$ for $s\in \partial \M$,
together with the (slightly offset and linearly rescaled) Cantor set $\CC$ (in blue).}
\label{zoomed_vals_with_Cantor}
\end{figure}

The numerical evidence makes it remotely plausible, though not compelling, that the set of $\theta$ 
values with $\lambda=2$ in the $1/3$ bulb maps exactly to $\CC$ under $t = \theta/\pi + 1/2$.
Computational shortcomings of our method makes it easier to compute $\theta$ values in some
ranges than others, and the lacuna near $t=1/4$ seems to correspond to 
values of $s \sim 0.49116 + 0.45727i$ for which accuracy is very difficult to achieve.

\subsection{Conjecture A}\label{subsection:Conjecture_A}

Recall that Conjecture~\ref{conjecture:A} asserts that $D_f = J_f$ for every
non-real $s \in \partial \M$; equivalently, that $\partial K$ is the union of a
single arc contained in $f\partial K$ and a single arc contained in $g\partial K$,
meeting in exactly two points, or exactly one cut point $0 = fK \cap gK$. In this
section we briefly present the numerical evidence for the conjecture.

The evidence is organized around a finite-depth version of this two-point meeting
condition. Recall from Definition~\ref{definition:limit_set} the decreasing family
of disk covers $D_0 \supset D_1 \supset \cdots$ with $\bigcap_n D_n = \Lambda$,
where $D_0$ is the disk of radius $1/(1-|s|)$ centered at $0$ and
$D_n = fD_{n-1} \cup gD_{n-1}$.

\begin{definition}[Clean at depth $n$]\label{definition:clean}
For $s \in \M$ and $n \ge 1$, let $\partial D_n$ denote the outer boundary of
$D_n$, and set
$$X_n := fD_{n-1} \cap gD_{n-1} \cap \partial D_n,$$
the set of points of $\partial D_n$ lying in both halves. We say that $s$ is
{\em clean at depth $n$} if $|X_n| = 2$.
\end{definition}

\begin{remark}
The involution $z \mapsto -z$ preserves $D_n$ and interchanges its two halves, so
$X_n$ is invariant under $z \mapsto -z$. Hence $|X_n|$ is odd only if $0 \in X_n$;
in particular $|X_n| = 1$ if and only if $X_n = \lbrace 0 \rbrace$. However,
since each $D_{n+1}$ is strictly contained in the interior of $D_n$, if $|X_n|=1$
then $D_{n+1}$ is disconnected so that $s$ is not in $\M$ after all.

The generic clean case is $|X_n| = 2$, an antipodal pair
$\lbrace p, -p \rbrace$ cutting $\partial D_n$ into a single arc in $fD_{n-1}$ and a
single arc in $gD_{n-1}$; this is the finite-depth form of the partition
$D_f = J_f$. The disk-cover computation of \S~\ref{subsection:computing_lambda_theta}
returns $|X_n|$ as the number of discontinuities of its piecewise-linear
reconstruction of the map $H$.
\end{remark}

The results of computation are summarized as follows. We let $s$ range over a sample of 
$19663$ numerically computed parameters of $\partial \M$. Computed at depth $n = 16$, 
all but $142$ of these --- that is, $19521$ of them, or $99.28\%$ --- are clean; 
the $142$ exceptions return $|X_{16}| > 2$. In a focused resample of $1426$ boundary 
parameters (near the $n=16$ exceptions) computed at depth $n = 23$, every one is clean.

\appendix

\section{Proof of Theorem~\ref{theorem:strict_inequality}}\label{section:inequality_proof}

In this section we prove Theorem~\ref{theorem:strict_inequality}, that the entire
circle $|s|=1/\sqrt{2}$ lies in the interior of $\M$ except for the two points
$s=\pm i/\sqrt{2}$. By the symmetries $s\mapsto\bar{s}$ and $s\mapsto-s$ of $\M$
(\S\ref{subsection:symmetries}) it is enough to certify $s$ in the quadrant
$\arg(s)\in[0,\pi/2)$. 

Most of the interval is certified numerically by the method of traps of \cite{Calegari_Koch_Walker},
although there are four special points where different methods are necessary. These points
require special treatment, and fall into three cases:
\begin{enumerate}
\item{At $s=1/\sqrt{2}$ the limit set $\Lambda$ degenerates to a line segment, and there
is no ordinary trap for $s$. Instead, a neighborhood of this point can be certified as interior to $\M$
by an {\em affine trap} c.f.\/ \cite{Calegari_Koch_Walker} \S~10.3.}
\item{At $s_0: = 1/2 + i/2$ and $s_1:=1/4 + \sqrt{-7}/4$ the limit set $\Lambda$ is a
Jordan disk which has the property that the interiors of $f\Lambda$ and $g\Lambda$ are
disjoint and their boundaries meet along a connected arc, so that there is no ordinary 
trap for $s$. However, the special nature of these points means that they are 
{\em renormalization points} (or {\em landmark points} in the terminology of
Solomyak \cite{Solomyak}) and neighborhoods of these points can be certified as interior to $\M$
by {\em limit traps} c.f.\/ \cite{Calegari_Koch_Walker} \S~9.}
\item{At $s=i/\sqrt{2}$ the limit set $\Lambda$ is a rectangle, and in fact $s$ is not 
an interior point of $\M$ at all. To certify that $s= e^{i\theta}/\sqrt{2}$ is interior 
for an entire open interval $\theta \in (\theta_*,\pi/2)$ we introduce a novel 
technique, the method of {\em logarithmic spirals}.}
\end{enumerate}
Every other point lies in a regime in which ordinary traps may be used to certify
interiority. Each of these cases required computer assistance; this was carried out
by the program {\tt schottky}.

\medskip

Here is the structure of the remainder of this appendix. In \S~\ref{subsection:log_spiral}
we explain the method of logarithmic spirals, and explain how this can be used to
certify an explicit open interval of $|s|=1/\sqrt{2}$ abutting $i/\sqrt{2}$. In 
\S~\ref{subsection:limit_traps} we recall the definition of limit traps and explain how
they can be used to certify explicit open neighborhoods of $s_0$ and $s_1$. 
In \S~\ref{subsection:affine_traps} we recall the definition of affine traps and explain
how they can be used to certify an explicit open neighborhood of $1/\sqrt{2}$.
Finally in \S~\ref{subsection:explicit_certificate} we briefly discuss how ordinary trap
certificates may be found away from the four exceptional points, and conclude the
proof.

\subsection{The logarithmic--spiral construction near $i/\sqrt{2}$}\label{subsection:log_spiral}

In this subsection we show $e^{i\theta}/\sqrt{2}$ is in the interior of $\M$ for
an explicit open interval of values $\theta \in (\theta_*,\pi/2)$.
It is convenient to adopt notation throughout this subsection adapted to this specific
regime. For $t$ small and non-negative, define $s(t) = e^{i(\pi/2 - t)}/\sqrt{2}$, and denote
$f_t:z \to s(t)z - 1$ and $g_t:z \to s(t)z + 1$ with attractor $\Lambda_t$. Likewise,
for $w\in \SS$ we let $w_t$ denote the 
similarity of $\C$ corresponding to $w$ for $s=s(t)$.

When $t=0$ (i.e.\/ for $s=i/\sqrt{2}$) the set $\Lambda_0$ is a rectangle which
for every integer $m$ is tiled by interior disjoint subrectangles $w_0\Lambda_0$ as $w$ ranges over the
elements of $\SS$ with $|w|=m$. 

Now let's take a small positive $t$ and look at the images $w_t\Lambda_0$ for various
words $w_t$. Note that the Hausdorff distance from $\Lambda_0$ to $\Lambda_t$ is
of order $t$ so the Hausdorff distance from $w_t\Lambda_0$ to
$w_t\Lambda_t$ is of order $t\cdot 2^{-|w|/2}$.

When $t=0$ the rectangles $w_0\Lambda_0$ for fixed $|w|$ abut each
other but do not overlap. When $t>0$ the rectangles $w_t\Lambda_0$ for fixed
$|w|$ overlap each other nontrivially; see Figure~\ref{subdivide_rectangle_tilt}. 

\begin{figure}[htpb]
\centering
\includegraphics[scale=0.6]{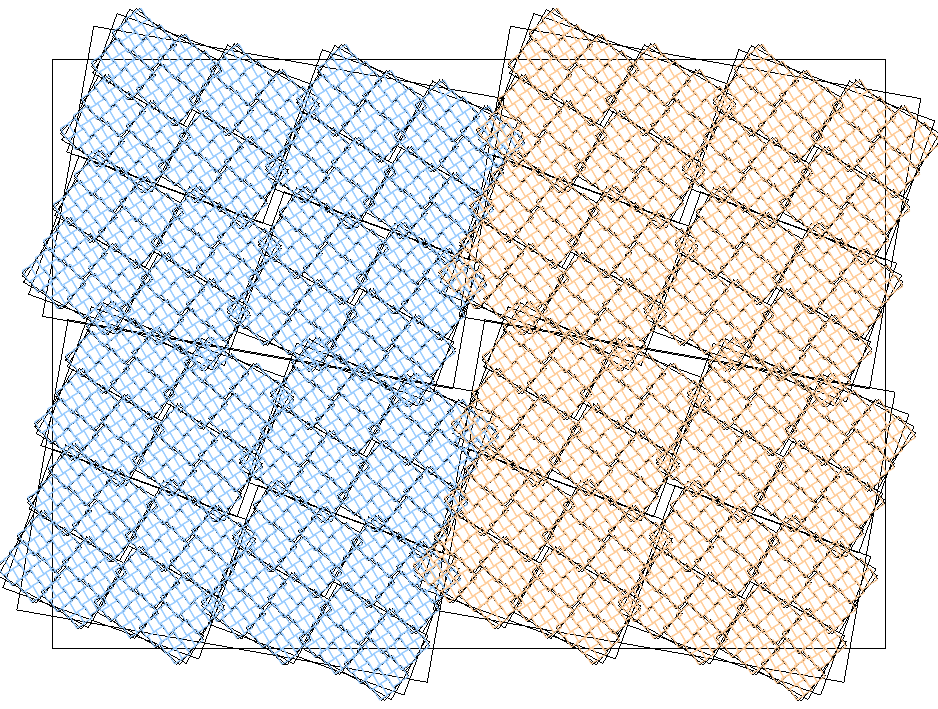}
\caption{The images $w_t\Lambda_0$ when $t$ is small but strictly positive.}
\label{subdivide_rectangle_tilt}
\end{figure}

The set $\Lambda_t$ is the Hausdorff limit 
$$\Lambda_t = \lim_{n \to \infty} \bigcup_{|w|=n} w_t\Lambda_0$$
Thus although none of the sets $\cup_{|w|=n} w_t\Lambda_0$ are full, their
filled sets have the property that their images under $f_t$ and $g_t$ overlap
each other nontrivially. We shall show that this property persists in the limit;
in other words, the filled set $K_t$ 
of $\Lambda_t$ has the property that $f_t K_t$ and $g_t K_t$ 
overlap nontrivially. Furthermore, we shall show that
this property of $K_t$ is stable under perturbation, and therefore
holds for all $s'$ sufficiently close to $s(t)$, so that $s(t)$ 
is in the interior of $\M$.

\begin{figure}[htpb]
\labellist
\small\hair 2pt
\pinlabel $L_t^0$ at 250 180
\pinlabel $f_tL_t^0$ at 650 180
\pinlabel $g_tL_t^0$ at 850 180
\endlabellist
\centering
\includegraphics[scale=0.4]{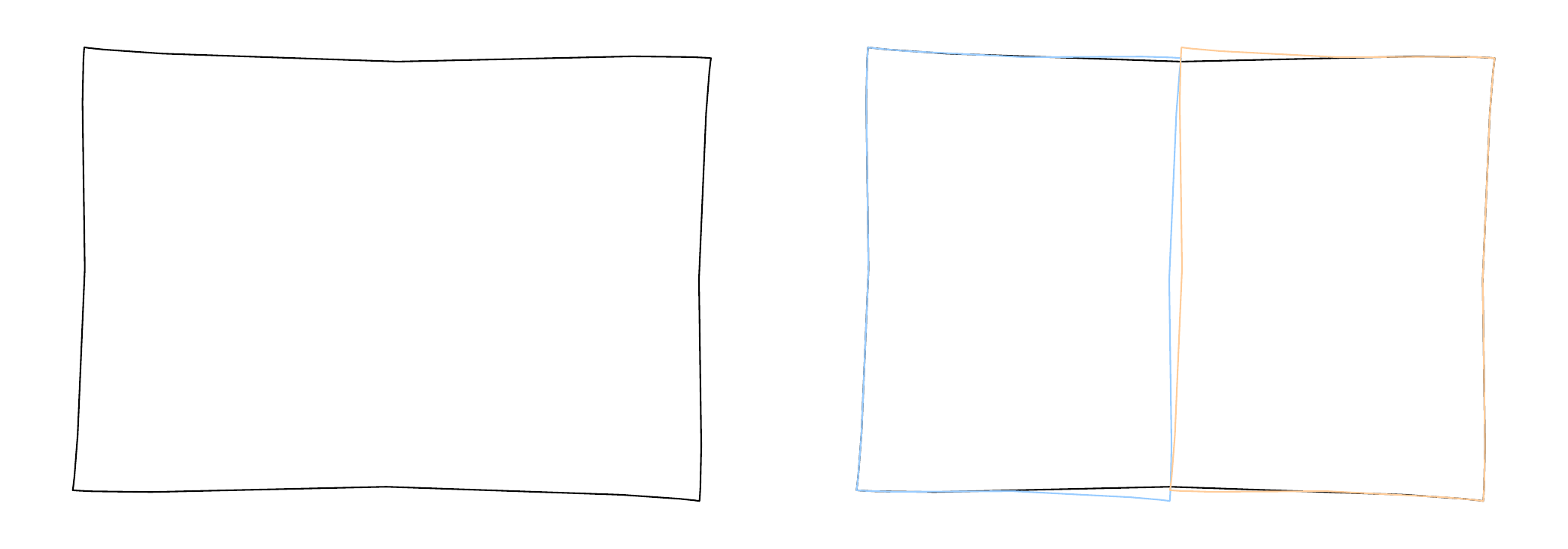}
\caption{The set $L_t^0$ (in black) for $t \sim 0.0087$
whose boundary is made from 8 segments of logarithmic
spirals landing at the fixed points of $(gffg)_t$, $(ggff)_t$, $(fggf)_t$ and $(ffgg)_t$,
and its images under $f_t$ (in blue) and $g_t$ (in orange) whose union has
a full set that completely encloses $L_t^0$.}
\label{subdivide_rectangle_spiral}
\end{figure}

For every $t$ positive and sufficiently small, we shall construct a 
Jordan domain $L_t^0$ so that the union $f_tL_t^0\cup g_tL_t^0$ is connected, 
and its full set $L_t^1$ contains
$L_t^0$. It will follow by induction that every union $\cup_{|w|=n}w_tL_t^0$ is connected,
and that its full set $L_t^n$ contains $L_t^j$ for all $j\le n$. In other words,
the full set $K_t$ is the {\em increasing} union of the sets $L_t^n$.
Furthermore, it will turn out that $g_tL_t^0$ and $f_tL_t^0$ intersect {\em transversely}.

The set $L_t^0$ and its images under $f_t$ and $g_t$ is depicted in
Figure~\ref{subdivide_rectangle_spiral}. The `corners' $p_i$ for $i = 0,1,2,3$
of $L_t^0$, starting at the bottom right and proceeding in the anticlockwise direction,
are the fixed points of the elements $(gffg)_t$, $(ggff)_t$,
$(fggf)_t$ and $(ffgg)_t$ respectively. These four words are the cyclic
rotations of one another, and consequently their fixed points are permuted by
$f_t$ and $g_t$: if $w'$ is obtained from $w$ by moving its last letter to the
front then $\mathrm{fix}(w') = c_t\,\mathrm{fix}(w)$, where $c$ is that letter, and so
$$p_1 = g_t\,p_0, \qquad p_2 = f_t\,p_1, \qquad p_3 = f_t\,p_2, \qquad p_0 = g_t\,p_3.$$
Moreover the symmetry $z\mapsto -z$ (which conjugates $f_t$ to $g_t$) preserves the
configuration, so $p_2 = -p_0$ and $p_3 = -p_1$.

Each `edge' of $L_t^0$ is made from a pair of segments of logarithmic spirals, each of
which ends at one of the two corners bounding that edge and is taken into itself by the
corresponding $w_t$; each such segment is a piece of the invariant logarithmic spiral
of the similarity $w_t$ about its fixed point (of multiplier $s^4$). An edge is thus
determined by its two endpoints together with the common endpoint of its two spiral
segments, which we call the `midpoint' of the edge. Write $m_0,m_1,m_2,m_3$ for the
midpoints of the right, top, left and bottom edges $p_0p_1$, $p_1p_2$, $p_2p_3$,
$p_3p_0$ respectively.

We do \emph{not} choose the four midpoints independently. We choose only the midpoints
$m_1$ and $m_3$ of the top and bottom edges, and we choose them symmetrically,
$$m_3 = -m_1,$$
in accordance with the symmetry $z\mapsto -z$. The midpoints $m_2$ and $m_0$ of the
left and right edges are then \emph{determined} by the requirement that the left edge
be the $f_t$--image of the top edge and the right edge be the $g_t$--image of the
bottom edge:
$$m_2 = f_t\,m_1, \qquad m_0 = g_t\,m_3.$$
Since $f_t p_1 = p_2$
and $f_t p_2 = p_3$, and since $f_t$ carries the invariant spiral of $(ggff)_t$ about
$p_1$ to that of $(fggf)_t$ about $p_2$, and the invariant spiral of $(fggf)_t$ about
$p_2$ to that of $(ffgg)_t$ about $p_3$, the choice $m_2 = f_t m_1$ makes the
entire left edge of $L_t^0$ equal to $f_t(\text{top edge of }L_t^0)$; symmetrically $m_0 = g_t m_3$
makes the entire right edge of $L_t^0$ equal to $g_t(\text{bottom edge of }L_t^0)$.

There remains a single degree of freedom, the choice of the top midpoint
$m_1\in\C$. If we write $\mu = (p_1+p_2)/2$, we can define $m_1$ to be
$\mu$ moved a distance of order $t$ towards the centre of $L_t^0$ (the origin); that is,
we `push' the top and bottom midpoints slightly into the interior. The push is chosen
so that $g_tL_t^0$ and $f_tL_t^0$ overlap transversely near the middle of the bottom
(and, symmetrically, the top) edge, and so that the union $f_tL_t^0\cup g_tL_t^0$ has a
full set containing $L_t^0$.

The orange corner $g_t p_2$ (i.e.\/ the lower left corner of $g_tL_t^0$) lies in the 
interior of the blue `rectangle' $f_tL_t^0$ at a distance $O(t^2)$ 
from the boundary; Figure~\ref{subdivide_rectangle_spiral_zoom} zooms in to 
Figure~\ref{subdivide_rectangle_spiral} near this point. 
These facts are routine to verify.

\begin{figure}[htpb]
\centering
\includegraphics[scale=0.5]{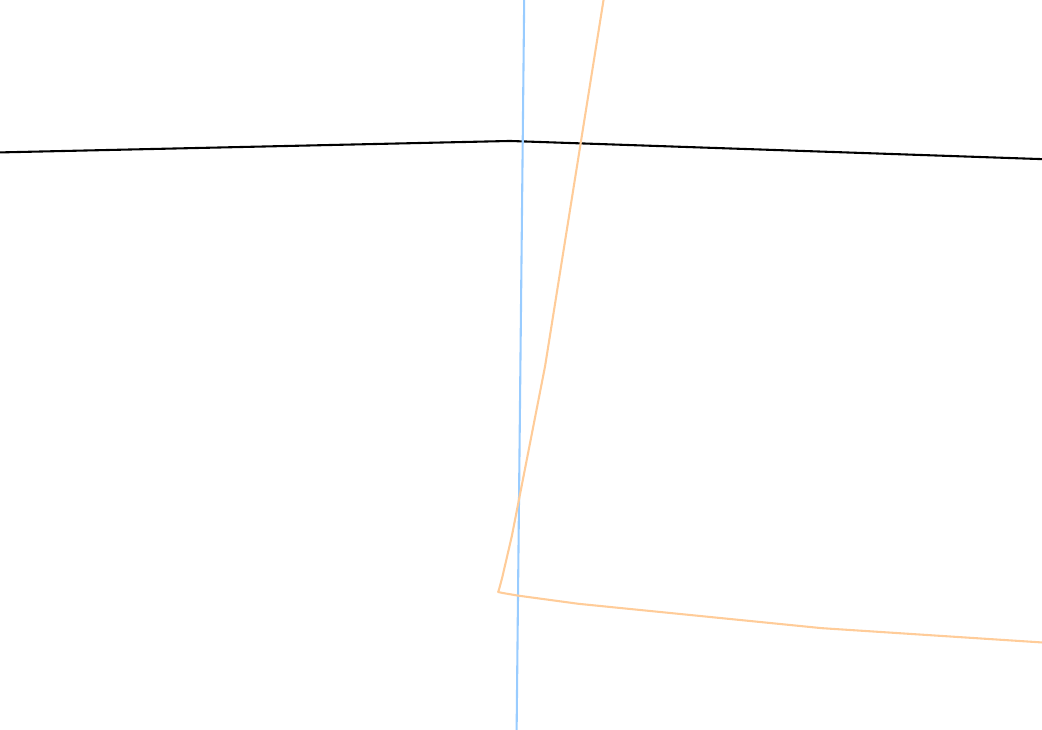}
\caption{A magnification near the middle of the bottom of 
Figure~\ref{subdivide_rectangle_spiral} showing
the intrusion of the (orange) corner $g_t p_2$ into the (blue) `rectangle'
$f_tL_t^0$.}
\label{subdivide_rectangle_spiral_zoom}
\end{figure}

Note that $\partial f_tL_t^0$ overlaps a neighborhood of the left edge
in $\partial L_t^0$, and $\partial g_tL_t^0$ overlaps a neighborhood
of the right edge in $\partial L_t^0$, since $\partial L_t^0$ is made of 
segments of invariant logarithmic spirals.

For $s'$ sufficiently close to $s(t)$ (of order $|s'-s(t)| = O(t^2)$)
we may construct an analogous set $L_{s'}^0$ from eight segments of invariant
logarithmic spirals, which is entirely contained in the full set of 
$f_{s'}L_{s'}^0\cup g_{s'}L_{s'}^0$
and for which $g_{s'}L_{s'}^0$ and $f_{s'}L_{s'}^0$ still intersect 
transversely (since this is an open condition). 
In particular, $\Lambda_{s'}$ is connected, so that $s'\in \M$
and therefore $s(t)$ is in the interior of $\M$ for all sufficiently small $t$.

\medskip

We now record how this construction is verified in practice, and how far towards the
bulk of the circle it reaches. The one free parameter---the top midpoint $m_1$---is a
\emph{complex} number, so we are free to vary it in \emph{two} real dimensions, rather
than only pushing it radially inward as the informal description suggests. Everything
else in $L_s^0$ is then determined by $m_1$: the bottom midpoint by $m_3 = -m_1$, the
side midpoints by $m_2 = f_t m_1$ and $m_0 = g_t m_3$, and the eight edges by the
corresponding invariant spirals. Thus a single point $m_1\in\C$ determines $L_t^0$
together with its images $f_tL_t^0$ and $g_tL_t^0$.

In Figure~\ref{subdivide_rectangle_spiral_zoom} one can see the desired configuration:
there is a triangular `notch' bounded by a blue edge (the right side of $f_tL_t^0$),
an orange edge (the left side of $g_tL_t^0$) and a black edge (the bottom side of $L_t^0$).
The existence of this notch can be confirmed by a numerically stable certificate, and
reduces to two inequalities that are decided by
the positions of finitely many points, and are therefore checkable exactly in interval
arithmetic. Let $Q = g_t p_2$ be the lower-left corner of $g_tL_t^0$; note that $Q$
depends on $t$ but not on $m_1$. If $Q$ lies on the interior side of the right edge of
$f_tL_t^0$, then the bottom edge of $g_tL_t^0$ crosses that right edge at a point $P$;
and if $P$ lies below the bottom edge of $L_t^0$, then the overlap of $f_tL_t^0$ and
$g_tL_t^0$ straddles the bottom edge of $L_t^0$, so that the full set of
$f_tL_t^0\cup g_tL_t^0$ contains it. Together with the symmetric statement at the top
edge, this yields $L_t^1\supseteq L_t^0$ with $f_tL_t^0$ and $g_tL_t^0$ meeting
transversely. Both inequalities are open conditions, so whenever they hold with a
positive margin they persist under a perturbation of $t$, and $s(t)$ is in the interior
of $\M$.

Maximizing this margin over the two-dimensional parameter $m_1$ --- a short numerical
search carried out in {\tt schottky} \cite{schottky} --- we find that the certificate can
be met with a strictly positive margin throughout the range
$$\arg(s)\in(\theta_*,\,\tfrac{\pi}{2}), \qquad \theta_*\approx 84.5^\circ \quad
(t\approx 0.096)$$
In this way the logarithmic-spiral certificate reaches down
to $\arg(s)\approx 84.5^\circ$, where it meets the range accessible to ordinary trap
certificates described in the sequel.

\subsection{Limit traps for $s_0:=1/2 + i/2$ and $s_1:=1/4 + \sqrt{-7}/4$}
\label{subsection:limit_traps}

In this subsection we explain how to certify interiority in a neighborhood of the
two points
$$s_0:=1/2 + i/2 \quad \text{ and } \quad s_1:=1/4 + \sqrt{-7}/4$$
at angles $\arg s_0 = 45^\circ$ and $\arg s_1 \approx 69.295^\circ$ respectively.

At each of $s_0$ and $s_1$ the attractor $\Lambda$ is a Jordan disk, the union
$\Lambda=f\Lambda\cup g\Lambda$ of two Jordan disks meeting along an arc in their 
common boundary. This common arc is symmetric under $z \to -z$, and therefore its
midpoint is $0$. The fact that $0 \in f\Lambda \cap g\Lambda$ in either case
(which implies that $s_0,s_1 \in \M$) follows from the identities
$$0 = fg f^\infty 0 = gf g^\infty 0 \text{ for } s_0$$
and
$$0 = f(gffg)^\infty 0 = g(fggf)^\infty 0 \text{ for } s_1$$

These identities certify that $s_0$ and $s_1$ are {\em renormalization points}
in the terminology of \cite{Calegari_Koch_Walker} \S~9.2 (or {\em landmark points}
in the terminology of Solomyak \cite{Solomyak}). Explicitly, there are finite
words $u,v$ with common length $a:=|u|=|v|$ so that $u$ starts with $f$ and $v$ starts with $g$,
and finite words $\alpha,\beta$ with common length $b:=|x|=|y|$ so that 
$0 = u \alpha^\infty 0 = v \beta^\infty 0$ for $s$.
For $s_0$ we have $u=fg$, $v=gf$, $\alpha=f$, $\beta=g$, $a=2$, $b=1$ whereas 
for $s_1$ we have $u=f$, $v=g$, $\alpha=gffg$, $\beta=fggf$, $a=1$, $b=4$.
Let's fix $\sigma \in \lbrace s_0,s_1\rbrace$ and fix $u,v,\alpha,\beta,a,b$ as above.
For any finite resp. infinite word $w$ we let $p_w(s)$ be the polynomial resp.
power series defined by $w_s z = z s^{|w|} + p_w(s)$, and for $u,v,\alpha,\beta$ define
$$P(s): = p_{u\alpha^\infty}(s) - p_{v\beta^\infty}(s)$$
so that (by definition) $P(\sigma)=0$.

Now \cite{Calegari_Koch_Walker} Lemma~9.2.5 says that if $x,y$ are words with
common length $c:=|x|=|y|$, and if there is a constant $C \in \C$ for which the
renormalized displacement
$$V(C;x,y): = \sigma^{-a-c}\left(p_u(\sigma) - p_v(\sigma)\right) + 
\sigma^{-c}\left(p_x(\sigma) - p_y(\sigma)\right) +
\sigma^{-a-c}C P'(\sigma)$$
is trap-like for $\sigma$, then the words $u\alpha^nx$ and $v\beta^ny$ form an
ordinary trap for $\sigma + C\sigma^{bn}$ for all sufficiently large $n$. In this
case one says that $C$ {\em admits a limit trap}.

Since this condition on $C$ is numerically stable, an entire open region of 
$C$-values may be certified. To certify $\sigma$ itself as an interior point, it 
suffices to cover an entire fundamental domain of the elliptic curve 
$E_\sigma:=\C^*/\langle \sigma^b \rangle$ by open regions of $C$-values as above.

Figure~\ref{asymp_cores} shows the results of a computer-aided search using
{\tt schottky} to successfully cover each fundamental domain by a finite union of
certified open regions. An estimate of the error guarantees an honest interval
of $\theta$ values around these landmark points for which $e^{i\theta}/\sqrt{2}$ is
in the interior of $\M$.

\begin{figure}[htpb]
\centering
\includegraphics[scale=1]{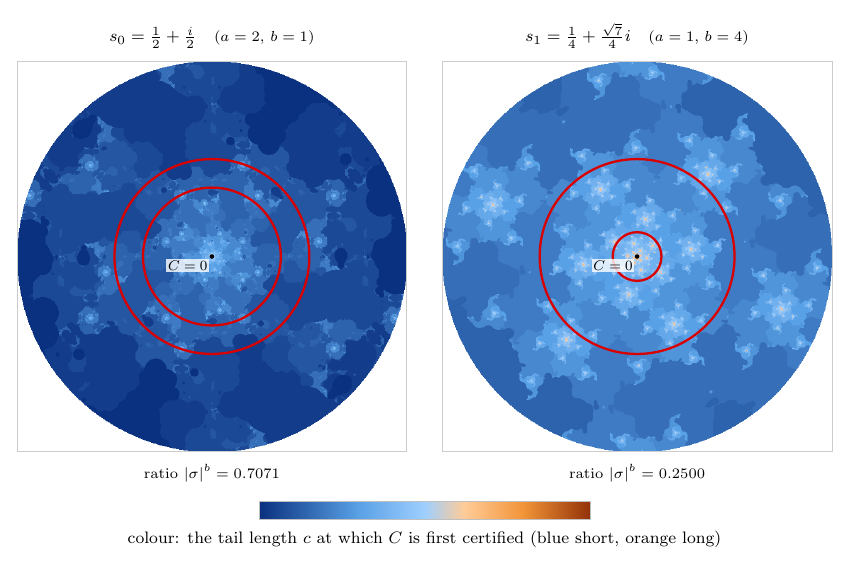}
\caption{Fundamental domains for the elliptic curves $E_\sigma$ are covered by
finitely many limit traps, certifying $\sigma$ as an interior point of $\M$
for $\sigma=s_0,s_1$.}
\label{asymp_cores}
\end{figure}

We conclude with two brief comments about the points $s_0,s_1$ and their associated
limit sets.

\begin{remark}[Planar 2-reptiles]
In fact, it is possible to characterize the three distinguished 
parameters $s_0$, $s_1$ and $i/\sqrt2$ intrinsically. For each of these three values,
the limit set $\Lambda$ is a
Jordan domain equal to the union $\Lambda = f\Lambda\cup g\Lambda$ of two interior--disjoint
and isometric copies of itself. In particular, in either case $\Lambda$ is an example of a {\em planar
$2$-reptile} in the language of \cite{Ngai_Sirvent_Veerman_Wang}. Rational planar $2$-reptiles
are classified in that paper up to isometry (Theorem~1.3); there are exactly 6 possibilities. 
Of these, in exactly three cases the two tiles are related by a translation --- the
rectangle (corresponding to $s=i/\sqrt{2}$), the twindragon ($s_0$) and the
tame twindragon ($s_1$); in the three remaining cases (the L\'evy dragon, the Heighway
dragon and the right isosceles triangle) the two tiles are related by a rotation, and
are irrelevant for our study. 

The paper \cite{Ngai_Sirvent_Veerman_Wang} leaves open the possibility that there might be
{\em irrational} planar $2$-reptiles beyond those on their list, although they conjecture
this does not occur. In fact, the numerical certificates obtained in this appendix 
{\em prove} this conjecture for the subclass of (not a priori rational) $2$-reptiles whose
tiles are related by a translation. It is plausible that a $2$-parameter generalization
of the method of traps and limit traps could certify this conjecture unconditionally.
\end{remark}

\begin{remark}[Half-zippers]
The coincidences $u\alpha^\infty 0 = v\beta^\infty 0$ for
$s_0$ and $s_1$ give rise to a construction of a canonical topological $\R$-tree
$T$ dense in the Jordan domain $\Lambda$ parallel to that in Example~\ref{example:R_tree}.
In each case the boundary of the Jordan domain $\Lambda$ may be glued up by isometries to produce
a conformal Riemann sphere in which the image of $T$ becomes a half-zipper in the sense of
\cite{Calegari_Gwynne}. The coincidence $f(fffg)^\infty 0 = g(fgff)^\infty 0$ for
$i/\sqrt{2}$ gives rise to a third example. Together with Example~\ref{example:R_tree} 
numerical evidence suggests this is an exhaustive list (up to $s \to -s$ and $s \to \bar{s}$).

These three topological $\R$-trees are depicted in Figure~\ref{dragon_R_trees}.

\begin{figure}[htpb]
\centering
\includegraphics[scale=0.4]{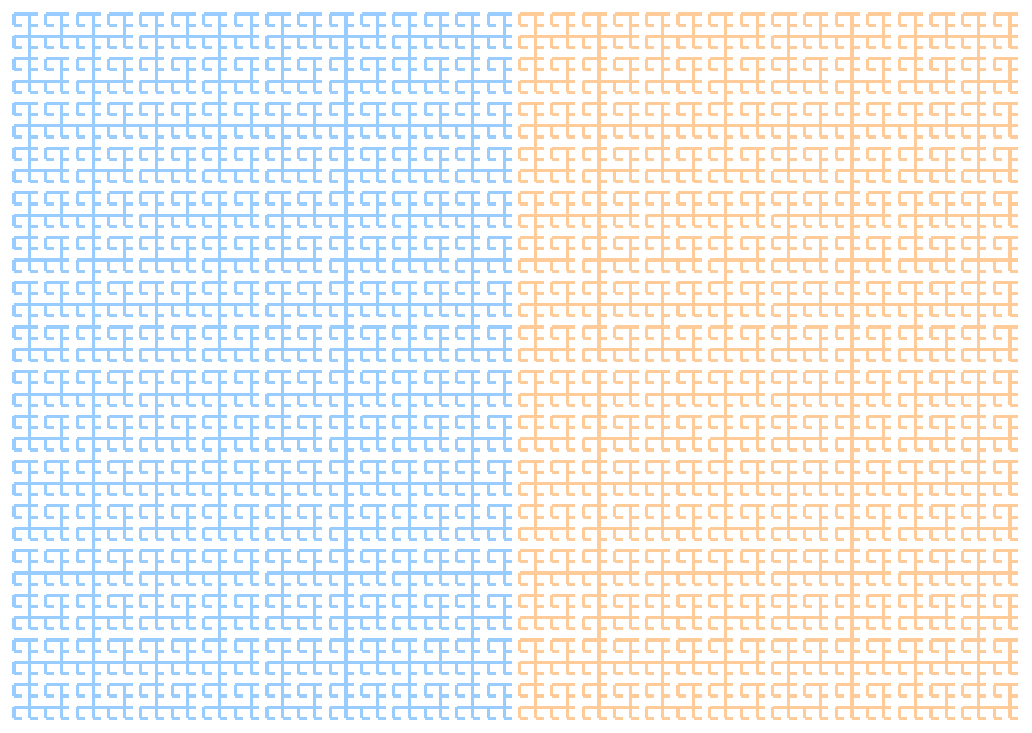}
\includegraphics[scale=0.4]{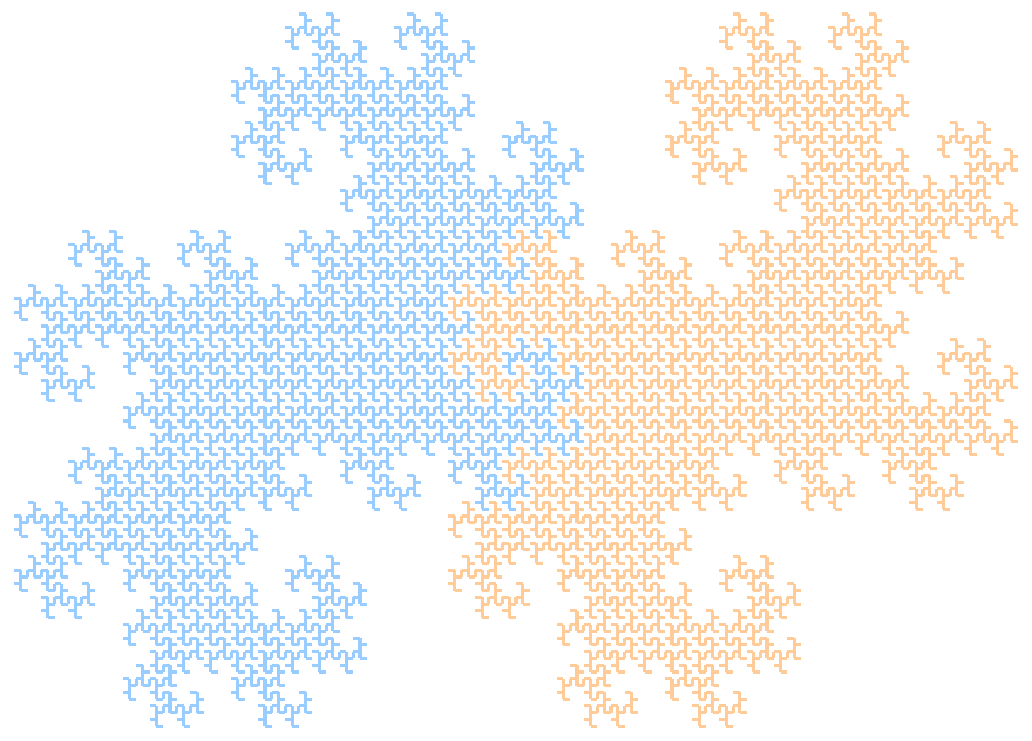}
\includegraphics[scale=0.4]{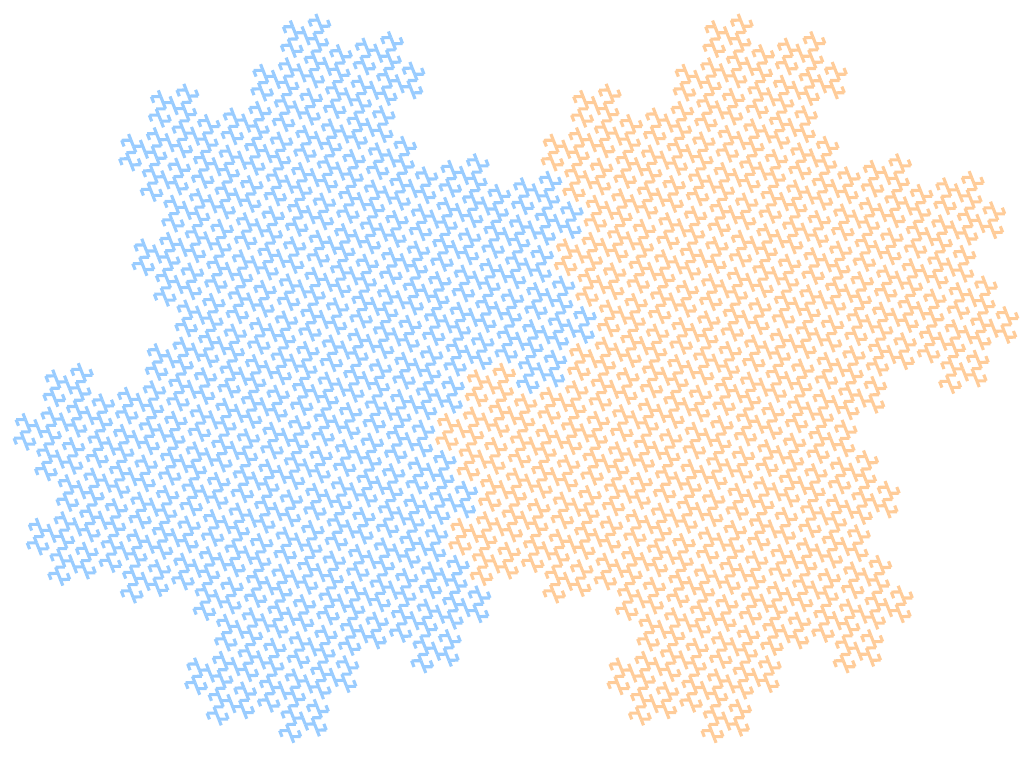}

\caption{Topological $\R$-trees dense in $\Lambda$ for $i/\sqrt{2}$, $s_0$ and $s_1$ 
giving rise to half-zippers.}
\label{dragon_R_trees}
\end{figure}

In fact, the half-zipper associated to $s_1$ is identical to the double-branched cover of
the half-zipper associated 
to the geometric mating of $z \to z^2 + i$ with itself discussed in \cite{Calegari_Loukidou}
Example~7.7 (see especially \cite{Calegari_Loukidou} Figure~25). This is a
Latt\`es example, derived from the elliptic curve with complex multiplication
$E=\C/\langle 1,\eta\rangle$ for $\eta = 1/2 + \sqrt{-7}/2 = 2s_1$.
\end{remark}

\subsection{Real points and affine traps}\label{subsection:affine_traps}

The method of affine traps is explained in \cite{Calegari_Koch_Walker} \S~10.2--3 and
can be used to certify interiority in $\M$ for real points $s$ for which ordinary
traps are unavailable. The idea is (roughly) to consider the analog of our IFS for
$s'$ a nilpotent imaginary thickening of a real parameter $s$. Explicitly, let's
write $s': = s+i\epsilon$ and $z:=x+i\epsilon y$ in $\R[i\epsilon]/(i\epsilon)^2$. Then
if we write $x+i\epsilon y$ as $(x,y)$, the maps $f$ and $g$ become 
$$f^{(1)}:(x,y) \to (sx-1,x+sy) \quad g^{(1)}:(x,y) \to (sx+1,x+sy)$$
which can now be reinterpreted as a 2 dimensional real linear IFS with attractor
$L_{s'} \subset \R^2$. As before, $L_{s'}$ is connected if and only if $fL_{s'}$
intersects $gL_{s'}$. Note that a slightly different formula appears in
\cite{Calegari_Koch_Walker} \S~10.2 reflecting the different normalization of $f$ and
$g$ throughout that paper.

Affine traps are the obvious generalization of ordinary traps in this context:
if we can find words $u,v$ starting with $f,g$ respectively
for which $uL_{s'}$ and $vL_{s'}$ form a trap (in $\R^2$) then the real number
$s$ is an interior point of $\M$; the precise statement is
\cite{Calegari_Koch_Walker}, Proposition~10.3.2, and Lemma~10.3.4 contains the
estimates that let us compare the limit set $L_{s'}$ and
the limit set $L_{s+i\epsilon}$ with $y$ coordinate scaled by $1/\epsilon$ and certify
that $s+i\epsilon$ admits an honest trap for all sufficiently small nonzero $\epsilon$. 

Affine traps are abundant at $s=1/\sqrt{2}$ and certify interiority of $e^{i\theta}/\sqrt{2}$
for $\theta$ in a definite neighborhood of $0$.

\subsection{Ordinary traps in the bulk}
\label{subsection:explicit_certificate}

Having certified interiority in $\M$ for explicit neighborhoods of $1/\sqrt{2},s_0,s_1$
and for $e^{i\theta}/\sqrt{2}$ for $\theta \in (84.5^\circ,90^\circ)$ it remains to
certify $e^{i\theta}/\sqrt{2}$ for $\theta$ in the remainder of $[0^\circ,90^\circ)$, 
a subset that we refer to informally as `the bulk'. This was carried out by
the program {\tt schottky}, completing the proof of Theorem~\ref{theorem:strict_inequality}. 
Two remarks are in order.

\begin{remark}
The statement that every $s=e^{i\theta}/\sqrt{2}$ for $\theta \in (0^\circ,84.5^\circ) -
\lbrace 45^\circ,\tan^{-1}(1/\sqrt{7})\rbrace$ can be certified as interior to
$\M$ {\em by a trap} is strictly stronger than simply the statement that these $s$ lie
in the interior of $\M$. The main result of \cite{Calegari_Koch_Walker}, namely Theorem~7.2.7,
proves that a dense subset of the interior of $\M$ may be certified by traps, but there
is no characterization --- even conjecturally --- of the set of interior $s$ in $\M$ that
{\em cannot} be certified by traps. This set includes all $s$ for which
$\Lambda$ is convex, which holds if and only if $s=re^{\pi ip/q}$ for
coprime integers $p,q$ and real $r\ge 2^{-1/q}$; see \cite{Calegari_Koch_Walker}
Lemma~7.2.3, but we have already seen it is strictly bigger --- for example, 
it includes the points $s_0$ and $s_1$.
\end{remark}

\begin{remark}
The computer search to certify all $\theta$ in the bulk was carried out {\em before}
the special points $s_0$ and $s_1$ were identified. More than half of the computational
time spent on the certification was devoted to trying (and failing) to certify
microscopic neighborhoods of these two points. The fact that the program failed to
certify these points by traps (as logic dictates it must) may be seen as an independent
check of its reliability.
\end{remark}

\end{document}